\newcommand{\mf}{\mathfrak}
\newcommand{\mb}{\mathbb}
\newtheorem{thm}{Theorem}[section]
\newtheorem{lem}[thm]{Lemma}
\newtheorem{prop}[thm]{Proposition}
\newtheorem{cor}[thm]{Corollary}
\newtheorem{conjecture}[thm]{Conjecture}
\theoremstyle{definition}
\newtheorem{definition}[thm]{Definition}
\newtheorem{remark}[thm]{Remark}
\newtheorem{example}[thm]{Example}
\newtheorem{algorithm}[thm]{Algorithm}
\numberwithin{equation}{section}
\newcommand{\al}{\boldsymbol{\alpha}}
\newcommand{\FF}{\mathbb{F}}
\newcommand{\s}{\mathcal{S}}
\newcommand{\m}{\mathfrak{m}}
\def\min{\mbox{\rm min}}
\def\max{\mbox{\rm max}}
\def\deg{\mbox{\rm deg}}
\def\gen{\mbox{\rm gen}}
\def\GL{\mbox{\rm GL}}
\newcommand{\as}{\boldsymbol{a}}
\def\min{\mbox{\rm min}}
\def\max{\mbox{\rm max}}
\def\deg{\mbox{\rm deg}}
\begin{document}

	\title{ Generic models of licci ideals parametrized by Schur functors}

	\author{Lorenzo Guerrieri \thanks{ Jagiellonian University, Instytut Matematyki, Krak\'{o}w \textbf{Email address:} lorenzo.guerrieri@uj.edu.pl 
		} 
		\and Xianglong Ni
		\thanks{UC Berkeley, Department of Mathematics \textbf{Email address:} xlni@berkeley.edu } 
		\and Jerzy Weyman
		\thanks{Jagiellonian University, Instytut Matematyki, Krak\'{o}w \textbf{Email address:} jerzy.weyman@uj.edu.pl }} 
	\maketitle
	
	\maketitle

	\begin{abstract}
		\noindent 
		Let $R$ be a commutative Noetherian ring. Licci ideals are the ideals of $R$ that can be linked in a finite number of steps to a complete intersection. Each licci ideal admits a rigid deformation, and two licci ideals are in the same Herzog class if they have a common deformation.
        In this work, we show how all the Herzog classes of licci ideals of codimension 3 can be parametrized in terms of pairs of partitions associated to Schur functors. This fact allows to describe in a purely combinatorial way the (infinite) graph whose vertices correspond to Herzog classes and edges represent direct links between representatives of such classes. As applications, we obtain results on the classification of multiplications in Tor Algebras, and new structure theorems for families of licci ideals. In the final section, we extend many of these results to arbitrary codimension, but under some conjectural assumptions.
		
		\medskip
		
		\noindent MSC: 13D02, 13C05, 13C40. \\
		\noindent Keywords: licci ideals, free resolutions, Schur functors.
	\end{abstract}

	\section{Introduction}
	
	% check title and abstract \ec
	
	Linkage of ideals in commutative rings arises as an algebraic counterpart of the theory of liaison of algebraic varieties.
	%Properties related to linkage have been considered from beginning of 20th century in several works in a geometric context and
	This has been formalized in algebraic terms by Peskine and Szpiro in \cite{pes-szp}. Successively, algebraic linkage has been deeply studied in many articles, including the celebrated one of Huneke and Ulrich \cite{hu-ul1}. %but was already used before in several works in a geometric context. 
	The theory of linkage allows one to understand properties of ideals in terms of those of their linked ideals, and can be used as a strong tool in the classification of perfect ideals. In particular, a lot of studies have been focused on the ideals that can be linked to a complete intersection (i.e. to an ideal generated by a regular sequence) in a finite number of steps (see \cite{hu-ul1}, \cite{Ul2}, \cite{hu-ul4}, and related references).
	Such ideals are called \it licci\rm.
	In codimension 2, licci ideals coincide with perfect ideals, but non-licci perfect ideals abound in codimension 3.
	
	Recently, several new results have been obtained on the class of licci ideals of codimension 3, using methods coming from representation theory. 
	Let $I$ be a perfect ideal of codimension 3 in a regular local (or graded) ring $R$. The free resolution of $R/I$ has the form 
	$$  \FF: 0 \longrightarrow F_3 \buildrel{d_3}\over\longrightarrow  F_2 \buildrel{d_2}\over\longrightarrow F_1 \buildrel{d_1}\over\longrightarrow R \longrightarrow \frac{R}{I}. $$
	We denote the rank of $F_i$ by $r_i$ and say that the complex $\FF$ has format $(1,r_1,r_2,r_3)$. 
	
	In \cite{W18}, it is demonstrated how the structure theory of such free resolutions is related to a Kac-Moody Lie algebra, whose formal definition depends only on the format of the resolution. For most formats, the associated Kac-Moody Lie algebra is not finite dimensional. %and there are infinitely many higher structure maps. There are only finitely many of such maps if and only if the 
	%Lie algebra 
	%It is finite dimensional, that is: the corresponding diagram is a Dynkin diagram (this condition is equivalent to the generic ring $\widehat{R}_{gen}$ being Noetherian).
	%This happens only for sufficiently small or well-behaved formats, 
	When the algebra is finite dimensional, the format in question is called a \it Dynkin format\rm, %It is therefore of great interest to consider the properties of the ideals having minimal free resolution of such particular formats and classify them. 
	and the ideals having minimal free resolutions of such formats are called \it ideals of Dynkin type. \rm %In these cases, it is possible to describe explicitly the structure of the free resolution, generalizing the structures studied by Buchsbaum and Eisenbud in \cite{BE}, \cite{BE74} and related papers. 
	%In ... it is proved that a perfect ideal of codimension 3 is licci if and only if some of the higher structure maps corresponding to one of the fundamental representations mentioned above does not vanish modulo the maximal ideal 
	%In the context of codimension 3, 
	
	It is proved in \cite{GNW2}, \cite{GNW3} that perfect ideals of Dynkin type are all licci and they all appear as specializations of finitely many models, each of which can be described explicitly in terms of Schubert varieties. This generalizes the well-known structure theorems of Hilbert-Burch and Buchsbaum-Eisenbud (\cite{Eis},\cite{BE77}, \cite{BE74}) to other formats of resolutions. In \cite{GNW3}, it was also shown that the structure theorem relating licci ideals with specializations of Schubert varieties extends to all licci ideals of codimension 3 beyond Dynkin types. However, it was already known from \cite{CVWdynkin} that for each non-Dynkin format of length 3, there exist perfect ideals with resolution of that format that are not licci.
	
	Results in the same flavour are also obtained for Gorenstein ideals of codimension 4 \cite{CGNW}, \cite{weyman-gorenstein}.
	Other recent valuable studies on licci ideals are devoted to describe properties related with their graded Betti numbers % cite Huneke Polini Ulrich ?? \ec
	or to relate them with special type of points of the Hilbert scheme \cite{JRS}.
	
	The aim of this paper is to establish a combinatorial method to classify the models of licci ideals of codimension 3, and conjecturally also of arbitrary codimension. Our starting points are results of Buchweitz \cite{buchweitz} and Herzog \cite{Herzog}.
	
	%The perfect ideal is licci (linkage class of a complete intersection) if it can be transformed to a complete intersection by a sequence of links.
Buchweitz proved that licci ideals are strongly unobstructed. %which means that they can be deformed to ideals defining rigid algebras. 
	Herzog showed that every strongly unobstructed ideal (in particular any licci ideal) is a specialization of a unique (up to trivial deformations) ideal defining a rigid algebra. Any deformation of a licci ideal is still a licci ideal and two ideals sharing a common deformation are said to be part of the same \it Herzog class \rm (see Section \ref{sec:background-herzog} for precise definitions of these notions).
	
	As a consequence of the results of the papers \cite{GNW2}, \cite{GNW3}, we classified the Herzog classes of licci ideals of codimension $3$. The representatives of these classes are the affine patches of certain opposite Schubert varieties in certain homogeneous space of the Kac-Moody group associated to the graph $T_{2, d+1, t+1}$ (here $d$ denotes the deviation of a given ideal, and $t$ its Cohen-Macaulay type).
	
	Similar examples of licci ideals can be found in an analogous homogeneous space related to the graph $T_{c-1,d+1,t+1}$ for arbitrary codimension bigger than  $3$. These examples are conjectured to provide a family of representative of all Herzog classes of licci ideals of arbitrary codimension.
	
	For each codimension $c$, we may form a graph $\mathrm{Licci}_c$ whose vertices index the Herzog classes of licci ideals of codimension $c$, and whose edges represent direct links. The goal of the present paper is to first present a combinatorial description of this graph for $c=3$, and then to extrapolate conjecturally for $c > 3$. The vertices of the graph may be indexed by certain pairs of partitions $(\boldsymbol \lambda ,\boldsymbol\mu)$ where $|\boldsymbol\lambda |=(c-1)|\boldsymbol\mu|+1$. We will give both qualitative and algorithmic descriptions of the possible pairs of partitions that can occur. When describing vertices of $\mathrm{Licci}_c$, we will often use the notation $S_{\boldsymbol \lambda} F_1\otimes S_{\boldsymbol \mu} F_c^*$ in place of the pair $(\boldsymbol\lambda ,\boldsymbol\mu)$, where $F_1$, $F_c$ are free modules in a minimal free resolution $\FF_\bullet$ of $R/I$ and $I$ is a licci ideal in the Herzog class associated to the given vertex. The reason for this is briefly explained in Section %\ref{sec:grade-3-classification}.
\ref{sec:background-herzog}.
	% still need to revisit the following ``outline'' \ec
	
The paper is organized as follows. In Section \ref{sec:preliminaries}, we provide all the needed preliminaries from Representation Theory (i.e. Kac-Moody Lie algebras, weights, representations) and from Commutative Algebra (licci ideals, Herzog classes, higher structure maps). Then, we recall the statement of one of the main result from \cite{GNW3} (see Theorem \ref{licci1}). This result explains specifically the relation between Herzog classes and pairs of partitions.

In Section \ref{sec:liccigraph}, we first define the graph $\text{Licci}_3$. Then, in Theorem \ref{thm:graph-edges-partitions}, we explain how the pairs of partitions get modified when moving through the vertices of this graph. Indeed, given a pair of partitions $(\boldsymbol{\lambda}, \boldsymbol \mu)$ associated to a vertex of $\text{Licci}_3$, we show how to obtain the pairs of partitions associated to the adjacent vertices. Subsequently, in Theorem \ref{thm:licci-graph}, we prove that the edge between two vertices of the graph exists if and only if there exist two licci ideals in the two Herzog classes corresponding to the vertices that are directly linked. 

Section \ref{sec:comblinks} is devoted to establish the notation that will be used to study the graph $\text{Licci}_3$ with combinatorial methods and to relate the elements of the partition $\boldsymbol \lambda$ with special systems of generators of ideals. The relation between partitions and free resolutions of ideals is further explored in Section \ref{sec:freeres}. It is presented an algorithm (Algorithm \ref{algo:licci-examples}) such that, combining the use of partitions with classical linkage theory, allows to construct inductively some representative for each Herzog class. Stronger results follow from Theorem \ref{genericcomplexthm} and Lemma \ref{lem:special-graded-free-res}. It is shown that, in a graded setting (possibly non-standard), each Herzog class has some representative whose graded Betti numbers can be written explicitly in terms of the two associated partitions. If such partitions are $\boldsymbol \lambda = (\lambda_1, \ldots, \lambda_{d+3})$, $\boldsymbol \mu = (\mu_1, \ldots, \mu_t)$, and $k=\sum_{i=1}^t \mu_i$, then there exists a licci ideal $I$ in the corresponding Herzog class having minimal graded free resolutions of the form:
$$ \FF: 0 \longrightarrow \bigoplus_{i=1}^t R(-k-1 - \mu_i) \buildrel{d_3}\over\longrightarrow  R^{d+t+2}( -k -1) \buildrel{d_2}\over\longrightarrow \bigoplus_{i=1}^{d+3} R(-k-1 + \lambda_i) \buildrel{d_1}\over\longrightarrow R. $$
Conversely, Lemma \ref{lem:special-graded-free-res} shows that, under some mild assumptions, a licci ideal having such a graded minimal free resolution must be in the Herzog class associated to the pair $(\boldsymbol \lambda, \boldsymbol \mu)$. The proofs of the latter two results are based on the representation theoretic methods developed in the previous papers. For a deeper understanding of these methods, the reader will be referred to \cite[$\S3$-$\S5$]{GNW3}.

In Section \ref{sec:decorations}, we list many examples of pairs of partitions corresponding to well-known Herzog classes of licci ideals of codimension 3. We cover explicitly all Gorenstein, almost complete intersections, hyperplane sections, and ideals of Dynkin type. For each Dynkin format there exist only finitely many Herzog classes of ideals with minimal free resolution of that format.
The smallest non-Dynkin format of free resolution is $(1,6,8,3)$. In Section \ref{sec:1683}, we exhibit an infinite family of pairs of partitions (hence of distinct Herzog classes) associated to ideals with resolution of format $(1,6,8,3)$.

Section \ref{sec:admissible} is devoted to an inverse problem: understanding for which arbitrary pairs $(\boldsymbol \lambda, \boldsymbol \mu)$ there exists an associated Herzog class. Any single instance of this problem has a clear algorithmic solution, which unfortunately may need long time for large values of $\lambda_i, \mu_j$. The results in this section illustrate several simple restrictions that the elements $\lambda_i, \mu_j$ of partitions associated to Herzog classes must satisfy. The most interesting of them is the quadratic equation:
$\sum \lambda_i^2 + \sum \mu_j^2 = (k+1)^2$.

In Section \ref{sec:toralgebra}, we illustrate how pairs of partitions can be used to understand the structure of the multiplication in the Tor Algebra Tor$(R/I,R/\m)$ of licci ideals of codimension 3 (see \cite{Weyman89}, \cite{AKM88}, \cite{CVWlinkage}, \cite{CV} for a detailed background on this topic). 
This structure is an invariant of the Herzog class, and the presence of units in the multiplication maps $\bigwedge^2 F_1 \to F_2$, $F_1 \otimes F_2 \to F_3$ has an equivalent formulation in terms of $(\boldsymbol \lambda, \boldsymbol \mu)$, as pointed out in Theorem \ref{multiprelation}. Sections \ref{sec:monomials} and \ref{sec:classG} deal respectively with the partitions associated to ideals with Koszul relations among minimal syzygies, and with ideals of Tor Algebra class $G(r)$. For the latter family of ideals, we answer positively to a conjecture stated in \cite{CVWlinkage}, showing that $r$ cannot be too large in terms of the number of generators of $I$, unless $I$ is Gorenstein.

Section \ref{sec:closetoGor} is dedicated to the study of the perfect ideals of codimension 3 which are the closest ones to the Gorenstein, both in the sense of partitions, and also in the sense that their free resolutions have the largest possible perfect pairing after the one of Gorenstein ideals. 
In terms of Tor Algebra classes, we prove that this family is precisely identified by the class $G(b-3)$ where $b$ is the number of generators. We also describe the pairs of partitions associated to the ideals in this family, and provide a conjectural explicit structure theorem for the minimal free resolutions of the rigid algebras in such Herzog classes. 

Finally, in Section \ref{sec:arbitrary}, we pass to discuss pairs of partitions in arbitrary codimension $c$.
The graph $\text{Licci}_c$ and the combinatoric formula computing the partitions corresponding to adjacent vertices can be generalized exactly following the same methods used for $c=3$. However, the absence of a well-defined theory of generic rings and higher structure maps in arbitrary codimension does not allow to obtain a formal correspondence between pairs of partitions and Herzog classes of licci ideals. We propose this analogous correspondence in the form of a conjecture (see Conjecture \ref{conj:general-c}).
After that, we start exploring the theory of pairs of partitions proving several properties in higher codimension.
We conclude the paper by listing examples of several families of pairs of partitions, connecting them to known classes of licci ideals and to ideal-theoretic constructions, such as hyperplane sections and doublings.

	\section{Preliminaries}\label{sec:preliminaries}
	
	In this section we recall preliminary notions that will be needed for the rest of the paper. If not otherwise specified, our setting is the following.
	By $R$ we denote a commutative Noetherian ring. For simplicity, we assume $R$ to be regular and local or graded, with maximal ideal $\m$ and infinite residue field $K$ of characteristic zero.
    However, we point out that our results work also without assuming $R$ is regular, if the notion of codimension is replaced by grade.
    We will mainly work with perfect ideals of $R$. For a perfect ideal $I \subseteq R$ we will denote by $c,d,t$ the invariants codimension, deviation (the difference between codimension and number of generators) and Cohen-Macaulay type.
	
	We start by summarizing a few facts regarding Kac-Moody Lie algebras and their representations. These topics, in much greater detail, can be found in the first few chapters of \cite{KumarBook}.
	
	\subsection{Kac-Moody Lie Algebras and weights}
	Fix integers $d, t \geq 1$ and let $T$ denote the graph
	\[\begin{tikzcd}[column sep = small, row sep = small]
		x_1 \ar[r,dash] & u \ar[r,dash]\ar[d,dash] & y_1 \ar[r,dash] & \cdots \ar[r,dash] & y_{d} \\
		& z_1 \ar[d,dash]\\
		& \vdots \ar[d,dash]\\
		& z_{t}
	\end{tikzcd}\]
	To this graph we have an associated Kac-Moody Lie algebra $\mf{g}$ with Weyl group $W$. For a subset $t \subset T$ of vertices, let $W_t$ denote the subgroup of $W$ generated by all simple reflections $\{s_i\}_{i \notin t}$. Let $W^t$ be the set of minimal length representatives of $W/W_t$, and if $t' \subset T$ is another subset, let $\leftindex^{t'}W^t$ denote the set of minimal length representatives of $W_{t'} \backslash W / W_t$.
	
	\medskip 
	%\subsection{Weights}
	
	The Weyl group $W$ acts linearly on weights of $\mf{g}$. Explicitly,
	\begin{equation}\label{eq:W-action-on-weights}
		s_i \omega_j = \begin{cases}
			\omega_j & \text{if $i \neq j$}\\
			-\omega_j + \displaystyle\sum_{\text{$k$ adjacent to $j$ on $T$}} \omega_k & \text{if $i = j$}
		\end{cases}
	\end{equation}
	Given a weight $\omega = \sum_{i \in T} a_i \omega_i$, let $t = \{i \in T : a_i \neq 0\}$ be the support of $\omega$. The stabilizer of $\omega$ is exactly $W_t$.
	
	As such, we may identify $W^{x_1}$ or $W/W_{x_1}$ with the $W$-orbit of the fundamental weight $\omega_{x_1}$. Explicitly, $\sigma \in W$ is an element of $W^{x_1}$ if $\ell(\sigma') \geq \ell(\sigma)$ for all $\sigma'$ satisfying $\sigma' \omega_{x_1} = \sigma \omega_{x_1}$.
	
	Suppose that $\sigma \in W^{x_1}$ and $\sigma \omega_{x_1} = \sum a_i \omega_i$. If $a_j > 0$, then $s_j \sigma \in W^{x_1}$ also, with $\ell(s_j\sigma) = \ell(\sigma) + 1$. 
	
	Let $t \subset T$. An element $\sigma \in W^{x_1}$ belongs to $\leftindex^t W^{x_1}$ if $\sigma \omega_{x_1} = \sum a_i \omega_i$ with $a_i \geq 0$ for all $i \notin t$. In particular, given $\sigma \in W^{x_1}$ with $\sigma \omega_{x_1} = \sum a_i \omega_i$, we may repeatedly apply simple reflections $s_i$ for which $a_i < 0$ and $i \notin t$. Once we reach a point where all $a_i \geq 0$ for $i \notin t$, this weight corresponds to the minimal length representative of $[\sigma] \in W_t \backslash W / W_{x_1}$.
	
	\subsection{The limiting group and graph}
	In the above, the definition of $W$ and the various derivative constructions depend on the choice of parameters $d$ and $t$, which have been suppressed from the notation. In this part we will consider adjusting these parameters, so we will write $T(d,t)$ and $W(d,t)$ for what were previously denoted $T$ and $W$. Suppose that $d' \geq d$ and $t' \geq t$. Then the group $W(d,t)$ is naturally a subgroup of $W(d',t')$. We define $\overline{W} = \lim_{d,t\to\infty} W(d,t)$, and $\overline{T}$ to be the limiting graph with infinite right and bottom arms.
	
	We analogously define $\overline{W}_t$, $\overline{W}^t$, and $\leftindex^{t'}{\overline{W}}^t$. The discussion about weights remains applicable; e.g. $\overline{W}^{x_1}$ can be identified with the $\overline{W}$-orbit of $\omega_{x_1}$, with the action of $\overline{W}$ on the infinite-dimensional vector space spanned by $\{\omega_i\}_{i \in \overline{T}}$ given by the same formula \eqref{eq:W-action-on-weights}.
	
	An element $\sigma \in \overline{W}$ belongs to $W(d,t)$ if $\sigma$ is expressible as a product of simple reflections $s_i$ where $i \in T(d,t)$. Suppose that $\sigma \in \overline{W}^{x_1}$ with $\sigma\omega_{x_1} = \sum a_i \omega_i$. If $N$ is the largest integer for which $s_{y_N}$ appears in a reduced word for $\sigma$, then $a_{y_{N+1}} > 0$ and $a_{y_j} = 0$ for $j \geq N+2$. In particular, we have that $\sigma \in W(d,t)$ if $a_{y_i} = 0$ for $i \geq d + 2$ and $a_{z_i} = 0$ for $i \geq t + 2$.
	
	\subsection{Representations}\label{sec:background-reps}
	Fix $d,t$ and consider the Kac-Moody Lie algebra associated to the graph $T(d,t)$. Let $L(\lambda)$ be its irreducible representation with highest weight $\lambda$. The representation $L(\omega_{x_1})$ plays an important role in the theory of higher structure maps arising from the study of generic free resolutions of length three. It has a grading by the root lattice, which may be coarsened to a $\mb{Z}$-grading by considering the coefficient of the simple root $\alpha_{z_1}$; we call this the $z_1$-grading. Each graded component is a representation of $\mf{gl}(F_1) \times \mf{gl}(F_3)$ where $F_1 = \mb{C}^{3+d}$ and $F_3 = \mb{C}^t$. When discussing higher structure maps, we more commonly look at the (restricted) dual $L(\omega_{x_1})^\vee$, i.e. the irreducible representation with lowest weight $-\omega_{x_1}$. Its bottom three components, dual to the top three components of $L(\omega_{x_1})$, are displayed below:
	\[
	L(\omega_{x_1})^\vee=  F_1 \oplus \bigwedge^3 F_1 \otimes F_3^* \oplus \begin{matrix}
		S_{2,1^3} F_1 \otimes \bigwedge^2 F_3^*\\
		\bigwedge^5 F_1 \otimes \bigwedge^2 F_3^*\\
		\bigwedge^5 F_1 \otimes S_2 F_3^*
	\end{matrix} \oplus \cdots
	\]
	where e.g. $S_{2,1^3} F_1 = S_{2,1,1,1} F_1$ denotes a Schur functor.
	
	If $\sigma \in W^{x_1}$, then $\sigma \omega_{x_1}$ is an extremal weight in $L(\omega_{x_1})$. If moreover $\sigma \in \leftindex^{z_1} W^{x_1}$, then $\sigma \omega_{x_1}$ is the highest weight of an extremal $\mf{gl}(F_1) \times \mf{gl}(F_3)$-representation inside of $L(\omega_{x_1})$. This representation has the form $S_{\boldsymbol \lambda} F_1^* \otimes S_{\boldsymbol \mu} F_3$ for some pair of partitions $(\boldsymbol\lambda,\boldsymbol\mu)$, which are deduced from $\sigma$ as follows.
	
	View $\sigma$ as an element of $\overline{W}$ and consider its action on $\omega_{x_1}$:
	\[
	\sigma \omega_{x_1} = \sum_{i \in \overline{T}} a_i \omega_i
	\]
	Note that $a_i$ may be nonzero for $i = y_{d+1}, z_{t+1}$ even though these are not vertices in the original graph $T(d,t)$. As $\sigma$ is a minimal length representative of $W_{z_1} \backslash W / W_{x_1}$, we have that $a_i \geq 0$ for all vertices $i \neq z_1$. The partitions $\boldsymbol \lambda$ and $\boldsymbol \mu$ are read off from the coefficients of the sequences of vertices $l_{\boldsymbol\lambda} = (x_1,u,y_1,y_2,\ldots)$ and $l_{\boldsymbol\mu} = (z_2,z_3,\ldots)$ in the typical way for type A weights, i.e.
	\begin{equation}\label{eq:weights-to-partitions}
		\lambda_k = \sum_{j=k}^\infty a_{(l_\lambda)_j}, \quad \mu_k = \sum_{j=k}^\infty a_{(l_\mu)_j}
	\end{equation}
	where e.g. $(l_{\boldsymbol\lambda})_j$ denotes the $j$th vertex in the sequence $l_{\boldsymbol\lambda}$, indexing starting from $j=1$. The weight $\sigma \omega_{x_1}$ is then the highest weight of the representation $S_{\boldsymbol\lambda} F_1^* \otimes S_{\boldsymbol\mu} F_3 \subset L(\omega_{x_1})$. We will often use the associated pair of partitions $(\boldsymbol\lambda,\boldsymbol\mu)$ when discussing elements of $\sigma \in \leftindex^{z_1}{\overline{W}}^{x_1}$; notice that if $\sigma \in W(d,t)$, the pair $(\boldsymbol\lambda,\boldsymbol\mu)$ does not depend on the values of $d$ and $t$ (although the actual representation $S_{\boldsymbol\lambda} F_1 \otimes S_{\boldsymbol\mu}F_3^*$ of course does).
	
	\subsection{Licci ideals, Herzog classes, and higher structure maps}
	\label{sec:background-herzog}
	
	An ideal $I$ of $R$ is \it perfect \rm if the quotient ring $R/I$ is Cohen-Macaulay and has finite free resolution as $R$-module (equivalently if the projective dimension of $R/I$ over $R$ coincides with the codimension of $I$).
	
	Two ideals $I,J$ of $R$ are said to be linked if there exists a regular sequence $\alpha \subseteq I \cap J$ such that $I=(\alpha):J = \lbrace x \in R \, | \, xJ \subseteq (\alpha) \rbrace$ and $J=(\alpha):I$. If two ideals are linked, then they have the same height and one of the two is perfect if and only if the other one is perfect. Working over a regular ring, if $I$ is a perfect ideal of height $c$ and $\alpha$ is a regular sequence of length $c$ contained in $I$, then $(\alpha):I$ is linked to $I$. An ideal is \it licci \rm if it can be linked to a complete intersection in finitely many steps. Licci ideals are clearly perfect.
	
	Given two pairs of ring-ideal $(R,I)$ and $(S,J)$, we say that $(R,I)$ is a \it specialization \rm of $(S,J)$ if $(R,I) =  \left( \frac{S}{(\as)}, \frac{J+(\as)}{(\as)} \right)$ where $\as$ is a regular sequence in $S$ whose image in $\frac{S}{J}$ is also regular.  
	At the same time we say that  $(S,J)$ is a \it deformation \rm of $(R,I)$. 
	
	A  trivial deformation  of a pair $(R,I)$ is a deformation of the form $$(R[z_1, \ldots, z_n], IR[z_1, \ldots, z_n]),$$ or a similar local version defined with formal power series.
	In classical deformation theory, ideals defining rigid algebras are defined in terms of certain functors $T_i$. For the purpose of this project, we say that an ideal  $I$ is \it rigid \rm if $(R,I)$ admits only trivial deformations. 
	
	By results of Herzog \cite{Herzog} and Buchweitz \cite{buchweitz}, licci ideals always admit rigid deformations. We say that two licci ideals that have a common rigid deformation are in the same \it Herzog class\rm.
	As a consequence of Hilbert-Burch Theorem, in codimension $2$, the rigid ideals are exactly the ideals of maximal minors of generic $n \times (n-1)$ matrices and there is exactly one Herzog class for each $n \geq 2$.
	
	We can associate to each Herzog class in codimension $3$ a pair of partitions associated to certain Schur functors. We give a brief explanation of this method, referring \cite{GNW3}, \cite{xianglong}, for further details.
	
	%Let $I \subseteq R$ be a perfect ideal of height 3 and consider the minimal free resolution
	%\begin{equation}
	%\label{complexF}
	%\FF: 0 \longrightarrow F_3 \buildrel{d_3}\over\longrightarrow  F_2 \buildrel{d_2}\over\longrightarrow F_1 \buildrel{d_1}\over\longrightarrow R \longrightarrow \frac{R}{I}.
	%\end{equation}
	%As said in the introduction, we denote by $r_i$ the rank of $F_i$ and say that the complex $\FF$ has format $(1,r_1,r_2,r_3)$. 
	%The bases of $ F_1,F_2,F_3$ will be respectively denoted by $\lbrace e_1, \ldots, e_{r_1}\rbrace$, $\lbrace f_1, \ldots, f_{r_2} \rbrace$, $\lbrace g_1, \ldots, g_{r_3}\rbrace$.
	
	Let $I$ be a licci ideal of codimension 3 in the ring $R$. The minimal free resolution of $R/I$ has the form 
	$$  \FF: 0 \longrightarrow F_3 \buildrel{d_3}\over\longrightarrow  F_2 \buildrel{d_2}\over\longrightarrow F_1 \buildrel{d_1}\over\longrightarrow R \longrightarrow \frac{R}{I}. $$
	As done in the introduction we denote by $r_i$ the rank of $F_i$ and say that the complex $\FF$ has format $(1,r_1,r_2,r_3)$, where $r_1 = 3+d$ and $r_3 = t$. 
	
	Let us consider the Kac-Moody Lie algebra $\mf{g}$ corresponding to the graph $T(d,t)$. Some of the main tools used to develop the theory in \cite{GNW2}, \cite{GNW3} and related papers, are three fundamental representations of this Lie algebra, corresponding to the three extremal vertices of the diagram. These three fundamental representations are $\mathbb{N}$-graded and appear in the generic ring $\widehat{R}_{gen}$ associated to the format of the free resolution (see \cite{W18}). Therefore, %fixed a format and given a free resolution $\FF$ of that format defined over a commutative ring $R$, 
	each direct summand of their graded components can be mapped to $R$. This induces a sequence of $R$-linear maps involving Schur functors in the modules $F_i$ appearing in $\FF$. These maps are called \it higher structure maps \rm  (see also \cite{Gue-Wey}, \cite{GNW1}, \cite{Gue-SAF} for explicit definitions and computations of these maps). %More details on their structure will be given in Section 2.1. 
	%These three fundamental representations are $\mathbb{N}$-graded, and each direct summand corresponds to a linear map which can be computed over the ring $R$. 
	The linear maps associated to zero graded components of these representations correspond naturally to the three differentials $d_1, d_2, d_3$ of the complex $\FF$. For this reason the three fundamental representations are denoted respectively by $W(d_1), W(d_2), W(d_3)$.
	
	In \cite{BE77}, it is shown that any free resolution $\FF$ of length 3 carries an associative multiplicative structure, described by three standard maps $\bigwedge^2F_1 \to F_2, F_1 \otimes F_2 \to F_3, \bigwedge^3F_1 \to F_3$. Such maps are computed by lifting the maps obtained from the comparison between the Koszul complex defined on the generators of $I$ and the complex $\FF$. The first graded components of the three fundamental representations correspond exactly to these three linear maps defining the multiplicative structure.
	The successive graded components encode further structure which was unknown before the work done in \cite{W18}. See \cite{Lee-Weyman} for the description of all the components of these representation in the case of Dynkin formats.
	
	The representation $W(d_1)$ has the form
	$$ W(d_1)= F_1  \oplus  \bigwedge^3 F_1 \otimes F_3^*  \oplus  \bigwedge^5 F_1 \otimes S_2F_3^* \oplus  \bigwedge^4 F_1 \otimes F_1 \otimes \bigwedge^2F_3^*  \oplus \ldots  \oplus S_{\boldsymbol \lambda}F_1 \otimes S_{\boldsymbol \mu}F_3^*  \oplus \ldots, $$
	such that in the $k$-th graded component the Schur functors appearing are of the form $  S_{\boldsymbol \lambda}F_1 \otimes S_{\boldsymbol \mu}F_3^*$ where $\boldsymbol \mu $ is a partition of $k$ and $\boldsymbol \lambda $ is a partition of $2k+1$.
	Each of these pairs of Schur functors is associated to a linear map $$ w^{(1)}_{k,j}= w^{(1)}_{k,j}(I):  S_{\boldsymbol \lambda}F_1 \to S_{\boldsymbol \mu}F_3, $$ which can be computed over the ring $R$ in terms of data from the complex $\FF$ (the index $k$ denotes the graded component from which the linear map comes from, while the index $j$ is used to denote possible different maps coming from the same component). 
	We have that
	$ w^{(1)}_{0}= d_1: F_1 \to R$ and $ w^{(1)}_{1}: \bigwedge^3 F_1 \to F_3$ are respectively the first differential of $\FF$ and the multiplication map obtained from the comparison with the Koszul complex.
	%In the second graded component we have maps $ w^{(1)}_{2,1}: \bigwedge^5 F_1 \to S_2F_3$, \\ $ w^{(1)}_{2,2}: \bigwedge^4 F_1 \otimes F_1 \to \bigwedge^2F_3$. 
	Also some of the successive maps can be computed explicitly by lifting cycles defined in terms of previous higher structure maps (coming also from the other fundamental representations) over some exact complex. %\cite{Gue-Wey}, \cite{GNW1}.
	In particular higher structure maps are not unique.
	
	%Denote by $w^{(1)}_j$ the higher structure maps corresponding to the critical representation $W(d_1)$ associated to the differential $d_1$. We write these maps in terms of Schur functors in the form $w^{(1)}_j: S_{\boldsymbol{\lambda}}F_1 \otimes S_{\boldsymbol{\mu}}F_3^* \longrightarrow R,$ where $\boldsymbol{\lambda}$ denotes a partition of $2j+1$ and $\boldsymbol{\mu}$ denotes a partition of $j$.
	
	A key fact used in \cite{GNW3} is that the higher structure maps transform elegantly under linkage. Each vertex of the diagram $T$ induces a grading on the Lie algebra $\mf{g}$ and its representations. Let $\begin{bmatrix}
		\alpha_1 & \alpha_2 & \alpha_3
	\end{bmatrix}$ be the restriction of $w^{(1)}$ to its 3-dimensional bottom $(y_1,z_1)$-bigraded component. If the $\alpha_i$ form a regular sequence, then the restriction of $w^{(1)}$ to its bottom $y_1$-graded component yields the ideal $I' = (\alpha_1,\alpha_2,\alpha_3):I$. Furthermore, $R/I'$ admits a resolution $\mb{F}'$ of format $(1,3+t,2+d+t,d)$ and a choice of $w'$ specializing the generic free resolution to $\mb{F}'$ such that $w'^{(1)} = w^{(1)}$ if we identify $T(d,t)$ and $T(t,d)$ in the evident way.
	
	We may paraphrase one of the main results of \cite{GNW3} as follows. 
	
	\begin{thm}
		\label{licci1}
		Let $\mf{m}$ be the maximal ideal of $R$ and $\Bbbk = R/\mf{m}$ the residue field. Let $I \subset R$ be a perfect ideal of height 3, and suppose we have computed $w^{(1)}$ for a resolution of $R/I$. The ideal $I$ is licci exactly when $w^{(1)} \otimes \Bbbk \neq 0$. Assuming this to be the case, $I$ is classified up to deformation by the lowest (in Bruhat order) extremal $\mf{gl}(F_1) \times \mf{gl}(F_3)$-representation $S_{\boldsymbol\lambda} F_1 \otimes S_{\boldsymbol\mu} F_3^*$ on which $w^{(1)} \otimes \Bbbk$ is nonzero, i.e. it is classified by the map
		\[
			w^{(1)}_{k,j}= w^{(1)}_{k,j}(I):  S_{\boldsymbol \lambda}F_1 \to S_{\boldsymbol \mu}F_3
		\]
		with minimal $k = \kappa(I)$ that is nonzero mod $\mf{m}$.
		
		In fact, there exists a choice of $\mb{F}$ and $w$ such that $w^{(1)} \otimes \Bbbk$ is nonzero only on the lowest weight space of $S_{\boldsymbol \lambda}F_1 \to S_{\boldsymbol \mu}F_3$, which has weight $-\sigma \omega_{x_1}$ for some $\sigma \in \leftindex^{z_1}W(d,t)^{x_1}$. We refer to the combinatorial data of either $\sigma$ or equivalently $(\boldsymbol\lambda,\boldsymbol\mu)$ as the Herzog class of $I$. Every $\sigma \in \leftindex^{z_1}W(d,t)^{x_1}$ is the Herzog class of some codimension 3 licci ideal with deviation $\leq d$ and type $\leq t$, with the exception of $\sigma = \mathrm{id}$ which corresponds to the unit ideal.
	\end{thm}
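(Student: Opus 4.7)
The plan is to reduce the theorem to a detailed analysis of how direct links transform both the format of the resolution and the data of $w^{(1)}$. The two essential inputs are the linkage identity $w'^{(1)} = w^{(1)}$ under the symmetry $T(d,t) \leftrightarrow T(t,d)$, and the fact that the bottom $(y_1,z_1)$-bigraded piece of $w^{(1)}$ is a three-dimensional candidate for a regular sequence inside $I$. The overarching strategy is induction on the length $\ell(\sigma)$ of the Weyl group element one hopes to associate to $I$.

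For the iff statement, the base case is a complete intersection: the resolution $\mathbb{F}$ is Koszul, the map $F_1 \to R$ is the differential $d_1$, and the component $\bigwedge^3 F_1 \otimes F_3^* \to R$ of $w^{(1)}$ is the top multiplication map, which is an isomorphism. Hence $w^{(1)} \otimes \Bbbk$ is nonzero. The forward implication then follows from the linkage identity: if $I$ links directly to $I'$, then $w^{(1)}(I) \otimes \Bbbk$ and $w^{(1)}(I') \otimes \Bbbk$ agree, so non-vanishing transports along any chain of links to a complete intersection. Conversely, assuming $w^{(1)}(I) \otimes \Bbbk \neq 0$ and that $I$ is not already a complete intersection, I would locate the lowest $(y_1,z_1)$-bigraded piece on which $w^{(1)}$ is nonzero modulo $\mathfrak{m}$; its three components $(\alpha_1,\alpha_2,\alpha_3)$ can be shown to form a regular sequence, and linking against them produces $I'$ whose associated Weyl group element $\sigma'$ has strictly shorter length. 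Iterating drives $\sigma'$ down to the Herzog class of a complete intersection, at which point the previous direction closes the loop.

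For the classification part and the refined choice of $w$, one tracks how a single link acts on the lowest nonzero extremal weight $-\sigma\omega_{x_1}$ of $w^{(1)} \otimes \Bbbk$. At each link, $\sigma$ gets multiplied on the left by a simple reflection consistent with the swap $T(d,t) \leftrightarrow T(t,d)$; matching this with the Bruhat covering relations on $\leftindex^{z_1}\overline{W}^{x_1}$ shows that any chain of links corresponds to a reduced expression, and the datum $(\boldsymbol\lambda,\boldsymbol\mu)$ read off from the lowest nonzero weight is preserved. To produce a representative with $w^{(1)} \otimes \Bbbk$ supported \emph{only} on the lowest weight line, one runs a normal-form argument: the $\mathrm{GL}(F_1) \times \mathrm{GL}(F_3)$ action on $\mathbb{F}$, combined with the gauge freedom in lifting higher structure maps inside the generic ring $\widehat{R}_{gen}$, suffices to kill all strictly higher-weight contributions modulo $\mathfrak{m}$.

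Finally, the surjectivity statement is handled constructively: starting from a complete intersection over the appropriate generic ring and performing successive linkages, each simple reflection allowed by the combinatorics of $\leftindex^{z_1}W(d,t)^{x_1}$ is realized by one link; for a target $\sigma = s_{i_1}\cdots s_{i_\ell}$, a reduced expression dictates a chain of links that builds a licci ideal with the prescribed Herzog class. The principal obstacle throughout is to establish genuine well-definedness: that $(\boldsymbol\lambda,\boldsymbol\mu)$ does not depend on the choice of $\mathbb{F}$ or on the many choices made in constructing $w$. This rests on the rigidity results of Buchweitz and Herzog together with the representation-theoretic input that the lowest nonzero weight of $w^{(1)} \otimes \Bbbk$ is an extremal weight of an irreducible $\mathfrak{gl}(F_1) \times \mathfrak{gl}(F_3)$-submodule of the fundamental representation of $\mathfrak{g}$, hence invariant under the available gauge transformations.
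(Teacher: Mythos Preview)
The paper does not prove this theorem at all: it is stated as a paraphrase of one of the main results of \cite{GNW3}, with no argument given. So there is no ``paper's own proof'' to compare against; any real comparison would have to be made against the cited reference.

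That said, your sketch has the right architecture but several soft spots that would need real work. First, the claim that the bottom $(y_1,z_1)$-bigraded piece of $w^{(1)}$ automatically yields a regular sequence is not correct as stated; in the paper's own Theorem~\ref{thm:licci-graph} the authors must perturb by a general element of $\mf{m}$ to force regularity, and this perturbation has to be shown compatible with the weight-space bookkeeping. Second, a single direct link does not correspond to left multiplication by a single simple reflection: the passage from $\sigma$ to the linked class involves an element of $W_{z_1}$ (or $W_{y_1}$ on the other side), and controlling which double coset you land in is exactly the content of Theorems~\ref{thm:graph-edges-partitions} and \ref{thm:licci-graph}, not something you get for free. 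Third, the ``normal-form argument'' killing higher-weight contributions modulo $\mf{m}$ is the technical heart of the matter and is where the machinery of \cite{GNW3} (Schubert varieties, the generic ring, and the precise structure of $L(\omega_{x_1})^\vee$) is actually used; invoking gauge freedom in $\widehat{R}_\mathrm{gen}$ is a gesture in the right direction but not yet an argument. Finally, well-definedness of $(\boldsymbol\lambda,\boldsymbol\mu)$ under change of $\mb{F}$ and $w$ does not follow from Buchweitz--Herzog rigidity alone; one needs that the $\GL(F_1)\times\GL(F_3)$-orbit of $w^{(1)}\otimes\Bbbk$ inside $L(\omega_{x_1})^\vee\otimes\Bbbk$ is an invariant, which is again representation-theoretic input from the cited work.
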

	
	We can therefore canonically associate a pair of Schur functors to each height 3 licci ideal $I$: \begin{equation}
		\label{schurfunctor}
		\mathcal{S}_I= S_{\boldsymbol{\lambda}}F_1 \otimes S_{\boldsymbol{\mu}}F_3^*,
	\end{equation}   where $\boldsymbol{\lambda}$ is a partition of $2k+1$ and $\boldsymbol{\mu}$ is a partition of $k$. For example, complete intersections are characterized by the fact that the multiplication $\bigwedge^3 \operatorname{Tor}_1(R/I,\Bbbk) \to \operatorname{Tor}_3(R/I,\Bbbk)$ is nonzero. This multiplication appears inside of $w^{(1)} \otimes \Bbbk$:
	\[
	\left(\bigwedge^3 F_1 \otimes F_3^*\right) \otimes \Bbbk \subset L(\omega_{x_1})^\vee \otimes \Bbbk \to \Bbbk.
	\]
	Thus, for a complete intersection $I$ we have $\mathcal{S}_I= \bigwedge^3 F_1 \otimes F_3^*$ i.e. $\boldsymbol{\lambda} = (1,1,1)$ and $\boldsymbol{\mu} = (1)$.%This Schur functor corresponds to the source of the higher structure map $w^{(1)}_k$ not vanishing modulo $\m$.
	
	% Each ideal licci $I$ is a specialization of a generic (rigid?) ideal $H$, described as defining ideal of a Schubert variety.  The ideal $H$ is associated to the same Schur functor $\mathcal{S}_I$. Many linkage properties of $I$ can be studied only in terms of this Schur functor. \ec

	%In \cite{GNW3}, it is proved that a perfect ideal of height 3 is licci if and only if there exists some choice of images of higher structure maps such that exactly one map $  w^{(1)}_{k,j}(I): S_{\boldsymbol \lambda}F_1 \to S_{\boldsymbol \mu}F_3 $ is nonzero modulo $\m$. Moreover, the partitions $\boldsymbol \lambda,\boldsymbol \mu$ do not depend on this choice.
	
	%It follows that we can associate univocally a tensor product of two Schur functors $  \s_I= S_{\boldsymbol \lambda}F_1 \otimes S_{\boldsymbol \mu}F_3^*$ and hence two partitions $\boldsymbol \lambda, \boldsymbol \mu$ to each licci ideal $I$.
	
	Furthermore, higher structure maps behave well with respect to deformations and specializations. Units of a ring can deform only to units and non-units can deform only to non-units.
	Hence, for any licci ideal $I$ of codimension 3, the non-vanishing of the map $ w^{(1)}_{k,j}(I) $ modulo $\m$ identifies univocally a rigid ideal $\tilde{I}$ such that $\s_I = \s_{\tilde{I}}$.
	
	\section{Licci ideals and partitions in codimension 3}
    \label{sec:liccigraph}
	
	\subsection{Definition of $\mathrm{Licci}_3$}
	We now define $\mathrm{Licci}_3$, a graph that encodes the links between different families of codimension 3 licci ideals (Theorem~\ref{thm:licci-graph}). We first define a bipartite graph $\mathrm{Licci}^{\mathrm{bi}}_3$. Its vertex set is given by
	\[
	\leftindex^{z_1}{\overline{W}}^{x_1} \amalg \leftindex^{y_1}{\overline{W}}^{x_1}
	\]
	and there is an edge between $\sigma \in \leftindex^{z_1}{\overline{W}}^{x_1}$ and $\sigma' \in \leftindex^{y_1}{\overline{W}}^{x_1}$ exactly when their respective double cosets overlap, i.e. $(\sigma,\sigma')$ is in the image of the ``diagonal'' map
	\[
	{\overline{W}} \to {\overline{W}}_{z_1} \backslash {\overline{W}} / {\overline{W}}_{x_1} \times {\overline{W}}_{y_1} \backslash {\overline{W}} / {\overline{W}}_{x_1} \cong \leftindex^{z_1}{\overline{W}}^{x_1} \times \leftindex^{y_1}{\overline{W}}^{x_1}
	\]
	sending an element of ${\overline{W}}$ to its double cosets. Note that we may restrict this map to $\leftindex^{y_1,z_1} {\overline{W}}^{x_1} \subset {\overline{W}}$ without changing its image.
	
	There is a natural correspondence between the two sets of vertices $\leftindex^{z_1}{\overline{W}}^{x_1}\xrightarrow{\chi}\leftindex^{y_1}{\overline{W}}^{x_1}$ induced by exchanging the $y$ and $z$ arms of $\overline{T}$. The graph $\mathrm{Licci}_3$ is the quotient of $\mathrm{Licci}^{\mathrm{bi}}_3$ by this identification; explicitly it has vertex set $\leftindex^{z_1}{\overline{W}}^{x_1}$ and edge set
	\[
	\{ (\sigma, \chi^{-1} \sigma') : (\sigma,\sigma') \text{ is an edge in } \mathrm{Licci}^{\mathrm{bi}}_3\}.
	\]
	Note that this is an undirected graph with loops.
	
	One can analogously define $\mathrm{Licci}^{\mathrm{bi}}_3(d,t)$ to be the subgraph of $\mathrm{Licci}^{\mathrm{bi}}_3$ obtained by using $W(d,t)$ in place of $\overline{W}$ everywhere in the construction. There is an edge connecting $\sigma,\sigma' \in \mathrm{Licci}^{\mathrm{bi}}_3(d,t)$ if and only if there is one connecting them in $\mathrm{Licci}^{\mathrm{bi}}_3$.
	
	\subsection{Main properties of $\mathrm{Licci}_3$}
	In the language of partitions $(\boldsymbol\lambda,\boldsymbol\mu)$ (see Sections \ref{sec:background-reps}-\ref{sec:background-herzog}), there is another equivalent characterization of edges in $\mathrm{Licci}_3$.
	
	\begin{thm}\label{thm:graph-edges-partitions}
		Let $\sigma \in \leftindex^{z_1}{\overline{W}}^{x_1}$ correspond to the pair of partitions $(\boldsymbol\lambda,\boldsymbol\mu)$, where $\boldsymbol\lambda = (\lambda_1,\lambda_2,\ldots)$ and $\boldsymbol\mu = (\mu_1,\mu_2,\ldots)$ are viewed as infinite lists with trailing zeros. Let $\lambda'_1 \geq \lambda'_2 \geq \lambda'_3$ be parts of $\boldsymbol\lambda$ (possibly equal to zero) and let $\boldsymbol\mu^\mathrm{link}$ be the partition obtained from $\boldsymbol\lambda$ after removing these parts. Define
		\[
		\boldsymbol\lambda^\mathrm{link} = \operatorname{rsort}(\lambda'_1 + p, \lambda'_2 + p, \lambda'_3 + p,\mu_1,\mu_2,\mu_3,\ldots).
		\]
		where $\operatorname{rsort}$ means to sort in non-increasing order and
		\[
		p = \sum_{j=1}^\infty \mu^\mathrm{link}_j - \sum_{j=1}^\infty \mu_j.
		\]
		Then the pair $(\boldsymbol\lambda^\mathrm{link},\boldsymbol\mu^\mathrm{link})$ corresponds to some $\sigma^\mathrm{link} \in \leftindex^{z_1}{\overline{W}}^{x_1}$ adjacent to $\sigma$ on $\mathrm{Licci}_3$, and all neighbors of $\sigma$ are obtained in this fashion for some choice of parts $\lambda'_1 \geq \lambda'_2 \geq \lambda'_3$.
	\end{thm}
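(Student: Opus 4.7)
The plan is to translate the graph-theoretic description of edges in $\mathrm{Licci}_3$ into an explicit combinatorial operation on partitions, using the correspondence of Section~\ref{sec:background-reps} between elements of $\leftindex^{z_1}\overline{W}^{x_1}$ and extremal weights of $L(\omega_{x_1})$. First, I would unpack the edge definition: a neighbor of $\sigma$ in $\mathrm{Licci}_3$ is $\chi(\sigma')$ for some $\sigma' \in \leftindex^{y_1}\overline{W}^{x_1}$ such that there exists $\tau \in \overline{W}$ projecting to both $\sigma$ and $\sigma'$. Since the inclusion $\leftindex^{z_1}\overline{W}^{x_1} \subset \leftindex^{y_1, z_1}\overline{W}^{x_1}$ holds (coarser quotients have fewer minimal-length representatives), one may take $\tau \in \leftindex^{y_1, z_1}\overline{W}^{x_1}$ lying in the same $W_{z_1}$-double coset as $\sigma$. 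At the weight level, such $\tau$ are in bijection with the $W_{y_1, z_1}$-sub-orbits of the $W_{z_1}$-orbit of $\sigma\omega_{x_1}$; each yields a neighbor $\sigma^\mathrm{link} = \chi(\sigma')$ by further reducing via $s_{z_1}$-style moves and applying $\chi$.

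Second, I would parametrize these sub-orbits by choices of three parts of $\boldsymbol{\lambda}$. The partition $\boldsymbol{\lambda}$ records the horizontal-arm coefficients of $\sigma\omega_{x_1}$ (at $x_1, u, y_1, y_2, \ldots$) and $\boldsymbol{\mu}$ records the strict vertical-tail coefficients (at $z_2, z_3, \ldots$). Removing $z_1$ from $\overline{T}$ disconnects it into a horizontal arm and a vertical tail, so $W_{z_1}$ acts roughly as $S_\infty \times S_\infty$ permuting the two partitions independently, coupled through the coefficient $a_{z_1}$ by the trivalent vertex $u$ and the reflection $s_{z_2}$; passing from $W_{z_1}$ to $W_{y_1, z_1}$ further isolates the three leading positions $x_1, u, y_1$ of the horizontal arm. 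I would show that the $W_{y_1, z_1}$-sub-orbit of $\sigma\omega_{x_1}$ is pinned down precisely by which three parts $\lambda'_1 \geq \lambda'_2 \geq \lambda'_3$ of $\boldsymbol{\lambda}$ occupy the positions $x_1, u, y_1$; the remainder necessarily sits on $y_2, y_3, \ldots$, while $\boldsymbol{\mu}$ is preserved on the $z$-tail.

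Third, applying the $s_{z_1}$-reduction and then the swap $\chi$ — which exchanges the $y$- and $z$-arms of $\overline{T}$ — the three leading horizontal positions are carried to the new $z$-arm and the old $z$-tail becomes the new $y$-tail. Hence the new horizontal partition $\boldsymbol{\lambda}^\mathrm{link}$ is assembled from the three chosen parts $\lambda'_j$ (shifted by a scalar $p$ coming from the rebalancing of $a_{z_1}$) together with the old $\boldsymbol{\mu}$, sorted; and the new vertical partition $\boldsymbol{\mu}^\mathrm{link}$ consists of the remaining parts of $\boldsymbol{\lambda}$. The scalar $p$ is determined by the quadratic norm invariant $\sum \lambda_i^2 + \sum \mu_j^2 = (k+1)^2$ preserved by the Weyl action; solving this condition yields exactly $p = \sum_j \mu^\mathrm{link}_j - \sum_j \mu_j$, as claimed. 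The converse (completeness) follows because every $W_{y_1, z_1}$-sub-orbit of the $W_{z_1}$-orbit arises from such a three-part choice.

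The main obstacle is Step 2: making rigorous the claim that the $W_{y_1, z_1}$-sub-orbits are classified by choices of three horizontal parts of $\boldsymbol{\lambda}$. The subtlety is that the horizontal and vertical contents of the extremal weight are not cleanly independent under $W_{z_1}$, because the reflections $s_u$ and $s_{z_2}$ shuttle weight through $a_{z_1}$; the argument must carefully track how $a_{z_1}$ shifts along a sequence of reflections that moves three horizontal tokens to the leading positions, and verify that all the resulting ambiguity collapses to the shift $p$ alone.
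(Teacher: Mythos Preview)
Your overall architecture matches the paper's: both arguments parametrize edges by elements $\tilde\sigma \in \leftindex^{y_1,z_1}{\overline W}^{x_1}$ lying in the same $W_{z_1}$-double coset as $\sigma$, identify such $\tilde\sigma$ with a choice of three parts $\lambda'_1 \geq \lambda'_2 \geq \lambda'_3$ of $\boldsymbol\lambda$, and then read off the new partitions after passing to the $W_{y_1}$-side and applying $\chi$. There are, however, two genuine gaps in your execution.

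First, your proposed computation of $p$ via the quadratic invariant $\sum \lambda_i^2 + \sum \mu_j^2 = (k+1)^2$ does not determine $p$ uniquely: writing out the condition yields $p\bigl(2k+2 - 2\sum\lambda'_i - 2p\bigr) = 0$, which has the spurious root $p=0$ in addition to the correct one. The paper instead observes that the \emph{linear} quantity $\sum_{j\geq 1} j\,c_{y_j} - \sum_{j\geq 1} j\,c_{z_j}$ vanishes on $\omega_{x_1}$ and is preserved by every simple reflection, hence vanishes on all of $\overline W\cdot\omega_{x_1}$. Combined with $\sum_{j\geq 2}(j-1)\tilde a_{y_j} = \sum \mu^{\mathrm{link}}_j$ and $\sum_{j\geq 2}(j-1)\tilde a_{z_j} = \sum \mu_j$, this gives $p = \sum \mu^{\mathrm{link}}_j - \sum \mu_j$ directly. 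Using the quadratic form also risks circularity, since in the paper that identity (Proposition~\ref{restrictions}(3)) is derived \emph{from} the linkage formula.

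Second, the ``main obstacle'' you flag in Step~2 dissolves once you note that removing the vertex $z_1$ genuinely disconnects $\overline T$, so $W_{z_1} \cong W_H \times W_V$ with $W_H$ generated by $\{s_{x_1},s_u,s_{y_1},s_{y_2},\ldots\}$ and $W_V$ by $\{s_{z_2},s_{z_3},\ldots\}$, both of type $A$. There is no ``coupling through $a_{z_1}$'' between the two factors: each affects $a_{z_1}$, but they commute and act on disjoint sets of the remaining coordinates. Since both $\sigma\omega_{x_1}$ and $\tilde\sigma\omega_{x_1}$ are already $W_V$-dominant, one may take $\tau \in W_H$; then $\tilde{\boldsymbol\lambda}$ is simply a permutation of $\boldsymbol\lambda$ constrained by $\tilde\lambda_1 \geq \tilde\lambda_2 \geq \tilde\lambda_3$ and $\tilde\lambda_4 \geq \tilde\lambda_5 \geq \cdots$, and conversely every such permutation is realized by type-$A$ combinatorics. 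No delicate tracking of $a_{z_1}$ is required.
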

	\begin{proof}
		This is mainly an exercise in translating weights to partitions. Let $\tilde{\sigma} \in \leftindex^{y_1,z_1}{\overline{W}}^{x_1}$ represent an edge of $\mathrm{Licci}^{\mathrm{bi}}_3$ between $\sigma \in \leftindex^{z_1}{\overline{W}}^{x_1}$ and $\chi(\sigma^\mathrm{link})\in \leftindex^{y_1}{\overline{W}}^{x_1}$. Let us write
		\begin{align*}
			\tilde{\sigma}\omega_{x_1} &= \sum_{i\in \overline{T}} \tilde{a}_i \omega_i\\
			\sigma\omega_{x_1} &= \sum_{i\in \overline{T}} a_i \omega_i\\
			\chi(\sigma^\mathrm{link})\omega_{x_1} &= \sum_{i\in \overline{T}} a'_i \omega_i
		\end{align*}
		where $\tilde{a}_i \geq 0$ for all $i \notin \{y_1,z_1\}$, $a_i \geq 0$ for all $i \neq z_1$, and $a'_i \geq 0$ for all $i \neq y_1$. Our goal is to relate the coefficients $a'_i$ to the coefficients $a_i$, and then translate these to partitions via \eqref{eq:weights-to-partitions}.

		We have that $\tilde{\sigma} = \tau\sigma$ where $\tau$ is a product of simple reflections $s_i$ where $i \in \{x_1,u,y_1,y_2,\ldots\}$ and similarly $\tilde{\sigma} = \tau' \chi(\sigma^\mathrm{link})$ where $\tau'$ is a product of simple reflections $s_i$ where $i \in \{x_1,u,z_1,z_2,\ldots\}$, In particular, $\tilde{a}_{z_j} = a_{z_j}$ for $j \geq 2$ and $\tilde{a}_{y_j} = a'_{y_j}$ for $j \geq 2$.
		
		Let $\boldsymbol\lambda$ be as in \eqref{eq:weights-to-partitions}, and let $\tilde{\boldsymbol\lambda}$ be the sequence defined by $\tilde{\lambda}_k = \sum_{j=k}^\infty \tilde{a}_{(l_\lambda)_j}$. Then $\tilde{\boldsymbol\lambda}$ is just a reordering of $\boldsymbol\lambda$, satisfying $\tilde{\lambda}_1 \geq \tilde{\lambda}_2 \geq \tilde{\lambda}_3$ and $\tilde{\lambda}_4 \geq \tilde{\lambda}_5 \geq \cdots$. Furthermore, fixing $\sigma$ (and thus $\boldsymbol\lambda$), every such reordering is realized by some $\tilde{\sigma} = \tau\sigma$. Using the notation from the statement of the theorem, we can write $\tilde{\lambda}_j$ as $\lambda'_j$ for $j \in \{1,2,3\}$. Since $\tilde{a}_{y_j} = a'_{y_j}$ for $j \geq 2$, we have that $\boldsymbol\mu^\mathrm{link}$ is the second partition in the pair corresponding to $\sigma^\mathrm{link}$, as claimed.
	%Let $\boldsymbol\lambda$ be as in \eqref{eq:weights-to-partitions}, and let $\tilde{\boldsymbol\lambda}$ be the sequence defined by $\tilde{\lambda}_k = \sum_{j=k}^\infty \tilde{a}_{(l_\lambda)_j}$. Then $\tilde{\boldsymbol\lambda}$ is just a reordering of $\boldsymbol\lambda$, satisfying $\tilde{\lambda}_1 \geq \tilde{\lambda}_2 \geq \tilde{\lambda}_3$ and $\tilde{\lambda}_4 \geq \tilde{\lambda}_5 \geq \cdots$. Furthermore, fixing $\sigma$ (and thus $\boldsymbol\lambda$), every such reordering is realized by some $\tilde{\sigma} = \tau\sigma$. Using the notation from the statement of the theorem, we can write $\tilde{\lambda}_j$ as $\lambda'_j$ for $j \in \{1,2,3\}$. Since $\tilde{a}_{y_j} = a'_{y_j}$ for $j \geq 2$, we have that $\boldsymbol\mu^\mathrm{link}$ is the second partition in the pair corresponding to $\sigma^\mathrm{link}$, as claimed.
		
		Let $l'_{\boldsymbol\lambda}$ be the list of vertices $(x_1,u,z_1,z_2,\ldots)$. Define $\tilde{\boldsymbol\lambda}^\mathrm{link}$ to be the sequence
		\[
		\tilde{\lambda}^\mathrm{link}_k = \sum_{j=k}^\infty \tilde{a}_{(l'_{\boldsymbol\lambda})_j}.
		\]
		Since $\tilde{a}_{z_j} = a_{z_j}$ for $j \geq 2$, we deduce that the tail of $\tilde{\boldsymbol\lambda}^\mathrm{link}$ is given by $\mu$:
		\[
		\tilde{\boldsymbol\lambda}^\mathrm{link} = (\tilde{\lambda}^\mathrm{link}_1,\tilde{\lambda}^\mathrm{link}_2,\tilde{\lambda}^\mathrm{link}_3,\mu_1,\mu_2,\mu_3,\ldots).
		\]
		Furthermore, for $i \in \{1,2,3\}$,
		\[
		\tilde{\lambda}^\mathrm{link}_i - \tilde{\lambda}_i = \sum_{j=3}^\infty \tilde{a}_{(l'_\lambda)_j} - \sum_{j=3}^\infty \tilde{a}_{(l_\lambda)_j} = \sum_{j=1}^\infty \tilde{a}_{z_j} - \sum_{j=1}^\infty \tilde{a}_{y_j}.
		\]
		Now we observe that if $w \omega_{x_1} = \sum_{i \in T} c_i \omega_i$ where $w \in \overline{W}$ is arbitrary, we have the identity
		\[
		\sum_{j=1}^\infty j c_{y_j} = \sum_{j=1}^\infty j c_{z_j}.
		\]
		Indeed, this holds for $w = \mathrm{id}$, and it is easy to check that it is preserved by all simple reflections.
		
		We also have that
		\[
		\sum_{j=2}^\infty (j-1)\tilde{a}_{y_j} = \sum_{j=1}^\infty \mu'_j, \quad \sum_{j=2}^\infty (j-1)\tilde{a}_{z_j} = \sum_{j=1}^\infty \mu_j.
		\]
		Hence:
		\begin{align*}
			\tilde{\lambda}^\mathrm{link}_i - \tilde{\lambda}_i &= \left(\sum_{j=1}^\infty j \tilde{a}_{z_j} - \sum_{j=2}^\infty (j-1)\tilde{a}_{z_j}\right) - \left(\sum_{j=1}^\infty j \tilde{a}_{y_j} - \sum_{j=2}^\infty (j-1)\tilde{a}_{y_j}\right)\\
			&= \sum_{j=1}^\infty \mu^\mathrm{link}_j - \sum_{j=1}^\infty \mu_j,
		\end{align*}
		which was called $p$ in the theorem statement. Since $\tilde{\boldsymbol\lambda}^\mathrm{link}$ is just a reordering of $\boldsymbol\lambda^\mathrm{link}$ (analogously to how $\tilde{\boldsymbol\lambda}$ was a reordering of $\boldsymbol\lambda$), we are done.
	\end{proof}

	\begin{thm}\label{thm:licci-graph}
		Let $I \subset R$ be a codimension 3 licci ideal. Let $\sigma \in \leftindex^{z_1}{\overline{W}}^{x_1}$ be the Herzog class of $I$, and let $\sigma' \in \leftindex^{z_1}{\overline{W}}^{x_1}$ be arbitrary.
		
		Then there is an edge connecting $\sigma$ and $\sigma'$ on $\mathrm{Licci}_3$ if and only if there exists a codimension 3 licci ideal $I' \subset R$ with Herzog class $\sigma'$ satisfying $I \sim I'$.
	\end{thm}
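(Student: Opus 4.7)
The plan is to handle the two directions separately, with both relying on the linkage-compatibility statement for higher structure maps recalled just before Theorem~\ref{licci1}: if the bottom $(y_1,z_1)$-bigraded component of $w^{(1)}$ is a regular sequence $\alpha=(\alpha_1,\alpha_2,\alpha_3)$, then $I'=\alpha:I$ admits a resolution of format $(1,3+t,2+d+t,d)$ whose $w'^{(1)}$ coincides with $w^{(1)}$ after identifying $T(d,t)$ with $T(t,d)$ by swapping the $y$- and $z$-arms.

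For the \emph{direct link implies edge} direction, I would start with $I\sim I'$ via a regular sequence $\alpha\subset I$. After a suitable deformation of $(R,I)$, which preserves the Herzog class, I would arrange for $\alpha$ to coincide with the bottom $(y_1,z_1)$-bigraded component of $w^{(1)}$, so that the compatibility above applies. By Theorem~\ref{licci1}, the classes $\sigma$ and $\sigma^{\mathrm{link}}$ of $I$ and $I'$ are read off as the minimal $W_{z_1}$-double coset representatives on which $w^{(1)}\otimes\Bbbk$ and $w'^{(1)}\otimes\Bbbk$ are respectively nonzero. Via the arm-swap, the latter condition translates into ``minimal $W_{y_1}$-double coset representative on which the same map $w^{(1)}\otimes\Bbbk$ is nonzero'', that is, $\chi(\sigma^{\mathrm{link}})$. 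Hence the pair $(\sigma,\chi(\sigma^{\mathrm{link}}))$ lies in the image of the diagonal map $\overline{W}\to\leftindex^{z_1}{\overline{W}}^{x_1}\times\leftindex^{y_1}{\overline{W}}^{x_1}$, producing the desired edge in $\mathrm{Licci}_3$.

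For the converse, suppose an edge is witnessed by some $\tilde\sigma\in\leftindex^{y_1,z_1}{\overline{W}}^{x_1}$ projecting to $\sigma$ and $\chi(\sigma^{\mathrm{link}})$. Using the existence half of Theorem~\ref{licci1} and the freedom to perturb inside a fixed Herzog class by adding terms of higher weight, I would build a representative $\tilde I$ in class $\sigma$ whose $w^{(1)}\otimes\Bbbk$ is nonzero at the extremal weight $-\tilde\sigma\omega_{x_1}$. For such an $\tilde I$ the bottom $(y_1,z_1)$-bigraded piece of $w^{(1)}$ is a regular sequence $\tilde\alpha$, and the compatibility statement yields $\tilde I'=\tilde\alpha:\tilde I$ in Herzog class $\sigma^{\mathrm{link}}$. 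To descend from $\tilde I$ to the given $I$, I would use that both are specializations of the common rigid deformation of class $\sigma$; the link $\tilde I\sim \tilde I'$ lifts to that deformation and then specializes to a direct link $I\sim I'$ with $I'$ in class $\sigma^{\mathrm{link}}$.

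The hard part will be the converse direction, specifically producing the representative $\tilde I$ whose $w^{(1)}\otimes\Bbbk$ is nonzero at the prescribed extremal weight and whose bottom bigraded component is a genuine regular sequence (not merely nonzero), together with verifying that the constructed link deforms flatly when passing between $\tilde I$ and the arbitrary $I$. Both points are not purely combinatorial and are handled by the generic ring and higher structure map formalism of \cite{GNW3}: the existence of enough perturbation directions inside a Herzog class, and the flatness of the linkage under the specialization $(\tilde R,\tilde I)\rightsquigarrow (R,I)$, are the steps where that representation-theoretic framework is essential.
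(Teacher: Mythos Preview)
Your overall strategy matches the paper's, but there is a concrete gap and one unnecessary detour.

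\textbf{The gap (``if'' direction).} You write that after a ``suitable deformation of $(R,I)$'' you can arrange for the given regular sequence $\alpha$ to occupy the bottom $(y_1,z_1)$-bigraded component of $w^{(1)}$. This is where non-minimal links cause trouble: if some $\alpha_i\in\mf{m}I$, then $\alpha$ cannot be part of any minimal generating set of $I$, so no amount of deforming $(R,I)$ will put $\alpha$ into the bottom bigraded piece of $w^{(1)}$ on a \emph{minimal} resolution. The paper does not deform at all. Instead it first enlarges the minimal resolution of $R/I$ by a split exact summand, passing from format $(1,3+d,2+d+t,t)$ to $(1,6+d,5+d+t,t)$, and then acts on $w^{(1)}$ by an element of $\GL(F_1\otimes R)=\GL_{6+d}(R)$ to move $\alpha$ into the bottom bigraded component. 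The three extra basis vectors in $F_1$ are precisely what allows non-minimal $\alpha_i$ to appear there. Without this enlargement step your argument only covers minimal links.

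\textbf{The detour (``only if'' direction).} You propose to construct a different representative $\tilde I$ of the class $\sigma$, link it to $\tilde I'$, then transport the link to the given $I$ via the common rigid deformation. This last specialization step is exactly the ``hard part'' you flag, and it is not needed. The paper works directly with the given $I$: starting from $w^{(1)}$ concentrated on the weight $-\sigma\omega_{x_1}$, one writes $\tilde\sigma=\tau\sigma$ with $\tau$ a product of simple reflections along the $y$-arm, represents $\tau$ by a permutation matrix $g\in\GL(F_1\otimes R)$ (again using the enlarged $F_1$, since $\tilde\sigma\in W(d+3,t)$), and acts by $g+\epsilon$ with $\epsilon$ a general matrix with entries in $\mf{m}$. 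Because $I$ has codimension $3$, three general $R$-linear combinations of its generators form a regular sequence, so the bottom bigraded component of the resulting $w^{(1)}$ is automatically a regular sequence $\alpha$; the linked ideal $I'=(\alpha):I$ then has Herzog class $\sigma'$ by construction. No deformation or specialization of links is involved.
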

	\begin{proof}
		We may equivalently work with the graph $\mathrm{Licci}^\mathrm{bi}_3$ replacing $\sigma'$ with $\chi(\sigma') \in \leftindex^{y_1}{\overline{W}}^{x_1}$. Take a minimal free resolution of $R/I$, with format $(1,3+d,2+d+t,t)$. Add a split exact part to this resolution so that it has format $(1,6+d,5+d+t,t)$ and call this resulting non-minimal resolution $\mb{F}$. Fix a homomorphism $w\colon \widehat{R}_\mathrm{gen} \to R$ specializing the generic free resolution of this larger format to $\mb{F}$. By adjusting $\mb{F}$ and $w$ as necessary, we may assume that $w^{(1)} \otimes \Bbbk$ is nonzero only on the lowest weight space of a single extremal $\mf{gl}(F_1)\times\mf{gl}(F_3)$-representation. The lowest weight of this representation is $-\sigma \omega_{x_1}$ where $\sigma$ is the Herzog class of $I$.
		
		For the ``if'' implication, suppose that we have an ideal $I'$ directly linked to $I$ by the regular sequence $\alpha_1,\alpha_2,\alpha_3 \in I$. We precompose $w^{(1)}$ with the action of an appropriate element of $\operatorname{GL}(F_1 \otimes R) = \operatorname{GL}_{6+d}(R)$ so that the restriction of $w^{(1)}$ to the bottom $(y_1,z_1)$-bigraded component of $L(\omega_{x_1})^\vee \otimes R$ is $\begin{bmatrix}
			\alpha_1 & \alpha_2 & \alpha_3
		\end{bmatrix}$. Note that the reason we enlarged $F_1$ in the beginning is precisely to accommodate non-minimal links here.
		
		After applying this action, $w^{(1)}$ describes higher structure maps for the linked ideal $R/I'$ when decomposed with respect to the $y_1$-grading. Let $-\tilde{\sigma}\omega_{x_1}$ be the weight of the lowest extremal weight space on which $w^{(1)} \otimes \Bbbk$ is nonzero, where $\tilde{\sigma} \in W^{x_1}$. This weight is in the same $W_{z_1}$-orbit as $-\sigma\omega_{x_1}$, and ($\chi$ applied to) the Herzog class $\sigma'$ of $I'$ is the minimal length representative of $[\tilde{\sigma}] \in W_{z_1} \backslash W / W_{x_1}$.
		
		For the ``only if'' implication, suppose that $\sigma'$ is connected to $\sigma$ on $\mathrm{Licci}_3$. Then there exists a $\tilde{\sigma} \in \leftindex^{y_1,z_1}{\overline{W}}^{x_1}$ representing both $[\sigma] \in \overline{W}_{z_1} \backslash \overline{W} / \overline{W}_{x_1}$ and $[\chi(\sigma')] \in \overline{W}_{y_1} \backslash \overline{W} / \overline{W}_{x_1}$. In particular, we have $\tilde{\sigma} = \tau \sigma$ where $\tau$ is a product of simple reflections $s_i$ where $i \in \{x_1,u,y_1,y_2,\ldots\}$. Since $\sigma \in W(d,t)$, we must have that $\tilde{\sigma} \in W(d+3,t)$ (see the proof of Theorem~\ref{thm:graph-edges-partitions}, where this is recast in terms of reordering partitions). The split part we added to our resolution accommodates this, so that $\tau$ can be represented by a permutation matrix $g\in \operatorname{GL}(F_1 \otimes R)$. If we act by $g + \epsilon$ on $w^{(1)}$, where $\epsilon$ is a general matrix whose entries are all in $\mf{m}$, the restriction of the resulting map to the bottom $(y_1,z_1)$-graded component of $L(\omega_{x_1})^\vee$ will be a regular sequence $\begin{bmatrix}
			\alpha_1 & \alpha_2 & \alpha_3
		\end{bmatrix}$. This is because $I$ has codimension 3, so 3 general linear combinations of its generators will form a regular sequence. Also, $w^{(1)}$ will then have the property that $w^{(1)} \otimes \Bbbk$ is nonzero only on the extremal weight space with weight $-\tilde{\sigma}\omega_{x_1}$. Restricting to the bottom $y_1$-graded component of $L(\omega_{x_1})^\vee$, we obtain an ideal $I'$ linked to $I$ by $(\alpha_1,\alpha_2,\alpha_3)$, whose Herzog class is $\sigma'$ by construction.
	\end{proof}
	
	\begin{definition}\label{def:SGS}
		Let $I\subset R$ be a codimension 3 licci ideal. Choose $w$ as in the proof of Theorem~\ref{thm:licci-graph} so that $w^{(1)} \otimes \Bbbk$ is nonzero only on the lowest weight space of a single extremal $\mf{gl}(F_1)\times\mf{gl}(F_3)$-representation. The bottom graded component of $w^{(1)}$ is a map $F_1 \to R$ whose image is $I$; say its entries are
		\[
			w^{(1)}_0 = \begin{bmatrix}
				h_1 & \cdots & h_b
			\end{bmatrix}
		\]
		where we use the increasing weight basis of $F_1$ (so that e.g. $h_1$ corresponds to the lowest weight space).
		We then say that the ordered list $(h_1,\ldots,h_b)$ is a \emph{special generating system} (SGS) for the ideal $I$.
	\end{definition}
	The significance of SGSs will be explained shortly, in Remark~\ref{re2}. Since higher structure maps specialize, if $\phi\colon R \to S$ is a local homomorphism and $\phi(I)S$ has codimension 3, then $\phi(h_1),\ldots,\phi(h_b)$ is an SGS for $\phi(I)S$.
	\begin{example}
		If $I\subset R$ is a codimension 3 Gorenstein ideal and $\mb{F}$ is a minimal free resolution of $R/I$, then it turns out that the condition on $w^{(1)}$ is automatically satisfied for any choice of higher structure maps. Thus any minimal generating list of $I$ is an SGS.
	\end{example}
	
	\subsection{Combinatorics of links of pairs of partitions}
    \label{sec:comblinks}
	
	%\subsection{aaa}
	
	Theorems \ref{thm:graph-edges-partitions} and \ref{thm:licci-graph} have many interesting consequences. In particular they allow us to explore Herzog classes of codimension 3 licci ideals with the use of simple combinatorial methods.
	
	%A Schur functor associated to a family of licci ideals is 
	If a pair of partitions $(\boldsymbol \lambda, \boldsymbol \mu) $ yields a valid Herzog class in the sense of Theorem \ref{licci1}, we say it is a \it decoration \rm of a vertex of $\mathrm{Licci}_3$. Equivalently, we say also that $\s_I$ is a decoration if we need to identify a specific representative of the Herzog class.
	In this case we have $\sum \mu_i = k = \kappa(I)$ and $\sum \lambda_i=2k+1$.
	This quantity $\kappa(I)$ corresponds to the graded component of $W(d_1)$ where the higher structure maps associated to $I$ sits. Roughly speaking, it is a measure of the complexity of the licci ideal $I$. We say that $k=\kappa(I)$ is the \it level \rm of $I$.
	We say that two decorations are linked if the corresponding vertices are adjacent in $\mathrm{Licci}_3$. %Equivalently, we say also that two pairs of Schur functors $\s_I$ and $\s_J$ are linked..
	
	We now give several remarks obtained as consequences of the results in the previous subsection.
	
	\begin{remark}
		\label{re1}
		Let $I $ be a licci ideal of codimension 3 minimally generated by $b=d+3$ elements and having Cohen-Macaulay type $t$.
		Suppose that the Herzog class of $I$ is associated to the decoration $(\boldsymbol \lambda, \boldsymbol \mu) $. 
		Then $\lambda_i \neq 0$ if and only if $i \leq b$ and $\mu_i \neq 0$ if and only if $i \leq t$.
	\end{remark}
	
	\begin{remark}
		\label{re2}
		Let $I $ be a licci ideal of codimension 3 with decoration  $$(\boldsymbol \lambda, \boldsymbol \mu)= ((\lambda_1, \ldots, \lambda_b),(\mu_1, \ldots, \mu_t)).$$
		We fix an SGS $x_1, \ldots, x_{b}$ of $I$ as in Definition~\ref{def:SGS}. Let $\boldsymbol{\alpha}$ be a regular sequence of length 3 in $I$. 
		Since we work over a local or graded ring with infinite residue field, by changing the basis of the ideal generated by $\boldsymbol{\alpha}$, we can set $\boldsymbol{\alpha} = \lbrace a_1, a_2, a_3 \rbrace$ such that for $j=1,2,3$:
		\begin{itemize}
			\item $a_j = \sum_{i=1}^{d+3} u_{ij}x_i$ with $u_{ij} \in R$.
			\item If $u_{ij}$ is a unit, then for every $l\neq j$, $u_{il} = 0$ and for every $r \neq i$, $u_{rj} \in \m$.
		\end{itemize}  %$a_j = \sum_{i=1}^{d+1} u_{ij}a_i$ with $u_{ij} \in R$ and, for each fixed $j$, only at most one of the $u_{ij}$ is a unit in $R$.
		%Let $\boldsymbol 0$ be a sequence of zeros of arbitrary length.
		% Any regular sequence of length $3$ in $I$ can be associated to a partition $\boldsymbol \lambda' \subseteq (\boldsymbol \lambda ,  \boldsymbol 0)$
		%$S_{\lambda_1', \lambda_2', \lambda_3'}F_1$ in the following way. 
		For $j=1,2,3$, set $\lambda_j' = \lambda_i$ if $u_{ij}$ is a unit and $\lambda_j' = 0 $ if $a_j \in \m I$ (none of the coefficients $u_{ij}$ is a unit). By convention, we permute the generators in such a way that $ \lambda_1' \geq \lambda_2' \geq \lambda_3'. $ %We set also for $i=1, \ldots, d+1$, $\lambda_i'' = \lambda_i$ if $\lambda_i \not \in \lbrace \lambda_1', \lambda_2', \lambda_3' \rbrace$ and equal to zero otherwise. 
		We set then $\boldsymbol \lambda'' :=  \boldsymbol \lambda \setminus \boldsymbol \lambda'.$
		Let $J= (\boldsymbol \alpha) : I$ and denote by $G_1$ and $G_3$ the first and last module in the minimal free resolution of $R/J$. The proofs of Theorems \ref{thm:graph-edges-partitions} and \ref{thm:licci-graph} show that the decoration of $J$ is \begin{equation}
			\label{linkageformulaeq}
			S_{\lambda'_1+p, \lambda'_2+p, \lambda'_3+p, \mu_1, \ldots, \mu_t}G_1 \otimes S_{\lambda_1'', \ldots, \lambda''_{b}}G_3^*,
		\end{equation}  where $p=\kappa(J) - \kappa(I)$ and the first partition has to be reordered in non-increasing order. 
        
        The key point (and the reason why an SGS is necessary, as opposed to an arbitrary generating set) is that we may precompose $w^{(1)}$ with the action of an appropriate element of $\GL(F_1)$ that moves our regular sequence to the bottom $(y_1,z_1)$-bigraded component while having $w^{(1)} \otimes \Bbbk$ be nonzero only on the extremal weight space with weight $-\tilde{\sigma}\omega_{x_1}$, where $\tilde{\sigma}$ is as in the proof of Theorem \ref{thm:graph-edges-partitions}. This element of $\GL(F_1)$ may be taken to be the product of a lower-triangular matrix and a permutation matrix.
	\end{remark}
	
	\begin{remark}
		\label{re3}
		The integer $p$ represents how much $\kappa(J)$ increase or decrease with respect to $k= \kappa(I)$. Recalling that $\sum \lambda_i = 2k+1$, we obtain the equation
		\begin{equation}
			\label{p}
			p= \sum_{i=1}^{b} \lambda''_i - k = k + 1 - \lambda_1'-  \lambda_2'- \lambda_3'.
		\end{equation} 
	\end{remark}
	
	Classically, a link from an ideal $I$ to a linked ideal $J=(\al):I$ is called \it minimal \rm if the elements of $\al$ are in $I \setminus \m I$.

		For this reason, we define a link from $\s_I$ to $\s_J$ to be \it minimal \rm if $\lambda_1', \lambda_2', \lambda_3' \neq 0$.
		%no $\gamma$ appear in the representation of $\sigma$, that is if $\lambda'_1, \lambda'_2, \lambda'_3 \neq 0.$ 
		A link $\s_J$ of a decoration $\s_I$ is the \it smallest minimal \rm if it is obtained choosing $\lambda'_j = \lambda_j$ for $j=1,2,3$; in this case $p$ is the minimal possible. %and $\s_J = \s_{J'}$.
		The largest possible $p$ is achieved if $\lambda_j'=0$ for $j=1,2,3$. This case corresponds to what is classically denoted as a generic link.
		
		More specifically, we identify four relevant examples of links of decorations that will be useful later in the paper to study several applications of this theory. 
		
		\begin{example} 
			\label{minimalminimal} (Smallest minimal linkage) \\
			Here $\lambda'_i = \lambda_i$ for $i=1,2,3$, $\lambda''_i= \lambda_{i+3}$ for $i=1, \ldots, d$ and 
			$ p= \sum_{i \geq 4} \lambda_i - k. $ Hence,
			$$ \s_J= \min\min(\s)=  S_{\lambda_1+p, \lambda_2+p, \lambda_3+p, \mu_1, \ldots, \mu_t} G_1 \otimes S_{\lambda_4, \ldots, \lambda_{d+3}} G_3^*. $$
		\end{example}
		
		\begin{example}
			\label{genericlink} (Generic linkage) \\
			In this case %$\lambda'_i = 0$ for $i=1,2,3$ and $\lambda''_i= \lambda_{i}$ for $i=1, \ldots, d+3$.
			%all $s_i$ have been replaced by $\gamma$'s. 
			%We have 
			$\lambda'_i = 0$ for $i=1,2,3$, $\lambda''_i= \lambda_{i}$ for $i=1, \ldots, d+3$ and 
			$ p= \sum_{i \geq 1} \lambda_i - k = k+1 $ is the maximal possible. Hence,
			$$ \s_J= \gen(\s)= S_{k+1, k+1, k+1, \mu_1, \ldots, \mu_t} G_1 \otimes S_{\lambda_1, \ldots, \lambda_{d+3}} G_3^*. $$
		\end{example}

		\begin{example} 
			\label{maximalminimal} (Largest minimal linkage) \\
			This link is the minimal link (in the classical sense) for which $p$ is the maximal possible. It often allows to link in such a way to increase the level $k$ without increasing the total Betti number of the ideal. 
			Here $\lambda'_i = \lambda_{d+i}$ for $i=1,2,3$, $\lambda''_i= \lambda_{i}$ for $i=1, \ldots, d$ and 
			$ p= \sum_{i = 1}^d \lambda_i - k. $ Hence,
			$$ \s_J=\max\min(\s)=  S_{\lambda_{d+1}+p, \lambda_{d+2}+p, \lambda_{d+3}+p, \mu_1, \ldots, \mu_t} G_1 \otimes S_{\lambda_1, \ldots, \lambda_{d}} G_3^*. $$
		\end{example}
		
	\begin{example} 
 \label{tightdouble} (Minimal tight double link) \\
 This is a canonical double link which can be used to prove statements depending inductively on $k$. The idea is based on the tight double linkage, classically considered in \cite{KMtight}, \cite{Ul2} and several more papers.
 Here, one first takes the smallest minimal link $\s_J$ and then define a second link with respect to the three elements $\lambda_1, \lambda_2, \mu_1$. The obtained decoration is (up to reordering)
 $$ \s_{I'}=  S_{\lambda_{1}+p+q, \lambda_{2}+p+q, \mu_1+q, \lambda_4, \ldots, \lambda_{d+3}} G_1 \otimes S_{\lambda_3 +p, \mu_2, \ldots, \mu_t} G_3^*, $$ where $ p=  k+1-\lambda_1-\lambda_2-\lambda_3 $ and $q = k+p+1-\lambda_1-p-\lambda_2-p-\mu_1=\lambda_3 -\mu_1$ (thus $\mu_1+q=\lambda_3$). In Proposition \ref{restrictions}(2), we will prove that the inequality $ \lambda_3+ p < \mu_1 $ is always satisfied and therefore $\kappa(I')<\kappa(I)$. Furthermore, notice that the type of $I'$ is either $t$ or $t-1$.
 %Here $\lambda'_i = \lambda_{d+i}$ for $i=1,2,3$, $\lambda''_i= \lambda_{i}$ for $i=1, \ldots, d$ and 
 %$ p= \sum_{i = 1}^d \lambda_i - k. $ Hence,
 \end{example}	
		
		%\begin{example}
		%\label{p=0link} \\
		%If $\lambda_1 + \lambda_2 = k+1$ we can define a canonical link with $p=0$.
		%Set $\lambda_1'= \lambda_1$, $\lambda_2'= \lambda_1$
		%In this case all $s_i$ have been replaced by $\gamma$'s. We have $\lambda'_i = 0$ for $i=1,2,3$, $\lambda''_i= \lambda_{i}$ for $i=1, \ldots, d+3$ and 
		%$ p= \sum_{i \geq 1} \lambda_i - k = k+1 $ is the maximal possible. 
		%Hence,
		%$$ \s_J=  S_{k+1, k+1, k+1, \mu_1, \ldots, \mu_t} G_1 \otimes S_{\lambda_1, \ldots, \lambda_{d+3}} G_3^*. $$
		%\end{example}

		\subsection{Constructing ideals in different Herzog classes}
		\label{sec:freeres}
		
		Remark~\ref{re2} demonstrates how, if we have an SGS for a codimension 3 licci ideal $I$, we can easily determine the Herzog class of ideals $J$ directly linked to $I$. However, we do not know how to explicitly compute an SGS for $J$ given the SGS for $I$, which poses an obstacle to iterating this procedure.
		
		The goal of this section is to circumvent this and provide an algorithm for exhibiting representatives of all vertices on $\mathrm{Licci}_3$. The main point is to show that for homogeneous licci ideals $I$ whose graded free resolutions have a particular form, it is easy to read off $\s_I$ as well as an SGS for $I$. Furthermore, this property of the graded free resolution is preserved under homogeneous linkage (with some mild restrictions).
		
		We first note that, for a given $\s= S_{\boldsymbol{\lambda}} F_1 \otimes S_{\boldsymbol{\mu}} F_3^*$, there exist ideals $I$ with $\s_I = \s$ whose free resolutions have a specific form:
		\begin{thm}
			\label{genericcomplexthm}
			%Let $R$ be a polynomial ring, graded with respect to  describe this precisely \ec
			%Suppose that $I$ is a licci ideal of codimension 3 defining a rigid algebra.
			Let $\s= S_{\boldsymbol{\lambda}} F_1 \otimes S_{\boldsymbol{\mu}} F_3^*$ be a decoration of some licci ideal of codimension 3. Set
			$\boldsymbol{\lambda}= (\lambda_1, \ldots, \lambda_{d+3})$, $\boldsymbol{\mu} = (\mu_1, \ldots, \mu_t)$ and $k=\sum^{t}_{i=1} \mu_i$. Then there exists a homogeneous ideal $I$ in a polynomial ring $R$ with ideal of variables $\mf{m}$ such that $I_\mf{m}$ is licci, with $\s=\s_{I_\mf{m}}$, and the quotient ring $R/I$ has minimal graded free resolution of the form 
			\begin{equation}
				\label{genericcomplex}
				\FF: 0 \longrightarrow \bigoplus_{i=1}^t R(-k-1 - \mu_i) \buildrel{d_3}\over\longrightarrow  R^{d+t+2}( -k -1) \buildrel{d_2}\over\longrightarrow \bigoplus_{i=1}^{d+3} R(-k-1 + \lambda_i) \buildrel{d_1}\over\longrightarrow R.
			\end{equation}
		\end{thm}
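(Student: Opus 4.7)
My approach is by induction on the level $k = \sum_{i=1}^{t} \mu_i$. The base case is $k=1$, where the only valid decoration (for a codimension 3 ideal) is $\boldsymbol{\lambda} = (1,1,1)$, $\boldsymbol{\mu} = (1)$, since $\sum \lambda_i = 2k+1 = 3$ and the licci ideal must have at least three generators. Taking $I = (x_1,x_2,x_3) \subset R = K[x_1,x_2,x_3]$, the Koszul complex $0 \to R(-3) \to R^{3}(-2) \to R^{3}(-1) \to R$ has exactly the form (\ref{genericcomplex}) with $k+1=2$, and $I_{\mathfrak{m}}$ is of course licci with Herzog class $\mathcal{S}_I = \bigwedge^3 F_1 \otimes F_3^*$.

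For the inductive step, let $\mathcal{S}$ be a decoration at level $k > 1$. Example~\ref{tightdouble} associates to $\mathcal{S}$ a decoration $\mathcal{S}^{\mathrm{prev}}$ at strictly smaller level $k^{\mathrm{prev}} < k$ via a minimal tight double link. By the inductive hypothesis there exists a homogeneous ideal $I^{\mathrm{prev}}$ in a polynomial ring $R^{\mathrm{prev}}$ whose minimal graded free resolution has the balanced form (\ref{genericcomplex}) for $\mathcal{S}^{\mathrm{prev}}$. The balanced form guarantees in particular that $I^{\mathrm{prev}}$ admits a homogeneous SGS (Definition~\ref{def:SGS}) whose entries sit in the prescribed degrees $k^{\mathrm{prev}}+1-\lambda^{\mathrm{prev}}_i$. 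Since linkage is symmetric, I would reverse the double link: choose a regular subsequence of this SGS of the correct degrees (enlarging $R^{\mathrm{prev}}$ by extra variables if needed to accommodate a non-minimal link, exactly as in the proof of Theorem~\ref{thm:licci-graph}), form the first link, then repeat. The resulting homogeneous ideal $I \subset R$ has $\mathcal{S}_{I_\mathfrak{m}} = \mathcal{S}$ by Theorem~\ref{thm:licci-graph}.

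The heart of the argument is then a degree computation. Applying the Peskine--Szpiro mapping cone formula twice, one writes the resolution of $R/I$ explicitly in terms of that of $R^{\mathrm{prev}}/I^{\mathrm{prev}}$ together with the two Koszul complexes on the regular sequences used in the double link. The combinatorial data of Example~\ref{tightdouble} --- specifically the shifts $p = k+1-\lambda_1-\lambda_2-\lambda_3$ and $q = \lambda_3-\mu_1$, and the prescribed reordering of parts of $\boldsymbol{\lambda}$ and $\boldsymbol{\mu}$ --- is arranged precisely so that the single balanced middle degree $k^{\mathrm{prev}}+1$ of the previous resolution transforms, after the two cones and the canonical cancellations, into a single balanced middle degree $k+1$, with outer shifts becoming exactly $\lambda_i$ on $F_1$ and $\mu_j$ on $F_3$.

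The step I expect to be the main obstacle is verifying that no spurious cancellation beyond the intended ones occurs in the double mapping cone, which would destroy either the minimality or the balanced shape (\ref{genericcomplex}). Equivalently, one must confirm that for a sufficiently generic choice of regular subsequence of the SGS in the prescribed degrees, neither of the two individual cones collapses unexpectedly. A reassuring observation is that Theorem~\ref{licci1} together with Theorem~\ref{thm:graph-edges-partitions} already predicts the Herzog class --- and hence the numerical shape --- of the linked ideal independently of any explicit calculation, so the remaining work is a transversality argument; this is ultimately guaranteed by the explicit generic models arising from the Schubert affine patch construction of \cite{GNW3}, which the paper indicates is the representation-theoretic input that this proof will rely on.
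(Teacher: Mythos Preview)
Your proposal is close in spirit to one of the paper's two arguments, but it has a genuine gap at the end and is more complicated than necessary.

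The paper actually gives two proofs. The primary one is a one-line citation: the Schubert-cell ideal constructed in \cite[\S3]{GNW3} already lives in a polynomial ring, and the particular $\mathbb{Z}$-coarsening of its multigrading described in \cite[Example~3.7]{GNW3} produces exactly the resolution \eqref{genericcomplex}. The alternative proof is Algorithm~\ref{algo:licci-examples}: repeatedly apply the \emph{smallest minimal link} (Example~\ref{minimalminimal}) to $\mathcal{S}$ until reaching the complete-intersection decoration, then climb back up one link at a time, invoking Lemma~\ref{lem:special-graded-free-res-link} at each step. That lemma shows a single link preserves the balanced shape \eqref{genericcomplex}, and the paper verifies that the needed homogeneous regular sequences always exist because the reverse link is smallest-minimal, hence the linking elements sit in the lowest possible degrees.

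Your approach differs in using the \emph{tight double link} of Example~\ref{tightdouble} for the descent. This is a legitimate way to decrease $k$, but it makes the climbing-back step harder: the intermediate ideal after one of the two links need not have a balanced middle module $R^{d+t+2}(-k-1)$, so you cannot simply invoke the single-link lemma twice. More seriously, your final paragraph does not close the argument. You correctly identify the crux --- controlling cancellation in the mapping cones --- but then write that it is ``ultimately guaranteed by the explicit generic models arising from the Schubert affine patch construction of \cite{GNW3}.'' That is precisely the paper's \emph{first} proof of the theorem, so invoking it makes your inductive argument circular. What the paper actually uses to control cancellation (in Lemma~\ref{lem:special-graded-free-res-link}) is Lemma~\ref{lem:special-graded-free-res}: knowing $\mathcal{S}_J$ from the higher-structure-map machinery pins down the minimal Betti numbers, so any extra cancellation in degree $k+p+1$ is ruled out a posteriori. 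If you want a self-contained inductive proof, you need either that lemma or an equivalent independent argument; a genericity appeal alone does not suffice.
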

		
		\begin{proof}
			We may use the generic example of the Herzog class $\s$, whose free resolution is given in \cite[\S3]{GNW3}. Indeed, this example is obtained by localizing an ideal $I$ in a polynomial ring $R$ at its ideal of variables. The free resolution of $R/I$ is $\mb{Z}^{d+t+2}$-graded, but there exists a particular coarsening to a $\mb{Z}$-grading, described in \cite[Example~3.7]{GNW3}, that yields the graded free resolution indicated above.
			
			In more detail: the ring $R$ is the coordinate ring of a Schubert cell $C_\sigma$, and the grading group $\mb{Z}^{d+t+2}$ is the root lattice, corresponding geometrically to the torus action on $C_\sigma$. We omit the details here, because they would require a significant digression into representation theory. 
		\end{proof}
        An alternative proof of this theorem is given by Algorithm~\ref{algo:licci-examples}.
        
		The resolution of $R/I$ mentioned in the preceding proof admits a particularly nice choice of higher structure maps; see \cite[\S5]{GNW3}. In particular, for the map
		\[
		w^{(1)} \colon L(\omega_{x_1})^\vee \otimes R =  [F_1 \oplus \bigwedge^3 F_1 \otimes F_3^* \oplus \cdots ]\otimes R \to R,
		\]
		the only degree zero part of the free module $L(\omega_{x_1})^\vee \otimes R$ is the lowest weight space of the representation $\s = S_{\boldsymbol{\lambda}} F_1 \otimes S_{\boldsymbol{\mu}} F_3^*$. The map $w^{(1)}$ is homogeneous of degree zero, hence the condition of Definition~\ref{def:SGS} is automatically satisfied and we conclude that the entries of $d_1$ form an SGS for $I_\mf{m}$. The ordered basis of $F_1$ is important; here it is ordered in non-decreasing degree.
		
		The key observation is that this fact is purely a consequence of the grading on the free resolution. Using this, we see that certain codimension 3 licci ideals can be classified just using their graded Betti numbers:
		\begin{lem}\label{lem:special-graded-free-res}
			Let $\s = S_{\boldsymbol{\lambda}} F_1 \otimes S_{\boldsymbol{\mu}} F_3$ be a decoration of some vertex in $\mathrm{Licci}_3$. Let
			$\boldsymbol{\lambda}= (\lambda_1, \ldots, \lambda_{d+3})$, $\boldsymbol{\mu} = (\mu_1, \ldots, \mu_t)$ and $k=\sum^{t}_{i=1} \mu_i$.
			
			Suppose $R$ is graded and $I\subset R$ is a homogeneous codimension 3 perfect ideal such that $R/I$ has a graded free resolution of the form
			\[
			\mb{F} \colon 0 \to \bigoplus_{i=1}^t R(-k-1-\mu_i) \to F_2 \to \bigoplus_{i=1}^{d+3} R(-k-1+\lambda_i) \to R.
			\]
			Let $\mf{p} \subset R$ be a prime ideal such that $R_i \subset \mf{p}$ for all $i \neq 0$. If $I_\mf{p}$ is a licci ideal in $R_\mf{p}$, then:
			\begin{enumerate}
				\item $\s_{I} = \s$.
				\item Any minimal generating set for $I$, ordered so that the degrees are non-decreasing, forms an SGS for $I_\mf{p}$ after localization.
				\item $F_2 = R^{d+t+2}(-k-1)$.
			\end{enumerate}
		\end{lem}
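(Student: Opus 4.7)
The plan is to exploit the graded form of the resolution to pin down the vanishing behaviour of the higher structure map $w^{(1)}$ modulo $\mf{p}$. First I would localize at $\mf{p}$ to reduce to a local setting, observing that every positive-degree element of $R$ lies in $\mf{p} R_\mf{p}$. Choose a homomorphism $w\colon \widehat{R}_{\mathrm{gen}} \to R_\mf{p}$ specializing the generic resolution to a minimal free resolution extracted from the given graded one. Because the shifts on $F_1$ and $F_3$ are $k+1-\lambda_i$ and $k+1+\mu_j$ respectively, we may take $w$ to be homogeneous of degree zero with respect to the induced grading on $L(\omega_{x_1})^\vee \otimes R_\mf{p}$, under which the weight basis vector $e_i$ of $F_1$ carries degree $k+1-\lambda_i$ and the weight basis vector $f_j^*$ of $F_3^*$ carries degree $-(k+1+\mu_j)$. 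Consequently, $w^{(1)}\otimes \Bbbk$ can only be nonzero on those weight basis vectors of extremal components $S_{\boldsymbol\nu}F_1 \otimes S_{\boldsymbol\tau}F_3^*$ that happen to sit in degree zero, since anything of strictly positive degree maps into $\mf{p}R_\mf{p}$.

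A direct computation with semistandard Young tableaux then shows that the lowest weight basis vector of $\s = S_{\boldsymbol\lambda}F_1 \otimes S_{\boldsymbol\mu}F_3^*$ itself has degree $(k+1)^2 - \sum_i \lambda_i^2 - \sum_j \mu_j^2$, which vanishes by the quadratic identity $\sum\lambda_i^2+\sum\mu_j^2 = (k+1)^2$ satisfied by every admissible decoration (see Section~\ref{sec:admissible}). Hence this basis vector is available to carry a nonzero value of $w^{(1)}\otimes \Bbbk$. The crux of the argument is then to show that $\s$ is the \emph{minimal} extremal component admitting a degree-zero basis vector in our grading, so that the minimality statement of Theorem~\ref{licci1} forces $\s_{I_\mf{p}} = \s$, which is claim (1). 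To handle this I would invoke Theorem~\ref{genericcomplexthm}: its generic example of the Herzog class $\s$ lies inside exactly the same graded template as $I$, and the explicit construction of $w$ for that example in \cite[\S5]{GNW3} arranges $w^{(1)}\otimes\Bbbk$ to be supported only on the lowest weight space of $\s$. Since this vanishing pattern on extremals below $\s$ is forced by the grading rather than by any particular specialization, the same pattern must hold for $I$.

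Claim (2) then follows at once: the bottom piece of $w^{(1)}$ is the differential $d_1$, and under the convention $\lambda_1 \geq \cdots \geq \lambda_{d+3}$ the weight-increasing basis of $F_1$ corresponds to listing the generators $h_i$ of $I$ in non-decreasing degree $k+1-\lambda_i$. For (3), once (1) and (2) are established, I would run the analogous grading analysis on the second fundamental representation $W(d_2)$: its degree-zero piece under the induced grading accounts for the generators of $F_2$, and matching against the description of the generic model in \cite[\S5]{GNW3} forces every generator of $F_2$ to sit in degree $k+1$. The main obstacle is the control of $w^{(1)}\otimes \Bbbk$ on extremals strictly below $\s$, since a priori other components could have accidental degree-zero basis vectors; I would overcome this by transporting the explicit generic-model computation of \cite[\S5]{GNW3} through the shared graded template, ensuring that all such potential contributions vanish for formal reasons of grading rather than as numerical coincidences.
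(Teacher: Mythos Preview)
Your approach is essentially the same as the paper's for (1) and (2): choose $w$ homogeneous of degree zero, observe that $w^{(1)}\otimes\Bbbk$ can only be nonzero on degree-zero weight vectors, and then locate the unique such vector as the lowest weight of $\s$. The paper is slightly more direct on the uniqueness step: rather than computing the degree of the lowest weight vector via the quadratic identity and then separately arguing that nothing below $\s$ can contribute, it simply observes that the grading on $L(\omega_{x_1})^\vee\otimes R$ depends only on the shifts of $F_1$ and $F_3$, which by hypothesis coincide with those of the generic example in Theorem~\ref{genericcomplexthm}. Since the generic example already has the property that the \emph{only} degree-zero generator of $L(\omega_{x_1})^\vee\otimes R$ is the lowest weight space of $\s$ (this is the content of the discussion after Theorem~\ref{genericcomplexthm}, drawn from \cite[\S5]{GNW3}), the same holds for $I$. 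Your quadratic-identity check is a nice independent verification that this particular vector is degree zero, but it is not needed, and your worry about ``accidental'' degree-zero vectors in lower extremals is resolved automatically by this grading comparison.

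For (3) the paper takes a different and cleaner route than your $W(d_2)$ analysis. Once the unit in $w^{(1)}$ is pinned to the lowest weight of $\s$, it follows from the structure theory in \cite{GNW3} that $I$ is a specialization of the generic example of the Herzog class $\s$, prior to localization. Because the degrees agree on $w^{(1)}$, this specialization is degree-preserving, and hence the entire graded free resolution of the generic example specializes to one for $R/I$ with identical shifts; in particular $F_2 = R^{d+t+2}(-k-1)$. Your plan to read $F_2$ off the degree-zero piece of $W(d_2)$ would require additional justification (you would need to know that the higher structure maps in $W(d_2)$ are also forced by the grading in the same way), whereas the specialization argument gets all of $F_2$ for free.
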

		\begin{remark}
			If $I \subset R$ is a homogeneous codimension 3 perfect ideal with graded free resolution as in the preceding lemma, we expect that $I_\mf{p}$ is necessarily licci in $R_\mf{p}$ (i.e. that condition is superfluous in the above statement).
		\end{remark}
		\begin{proof}
			Let $w \colon \widehat{R}_\mathrm{gen} \to R$ be a homomorphism specializing the generic free resolution of format $(1,3+d,2+d+t,t)$ to $\mb{F}$. The grading on $\mb{F}$ induces one on $\widehat{R}_\mathrm{gen}$, and the map $w$ may be chosen to be homogeneous of degree zero. By assumption, the graded modules $F_1$ and $F_3$ in the free resolution are the same as in Theorem~\ref{genericcomplexthm}. As discussed above, this results in $L(\omega_{x_1})^\vee \otimes R$ having only a single generator in degree zero: the lowest weight space of $S_{\boldsymbol{\lambda}} F_1 \otimes S_{\boldsymbol{\mu}} F_3^*$. Since $I_\mf{p}$ is licci, the entries of $w^{(1)} \otimes R_\mf{p}$ must contain a unit, and this must correspond to the aforementioned lowest weight space. The conclusions (1) and (2) then follow. 
			
			The location of the unit in $w^{(1)}$ implies that $I$ is a specialization of the generic example as referenced in the proof of Theorem~\ref{genericcomplexthm}, prior to localization. Since the degrees coincide on $w^{(1)}$, this specialization is degree-preserving. In particular, the graded free resolution for the generic example specializes to one for our particular ideal with the same shifts, establishing (3).
		\end{proof}

		\begin{lem}\label{lem:special-graded-free-res-link}
			Let $\s$, $\boldsymbol{\lambda}$, $\boldsymbol{\mu}$, $k$, and $I \subset R$ be as in the Lemma \ref{lem:special-graded-free-res}. Let each of $\lambda'_1 \geq \lambda'_2 \geq \lambda'_3$ be either a part of $\boldsymbol{\lambda}$ or equal to zero, and denote \eqref{linkageformulaeq} by $\s^\mathrm{link}$.
			
			Let $x_1,\ldots,x_{d+3}$ be any minimal generating list for $I$ with $\deg(x_i) = k+1 - \lambda_i$.	For $i=1,2,3$, take $\alpha_i$ as follows:
			\begin{itemize}
				\item If $\lambda'_i = \lambda_j$, let $\alpha_i = x_j$.
				\item If $\lambda'_i = 0$, let $\alpha_i$ be any element of $\mf{m}I$ of degree $k+1$.
			\end{itemize}
			
			Suppose that $\alpha_1,\alpha_2,\alpha_3$ is a regular sequence. Then, letting $J = (\alpha_1,\alpha_2,\alpha_3):I$, the graded free resolution of $R/J$ has the form given in Lemma~\ref{lem:special-graded-free-res} with $\s^\mathrm{link}$ in place of $\s$.
		\end{lem}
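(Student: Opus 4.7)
The plan is to perform a careful analysis of the standard mapping cone construction for linkage, keeping close track of graded shifts. Let $s = \sum_{i=1}^3 \deg(\alpha_i) = 3(k+1) - (\lambda'_1+\lambda'_2+\lambda'_3) = 2(k+1)+p$. Using the identification $J/(\boldsymbol\alpha) \cong \operatorname{Ext}^3_R(R/I,R)(-s)$, the dualized, reindexed, and twisted complex $\mb{F}^\vee(-s)$ resolves $J/(\boldsymbol\alpha)$. Combining this with the Koszul resolution $K_\bullet$ of $R/(\boldsymbol\alpha)$, together with a lift of the inclusion $J/(\boldsymbol\alpha) \hookrightarrow R/(\boldsymbol\alpha)$, the mapping cone produces a (non-minimal) graded free resolution of $R/J$ of length 4, with $M_n = G_{n-1} \oplus K_n$.

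First I would cancel the automatic split part: the top component $R(-s) \to R(-s)$ of the lift is always a unit, so the top module $R(-s)$ of the cone cancels against the $K_3$ summand one step below, yielding a length-3 resolution whose remaining terms, after rewriting the $K_2$ shifts via $-2k-2+\lambda'_i+\lambda'_j = -k-p-1-\lambda'_l$ for the unused index $l$, are
\begin{align*}
M'_3 &= \textstyle\bigoplus_{i=1}^{d+3} R(-k-p-1-\lambda_i),\\
M'_2 &= R^{d+t+2}(-k-p-1) \oplus \textstyle\bigoplus_{l=1}^3 R(-k-p-1-\lambda'_l),\\
M'_1 &= \textstyle\bigoplus_{j=1}^t R(-k-p-1+\mu_j) \oplus \textstyle\bigoplus_{i=1}^3 R(-k-1+\lambda'_i).
\end{align*}

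Next I would carry out further cancellations corresponding to each minimal choice $\alpha_i = x_{j_i}$ (i.e.\ $\lambda'_i = \lambda_{j_i} \neq 0$). For each such $i$, the lift $K_1 \to F_1$ can be chosen so that the basis vector $e_i$ is sent to $\epsilon_{j_i}$, and this unit entry propagates through the Koszul differentials and the dualization to yield a unit entry pairing the $K_2$ summand of shift $-k-p-1-\lambda_{j_i}$ with the matching $F_1^\vee(-s)$ summand in $M'_3$. When $\lambda'_i = 0$, by contrast, no cancellation occurs: the corresponding $K_2$ summand has shift $-k-p-1$ and merges into the $R^{d+t+2}(-k-p-1)$ block, while the $K_1$ summand $R(-k-1)$ survives as a minimal generator of $J$. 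Setting $k' = k+p$, the shifts surviving in $M'_3$ recover $-k'-1-\mu^{\mathrm{link}}_j$ (since $\mu^{\mathrm{link}}$ is exactly the set of $\lambda_i$ not selected as some $\lambda'_l$); the shifts in $M'_1$ split as $-k'-1+\mu_j$ and $-k-1+\lambda'_i = -k'-1+(\lambda'_i+p)$, which together form the multiset $-k'-1+\lambda^{\mathrm{link}}_i$ by Theorem~\ref{thm:graph-edges-partitions}; and $M'_2$ is pure of shift $-k'-1$, of the required rank $d'+t'+2$ where $d',t'$ are the deviation and type of $J$.

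The main obstacle will be the second cancellation step: verifying rigorously that the comparison map and its dual contain the predicted unit entries so that the cancellations can be carried out homogeneously. This amounts to choosing a specific homogeneous lift of the Koszul complex into $\mb{F}$ (possible because all maps can be taken to be degree-preserving) and inspecting, in degrees 1 and 2, the rows and columns indexed by the $j_i$: the required units follow from the identification $\alpha_i = x_{j_i}$ combined with the self-duality $K_\bullet^\vee(-s) \cong K_\bullet$ (reindexed). Once this is in place, the claim about the graded form of the minimal free resolution of $R/J$ follows directly.
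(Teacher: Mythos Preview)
Your mapping-cone computation is correct and is exactly what the paper does (it is the $c=3$ case of Lemma~\ref{lem:mapping-cone-decorations}). The shift bookkeeping and the first round of cancellations---the top $R(-s)$ against $K_3$, and for each $\lambda'_i>0$ the $K_2$ summand against the matching $F_1^\vee(-s)$ summand via the dual of $\phi_1$---are all right.

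The gap is in your final claim that the resulting complex is already the \emph{minimal} resolution, i.e.\ that $M'_1$ has exactly $d'+3$ summands and $M'_2$ has rank $d'+t'+2$. This fails precisely when $\lambda'_3+p=0$, equivalently $\lambda'_1+\lambda'_2=k+1$ (which does occur: e.g.\ link the class $\boldsymbol\lambda=(2,2,1,1,1)$, $\boldsymbol\mu=(2,1)$ with $\lambda'=(2,2,0)$). In that situation your $M'_1$ has a summand $R(-k'-1)$ that is \emph{not} accounted for by any nonzero part of $\boldsymbol\lambda^{\mathrm{link}}$, so the complex does not literally have the form of Lemma~\ref{lem:special-graded-free-res} for $\s^{\mathrm{link}}$. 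Showing that this extra summand cancels amounts to showing $\alpha_3\in\mf{m}J$, and your argument gives no mechanism for that.

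The paper closes this gap by a different route: it first invokes Lemma~\ref{lem:special-graded-free-res}(2) to know that the chosen generators of $I$ form an SGS, then uses Remark~\ref{re2} to conclude $\s_J=\s^{\mathrm{link}}$ \emph{a priori}. This pins down the number of minimal generators of $J$, and since the only possible non-minimality in the mapping-cone output occurs in degree $k'+1$, the required extra cancellations are forced. In other words, the paper does not verify the last cancellation by inspecting the comparison map; it deduces it from the higher-structure-map machinery. Your purely elementary approach would need an independent argument for this step.
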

		\begin{proof}
		Lemma~\ref{lem:mapping-cone-decorations} gives a resolution of $R/J$ which is almost the desired form (see Remark~\ref{rem:mapping-cone-decorations}). For the case $c=3$, the middle module of the produced resolution for $R/J$ is generated entirely in degree $k+p+1$. Since we know that $\s_J = \s^\mathrm{link}$ from point (2) of Lemma~\ref{lem:special-graded-free-res}, and the only possible non-minimality in the resolution occurs in degree $k+p+1$, it follows that the first module in the minimal free resolution does not have any generators in that degree. Thus we reach the desired form of the resolution.
		\end{proof}
		
		Using this, we can outline an algorithm for producing codimension 3 licci ideals in each Herzog class, which has been implemented in Macaulay2 \cite{M2examples}.
        \begin{algorithm}\label{algo:licci-examples}
            Let $R$ be a polynomial ring with at least three variables, and let $\mf{m}$ be the ideal of variables. Let $\s$ be a decoration of a vertex on $\mathrm{Licci}_3$. Compute a path on $\mathrm{Licci}_3$ from $\s$ to $\s_0 = \bigwedge^3 F_1 \otimes F_3^*$ (corresponding to a complete intersection) by repeatedly applying Example~\ref{minimalminimal} to $\s$:
		\[
			\s = \s_N \sim \cdots \sim \s_1 \sim \s_0.
		\]
		We then inductively construct $I_j$ so that $\s_{(I_j)_\mf{m}} = \s_j$, starting by taking $I_0$ to be a complete intersection generated by three elements of degree one. To construct $I_{j+1}$ from $I_j$, we use Lemma~\ref{lem:special-graded-free-res-link} with appropriate $\lambda'_1, \lambda'_2,\lambda'_3$ (c.f. Theorem~\ref{thm:graph-edges-partitions}). It remains to show that $\alpha_1,\alpha_2,\alpha_3$ may be adjusted into a regular sequence, so that the conclusion of Lemma~\ref{lem:special-graded-free-res-link} holds.
        
        The crucial point is that $I_j$ has a regular sequence of minimal degree among its minimal generators. If $j = 0$, then this is obvious. If $j > 0$, the elements of the regular sequence linking $I_{j-1} \sim I_j$ will always have lowest possible degree in $I_j$ by construction, since $\s_{j-1}$ was constructed from $\s_j$ via Example~\ref{minimalminimal}.
        
        Since we are working over an infinite field, adding a general homogeneous combination of lower degree generators of $I_j$ to each of $\alpha_1,\alpha_2,\alpha_3$ will ensure that it is a regular sequence.
        \end{algorithm}

		%We also assume $\frac{1}{2} \in R$. 
		%For a matrix $A$ with entries in $R$ we always denote by $I_d(A)$ the ideal generated by its $d\times d$ minors.

		\section{Decorations and links of relevant families}
        \label{sec:decorations}
		
		In this section, first we write down the tables of all the Herzog classes linked to relevant families of licci ideals of codimension 3, then we list all the Herzog classes of Dynkin formats and then we provide an explicit infinite family of Herzog classes of ideals with minimal free resolution of format $(1,6,8,3)$, that is the smallest non-Dynkin format.
		
		\subsection{Tables of examples}
		\label{sec:tables}
		
		For all the following Herzog classes of licci ideals of codimension 3, represented by an ideal $I$, we write down a table, listing all the pairs of partitions $\boldsymbol \lambda; \boldsymbol \mu$ of the linked Herzog classes, represented by an ideal $J$, together with the partition $\lambda'_1, \lambda_2', \lambda_3'$ obtained to define the link, the invariants $\kappa(J)$ and $p$, and the format of the minimal free resolution of $R/J$.
		
		\medskip
		
		\bf Complete intersection: \rm the pair of partition associated to $\s_I$ is $\boldsymbol \lambda=(1,1,1)$, $\boldsymbol \mu=(1)$. Format of the resolution: $(1,3,3,1)$. \\
		
		%\label{gorenstein}
		\begin{tabular} {|l|c|c|c|c|}
			\hline
			$\lambda_1', \lambda_2', \lambda_3' $ & $p$ & pair of part. $\s_J$ & $\kappa(J)$ & Format res. of $J$ \\
			\hline
			$1,1,1 $ & $-1$ & $(1);(0) $ & $0$ & $J=R$ \\
			\hline
			$1, 1, 0 $ & $0$ & $(1,1,1);(1)$ & $1$ & $(1,3,3,1)$ \\
			\hline
			$1, 0, 0 $ & $1$ & $(2,1,1,1);(1,1)$ & $2$ & $(1,4,5,2)$ \\
			\hline
			$0, 0, 0 $ & $2$ & $(2,2,2,1);(1,1,1)$ & $3$ & $(1,4,6,3)$ \\
			\hline 
		\end{tabular}
		
		\medskip
		
		This correspond to the well-known fact that a complete intersection of codimension 3 can be linked only to the unit ideal, to other complete intersections, or to almost complete intersections of type $2$ and  $3$.
		
		\bigskip
		
		\bf Gorenstein ideals: \rm Gorenstein ideals of codimension 3 are licci by a result of Watanabe \cite{Watanabe}. The pair of partitions associated to $\s_I$ is $\boldsymbol \lambda=(1^{2k+1})$, $\boldsymbol \mu=(k)$, for $k\geq 2$. Format of the resolution: $(1,2k+1,2k+1,1)$. \\
		%$\s_I = \bigwedge^{2k+1} F_1 \otimes S_k F_3^*$. Format $(1,n,n,1)$ with $n= 2k+1 \geq 5$. \\
		
		%\label{gorenstein}
		\begin{tabular} {|l|c|c|c|r|}
			\hline
			$\lambda_1', \lambda_2', \lambda_3' $ & $p$ & pair of part. $\s_J$ & $\kappa(J)$ & Format res. of $J$ \\
			\hline
			$1,1,1 $ & $k-2$ & $(k, k-1,k-1,k-1); (1^{2k-2})$ & $2k-2$ & $(1,4,2k+1,2k-2)$ \\
			\hline
			$1,1, 0 $ & $k-1$ & $(k, k,k,k-1); (1^{2k-1})$ & $2k-1$ & $(1,4,2k+2,2k-1)$ \\
			\hline
			$1, 0, 0 $ & $k$ & $(k+1, k,k,k); (1^{2k})$ & $2k$ & $(1,4,2k+3,2k)$ \\
			\hline
			$0,0,0 $ & $k+1$ & 
			$(k+1, k+1,k+1,k); (1^{2k+1})$ & $2k+1$ & $(1,4,2k+4,2k+1)$ \\
			\hline
		\end{tabular}
		
		\medskip
		
		Also in this case we recover the well-known fact that a Gorenstein ideal of codimension 3, generated by $n=2k+1$ generators can be linked only to almost complete intersections of type $t$ such that $n-3 \leq t \leq n$.
		
		\bigskip
		
		\bf Almost complete intersections of even type: \rm all the almost complete intersections of codimension 3 are licci since they are directly linked to Gorenstein ideals.
		The pair of partition associated to $\s_I$ is $\boldsymbol \lambda=(j+1,j,j,j)$, $\boldsymbol \mu=(1^{2j})$, for $j\geq 1$. Format of the resolution: $(1,4,2j+3,2j)$. \\
		
		%$\s_I = S_{l+1, l, l , l} F_1 \otimes \bigwedge^{2l} F_3^*$.  \\ Format $(1,4,2l+3,2l)$ with $l\geq 1$. $\kappa(I)=2l$.  \\
		
		%\label{gorenstein}
		\begin{tabular} {|l|c|c|c|r|}
			\hline
			$\lambda_1', \lambda_2', \lambda_3' $ & $p$ & pair of part. $\s_J$ & $\kappa(J)$ & Format res. of $J$ \\
			\hline
			$j+1,j,j $ & $-j$ & $(1^{2j+1});(j)$
			& $j$ & $(1,2j+1,2j+1,1)$ \\
			\hline
			$j,j,j $ & $-j+1$ & $(1^{2j+3});(j+1)$
			& $j+1$ & $(1,2j+3,2j+3,1)$ \\
			\hline
			$j+1,j, 0 $ & $0$ & $(j+1,j,1^{2j});(j,j)$
			& $2j$ & $(1,2j+2,2j+3,2)$ \\
			\hline
			$j,j, 0 $ & $1$ & $(j+1,j+1,1^{2j+1});(j+1,j)$
			& $2j+1$ & $(1,2j+3,2j+4,2)$ \\
			\hline
			$j+1,0,0 $ & $j$ & $(2j+1,j,j,1^{2j});(j^3)$
			& $3j$ & $(1,2j+3,2j+5,3)$ \\
			\hline
			$j,0,0 $ & $j+1$ & $(2j+1,j+1,j+1,1^{2j});(j+1,j,j)$
			& $3j+1$ & $(1,2j+3,2j+5,3)$ \\
			\hline
			$0,0,0 $ & $2j+1$ & $(2j+1,2j+1,2j+1,1^{2j});(j+1,j^3)$ & $4j+1$ & $(1,2j+3,2j+6,4)$ \\
			\hline
		\end{tabular}
		
		\bigskip
		
		\bf Almost complete intersections of odd type: \rm 
		the pair of partition associated to $\s_I$ is $\boldsymbol \lambda=(j,j,j,j-1)$, $\boldsymbol \mu=(1^{2j-1})$, for $j\geq 2$. Format of the resolution: $(1,4,2j+2,2j-1)$. \\
		
		%$\s_I = S_{l, l, l , l-1} F_1 \otimes \bigwedge^{2l-1} F_3^*$.  \\ Format $(1,4,2l+2,2l-1)$ with $l\geq 2$. $\kappa(I)=2l-1$.  \\
		
		%\label{gorenstein}
		\begin{tabular} {|l|c|c|c|r|}
			\hline
			$\lambda_1', \lambda_2', \lambda_3' $ & $p$ & pair of part. $\s_J$ & $\kappa(J)$ & Format res. of $J$ \\
			\hline
			$j,j,j $ & $-j$ & $(1^{2j-1});(j-1)$
			& $j-1$ & $(1,2j-1,2j-1,1)$ \\
			\hline
			$j,j,j-1 $ & $-j+1$ & $(1^{2j+1});(j)$ & $j$ & $(1,2j+1,2j+1,1)$ \\
			\hline
			$j,j, 0 $ & $0$ & $(j,j,1^{2j-1});(j,j-1)$
			& $2j-1$ & $(1,2j+1,2j+2,2)$ \\
			\hline
			$j,j-1, 0 $ & $1$ & $(j+1,j,1^{2j});(j,j)$ & $2j$ & $(1,2j+2,2j+3,2)$ \\
			\hline
			$j, 0, 0 $ & $j$ & $(2j,j,j,1^{2j-1});(j,j,j-1)$ & $3j-1$ & $(1,2j+2,2j+4,3)$ \\
			\hline
			$j-1, 0, 0 $ & $j+1$ & $(2j,j+1,j+1,1^{2j-1});(j^3)$ & $3j$ & $(1,2j+2,2j+4,3)$ \\
			\hline
			$0, 0, 0 $ & $2j$ &  $(2j,2j,2j,1^{2j-1});(j^3,j-1)$ &  $4j-1$ & $(1,2j+2,2j+5,4)$ \\
			\hline
		\end{tabular}
		
		\bigskip
		
		\bf Hyperplane sections of perfect ideals of codimension 2: \rm perfect ideals of codimension 2 are all licci. It easily follows that their hyperplane sections are also licci. The pair of partitions associated to $\s_I$ is $\boldsymbol \lambda=(k,1^{k+1})$, $\boldsymbol \mu=(1^{k})$, for $k\geq 3$ (the cases $k=1,2$ are the complete intersection and almost complete intersection of type 2). Format of the resolution: $(1,k+2,2k+1,k)$. \\
		
		%$\s_I = S_{k, 1^{k+1}} F_1 \otimes \bigwedge^{k} F_3^*$.  \\ Format $(1,k+2,2k+1,k)$ with $\kappa(I)= k\geq 3$.   \\
		
		%\label{gorenstein}
		\begin{tabular} {|l|c|c|c|r|}
			\hline
			$\lambda_1', \lambda_2', \lambda_3' $ & $p$ & pair of part. $\s_J$ & $\kappa(J)$ & Format res. of $J$ \\
			\hline
			$k,1,1 $ & $-1$ & $(k-1,1^{k});(1^{k-1})$
			& $k-1$ & $(1,k+1,2k-1,k-1)$ \\
			\hline
			$k,1, 0 $ & $0$ & $(k,1^{k+1});(1^{k})$
			& $k$ & $(1,k+2,2k+1,k)$ \\
			\hline
			$k, 0,0 $ & $1$ & $(k+1,1^{k+2});(1^{k+1})$ & $k+1$ & $(1,k+3,2k+3,k+1)$ \\
			\hline
			$1,1,1 $ & $k-2$ & $((k-1)^3,1^{k});(k,1^{k-2})$
			& $2k-2$ & $(1,k+3,2k+2,k)$ \\
			\hline
			$1,1, 0 $ & $k-1$ & $(k,k,k-1,1^{k});(k,1^{k-1})$ & $2k-1$ & $(1,k+3,2k+2,k)$ \\
			\hline
			$1, 0, 0 $ & $k$ & $(k+1,k,k,1^{k});(k,1^{k})$ & $2k$ & $(1,k+3,2k+3,k+1)$ \\
			\hline
			$0,0, 0 $ & $k+1$ & $((k+1)^3,1^{k});(k,1^{k+1})$ & $2k+1$ & $(1,k+3,2k+4,k+2)$ \\
			\hline
		\end{tabular}
		
		\bigskip
		
		\bf Anne Brown's ideals with 5 generators and type 2: \rm 
		in \cite{annebrown}, Anne Brown studies the family of licci ideals of codimension 3 with 5 generators, type 2, and having a Koszul relation among minimal syzygies (equivalently the ones not defining a Golod ring).
		The pair of partitions associated to $\s_I$ is $\boldsymbol \lambda=(2,2,1,1,1)$, $\boldsymbol \mu=(2,1)$. Format of the resolution: $(1,5,6,2)$. \\
		%$\s_I = S_{2,2,1,1,1} F_1 \otimes S_{2,1} F_3^*$.  \\ Format $(1,5,6,2)$, $\kappa(I)= 3$.   \\
		
		\begin{tabular} {|l|c|c|c|c|}
			\hline
			$\lambda_1', \lambda_2', \lambda_3' $ & $p$ & pair of part. $\s_J$ & $\kappa(J)$ & Format res. of $J$ \\
			\hline
			$2,2,1 $ & $-1$ & $(2,1,1,1);(1,1)$
			& $2$ & $(1,4,5,2)$ \\
			\hline
			$2,1,1 $ & $0$ & 
			$(2,2,1,1,1);(2,1)$  & $3$ & $(1,5,6,2)$ \\
			\hline
			$1,1,1 $ & $1$ & $(2,2,2,2,1);(2,2)$
			& $4$ & $(1,5,6,2)$ \\
			\hline
			$2,2, 0 $ & $0$ & $(2,2,2,1);(1,1,1)$ & $3$ & $(1,4,6,3)$ \\
			\hline
			$2,1, 0 $ & $1$ & $(3,2,2,1,1);(2,1,1)$ & $4$ & $(1,5,7,3)$ \\
			\hline
			$1,1, 0 $ & $2$ & $(3,3,2,2,1);(2,2,1)$ & $5$ & $(1,5,7,3)$ \\
			\hline
			$2, 0, 0 $ & $2$ & $(4,2,2,2,1);(2,1,1,1)$ & $5$ & $(1,5,8,4)$ \\
			\hline
			$1, 0, 0 $ & $3$ & $(4,3,3,2,1);(2,2,1,1)$ & $6$ & $(1,5,8,4)$ \\
			\hline
			$0, 0, 0 $ & $4$ & $(4,4,4,2,1);(2,2,1,1,1)$ & $7$ & $(1,5,9,5)$ \\
			\hline
		\end{tabular} 
		
		\medskip
		
		As already known, the smallest minimal link of an ideal in this class is an almost complete intersections of type 2.
		
		\bigskip
		
		\bf Ideal of type $E_6$ of Celikbas, Kra\'{s}kiewicz, Laxmi, Weyman: \rm  in \cite{CKLW}, another model of ideals of codimension 3 with 5 generators and type 2 is constructed. This ideal defines a Golod ring and, as a consequence of the results in \cite{GNW2}, this and the Anne Brown's model represent the only two Herzog classes in the case $c=3$, $d=2$, $t=2$. Observing the previous table, we see that this Herzog class can be obtained by the one of Anne Brown's model performing a largest minimal link. An explicit computation of this link in the rigid case appears in \cite{kustin}. 
		The pair of partitions associated to $\s_I$ is $\boldsymbol \lambda=(2,2,2,2,1)$, $\boldsymbol \mu=(2,2)$. Format of the resolution: $(1,5,6,2)$. \\

		%$\s_I = S_{2,2,2,2,1} F_1 \otimes S_{2,2} F_3^*$.  \\ Format $(1,5,6,2)$, $\kappa(I)= 4$.   \\
		
		\begin{tabular} {|l|c|c|c|c|}
			\hline
			$\lambda_1', \lambda_2', \lambda_3' $ & $p$ & pair of part. $\s_J$ & $\kappa(J)$ & Format res. of $J$ \\
			\hline
			$2,2,2 $ & $-1$ & $(2,2,1,1,1);(2,1)$ & $3$ & $(1,5,6,2)$ \\
			\hline
			$2,2,1 $ & $0$ & $(2,2,2,2,1);(2,2)$ & $4$ & $(1,5,6,2)$ \\
			\hline
			$2,2, 0 $ & $1$ & $(3,3,2,2,1);(2,2,1)$ & $5$ & $(1,5,7,3)$ \\
			\hline
			$2, 1, 0 $ & $2$ & $(4,3,2,2,2);(2,2,2)$ & $6$ & $(1,5,7,3)$ \\
			\hline
			$2, 0, 0 $ & $3$ & $(5,3,3,2,2);(2,2,2,1)$ & $7$ & $(1,5,8,4)$ \\
			\hline
			$1, 0, 0 $ & $4$ & $(5,4,4,2,2);(2,2,2,2)$ & $8$ & $(1,5,8,4)$ \\
			\hline
			$0, 0, 0 $ & $5$ & $(5,5,5,2,2);(2,2,2,2,1)$ & $9$ & $(1,5,9,5)$ \\
			\hline
		\end{tabular}
		
		\bigskip
		
		\bf Ideals of Tor algebra class $G(k-1)$ and $\kappa(I)= k \geq 4$: \rm in Section \ref{sec:closetoGor} we study a family of Herzog classes formed by ideals that are the closest to the Gorestein ones in terms the combinatoric of the partitions. 
		The pair of partitions associated to $\s_I$ is $\boldsymbol \lambda=(2^{k-1},1,1,1)$, $\boldsymbol \mu=(k-1,1)$. Format of the resolution: $(1,k+2,k+3,2)$. \\
		
		% $\s_I = S_{2^{k-1},1,1,1} F_1 \otimes S_{k-1,1} F_3^*$.  \\ Format $(1,k+2,k+3,2)$.   \\
		
		\begin{tabular} {|l|c|c|c|c|}
			\hline
			$\lambda_1', \lambda_2', \lambda_3' $ & $p$ & pair of part. $\s_J$ & $\kappa(J)$ & Format res. of $J$ \\
			\hline
			$2,2,2 $ & $k-5$ & $(k-1,k-3,k-3,k-3, 1); (2^{k-4},1,1,1)$ & $2k-5$ & $(1,5,k+3,k-1)$ \\
			\hline
			$2,2,1 $ & $k-4$ & $(k-1,k-2, k-2, k-3, 1); (2^{k-3},1,1)$ & $2k-4$ & $(1,5,k+3,k-1)$ \\
			\hline
			$2,1,1 $ & $k-3$ & $(k-1,k-1,k-2,k-2,1); (2^{k-2},1)$ & $2k-3$ & $(1,5,k+3,k-1)$ \\
			\hline
			$1,1,1 $ & $k-2$ & $(k-1,k-1,k-1,k-1,1); (2^{k-1})$ & $2k-2$ & $(1,5,k+3,k-1)$ \\
			\hline
			$2,2, 0 $ & $k-3$ & $(k-1,k-1,k-1,k-3, 1); (2^{k-3},1,1,1)$ & $2k-3$ & $(1,5,k+4,k)$ \\
			\hline
			$2,1, 0 $ & $k-2$ & $(k,k-1,k-1,k-2,1); (2^{k-2},1,1)$ & $2k-2$ & $(1,5,k+4,k)$ \\
			\hline
			$1,1, 0 $ & $k-1$ & $(k,k,k-1,k-1,1); (2^{k-1},1)$ & $2k-1$ & $(1,5,k+4,k)$ \\
			\hline
			$2, 0, 0 $ & $k-1$ & $(k+1,k-1, k-1, k-1,1); (2^{k-2},1,1,1)$ & $2k-1$ & $(1,5,k+5,k+1)$ \\
			\hline
			$1, 0, 0 $ & $k$ & $(k+1,k,k,k-1,1); (2^{k-1},1,1)$ & $2k$ & $(1,5,k+5,k+1)$ \\
			\hline
			$0, 0, 0 $ & $k+1$ & $(k+1, k+1, k+1, k-1,1); (2^{k-1},1,1,1)$ & $2k+1$ & $(1,5,k+6,k+2)$ \\
			\hline
		\end{tabular}
		
		\subsection{Ideals of Dynkin type}
		\label{sec:dynkin}
		
		By inspection we can list all the decorations for small values of $k$. From the results in Section \ref{sec:admissible}, one can obtain a combinatorial criterion that shows that these are the only possible ones. Here, for stylistic reasons, we use for the decorations the notation $S_{\boldsymbol \lambda} F_1 \otimes S_{\boldsymbol \mu} F_3^*$. We have:
		%\begin{itemize}
		%\item $k=1$: $[(1,1,1);(1)]$.
		%\item $k=2$: $[(1^5);(2)]$, $[(2,1,1,1);(1,1)]$.
		%\item $k=3$: $[(1^7);(3)]$, $[(2,2,1,1,1);(2,1)]$, $[(3,1^4);(1^3)]$, $[(2,2,2,1);(1^3)]$.
		%$\bigwedge^7 F_1 \otimes S_3F_3^*$; $S_{2,2,1,1,1} F_1 \otimes S_{2,1} F_3^*$; $S_{3,1,1,1,1} F_1 \otimes \bigwedge^3 F_3^*$; $S_{2,2,2,1} F_1 \otimes \bigwedge^3 F_3^*$.
		%\item $k=4$: $[(1^9);(4)]$, $[(2,2,2,1,1,1);(3,1)]$, $[(3,2,1^4);(2,2)]$, $[(2^4,1);(2,2)]$, $[(3,2,2,1,1);(2,1,1)]$, $[(4,1^5);(1^4)$, $[(3,2,2,2);(1^4)]$.
		%$\bigwedge^9 F_1 \otimes S_4F_3^*$; $S_{2,2,2,1,1,1} F_1 \otimes S_{3,1} F_3^*$; $S_{3,2,1,1,1,1} F_1 \otimes S_{2,2} F_3^*$; $S_{2,2,2,2,1} F_1 \otimes S_{2,2} F_3^*$; $S_{3,2,2,1,1} F_1 \otimes S_{2,1,1} F_3^*$; $S_{4,1,1,1,1,1} F_1 \otimes \bigwedge^4 F_3^*$; $S_{3,2,2,2} F_1 \otimes \bigwedge^4 F_3^*$.
		%\end{itemize}
		
		\begin{itemize}
			\item $k=1$: $\bigwedge^3 F_1 \otimes F_3^*$.
			\item $k=2$: $\bigwedge^5 F_1 \otimes S_2F_3^*$, $S_{2,1,1,1} F_1 \otimes \bigwedge^2 F_3^*$.
			\item $k=3$: $\bigwedge^7 F_1 \otimes S_3F_3^*$, $S_{2,2,1,1,1} F_1 \otimes S_{2,1} F_3^*$, $S_{3,1,1,1,1} F_1 \otimes \bigwedge^3 F_3^*$, $S_{2,2,2,1} F_1 \otimes \bigwedge^3 F_3^*$.
			\item $k=4$: $\bigwedge^9 F_1 \otimes S_4F_3^*$, $S_{2,2,2,1,1,1} F_1 \otimes S_{3,1} F_3^*$, $S_{3,2,1,1,1,1} F_1 \otimes S_{2,2} F_3^*$, $S_{2,2,2,2,1} F_1 \otimes S_{2,2} F_3^*$, $S_{3,2,2,1,1} F_1 \otimes S_{2,1,1} F_3^*$, $S_{4,1,1,1,1,1} F_1 \otimes \bigwedge^4 F_3^*$, $S_{3,2,2,2} F_1 \otimes \bigwedge^4 F_3^*$.
		\end{itemize}
		
		We recall that the Dynkin formats of resolutions of length 3 of perfect ideals are: \begin{itemize}
			%\item Type $A_n$: $(1,3,n,n-2) $ for $n \geq 3$ (in the perfect case $n=3$ and the ideal is a complete intersection).
			\item Type $D_n$: $(1,n,n,1) $  for $n\geq 3$ (Gorestein ideals)
			and $ (1,4,n,n-3) $ for $n \geq 5$ (almost complete intersections).
			\item Type $E_6$: $ (1,5,6,2) $.
			\item Type $E_7$: $ (1,5,7,3) $ and $ (1,6,7,2) $.
			\item Type $E_8$: $ (1,5,8,4) $ and $ (1,7,8,2) $.
		\end{itemize}
		
		For each of these formats there are only finitely many Herzog classes. For more details on this see \cite{GNW2}, \cite{GNW3}, \cite{examples}. The Herzog classes of Gorenstein ideals, almost complete intersections and ideals of type $E_6$ have been listed in the previous section. We still want to provide a list of all the Herzog classes of the other Dynkin formats $E_7$ and $E_8$.
		For completeness we list all those such that $k \geq 5:$
		\begin{itemize}
			\item Format $(1,6,7,2)$: 
			$ S_{3,2^3,1^2}F_1 \otimes S_{3,2}F_3^*$, 
			$ S_{3^2,2^3,1}F_1 \otimes S_{3,3}F_3^*$, 
			$  S_{3,2^5}F_1 \otimes S_{4,2}F_3^*$,  
			$ S_{3^3,2^3}F_1\otimes S_{4,3}F_3^*$, 
			$ S_{3^5,2}F_1\otimes S_{4,4}F_3^*$.
			\item Format $(1,5,7,3)$: 
			$ S_{3^2,2^2,1}F_1\otimes S_{2,2,1}F_3^*$, $ S_{3,2^4}F_1\otimes S_{3,1,1}F_3^*$,
			$ S_{3^3,2^2}F_1\otimes S_{3,2,1}F_3^*$, 
			$ S_{4,3,2^3}F_1\otimes S_{2^3}F_3^*$, 
			$ S_{3^4,1}F_1\otimes S_{2^3}F_3^*$, 
			$ S_{4,3^3,2}F_1\otimes S_{3,2,2}F_3^*$,  $ S_{3^5}F_1\otimes S_{3^2,1}F_3^*$, 
			$ S_{4^2,3^3}F_1\otimes S_{3,3,2}F_3^*$, 
			$ S_{4^4,3}F_1\otimes S_{3,3,3}F_3^*$.
			\item Format $(1,7,8,2)$: for this format we exhibit all the decoration with $k \leq 10$. The remaining decorations can be obtained from all the decorations of the Herzog classes in the formats $\leq (1,7,8,2)$ with $k \leq 10$ (including the decoration of the unit ideal $S_1F_1 \otimes S_0 F_3^*$) by taking the dual with respect to the pair of partitions $(6^7);(10,10)$.
			We have: \\
			$ S_{2^4,1^3}F_1 \otimes S_{4,1}F_3^*$, 
			$ S_{3^2,1^5}F_1 \otimes S_{3,2}F_3^*$, 
			$ S_{3^2,2^2,1^3}F_1 \otimes S_{4,2}F_3^*$, \\
			$ S_{4,2^3,1^3}F_1 \otimes S_{3,3}F_3^*$,
			$ S_{4,3,2^3,1^2}F_1 \otimes S_{4,3}F_3^*$, 
			$ S_{3^2,2^4,1}F_1 \otimes S_{5,2}F_3^*$, \\
			$ S_{3^4,1^3}F_1 \otimes S_{4,3}F_3^*$, 
			$ S_{4,3^2,2^3,1}F_1 \otimes S_{5,3}F_3^*$,  
			$ S_{4,3^3,2,1^2}F_1 \otimes S_{4,4}F_3^*$, \\
			$ S_{4^2,2^4,1}F_1 \otimes S_{4,4}F_3^*$,  
			$ S_{5,2^6}F_1 \otimes S_{4,4}F_3^*$,  
			$ S_{4^2,3^2,2^2,1}F_1 \otimes S_{5,4}F_3^*$, \\
			$ S_{5, 3^2, 2^4}F_1 \otimes S_{5,4}F_3^*$, 
			$ S_{4,3^3,2^3}F_1 \otimes S_{6,3}F_3^*$,  
			$ S_{3^6,1}F_1 \otimes S_{6,3}F_3^*$, \\
			$ S_{5,4,3^2,2^3}F_1 \otimes S_{5,5}F_3^*$, 
			$ S_{4^3,3,2^3}F_1 \otimes S_{6,4}F_3^*$, 
			$ S_{4^3,3^2,2,1}F_1 \otimes S_{5,5}F_3^*$, \\
			$ S_{4^2,3^4,1}F_1 \otimes S_{6,4}F_3^*$, 
			$ S_{5,3^4,2^2}F_1 \otimes S_{6,4}F_3^*$.  \\
			The decoration with the largest value of $k$ appearing for the format $(1,7,8,2)$ is $ S_{6^6,1}F_1 \otimes S_{10,10}F_3^*$.
			\item Format $(1,5,8,4)$: for this format we exhibit all the decoration with $k \leq 12$. 
			The remaining decorations can be obtained from all the decorations of the Herzog classes in the formats $\leq (1,5,8,4)$ with $k \leq 12$ (including the decoration of the unit ideal $S_1F_1 \otimes S_0 F_3^*$) by taking the dual with respect to the pair of partitions $(10^5);(6^4)$.
			%All the remaining decorations can be obtained from the decorations of ideals in the formats $\leq (1,5,8,4)$ with $k \leq 12$ (including the decoration of the unit ideal $S_1F_1 \otimes S_0F_3^*$) by taking the dual with respect to the representation $S_{6^4}F_3^*\otimes S_{10^5}F_1$.
			We have: \\
			$ S_{4,2^3,1}F_1 \otimes S_{2,1^3}F_3^*$, 
			$ S_{3^3,1^2}F_1 \otimes S_{2,1^3}F_3^*$, 
			$ S_{4,3^2,2,1}F_1 \otimes S_{2^2,1^2}F_3^*$, \\
			$ S_{4,3,2^3}F_1 \otimes S_{3,1^3}F_3^*$,  
			$ S_{4^2,3,2^2}F_1 \otimes S_{3,2,1^2}F_3^*$, 
			$ S_{5,3^2,2^2}F_1 \otimes S_{2^3,1}F_3^*$,  \\
			$ S_{4^2,3^2,2}F_1 \otimes S_{2^3,1}F_3^*$, 
			$ S_{3^5}F_1 \otimes S_{4,1^3}F_3^*$, 
			$ S_{5,4,3^2,2}F_1 \otimes S_{3,2^2,1}F_3^*$, \\
			$ S_{4^2,3^3}F_1 \otimes S_{4,2,1^2}F_3^*$,  
			$ S_{4^3,3,2}F_1 \otimes S_{3^2,1^2}F_3^*$, 
			$ S_{5,4^2,2^2}F_1 \otimes S_{2^4}F_3^*$,  \\
			$ S_{4^4,1}F_1 \otimes S_{2^4}F_3^*$,  
			$ S_{5,4^2,3^2}F_1 \otimes S_{4,2^,1}F_3^*$  
			$ S_{5^2,4,3,2}F_1 \otimes S_{3,2^3}F_3^*$, \\ 
			$ S_{5,4^3,2}F_1 \otimes S_{3^2,2,1}F_3^*$,  
			$ S_{6,4,3^3}F_1 \otimes S_{3,2^3}F_3^*$,  
			$ S_{5^2,3^3}F_1 \otimes S_{3^2,2,1}F_3^*$, \\ 
			$ S_{4^4,3}F_1 \otimes S_{4,3,1^2}F_3^*$, 
			$ S_{5^2,4^2,3}F_1 \otimes S_{4,3,2,1}F_3^*$, 
			$ S_{6,5,4,3^2}F_1 \otimes S_{3^2,2^2}F_3^*$, \\
			$ S_{6,4^3,3}F_1 \otimes S_{3^3,1}F_3^*$, 
			$ S_{5^3,3^2}F_1 \otimes S_{4,2^3}F_3^*$, 
			$ S_{5^3,4,2}F_1 \otimes S_{3^2,2^2}F_3^*$, \\
			$ S_{6,4^3,3}F_1 \otimes S_{4,2^3}F_3^*$, 
			$ S_{5^3,4^2}F_1 \otimes S_{5,2^3}F_3^*$, 
			$ S_{6,5^2,4,3}F_1 \otimes S_{4,3,2^2}F_3^*$, \\
			$ S_{6,5,4^3}F_1 \otimes S_{4,3^2,1}F_3^*$, 
			$ S_{7,4^4}F_1 \otimes S_{3^3,2}F_3^*$,  
			$ S_{5^3,4^2}F_1 \otimes S_{4^2,2,1}F_3^*$, \\
			$ S_{6^2,4^2,3}F_1 \otimes S_{3^3,2}F_3^*$,
			$ S_{5^4,3}F_1 \otimes S_{4,3^2,1}F_3^*$,   
			$ S_{6,5^3,4}F_1 \otimes S_{5,3,2^2}F_3^*$, 
			$ S_{7,5^2,4^2}F_1 \otimes S_{4,3^2,2}F_3^*$, \\
			$ S_{6^2,5^2,3}F_1 \otimes S_{4,3^2,2}F_3^*$,
			$ S_{6^2,5,4^2}F_1 \otimes S_{4^2,2^2}F_3^*$,
			$ S_{6^3,4,3}F_1 \otimes S_{3^4}F_3^*$, \\
			$ S_{6,5^3,4}F_1 \otimes S_{4^2,3,1}F_3^*$,  
			$ S_{7,6,4^3}F_1 \otimes S_{3^4}F_3^*$. \\
			The decoration with the largest value of $k$ appearing for the format $(1,5,8,4)$ is $ S_{10^4,9}F_1 \otimes S_{6^4}F_3^*\otimes$.
		\end{itemize}
		
	In total, we have only $2$ Herzog classes for the format $(1,5,6,2)$, $7$ Herzog classes for the format $(1,6,7,2)$, $11$ Herzog classes for the format $(1,5,7,3)$, $53$ Herzog classes for the format $(1,7,8,2)$, and $95$ Herzog classes for the format $(1,5,8,4)$.

		\subsection{Infinite families of Herzog classes of ideals with resolution of format $(1,6,8,3)$}
		\label{sec:1683}
		
		%\subsection{Infinite families with format $(1,6,8,3)$}
		
		The format of resolution $(1,6,8,3)$ is the smallest non-Dynkin format. The Kac-Moody Lie algebra associated to this format is not finitely generated and therefore there are infinitely many  higher structure maps that are possibly nonzero. This means that we expect to see infinitely many Herzog classes of ideals having minimal free resolution of this format. Computational experiments show that there are so many Herzog classes that a complete classification seems to be hard. However we can provide examples of a few nice families. \\
		
		For $n \geq 2$, set $a= \frac{n(3n+1)}{2}$. Define 
		$$ \s_{I_n}= S_{a+2,a,a,a-2n+1, a-2n+1, a-2n+1} F_1 \otimes S_{a-n+1,a-n+1,a-n} F_3^*, $$
		$$ \s_{H_n}= S_{a-n+1, a-n+1, a-n, a-3n+3,a-3n+1,a-3n+1} F_1 \otimes S_{a-2n+1,a-2n+1,a-2n+1} F_3^*, $$
		$$ \s_{J_n}= S_{a-2n+1, a-2n+1, a-2n+1, a-4n+3,a-4n+3,a-4n+2} F_1 \otimes S_{a-3n+3,a-3n+1,a-3n+1} F_3^*. $$
		
		It is not hard to check using the formulas in Examples \ref{minimalminimal} and \ref{maximalminimal} that, if $\rightsquigarrow$ denotes the smallest minimal link, we have $$ I_n \rightsquigarrow H_n \rightsquigarrow J_n \rightsquigarrow I_{n-1}, $$ and, if $\leftsquigarrow$ denotes the largest minimal link, we have $$ I_n \leftsquigarrow H_n \leftsquigarrow J_n \leftsquigarrow I_{n-1}. $$
		
		%In the opposite direction all links are maximal minimal, which means that one has to choose a regular sequence among minimal generators which does not contain any of the elements of a regular sequence defining the minimal minimal link.
		For $n= 1$, we have $\s_{I_1}= S_{4,2,2,1, 1, 1} F_1 \otimes S_{2,2,1} F_3^*$. The smallest minimal link of $\s_{I_1}$ produces the decoration of the almost complete intersection of format $(1,4,6,3)$. By induction this shows that for every $n$, $\s_{I_n}$, $\s_{J_n}$ and $\s_{H_n}$ are decorations and they are clearly of ideals with resolution of format $(1,6,8,3)$. Other infinite families of decorations with the same formats can be obtained with analogous methods. For instance, starting by $\s_{I_1'}= S_{3,3,2,1, 1, 1} F_1 \otimes S_{3,1,1} F_3^*$ and linking up using an infinite sequence of largest minimal links will produce three other infinite families of decorations with a similar behavior.
		
		%\bigskip 

		%For $n \geq 2$, set $a= \frac{n(3n+1)}{2} +1 $. Define 
		%$$ \s_{I'_n}= S_{a,a,a-1,a-2n, a-2n, a-2n} F_1 \otimes S_{a-n+1,a-n-1,a-n-1} F_3^*, $$
		%$$ \s_{H_n}= S_{a-n+1, a-n+1, a-n, a-3n+3,a-3n+1,a-3n+1} F_1 \otimes S_{a-2n+1,a-2n+1,a-2n+1} F_3^*, $$
		%$$ \s_{J_n}= S_{a-2n+1, a-2n+1, a-2n+1, a-4n+3,a-4n+3,a-4n+2} F_1 \otimes S_{a-3n+3,a-3n+1,a-3n+1} F_3^*. $$
		%Then, if $\rightsquigarrow$ denotes a minimal minimal link, we have $ I'_n \rightsquigarrow H'_n \rightsquigarrow J'_n \rightsquigarrow I'_{n-1}. $ In the opposite direction all links are maximal minimal, which means that one has to choose a regular sequence among minimal generators which does not contain any of the elements of a regular sequence defining the minimal minimal link.
		%For $n= 1$, we have $\s_{I_1}= S_{3,3,2,1, 1, 1} F_1 \otimes S_{3,1,1} F_3^*$. A minimal minimal link of $\s_{I_1}$ produces the functor of the hyperplane section of format $(1,5,7,3)$. 

		\section{Classifying the admissible decorations}
		\label{sec:admissible}
		
		In this section we consider the problem of classifying which pairs of partitions (or Schur functors) are indeed decorations of vertices of the licci graph. Let us consider a pair of Schur functors of the form $$ \s= S_{\lambda_1, \ldots, \lambda_{d+3}} F_1 \otimes S_{\mu_1, \ldots, \mu_{t}} F_3^* $$ where $\lambda_i, \mu_j$ are positive integers (listed in non-increasing order) such that
		$\sum_{i\geq 1} \lambda_i = 2k+1$ and $\sum_{j \geq 1} \mu_j = k$.  
		
		We know that $\s$ is a decoration if and only if after applying a finite number of times the operation corresponding to the smallest minimal linkage (see Example \ref{minimalminimal}) we obtain the decoration $\bigwedge^3 F_1 \otimes F_3^*$ corresponding to complete intersections. Since we will now work using smallest minimal links, if not otherwise specified, we set $p= p(\s)= \sum_{i\geq 4} \lambda_i - k = k+1 - \lambda_1 - \lambda_2 - \lambda_3.$ 
		
		We can immediately find several restrictions that prevent many pairs of Schur functors of the above form from being decorations:
		
		\begin{prop}
			\label{restrictions}
			Let $\s = \s_I$ be a decoration of a vertex of the licci graph $\mathrm{Licci}_3$. Then:
			\begin{enumerate}
				\item[(1)] $\lambda_1 + \lambda_2 \leq k+1$.
				\item[(2)] $\mu_1 + \lambda_1 + \lambda_2 > k+1$.
				\item[(3)] $\sum_{i \geq 1} \lambda_i^2 + \sum_{j \geq 1} \mu_j^2 = (k+1)^2.$
				\item[(4)] $\lambda_1+\mu_1 \leq k+1$.
			\end{enumerate}
		\end{prop}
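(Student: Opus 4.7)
The plan is to deduce (1) and (4) from Theorem~\ref{thm:graph-edges-partitions} applied to specific links, to verify (3) via an invariant-preserving induction, and to establish (2) by translating the statement into a relation on the weight $\sigma\omega_{x_1}$.

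For (1), I would apply the smallest minimal link (Example~\ref{minimalminimal}) by setting $\lambda'_i = \lambda_i$ for $i = 1, 2, 3$. Theorem~\ref{thm:graph-edges-partitions} guarantees that the output is again a valid decoration, so the entries of $\boldsymbol{\lambda}^{\mathrm{link}}$ must be non-negative; its smallest among the first three components is $\lambda_3 + p$, with $p = k + 1 - \lambda_1 - \lambda_2 - \lambda_3$, and $\lambda_3 + p \geq 0$ rearranges directly to $\lambda_1 + \lambda_2 \leq k + 1$. For (4), I would instead take the link with $\lambda'_1 = \lambda_1$ and $\lambda'_2 = \lambda'_3 = 0$, producing a decoration with $k^{\mathrm{link}} = 2k + 1 - \lambda_1$ whose two largest parts are $k + 1$ and $\max(k + 1 - \lambda_1, \mu_1)$. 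Applying (1) to this new decoration yields $(k + 1) + \max(k + 1 - \lambda_1, \mu_1) \leq 2k + 2 - \lambda_1$, forcing $\mu_1 \leq k + 1 - \lambda_1$.

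For (3), I would set $Q(\s) := (k + 1)^2 - \sum_i \lambda_i^2 - \sum_j \mu_j^2$ and show that it is preserved by every link. Expanding, using that $\boldsymbol{\lambda}^{\mathrm{link}}$ is the reordering of $(\lambda'_1 + p, \lambda'_2 + p, \lambda'_3 + p, \mu_1, \ldots, \mu_t)$ while $\boldsymbol{\mu}^{\mathrm{link}}$ is obtained from $\boldsymbol{\lambda}$ by deleting $\lambda'_1, \lambda'_2, \lambda'_3$, gives
\[
Q(\s^{\mathrm{link}}) - Q(\s) = 2p\bigl(k + 1 - \lambda'_1 - \lambda'_2 - \lambda'_3\bigr) - 2p^2 = 0,
\]
since $p = k + 1 - \lambda'_1 - \lambda'_2 - \lambda'_3$. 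Because $Q$ vanishes on the complete intersection and every licci ideal is linked to a complete intersection in finitely many steps, so by Theorem~\ref{thm:licci-graph} every vertex of $\mathrm{Licci}_3$ is reachable from the complete-intersection vertex along edges, we conclude $Q \equiv 0$.

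Finally, for (2) I would pass to the weight description $\sigma\omega_{x_1} = \sum a_i \omega_i$. Combining \eqref{eq:weights-to-partitions} with the column-sum identity $\sum_j j\, a_{y_j} = \sum_j j\, a_{z_j}$ established inside the proof of Theorem~\ref{thm:graph-edges-partitions}, a short manipulation produces the clean relation
\[
\mu_1 + \lambda_1 + \lambda_2 - (k + 1) = -a_{z_1}.
\]
The dominant Weyl chamber is a fundamental domain for the $W$-action on weights, so $\omega_{x_1}$ is the only dominant weight in its $W$-orbit. Hence for a proper ideal $I$ we have $\sigma \neq \mathrm{id}$, so $\sigma\omega_{x_1}$ is not dominant; the constraint $\sigma \in \leftindex^{z_1}{\overline{W}}^{x_1}$ allows the sole negative coefficient at position $z_1$, so $a_{z_1} \leq -1$ and the strict inequality (2) follows. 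The most involved step will be deriving the weight identity behind (2)—keeping aligned the definitions of $\boldsymbol{\lambda}, \boldsymbol{\mu}$, the relation $2k+1 = \sum_i \lambda_i$, and the column-sum identity—while the bookkeeping for (1), (3), (4) amounts to direct substitutions.
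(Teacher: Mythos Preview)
Your proposal is correct. Parts (1) and (3) match the paper's approach exactly: (1) via non-negativity of $\lambda_3+p$ under the smallest minimal link, and (3) by showing the quadratic expression is link-invariant and checking the complete-intersection base case (the paper defers this to its Theorem~\ref{squareformulas} in arbitrary codimension, but the argument is the same).

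For (4) and especially (2) you take genuinely different routes. The paper proves (4) by applying (1) to the \emph{smallest minimal link} $\s'$: its two parts $\lambda_1+p$ and $\mu_1$ satisfy $(\lambda_1+p)+\mu_1\leq (k+p)+1$, which cancels $p$. Your choice $\lambda'=(\lambda_1,0,0)$ also works but is slightly more elaborate. For (2), the paper argues combinatorially: if $\mu_1+\lambda_1+\lambda_2\leq k+1$ then $\mu_1\leq\lambda_3+p$, so the first three parts of $\s'$ are $\lambda_1+p,\lambda_2+p,\lambda_3+p$; one computes $p(\s')=-p$, hence the smallest minimal link of $\s'$ returns $\s$, contradicting the fact that iterated smallest minimal links must reach a complete intersection. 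Your approach instead extracts the exact identity $\mu_1+\lambda_1+\lambda_2-(k+1)=-a_{z_1}$ (which indeed follows by writing $\lambda_1+\lambda_2=(2k+1)-\sum_j j\,a_{y_j}$ and substituting the column-sum identity together with $k=\sum_{j\geq2}(j-1)a_{z_j}$, $\mu_1=\sum_{j\geq2}a_{z_j}$), and then uses that $\sigma\omega_{x_1}$ cannot be dominant for $\sigma\neq\mathrm{id}$. This is more conceptual and yields a sharper statement (the precise value of $a_{z_1}$), at the cost of invoking the Tits-cone/fundamental-domain fact for Kac--Moody Weyl groups; the paper's argument stays entirely within the elementary combinatorics of partitions.
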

		
		\begin{proof}
			Let $\s'$ be the pair obtained from $\s$ by smallest minimal link. \\
			(1)  If $\lambda_1 + \lambda_2 > k+1$, by definition of $p$, we would get $\lambda_3 + p < 0$. This is a contradiction since $\s'$ cannot have negative indices in any of the partitions. \\
			(2) We can assume that $\s$ is not the decoration of a complete intersection, since otherwise it would satisfy condition (2).
			If $\mu_1 + \lambda_1 + \lambda_2 \leq k+1$, we would have $\mu_1 \leq p+\lambda_3$. Hence, $$ s'= S_{\lambda_1+p, \lambda_2+p, \lambda_3+p, \mu_1, \ldots, \mu_t}F'_1 \otimes S_{\lambda_4, \ldots, \lambda_{d+3}}F_3'^*, $$ and $p(\s')= \sum_{j \geq 1} \mu_j - (k+p)= -p.$ If $\s''$ denotes the smallest minimal link of $\s'$ we get now $\s''=\s.$ It follows that $\s$ cannot be a decoration since it cannot be linked to the decoration of a complete intersection by any sequence of smallest minimal links. \\
			(3) We prove a more general version of this result in arbitrary codimension in Theorem \ref{squareformulas}. This case follows as a specialization of that theorem to the case $c=3$. \\
			%We use the formula for the graded minimal free resolution of $I$ given in (\ref{genericcomplex}). Let $e_1, \ldots, e_{d+3}$ and $g_1, \ldots, g_t$ denote basis for the first and third free module in the resolution.
			%Computing the degree of the entries of the higher structure map associated to $\s$ we find the formula
			%$$ 0 = \sum_{i=1}^{d+3} \lambda_i \deg(e_i) - \sum_{j=1}^{t} \mu_j \deg(g_j) = \sum_{i=1}^{d+3} \lambda_i (k+1-\lambda_i) - \sum_{j=1}^{t} \mu_j (k+1+\mu_j). $$ Easy calculations lead to the formula in (3). \\
            (4)  %Let $\s_J$ be the decoration obtained from $\s$ by taking the smallest minimal link. 
            Applying (1) to $\s'$, we obtain $\lambda_1+p + \mu_1 \leq \kappa(J)+1=k+p+1$. The thesis clearly follows.
		\end{proof}

		\begin{cor}
			\label{restrictions2}
			Let $\s = \s_I$ be a decoration of a vertex of the licci graph in codimension 3.
			The following assertions hold:
			\begin{enumerate}
				\item[(1)] $\mu_2=0$ if and only if $I$ is Gorenstein and if and only if $\lambda_1 = 1$.
				\item[(2)] $\lambda_5=0$ if and only if $I$ is an almost complete intersection.
				\item[(3)] $\lambda_1 = k$ if and only if $I$ is an hyperplane section of a perfect ideal of height 2.
			\end{enumerate}
		\end{cor}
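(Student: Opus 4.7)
The plan is to derive all three equivalences from two ingredients already in the paper: Remark~\ref{re1}, which relates the lengths of $\boldsymbol\lambda$ and $\boldsymbol\mu$ to the number of generators $b = d+3$ and the type $t$ respectively; and Proposition~\ref{restrictions}, in particular the quadratic identity $(3)$ and the inequality $(4)$ $\lambda_1 + \mu_1 \leq k+1$.

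Part (2) is immediate: by Remark~\ref{re1}, $\lambda_5 = 0$ iff $b \leq 4$, i.e., iff $I$ has at most $c+1 = 4$ generators, which is exactly the defining condition for an almost complete intersection (counting the complete intersection as a degenerate case). For Part (1), I would first use Remark~\ref{re1} to get $\mu_2 = 0 \iff t = 1 \iff I$ is Gorenstein. To connect this with $\lambda_1 = 1$, I would argue in two steps. First, if $\mu_2 = 0$ then $\mu_1 = k$, so Proposition~\ref{restrictions}(4) forces $\lambda_1 \leq k+1 - k = 1$; since $\lambda_1 \geq 1$ for any proper ideal, $\lambda_1 = 1$. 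Conversely, if $\lambda_1 = 1$ then every part of $\boldsymbol\lambda$ lies in $\{0,1\}$, giving $\sum \lambda_i^2 = \sum \lambda_i = 2k+1$, and Proposition~\ref{restrictions}(3) yields $\sum \mu_j^2 = (k+1)^2 - (2k+1) = k^2$. Combined with $\sum \mu_j = k$ and $\mu_1 \leq k$, the bound $\sum \mu_j^2 \leq \mu_1 \sum \mu_j \leq k^2$ must be an equality, forcing $\mu_1 = k$ and $\mu_2 = 0$.

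For Part (3), the forward implication again reduces to extremal analysis of Proposition~\ref{restrictions}(3). Assuming $\lambda_1 = k$, we have $\sum_{i \geq 2}\lambda_i = k+1$ and $\sum_j \mu_j = k$, and since $n^2 \geq n$ for positive integers with equality iff $n=1$,
\[
\sum_{i \geq 2}\lambda_i^2 + \sum_j \mu_j^2 \;\geq\; (k+1) + k \;=\; 2k+1 \;=\; (k+1)^2 - k^2.
\]
Equality must hold, forcing every part of $\boldsymbol\lambda$ past the first, and every part of $\boldsymbol\mu$, to equal $1$. Hence $(\boldsymbol\lambda,\boldsymbol\mu) = ((k,1^{k+1}),(1^k))$, which by the corresponding row in the tables of Section~\ref{sec:tables} is exactly the decoration of the hyperplane section of a perfect codimension $2$ ideal. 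The converse direction follows by simply reading $\lambda_1 = k$ off that same row.

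No real obstacle is anticipated: the entire argument is bookkeeping around Proposition~\ref{restrictions}, whose substantive content (the quadratic identity) is proven separately in arbitrary codimension in Theorem~\ref{squareformulas}. The only minor subtlety is making sure to treat $\lambda_1 \geq 1$ and the positivity of parts of partitions correctly so that the saturation of the $n^2 \geq n$ bound really does pin down the partitions uniquely.
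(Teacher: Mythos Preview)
Your proposal is correct, and for parts (1) and (2) it is essentially the same as the paper's argument. The one notable difference in (1) is the converse direction $\lambda_1 = 1 \Rightarrow \mu_2 = 0$: the paper uses Proposition~\ref{restrictions}(2), observing that $\lambda_1 = \lambda_2 = 1$ forces $\mu_1 > k-1$, hence $\mu_1 = k$. Your route via the quadratic identity (3) is equally valid, just slightly longer.

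For part (3) forward, your approach is genuinely different. The paper invokes Theorem~\ref{genericcomplexthm} to produce a homogeneous representative in the Herzog class with a generator of degree $k+1-\lambda_1 = 1$, and then argues geometrically that a linear change of coordinates exhibits it as a hyperplane section. You instead squeeze the quadratic identity against the trivial bound $n^2 \geq n$ to pin down the entire pair of partitions as $((k,1^{k+1}),(1^k))$, and then match this against the tables. Your argument is more self-contained combinatorially and avoids the appeal to the graded structure theorem; the paper's argument, on the other hand, explains \emph{why} such an ideal is a hyperplane section rather than just identifying the decoration. Both routes ultimately rely on the same implicit fact---that being (in the Herzog class of) a hyperplane section is determined by the decoration---so neither is strictly more elementary. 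For the converse of (3), you and the paper both read it off the tables.
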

		
		\begin{proof}
			(1) The first equivalence is straightforward. The condition $\mu_2=0$ is equivalent to $\mu_1=k$ which implies $\lambda_1=1$ by Proposition \ref{restrictions}(4).
            Similarly, if $\lambda_1 =1$, the condition $\mu_1=k$  follows by Proposition \ref{restrictions}(2). \\
            %Assuming the third condition, if by way of contradiction $\mu_2 \neq 0$, it follows that $\mu_1 < k$ and $\s$ does not satisfy condition (2) of Proposition \ref{restrictions}. \\
			(2) This is straightforward. \\
			(3) If $\lambda_1=k$, by Theorem \ref{genericcomplexthm} we can find an homogeneous ideal in the Herzog class of $I$ with a generator of degree 1.
			%the formulas in (\ref{genericcomplex}), 
			%we obtain that one generator of $I$ has degree one. Since $I$ is homogeneous, 
            Up to apply a linear change of coordinate we find that $I$ is in the same Herzog class of an hyperplane section. Therefore $I$ has to be an hyperplane section. % Any specialization of an hyperplane section is still an hyperplane section, so $\s$ corresponds to one of the Herzog classes of hyperplane sections. 
            Conversely, looking at the table in Section \ref{sec:tables}, one can easily describe the decorations of each hyperplane section starting from a complete intersection, using Algorithm \ref{algo:licci-examples}.
            % here we need again Theorem \ref{genericcomplexthm} \ec.
		\end{proof}

        The invariant $k=\kappa(I)$ is apparently not strictly related with the number of generators $b=d+3$ of a licci ideal $I$. We clearly have the inequality $b \leq 2k+1$ and, by Corollary \ref{restrictions2}, the equality $b = 2k+1$ holds if and only if $I$ is Gorenstein. We can prove that, in case $I$ is not Gorenstein, there is a more accurate bound for $b$ in function of $k$.

\begin{prop}
\label{boundfornumgens}
Let $\s_I$ be the decoration of a non-Gorenstein licci ideal $I$ of codimension 3, minimally generated by $b$ elements. 
Then $b \leq \kappa(I)+2$.
\end{prop}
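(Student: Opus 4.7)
The plan is to prove the bound by induction on $k = \kappa(I)$, using the tight double link of Example~\ref{tightdouble} to strictly decrease $k$. The base case $k = 2$ is immediate: Proposition~\ref{restrictions} combined with $\lambda_1 \geq 2$ from Corollary~\ref{restrictions2}(1) admits only the decoration $(\boldsymbol{\lambda},\boldsymbol{\mu}) = ((2,1,1,1),(1,1))$, for which $b = 4 = k + 2$.

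For the inductive step $k \geq 3$, I apply the tight double link to $I$ and read off from Example~\ref{tightdouble} that the resulting ideal $J$ has first partition with entries
$(k+1-\lambda_2-\mu_1,\; k+1-\lambda_1-\mu_1,\; \lambda_3,\; \lambda_4,\ldots,\lambda_b)$,
the first two of which are non-negative by Proposition~\ref{restrictions}(4). Let $z \in \{0,1,2\}$ count the zeros among these first two entries; then the number of minimal generators of $J$ is $b_J = b - z$, and Proposition~\ref{restrictions}(2) gives $k' := \kappa(J) = 2k+1 - \lambda_1 - \lambda_2 - \mu_1 \leq k - 1$. When $J$ is non-Gorenstein, the inductive hypothesis $b_J \leq k' + 2$ combines with case-specific identities to yield $b \leq k+2$: for $z=0$ this is immediate; for $z = 1$, the relation $\lambda_1+\mu_1 = k+1$ (forced by $\lambda_1 \geq \lambda_2$) gives $k' = k - \lambda_2 \leq k-1$; and for $z = 2$, the relations $\lambda_1 = \lambda_2 \geq 2$ and $\lambda_1+\mu_1 = k+1$ give $k' = k - \lambda_1 \leq k-2$.

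The main obstacle is the case where $J$ is Gorenstein, so that the induction hypothesis fails to bound $b_J$. In this situation all nonzero parts of the first partition of $\s_J$ equal $1$, which forces $\lambda_3 = \cdots = \lambda_b = 1$ together with a tight relation between $\lambda_1, \lambda_2, \mu_1$ (depending on $z$), so the decoration reduces to a one-parameter family in $a = \lambda_1$ (or $\lambda_2$, for $z=1$). I intend to close the argument by substituting into the quadratic identity of Proposition~\ref{restrictions}(3) and using the elementary estimates
\[
    \sum_{j \geq 2} \mu_j \;\leq\; \sum_{j \geq 2} \mu_j^2 \;\leq\; \Bigl(\sum_{j \geq 2} \mu_j\Bigr)^{2}
\]
on the tail of $\boldsymbol{\mu}$. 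Together with Proposition~\ref{restrictions}(1), this squeezes $a$ to the boundary value $a = (k+1)/2$ (with $k$ odd) when $z \in \{0, 2\}$ and $a = k/2$ (with $k$ even) when $z = 1$; in each subcase a direct substitution yields $b = k+2$ exactly, completing the induction.
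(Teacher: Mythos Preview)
Your argument is correct and follows the same overall strategy as the paper: induct on $k$ via the minimal tight double link of Example~\ref{tightdouble}, then split according to how many of the first two entries of the new $\boldsymbol\lambda$ vanish. The case analysis for non-Gorenstein $J$ matches the paper's, and your computations $k' = k-\lambda_2$ (for $z=1$) and $k' = k-\lambda_1$ (for $z=2$) are exactly what is needed to close those cases.

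Where you diverge from the paper is in explicitly isolating the possibility that $J$ is Gorenstein, which the paper silently folds into its use of the inductive hypothesis. Your treatment of this case via the quadratic identity and the tail estimates on $\sum_{j\geq 2}\mu_j^2$ is valid, but it is considerably more work than necessary. A one-line alternative: if $J$ is Gorenstein then its second partition $(\lambda_3+p,\mu_2,\ldots,\mu_t)$ has exactly one nonzero part; since $I$ is non-Gorenstein, $\mu_2>0$, forcing $t=2$ and $\lambda_3+p=0$, i.e.\ $\lambda_1+\lambda_2=k+1$. Combined with $\lambda_3=\cdots=\lambda_b=1$ this gives $\sum\lambda_i=(k+1)+(b-2)=2k+1$, hence $b=k+2$ directly---no quadratic identity or parity split required. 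Your route reaches the same conclusion, just with more machinery; in particular the lower bound $\sum_{j\geq 2}\mu_j \leq \sum_{j\geq 2}\mu_j^2$ is never actually needed.
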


\begin{proof}
We can work by induction on $k=\kappa(I)$, observing by inspection that the result is true for small values of $k$ (see Section 4). Consider the tight double link $\s_{I'}$ of $\s_I$ defined as in Example \ref{tightdouble}. Adopting the same notation as in that example, we have $\kappa(I')=k+p+q< k$ and by inductive hypothesis the number of generators $b(I')\leq \kappa(I') + 2 < k+2.$ Looking at the partitions associated to $\s_{I'}$, we also observe that $b-2 \leq b(I') \leq b$ and the second inequality is strict if and only if one of $\lambda_1+p+q$ and $\lambda_2+p+q$ is zero. If $b(I') > b-2$ we are done. If $b(I')=b-2$, then $\lambda_1+p+q= \lambda_2+p+q=0$. If by way of contradiction we suppose $b > k+2$, then $b-2 \leq k+p+q+2 < k+2$, hence $b=k+3$ and $p+q=-1$. This implies $\lambda_1 = 1$, which is impossible by Corollary \ref{restrictions2}, since $I$ is not Gorenstein.
\end{proof}

	%	We show now that the quantity $\lambda_1 + \lambda_2$ is maximized in the case the elements of the Herzog class are directly linked to ideals with strictly smaller total Betti number. We will investigate further properties of decorations satisfying this condition in Section \ref{sec:monomials}.
		
	%	\begin{lem}
	%		\label{koszulrelation}
	%		Let $\s_I$ be a decoration. 
	%		The following conditions are equivalent:
	%		\begin{enumerate}
		%		\item $I$ is directly linked to an ideal with smaller total Betti number.
			%	\item $\lambda_1 + \lambda_2 = k+1$.
			%\end{enumerate}
		%\end{lem}
		
		%\begin{proof}
	%		It is enough to consider the case when  $I$ defines a rigid algebra. \ec
%			If $I$ is directly linked to an ideal with smaller total Betti number, then $I$ must have a Koszul relation of its generators among the minimal sygyzies. Hence, by  Theorem \ref{genericcomplexthm}, \ec looking at the graded Betti numbers of the resolution (\ref{genericcomplex}) of $I$, there exists distinct $i,j$ such that $k+1-\lambda_i + k+1 - \lambda_j = k+1$. It follows that $\lambda_i + \lambda_j = k+1 $, and by item (1) of Proposition \ref{restrictions}, also $\lambda_1 + \lambda_2 = k+1$. Conversely, if $\lambda_1 + \lambda_2 = k+1$, setting $p= \sum_{i\geq 4} \lambda_i -k$, arguing as in the proof of item (1) of Proposition \ref{restrictions}, we obtain $p= -\lambda_3$ and therefore the smallest minimal link of $I$ has smaller total Betti number than $I$.
%		\end{proof}
		
		Next we show that the only decorations involving a pure exterior power of $F_3^*$ in the second partition (equivalently $\mu_1=1$) correspond either to almost complete intersections or to hyperplane sections. 
		
		\begin{prop}
			\label{exteriorF3}
			Let $\s = \s_I$ be a decoration of a vertex of the licci graph $\mathrm{Licci}_3$. Suppose that $\mu_1=1$. %(equivalently $\mu_j = 1$ for every $j=1, \ldots, t$). 
			Then $I$ is either an almost complete intersection or an hyperplane section. 
		\end{prop}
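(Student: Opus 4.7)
The plan is to extract enough algebraic constraints from the hypothesis $\mu_1 = 1$ via Proposition~\ref{restrictions} that the number $b$ of generators can be pinned down. Since the $\mu_j$'s are positive integers arranged in non-increasing order, $\mu_1 = 1$ forces $\boldsymbol\mu = (1^k)$ and hence $\sum_j \mu_j^2 = k$. I would then combine parts (1) and (2) of Proposition~\ref{restrictions} to deduce $\lambda_1 + \lambda_2 = k+1$, and invoke part (3) to get $\sum_i \lambda_i^2 = (k+1)^2 - k = k^2 + k + 1$. This already handles the hyperplane-section case cleanly: if $\lambda_2 = 1$, then $\lambda_1 = k$, so Corollary~\ref{restrictions2}(3) immediately identifies $I$ as a hyperplane section.

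The remaining work is to show that the complementary case $\lambda_2 \geq 2$ forces $b = 4$, i.e., that $I$ is an almost complete intersection. Setting $c := \lambda_2 \geq 2$ and $a := \lambda_1 = k + 1 - c$ (so $a \geq c$ and $c \leq (k+1)/2$), I would subtract the contributions of the first two parts to rewrite the constraints as
\[
\sum_{i \geq 3} \lambda_i = k, \qquad \sum_{i \geq 3} \lambda_i^2 = 2ac - k,
\]
with each $\lambda_i$ in the integer interval $[1,c]$ for $i \geq 3$.

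The key trick should be a pointwise comparison coupling the two sums: for any integer $\lambda_i \in [1,c]$, we have $\lambda_i(\lambda_i - 1) \leq c(\lambda_i - 1)$, since both factors in the difference $(c - \lambda_i)(\lambda_i - 1)$ are nonnegative. Summing over $i \geq 3$ gives $2(ac - k) \leq c\bigl(k - (b-2)\bigr)$, and substituting $a = k + 1 - c$ rearranges to
\[
b \leq f(c) := 2c + \frac{2k}{c} - k.
\]
Since $f$ is strictly convex in $c$ (its second derivative is $4k/c^3 > 0$), on the admissible range $[2, (k+1)/2]$ its maximum is attained at an endpoint. A direct computation yields $f(2) = 4$ and $f\bigl((k+1)/2\bigr) = (5k+1)/(k+1) < 5$, so $b \leq 4$. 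Combined with the observation that $b = 3$ would force a complete intersection and hence $\lambda_2 = 1$ (contradicting the case assumption), this gives $b = 4$, so $I$ is an almost complete intersection.

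The main obstacle is spotting the correct pointwise inequality $\lambda_i(\lambda_i - 1) \leq c(\lambda_i - 1)$: it is really this step that couples the linear and quadratic sum conditions in a usable way. Once it is isolated, the rest reduces to a single convexity check with a comfortable margin, and the subsequent case analysis is immediate.
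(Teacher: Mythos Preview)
Your proof is correct and takes a genuinely different route from the paper's. The paper argues by induction on $k$ via the minimal tight double link (Example~\ref{tightdouble}): it forms $\s_{I'}$ with $\kappa(I')=k-1$, applies the inductive hypothesis to classify $I'$ as an almost complete intersection or a hyperplane section, and then traces this back to $I$ through a somewhat delicate case analysis. Your argument is instead a direct, non-inductive count: after extracting $\lambda_1+\lambda_2=k+1$ and $\sum_i\lambda_i^2=k^2+k+1$ from Proposition~\ref{restrictions}, the pointwise inequality $\lambda_i(\lambda_i-1)\le c(\lambda_i-1)$ couples the linear and quadratic constraints into the bound $b\le 2c+2k/c-k$, and convexity finishes. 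What your approach buys is a cleaner, self-contained argument that relies only on the basic restrictions (including the squares formula, which the paper in fact proves in arbitrary codimension in Theorem~\ref{squareformulas}); what the paper's approach buys is that the tight double link machinery is reused throughout Sections~\ref{sec:admissible}--\ref{sec:toralgebra} (e.g.\ Proposition~\ref{boundfornumgens}, Theorem~\ref{conjectureLars1}), so its appearance here is consistent with the surrounding narrative. One minor remark: your convexity step tacitly assumes the interval $[2,(k+1)/2]$ is nonempty, i.e.\ $k\ge 3$; for $k\le 2$ the case $\lambda_2\ge 2$ is vacuous since $a\ge c$ and $a+c=k+1$ already force $c\le 1$, so this is harmless but worth a sentence.
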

		
		\begin{proof}
			Using the restrictions obtained in Proposition \ref{restrictions} and looking at Section \ref{sec:dynkin}, we can easily check that the thesis is true for small values of $k$. Thus we can work by induction on $k$. By condition (1) and (2) of Proposition \ref{restrictions} we must have $\lambda_1 + \lambda_2 = k+1$ where $k= \kappa(I)$. By Corollary \ref{restrictions2}, we can also assume that $I$ is not Gorenstein and $\mu_2=1$.

Consider the minimal tight double link $\s_{I'}$ of $\s$ defined as in Example \ref{tightdouble}. 
Since $\lambda_1 + \lambda_2 = k+1$, then $p= k+1-\lambda_1-\lambda_2-\lambda_3 = -\lambda_3$ and 
 $$ \s_{I'}=  S_{\lambda_{1}+p+q, \lambda_{2}+p+q, \lambda_3, \lambda_4, \ldots, \lambda_{d+3}} G_1 \otimes S_{\mu_2, \ldots, \mu_t} G_3^*, $$ where $q = \lambda_3 -\mu_1 = \lambda_3-1$.
 Observe that $\kappa(I')= k+p+q= k-1 < k$. 
 By inductive hypothesis $I'$ is either an almost complete intersection or an hyperplane section. Observe that $\lambda_1+p+q= k - \lambda_2$ and $\lambda_2+p+q= k - \lambda_1$. By Corollary \ref{restrictions2}, one of the two is zero if and only if $I$ is an hyperplane section. If we assume that $I$ is not an hyperplane section, we must then have $\lambda_{1}+p+q, \lambda_{2}+p+q \neq 0$. Hence, if $I'$ is an almost complete intersection, we have $\lambda_5=0$, which implies that also $I$ is an almost complete intersection.
 
 If $I'$ is an hyperplane section, then $k-\lambda_1= \lambda_{2}+p+q= 1$, forcing $\lambda_1 = k-1$ and $\lambda_2=2$ since $p+q=-1$. But $k-\lambda_2= \lambda_{1}+p+q$ is equal either to $\kappa(I')=k-1$ or to $1$. The first case is impossible because $\lambda_2=2$, while the second case forces $\lambda_1=\lambda_2=2$ and $k=3$, which again forces $I$ to be an almost complete intersection by the tables in Section \ref{sec:dynkin}.  
		\end{proof}

		\begin{cor}
			\label{lambda4}
			Let $\s = \s_I$ be a decoration. Suppose that $\lambda_4=1$. Then $I$ is directly linked to either an almost complete intersection or an hyperplane section. 
		\end{cor}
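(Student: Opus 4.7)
Proof proposal: The plan is to apply the smallest minimal link (Example~\ref{minimalminimal}) to $I$ and then invoke Proposition~\ref{exteriorF3} on the resulting ideal $J$. The hypothesis $\lambda_4 = 1$, together with the non-increasing ordering of $\boldsymbol{\lambda}$, forces the entries $\lambda_4, \lambda_5, \ldots, \lambda_{d+3}$ to all lie in $\{0,1\}$, with first entry equal to $1$. This observation is the engine of the proof.

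First, I would confirm that the smallest minimal link is well-defined. By Proposition~\ref{restrictions}(1), $\lambda_1 + \lambda_2 \leq k+1$, so $p = k+1 - \lambda_1 - \lambda_2 - \lambda_3 \geq -\lambda_3$. Consequently the entries $\lambda_i + p$ for $i = 1, 2, 3$ are non-negative, so $\s_J$ is a genuine decoration of a vertex of $\mathrm{Licci}_3$. By Theorem~\ref{thm:licci-graph}, $I$ is then directly linked to some representative of the Herzog class of $J$.

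The formula of Example~\ref{minimalminimal} exhibits the second partition of $\s_J$ as $(\lambda_4, \ldots, \lambda_{d+3})$, whose first entry is $\lambda_4 = 1$. Thus $\mu^J_1 = 1$, and Proposition~\ref{exteriorF3} applied to $J$ immediately concludes that $J$ is an almost complete intersection or a hyperplane section, yielding the direct link claimed in the corollary.

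The only technical point that requires care is the degenerate case $d = 1$ (equivalently, $I$ is itself an almost complete intersection), in which $\boldsymbol{\mu}^J = (1)$ forces $J$ to be the complete intersection. This falls within the conclusion via the convention adopted in Section~\ref{sec:tables}, under which the complete intersection is treated as the $k=1$ hyperplane section; this is precisely the convention already needed for the base case of the inductive proof of Proposition~\ref{exteriorF3}, so nothing further needs to be verified beyond noting that $\s_J$ is nontrivial, which is clear since $\lambda_4 = 1$ guarantees $\boldsymbol{\mu}^J$ has a nonzero part.
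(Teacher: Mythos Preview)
Your proof is correct and follows essentially the same approach as the paper: apply the smallest minimal link so that the second partition of $\s_J$ becomes $(\lambda_4,\ldots,\lambda_{d+3})$ with $\mu_1^J=1$, then invoke Proposition~\ref{exteriorF3}. Your added verifications (non-negativity of $\lambda_i+p$ via Proposition~\ref{restrictions}(1), and the degenerate case $d=1$ landing on the complete intersection) are reasonable elaborations of the paper's one-line argument and do not change the strategy.
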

		
		\begin{proof}
			Apply the smallest minimal link and use Proposition \ref{exteriorF3}.
		\end{proof}

			Thanks to Corollary \ref{lambda4}, the tables in Section \ref{sec:tables} allow to classify all the decorations such that $\lambda_4=1$.

        \section{Pairs of partitions and multiplicative structure in Tor algebra}
		\label{sec:toralgebra}

        We already recalled in Section \ref{sec:background-herzog} that the free resolution $\FF$ of a perfect ideal of codimension 3 admits an associative multiplicative structure consisting of %\bf cite \ec, which is defined by a lifting cycles procedure using the comparison maps between $\FF$ and the Koszul complex on the generators of $I$. The three maps defining this structure 
three linear maps $\bigwedge^2 F_1 \to F_2$, $F_1 \otimes F_2 \to F_3$, $\bigwedge^3 F_1 \to F_3$. In a local setting, the possible multiplicative structures can be classified modulo the maximal ideal. This correspond to study the multiplication in the Tor algebra Tor$(R/I,R/\m)$. For details about this classification see \cite{Weyman89}, \cite{AKM88}, \cite{CVWlinkage}, \cite{CV} and related papers. 

The configuration of unit elements in higher structure maps is an invariant of the Herzog classes. In particular so is the structure of multiplication in the Tor algebra. We show how, in the licci case, this structure can be detected simply by looking at the pair of partitions associated to the Herzog class of the ideal.

Let $I$ be a perfect ideal of codimension 3 in a polynomial ring $R$ and denote a given basis of the free modules $F_1, F_2, F_3$ in the minimal free resolution of $I$ by 
$\lbrace e_1, \ldots, e_{d+3}\rbrace$, $\lbrace f_1, \ldots, f_{d+t+2} \rbrace$, $\lbrace g_1, \ldots, g_{t}\rbrace$. We have:

\begin{thm}
\label{multiprelation}
Let $I$ be a licci ideal of codimension 3, with $\s_I = S_{\boldsymbol \lambda} F_1 \otimes S_{\boldsymbol \mu} F_3^*$. 
Then there exists a choice of bases for the free modules $F_i$ in a minimal free resolution of $R/I$ such that:
\begin{enumerate}
\item[(1)] The product $e_i^.e_j$ is nonzero modulo $\m$ if and only if $\lambda_i + \lambda_j = k+1$.
\item[(2)] We have $e_i^.f = g_j$ modulo $\m$ for some $f \in F_2$ if and only if $\lambda_i + \mu_j = k+1$.
\end{enumerate}
\end{thm}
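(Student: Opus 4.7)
The plan is to pass to a graded representative of the Herzog class via Theorem~\ref{genericcomplexthm}, assign degrees $\deg e_i = k+1-\lambda_i$, $\deg f_l = k+1$, $\deg g_j = k+1+\mu_j$, and choose a degree-zero DG-algebra structure on $\FF$. This is legitimate because the Tor-algebra multiplicative structure is a Herzog class invariant, as noted just before the theorem statement; so proving (1) and (2) for any one representative suffices.

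The ``only if'' direction of both (1) and (2) reduces to degree bookkeeping. The coefficient of $f_l$ in $e_i \cdot e_j \in F_2$ is a homogeneous element of $R$ of degree $k+1-\lambda_i-\lambda_j$, which vanishes if this is negative and lies in $\m$ if positive; only degree zero survives modulo $\m$. The analogous computation for $e_i \cdot f \in F_3$ with $f \in (F_2)_{k+1}$ gives the condition $\lambda_i + \mu_j = k+1$ for (2); any $f$ of higher degree lies in $\m F_2$ and contributes nothing modulo $\m$.

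For the ``if'' direction of (1) (assuming $i \neq j$), the DG-algebra Leibniz rule gives $d_2(e_i \cdot e_j) = x_i e_j - x_j e_i$ where $x_l = d_1(e_l)$. This element of $F_1$ is non-zero (its $e_i$- and $e_j$-coordinates are the non-zero generators $-x_j$ and $x_i$) and lies in degree $k+1$. Any compatible lift $e_i \cdot e_j$ in $(F_2)_{k+1}$ is unique, because $\ker d_2 \cap (F_2)_{k+1} = \mathrm{image}(d_3) \cap (F_2)_{k+1}$, and $F_3$ being generated in degrees $\geq k+2$ places $\mathrm{image}(d_3)$ in $(F_2)_{\geq k+2}$. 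So $e_i \cdot e_j$ is a non-zero element of $(F_2)_{k+1}$. Finally, since $F_2$ is generated in degree $k+1$, the submodule $\m F_2$ is supported in degrees $\geq k+2$, so $(F_2)_{k+1}$ injects into $F_2/\m F_2$; therefore $e_i \cdot e_j \not\equiv 0 \pmod{\m F_2}$.

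The main obstacle is the ``if'' direction of (2). The plan is to reduce it to (1) via linkage: given $i,j$ with $\lambda_i + \mu_j = k+1$, take $\lambda'_1 = \lambda_i$ and $\lambda'_2 = \lambda'_3 = 0$ in the linkage formula \eqref{linkageformulaeq}; the linked ideal $J$ then has $\boldsymbol\lambda^J$ containing both $\lambda_i+p = k+1$ and $\mu_j$, and their sum equals $\kappa(J)+1$. Applying (1) to $J$ (already established) produces a non-zero product of two generators of $J$ in $F_2^J/\m$. Under the mapping-cone relationship between $\FF^J$ and $\FF^I$, the generator of $F_1^J$ of degree $k+1-\lambda_i$ corresponds to $\alpha_1 = x_i = d_1(e_i)$ and the generator of degree $k+1$ corresponds to $g_j \in F_3^I$; transporting the non-vanishing back through the mapping cone produces $f \in F_2^I$ with the $g_j$-component of $e_i \cdot f$ non-zero modulo $\m$. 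Alternatively, this may be checked directly from the explicit description of the multiplicative structure of the generic model in \cite[\S5]{GNW3}, where the degree-zero components of the higher structure maps are seen to realize precisely the extremal weight contributions predicted by $\s_I$.
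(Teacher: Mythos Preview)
Your setup and the ``only if'' directions are essentially identical to the paper's proof: both reduce to the graded representative from Theorem~\ref{genericcomplexthm} and read off the degree of each structure coefficient.

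Your argument for the ``if'' direction of (1) is genuinely different from the paper's, and cleaner. The paper proves it by linking with respect to $(\lambda_i,\lambda_j,\lambda_l)$, observing that $\lambda_l+p=0$ forces the linked ideal to have fewer than $t+3$ generators, hence at least one of $e_i\cdot e_j$, $e_i\cdot e_l$, $e_j\cdot e_l$ is a unit, and then doing casework on $\lambda_l$ to isolate $e_i\cdot e_j$. You instead argue directly: the Koszul syzygy $x_ie_j-x_je_i$ is a nonzero element of $(F_1)_{k+1}$, its lift to $(F_2)_{k+1}$ is unique because $\mathrm{image}(d_3)$ sits in degrees $\geq k+2$, and $(F_2)_{k+1}$ injects into $F_2/\m F_2$ since $F_2$ is generated purely in degree $k+1$. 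This is correct and avoids both the linkage machinery and the case analysis; the paper's approach, on the other hand, has the advantage of not using the specific form of $F_2$ (which comes from the deeper Lemma~\ref{lem:special-graded-free-res}(3)).

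For the ``if'' direction of (2), your strategy matches the paper's---reduce to (1) for a linked ideal $J$---but the final step is a gap. You assert that the nonvanishing of the product in $F_2^J/\m F_2^J$ ``transports back through the mapping cone'' to give $e_i\cdot f\equiv g_j$ in $F_3^I/\m F_3^I$, but you do not explain how the multiplicative structure on $\FF^J$ relates to that on $\FF^I$. This is exactly the content the paper imports from \cite[Proposition~3.5~(3.7)]{GNW1}, which describes how the multiplication $F_1\otimes F_2\to F_3$ for $I$ is visible inside the multiplication $\bigwedge^2 G_1\to G_2$ for the linked ideal. Without that result (or a concrete substitute), the mapping-cone heuristic does not by itself control where units land in the DG structure. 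Your fallback to \cite[\S5]{GNW3} is plausible but equally unfleshed; if you want to avoid the citation, you would need to exhibit the relevant degree-zero component of the map $F_1\otimes F_2\to F_3$ explicitly for the generic model and verify it is nonzero, which is more work than you have indicated.
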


\begin{proof}
Since the Tor algebra structure is an invariant of the Herzog class, it suffices to prove the claim for a single ideal in the same class. To this end, we assume $I$ is a homogeneous ideal with minimal graded free resolution of the form described in
Theorem \ref{genericcomplexthm}, and the basis of $F_1$ maps to a SGS of $I$ (see Definition \ref{def:SGS}).
The degree of the coefficient of $e_i^.e_j$ with respect to a basis element $f \in F_2$ is given by $$\deg(e_i)+\deg(e_j)-\deg(f)= k+1-\lambda_i+k+1-\lambda_j -k-1= k+1 -\lambda_i-\lambda_j.$$ Similarly, the degree of the coefficient of $e_i^.f$ with respect to $g_j$ is $$\deg(e_i)+\deg(f)-\deg(g_j)= k+1-\lambda_i+k+1-\mu_j -k-1= k+1 -\lambda_i-\mu_j.$$ %By Proposition \ref{restrictions}, both these degrees are non-negative and 
These degrees are equal to zero if and only if the corresponding equalities $\lambda_i + \lambda_j = k+1$ and $\lambda_i + \mu_j = k+1$ hold. It follows that if one of those equalities does not hold, the corresponding product is zero modulo $\m$. 

Conversely, suppose $\lambda_i + \lambda_j = k+1$ and without loss of generality say that $\lambda_i \geq \lambda_j$. Define the minimal link of $\s_I$ with respect to the partition $\boldsymbol \lambda'= \lambda_i, \lambda_j, \lambda_l$ for some $l \neq i,j$. The corresponding $p$ is $k+1-\lambda_i-\lambda_j-\lambda_l= -\lambda_l$. Thus $\lambda_l+p =0$ and, by the formula in Remark \ref{re2}, the ideal $J$ linked to $I$ with respect to a regular sequence associated to $\lambda_i, \lambda_j, \lambda_l$ has less than $t+3$ minimal generators. Therefore at least one of the products $ e_i^.e_j, e_i^.e_l, e_j^.e_l $ is nonzero modulo $\m$. 
If $\lambda_l < \lambda_j$, then by Proposition \ref{restrictions}(1), $\lambda_i + \lambda_l, \lambda_j + \lambda_l < k+1$ and the products $ e_i^.e_l, e_j^.e_l $ vanish modulo $\m$. It follows that $ e_i^.e_j $ is nonzero modulo $\m$. If $\lambda_l \geq \lambda_i$, then $\lambda_i + \lambda_l= \lambda_j + \lambda_l = k+1$. In this case $J$ has only $t$ generators and all three products $ e_i^.e_j, e_i^.e_l, e_j^.e_l $ do not vanish modulo $\m$. Similarly, if $ \lambda_j \leq \lambda_l < \lambda_i $, then $J$ has $t+1$ generators and the products $ e_i^.e_j, e_i^.e_l$ do not vanish modulo $\m$.
In any case this proves (1).

Finally, suppose $\lambda_i + \mu_j = k+1$. Consider the minimally linked decoration with respect to the choice $\boldsymbol \lambda'= \lambda_i, \lambda_r, \lambda_s$ for some $r,s \neq i$. Let $J$ be the associated linked ideal, and observe that there are two minimal generators of $J$ corresponding to the partition elements $\lambda_i + p$ and $\mu_j$ in $\s_J$. The first of these two generators is in $I \cap J$, while the second one is not in $I$. By assumption we have $\lambda_i+p+\mu_j=k+1+p= \kappa(J)+1$. By the statement (1) of this proposition, applied to $J$, we obtain that the product corresponding to these two generators is nonzero modulo $\m$. It follows that $e_i^.f = g_j$ modulo $\m$ for some $f$ as a consequence of \cite[Proposition 3.5 (3.7)]{GNW1}. This proves (2).
%By Proposition \ref{restrictions}(1), this forces $\lambda_1 + \lambda_2 = k+1$, since by definition of $\boldsymbol \lambda$, $\lambda_1+ \lambda_2 \geq \lambda_i+\lambda_j$.
%By applying the smallest minimal link as in Example \ref{minimalminimal}, we obtain $\lambda_3+p =0$. Therefore the Herzog class of $I$ is directly linked to an Herzog class whose ideals have strictly smaller total Betti number.%It is enough to consider the case when $I$ defines a rigid algebra.
%If $I$ is directly linked to an ideal with smaller total Betti number, then $I$ must have a Koszul relation of its generators among the minimal sygyzies. Hence, by Theorem \ref{genericcomplexthm}, looking at the graded Betti numbers of the resolution (\ref{genericcomplex}) of $I$, there exists distinct $i,j$ such that $k+1-\lambda_i + k+1 - \lambda_j = k+1$. It follows that $\lambda_i + \lambda_j = k+1 $, and by item (1) of Proposition \ref{restrictions}, also $\lambda_1 + \lambda_2 = k+1$. Conversely, if $\lambda_1 + \lambda_2 = k+1$, setting $p= \sum_{i\geq 4} \lambda_i -k$, arguing as in the proof of item (1) of Proposition \ref{restrictions}, we obtain $p= -\lambda_3$ and therefore the smallest minimal link of $I$ has smaller total Betti number than $I$.
\end{proof}

In the remaining of this section we investigate the cases where the map $\bigwedge^2 F_1 \to F_2$ does not vanish modulo $\m$ and the case where there is a partial perfect pairing on the resolution induced by the map $F_1 \otimes F_2 \to F_3$.

\subsection{Licci ideals with Koszul relations among minimal syzygies}
		\label{sec:monomials}

A perfect ideal of codimension three has Koszul relations among minimal syzygies if and only if the multiplication map $\bigwedge^2 F_1 \to F_2$ does not vanish modulo the maximal (or homogeneous maximal) ideal. This condition is equivalent to the property of being linked to some ideal having strictly smaller total Betti number.
In Theorem \ref{multiprelation}, we characterize the decorations corresponding to licci ideals with Koszul relations among minimal syzygies as the ones such that $\lambda_i + \lambda_j = k+1$ for some $i,j$. In particular, by Proposition \ref{restrictions}(1) this implies necessarily that $\lambda_1 + \lambda_2 = k+1$.
%hence such that the multiplication map $\bigwedge^2 F_1 \to F_2$ does not vanish modulo the maximal (or homogeneous maximal) ideal of $R$.  
For this reason, in this section we study the family of decorations, characterized by the condition $\lambda_1 + \lambda_2 = k+1$.
		
	%	In Lemma \ref{koszulrelation}, we characterize the decoration corresponding to ideals with Koszul relations among minimal syzygies, hence such that the multiplication map $\bigwedge^2 F_1 \to F_2$ does not vanish modulo the maximal (or homogeneous maximal) ideal of $R$. This condition is equivalent to the property of being linked to an Herzog class of ideals having strictly smaller total Betti number. In this section we study this family of decoration, which is characterized by the condition $\lambda_1 + \lambda_2 = k+1$.
		
		In \cite{hu-ul4} it is shown that any licci $\m$-primary monomial ideal is directly linked by the monomial regular sequence of smallest degree to a monomial ideal with smaller total Betti number. Therefore the Herzog classes of licci monomial ideals of codimension 3 in $K[x,y,z]$ are part of this family.
		
		We prove that for any fixed type $t$ or number of generators $b= d+3$, the pairs of partitions corresponding to decorations such that $\lambda_1 + \lambda_2 = k+1$ are bounded, hence they are finitely many. \\

		For any decoration $\s$ such that $\lambda_1 + \lambda_2 = k+1$ we can define a special linked decoration
		$$ p_0(\s)=  S_{\lambda_{1}, \lambda_{2}, \mu_1, \ldots, \mu_t}F_1 \otimes S_{\lambda_3, \ldots, \lambda_{d+3}}F_3^*. $$ The link from $\s$ to $p_0(\s)$ is defined choosing $\lambda_1', \lambda_2', \lambda_3' = \lambda_1, \lambda_2, 0$ and observing that the corresponding value of $p$ is zero. It is possible in some cases to have $p_0(\s )= \s$.
		
		We define now for $t \geq 1$, a family of decorations whose partitions are expressed in terms of consecutive powers of $2$: $$ \mathcal{H}_t = S_{2^{t-1},2^{t-1},2^{t-2},2^{t-2}, \ldots, 2,2,1,1,1}F_1 \otimes S_{2^{t-1},2^{t-2}, \ldots, 2,1}F_3^*. $$ Observe that $\mathcal{H}_1$ is the decoration of the complete intersections and next we have $\mathcal{H}_2= S_{2,2,1,1,1}F_1 \otimes S_{2,1}F_3^*$, $\mathcal{H}_3 = S_{4,4,2,2,1,1,1}F_1 \otimes S_{4, 2,1}F_3^*$, and so on.
		Recall that by $\gen(\s)$, we mean the generic link defined in Example \ref{genericlink}.
		We prove a couple of lemmas about this kind of decorations.
		
		\begin{lem}
			\label{hunekesequence}
			For $t \geq 1$, we have $ \mathcal{H}_{t+1} = p_0(\gen(\mathcal{H}_{t})). $ In particular each $\mathcal{H}_t$ is a decoration such that $\lambda_1 + \lambda_2 = k+1$.
		\end{lem}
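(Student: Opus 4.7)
The proof is a direct unpacking of the definitions, essentially a bookkeeping exercise with partitions, so my plan is to make the arithmetic explicit in three short steps.

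First, I would record the basic invariants of $\mathcal{H}_t$. The partition $\boldsymbol\mu = (2^{t-1},2^{t-2},\ldots,2,1)$ has $t$ parts summing to $k = 2^t - 1$, and $\boldsymbol\lambda = (2^{t-1},2^{t-1},\ldots,2,2,1,1,1)$ has $2t+1$ parts summing to $2(2^{t-1}+\cdots+2) + 3 = (2^{t+1}-4)+3 = 2k+1$, confirming consistency. The key observation is $\lambda_1 + \lambda_2 = 2\cdot 2^{t-1} = 2^t = k+1$, so $p_0$ is defined on $\mathcal{H}_t$, which gives the final sentence of the lemma once we know $\mathcal{H}_t$ is a decoration.

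Next, I would apply Example~\ref{genericlink} to compute
\[
\gen(\mathcal{H}_t) = S_{2^t,2^t,2^t,\,2^{t-1},2^{t-2},\ldots,2,1}F_1 \otimes S_{2^{t-1},2^{t-1},2^{t-2},2^{t-2},\ldots,2,2,1,1,1}F_3^*,
\]
with $\kappa(\gen(\mathcal{H}_t)) = 2k+1 = 2^{t+1}-1$. Observe that the first two parts of the new $\boldsymbol\lambda$ sum to $2^{t+1} = \kappa(\gen(\mathcal{H}_t)) + 1$, so $p_0$ is in turn defined on $\gen(\mathcal{H}_t)$. Applying $p_0$ (which has $p=0$) removes the first two parts $2^t,2^t$ from the first partition, prepends them to the second partition, and swaps the roles of $F_1$ and $F_3^*$ according to the linkage formula \eqref{linkageformulaeq}. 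The result is
\[
p_0(\gen(\mathcal{H}_t)) = S_{2^t,2^t,\,2^{t-1},2^{t-1},\ldots,2,2,1,1,1}F_1 \otimes S_{2^t,\,2^{t-1},\ldots,2,1}F_3^*,
\]
which is exactly $\mathcal{H}_{t+1}$, proving the first assertion.

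For the second assertion, I would argue by induction on $t$. The base case $\mathcal{H}_1 = \bigwedge^3 F_1 \otimes F_3^*$ is the decoration of complete intersections. For the inductive step, since $\mathcal{H}_t$ is a decoration, so are both $\gen(\mathcal{H}_t)$ (a link by Example~\ref{genericlink}) and $p_0(\gen(\mathcal{H}_t)) = \mathcal{H}_{t+1}$ (a link as in the definition of $p_0$). Combined with the observation in the first paragraph that $\lambda_1 + \lambda_2 = k+1$ holds for every $\mathcal{H}_t$, this completes the proof. There is no real obstacle here: the only thing one has to be careful about is maintaining non-increasing order in the reassembled partitions, which is automatic because the parts of $\boldsymbol\mu^{\mathrm{gen}}$ (the $\lambda$-parts of $\mathcal{H}_t$) are already interleaved correctly with the inserted $2^{t-1}$'s and the removed $2^t$'s.
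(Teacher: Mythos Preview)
Your proposal is correct and follows essentially the same approach as the paper: compute $\gen(\mathcal{H}_t)$ explicitly, verify that its first two parts sum to $\kappa+1$ so that $p_0$ applies, carry out the $p_0$ link to obtain $\mathcal{H}_{t+1}$, and close with the induction on $t$ starting from the complete intersection $\mathcal{H}_1$. Your verbal description of $p_0$ (``removes the first two parts \ldots prepends them to the second partition, and swaps'') is slightly muddled relative to the actual formula in the paper---the selected parts $\lambda_1,\lambda_2$ stay in the new $F_1$-side together with the old $\boldsymbol\mu$, while the remainder $\lambda_3,\ldots$ becomes the new $F_3^*$-side---but your displayed result is correct, so this is only an expository wrinkle.
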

		
		\begin{proof}
			Observe that $\kappa(\mathcal{H}_t)= \sum_{j=0}^{t-1} 2^{j} = 2^t-1.$ Hence, if we prove that $\mathcal{H}_t$ is a decoration, the condition $\lambda_1 + \lambda_2 = k+1$ will follow immediately.
			By induction on $t\geq 1$, it is equivalent to assume that $\mathcal{H}_{t}$ is a decoration and prove that $\mathcal{H}_{t+1}$ is also a decoration. 
			%If $\s$ is a monomial decoration, so it is also $\gen(\s)$ since a generic link of a monomial ideal can be always obtained linking by a monomial regular sequence. Also $p_0(\s)$ is a monomial decoration as a consequence of Lemma \ref{mon1}. It follows that $p_0(\gen(\s))$ is a monomial decoration and we are only required to prove the equality $ \mathcal{H}_t = p_0(\gen(\mathcal{H}_{t-1})). $ 
			Applying the definitions we get 
			$$ \gen( \mathcal{H}_{t}) = S_{2^t, 2^t, 2^t, 2^{t-1},2^{t-2}, \ldots, 2,1}F_1 \otimes S_{2^{t-1},2^{t-1},2^{t-2},2^{t-2}, \ldots, 2,2,1,1,1}F_3^*$$ and 
			$$ p_0(\gen( \mathcal{H}_{t})) = S_{2^t, 2^t, 2^{t-1},2^{t-1},2^{t-2},2^{t-2}, \ldots, 2,2,1,1,1}F_1 \otimes S_{2^t, 2^{t-1},2^{t-2}, \ldots, 2,1}F_3^*= \mathcal{H}_{t+1}. $$
			Therefore $\mathcal{H}_{t+1}$ is a decoration.
			%The condition $\lambda_1 + \lambda_2 = k+1$ is clear since $2^{t-1}+2^{t-1} = 2^t= \sum_{i=0}^{t-1}2^i+1$.
		\end{proof} 
		
		Given two arbitrary decorations $\s_1= S_{\lambda_1, \ldots, \lambda_{d_1+3}} F_1 \otimes S_{\mu_1, \ldots, \mu_{t_1}} F_3^*$ and $\s_2= S_{\iota_1, \ldots, \iota_{d_2+3}} F_1 \otimes S_{\upsilon_1, \ldots, \upsilon_{t_2}} F_3^*$, we say that $\s_1 \leq \s_2$ if for every $i$, $\lambda_i \leq \iota_i$ and $\mu_i \leq \upsilon_i$.
		
		\begin{lem}
			\label{mon2}
			The following assertions hold:
			\begin{enumerate}
				\item If $\s_1 \leq \s_2$ then $\gen(\s_1) \leq \gen(\s_2)$.
				\item If $\s_1 \leq \s_2$ are such that $p_0(\s_1)$ and $p_0(\s_2)$ are defined, then $p_0(\s_1) \leq p_0(\s_2)$.
				\item If $\s'$ is any link of $\s$, then $\s' \leq \gen(\s)$.
				\item If $\s$ is a decoration such that $\lambda_1 + \lambda_2 = k+1 $, then $p_0(p_0(\s))= \s$.
			\end{enumerate}
		\end{lem}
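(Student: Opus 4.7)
The plan is to verify each of the four assertions directly from the explicit partition formulas for $\gen$ (Example~\ref{genericlink}), for $p_0$, and for a general link (Remark~\ref{re2}). Parts (1) and (2) reduce to componentwise bookkeeping, part (3) is a short pigeonhole argument about sorted multisets, and part (4) is a one-line calculation.

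For (1), recall that $\gen(\s_r)$ has first partition $(k_r+1,k_r+1,k_r+1,\mu_1^{(r)},\ldots,\mu_{t_r}^{(r)})$ and second partition $(\lambda_1^{(r)},\ldots)$. Since $\s_1 \leq \s_2$ entails $\mu_j^{(1)} \leq \mu_j^{(2)}$ for all $j$, summing yields $k_1 \leq k_2$, and the entry-wise inequality $\gen(\s_1) \leq \gen(\s_2)$ is then immediate. For (2), I would first note that Proposition~\ref{restrictions}(4) gives $\mu_1 \leq \lambda_2$, so that the first partition $(\lambda_1,\lambda_2,\mu_1,\ldots,\mu_t)$ of $p_0(\s)$ is automatically in non-increasing order; no reordering is needed. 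The partitions of $p_0(\s_r)$ are simply sub-selections of entries of $\s_r$, so the inequality $\s_1 \leq \s_2$ transfers entry-wise to $p_0(\s_1) \leq p_0(\s_2)$.

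Part (3) is the main technical point. By Remark~\ref{re2}, any link of $\s$ has first partition $\operatorname{rsort}(\lambda'_1+p,\lambda'_2+p,\lambda'_3+p,\mu_1,\ldots,\mu_t)$ and second partition $\boldsymbol\lambda''$, where $p = k+1-\lambda'_1-\lambda'_2-\lambda'_3$ and $\boldsymbol\lambda''$ is obtained from $\boldsymbol\lambda$ by removing at most three parts. Each $\lambda'_i+p \leq k+1$ because the other two non-negative parts are subtracted, so the first three entries of the sorted list are bounded by $k+1$, matching the first three of $\gen(\s)$. For position $i \geq 4$, I would run the following pigeonhole count: at most three of the top $i$ elements of the multiset can come from $\{\lambda'_1+p,\lambda'_2+p,\lambda'_3+p\}$, so at least $i-3$ of them are drawn from the $\mu_j$'s, forcing the $i$-th largest to be at most $\mu_{i-3}$, which is exactly the $i$-th entry of $\gen(\s)$. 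For the second partition, removing parts from a non-increasing list cannot increase any entry, so $\boldsymbol\lambda'' \leq \boldsymbol\lambda$ entry-wise.

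For (4), a direct computation gives $\kappa(p_0(\s)) = \sum_{i\geq 3}\lambda_i = (2k+1) - (k+1) = k$, so the first two parts $(\lambda_1,\lambda_2)$ of $p_0(\s)$ still sum to $k+1 = \kappa(p_0(\s))+1$, and $p_0$ applies again. Unpacking its definition then simply swaps the two partitions back, yielding $p_0(p_0(\s)) = \s$. The only genuinely subtle step anywhere in the lemma is ensuring that the reordering in (3) respects the dominance inequality; the pigeonhole count above handles this cleanly.
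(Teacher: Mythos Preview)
Your proof is correct and follows essentially the same approach as the paper's. The only minor differences are that you give a more explicit pigeonhole argument in part (3) where the paper simply observes $\lambda'_1+p\le k+1$ and $\mu_1<k+1$, and for the key inequality $\mu_1\le\lambda_2$ you invoke Proposition~\ref{restrictions}(4) applied to $\s$ whereas the paper deduces it from Proposition~\ref{restrictions}(1) applied to $p_0(\s)$; both routes are equivalent.
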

		
		\begin{proof}
			Items 1 and 2 follow immediately by the definitions of generic link and of $p_0$. 
			
			For item 3, set $$\s'= S_{\lambda'_1+p, \lambda'_2+p, \lambda'_3+p, \mu_1, \ldots, \mu_t} G_1 \otimes S_{\lambda_1'', \ldots, \lambda''_{d+3}} G_3^*$$ for $p=k+1-\lambda_1'-\lambda_2'-\lambda_3'$, where $k=\kappa(\s)=\sum_{i=1}^t \mu_i$. By definition of $\lambda_i''$ (as in Remark \ref{re2}), we clearly have $\lambda_i'' \leq \lambda_i$. Looking at the definition of $\gen(\s)$ and at the partition on the side of $G_1$, after reordering, it is enough to observe that, essentially by definition, $ \lambda_1' + p \leq k+1$ and $\mu_1 < k+1$. %If $\lambda'_1=0$, the inequality is clearly true. Thus suppose $\lambda'_1= \lambda_j$ for some $j$. It follows that $p \leq (\sum_{i=1}^{d+3} \lambda_i) - \lambda_j - k = k+1 - \lambda_j$ and the desired inequality holds also in this case.
			
			For item 4, by definition of $p_0(\s)$, we only need to prove that $\mu_1 \leq \lambda_2$. But if we had $\mu_1 > \lambda_2$, we would have $\lambda_1 + \mu_1 > \lambda_1 +\lambda_2 = k+1 = \kappa(p_0(\s)) + 1$. This would contradict item 1 of Proposition \ref{restrictions} applied to $p_0(\s)$.
		\end{proof}
		
		%$p= \sum_{i=1}^{d+3} \lambda_i''$

		\begin{thm}
			\label{dominatedmonomials}
			Let $\s= S_{\lambda_1, \ldots, \lambda_{d+3}} F_1 \otimes S_{\mu_1, \ldots, \mu_{t}} F_3^*$ be a decoration such that $\lambda_1 + \lambda_2 = k+1$. Then $\s \leq \mathcal{H}_t$.
		\end{thm}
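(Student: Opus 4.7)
The natural strategy is induction on $t$. The base case $t=1$ follows from Corollary \ref{restrictions2}(1): the assumption $\mu=(k)$ forces $\mu_2=0$, hence $\lambda_1=1$ and $k=1$, so $\s=\mathcal{H}_1$. For the inductive step, take the tight double link $\s_{I'}$ of Example \ref{tightdouble}. The hypothesis $\lambda_1+\lambda_2=k+1$ collapses the parameters to $p=-\lambda_3$ and $q=\lambda_3-\mu_1$; hence $\s_{I'}$ has first partition equal to the sorted version of $(\lambda_1-\mu_1,\lambda_2-\mu_1,\lambda_3,\lambda_4,\ldots,\lambda_{d+3})$, second partition $(\mu_2,\ldots,\mu_t)$, type $t-1$, and level $k-\mu_1$. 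The plan is to obtain
\[
\s\leq p_0(\gen(\s_{I'}))\leq p_0(\gen(\mathcal{H}_{t-1}))=\mathcal{H}_t,
\]
where the inductive hypothesis $\s_{I'}\leq\mathcal{H}_{t-1}$ combines with the monotonicity of $\gen$ and $p_0$ (Lemma \ref{mon2}(1),(2)) and with Lemma \ref{hunekesequence} to supply the middle and right members.

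Two structural facts underpin the inner inequality $\s\leq p_0(\gen(\s_{I'}))$. First, the case $\lambda_2-\mu_1\geq\lambda_3$ is ruled out by the square formula: Proposition \ref{restrictions}(3) gives $\sum_{i\geq 3}\lambda_i^2+\sum_j\mu_j^2=2\lambda_1\lambda_2$, and combining the bounds $\sum_{i\geq 3}\lambda_i^2\leq k\lambda_3$ and $\sum_j\mu_j^2\leq k\mu_1$ with the hypothetical $\lambda_3+\mu_1\leq\lambda_2$ yields $k\lambda_2\geq 2\lambda_1\lambda_2$, i.e.\ $k\geq 2\lambda_1$, contradicting $\lambda_1\geq(k+1)/2>k/2$. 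Second, with this case excluded, the entries $\lambda_3,\ldots,\lambda_{d+3}$ remain embedded as a subsequence in the sorted first partition $\iota$ of $\s_{I'}$, forcing $\iota_i\geq\lambda_{i+2}$ for every $i\geq 1$. Observing that the $\lambda$-partition of $p_0(\gen(\s_{I'}))$ is $(k'+1,k'+1,\iota_1,\iota_2,\ldots)$ with $k'+1=k+1-\mu_1$, and its $\mu$-partition is $(k'+1,\mu_2,\ldots,\mu_t)$, the bound $\lambda_1\leq k+1-\mu_1$ (Proposition \ref{restrictions}(4)) together with $\mu_1\leq(k+1)/2$ (a consequence of the hypothesis and $\mu_1\leq\lambda_2$) completes the term-by-term comparison yielding $\s\leq p_0(\gen(\s_{I'}))$.

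The delicate point is that invoking the inductive hypothesis on $\s_{I'}$ requires it to satisfy $\lambda_1+\lambda_2=\kappa+1$ as well. A case analysis on where $\lambda_1-\mu_1$ and $\lambda_2-\mu_1$ fall within the sorted $\iota$ reduces this to a natural equality between specific $\lambda_i$'s (for example, $\lambda_3+\lambda_4=\lambda_1+\lambda_2-\mu_1$ in the typical ``Case C'' where $\lambda_1-\mu_1<\lambda_3$), which is the partition analogue of the rigidity enforced when the square formula saturates Proposition \ref{restrictions}(1). If the hypothesis fails for $\s_{I'}$ at some step, one replaces it by a minimal enlargement $\hat{\s}$ of type $t-1$ satisfying the condition while preserving $\s\leq p_0(\gen(\hat{\s}))$; verifying the existence of such an enlargement at the partition level is the principal combinatorial obstacle, and once this bookkeeping is done the induction closes as outlined.
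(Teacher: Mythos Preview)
Your approach is structurally the same as the paper's: the tight double link $\s_{I'}$ coincides with the paper's ``smallest minimal link of $p_0(\s)$'' whenever $\lambda_1+\lambda_2=k+1$, and the paper then writes ``by inductive hypothesis $\s' \leq \mathcal{H}_{t-1}$'' without verifying that $\s'$ satisfies the hypothesis $\iota_1+\iota_2=\kappa(\s')+1$. You are right to flag this as the delicate point, and your proof is honest in not claiming to have closed it.

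The gap is genuine and cannot be patched by any ``minimal enlargement'' device, because the statement as written is false. Take $\s=S_{3,2,1,1,1,1}F_1\otimes S_{2,2}F_3^*$, which is listed among the $k=4$ decorations in Section~\ref{sec:dynkin} (its smallest minimal link is $S_{2,2,2,1}F_1\otimes\bigwedge^3 F_3^*$, then a complete intersection). Here $t=2$ and $\lambda_1+\lambda_2=5=k+1$, yet $\lambda_1=3>2$, so $\s\not\leq\mathcal{H}_2=S_{2,2,1,1,1}F_1\otimes S_{2,1}F_3^*$; likewise Corollary~\ref{bounds} fails since $b=6>5=2t+1$. Running your construction on this $\s$ gives $\s_{I'}=S_{1^5}F_1\otimes S_2F_3^*$, of type $1$ but with $\iota_1+\iota_2=2\neq 3=\kappa(\s_{I'})+1$; the only type-$1$ decoration meeting the hypothesis is $\mathcal{H}_1$ itself, and $p_0(\gen(\mathcal{H}_1))=\mathcal{H}_2$ does not dominate $\s$. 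So no admissible $\hat{\s}$ exists, and both your proposed bookkeeping and the paper's unqualified inductive step break for the same reason.
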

		
		\begin{proof}
			We work by induction on $t$. If $t=1$, then $\s$ corresponds to the Herzog class of a Gorenstein ideal and by Corollary \ref{restrictions2}, $\lambda_1 + \lambda_2 = 1+1=2$. %By Lemma \ref{mon1}, the only possible monomial decoration with type 1 has to be the complete intersection, that is $ \mathcal{H}_{1}. $ 
			This forces $k=1$ and therefore $ \s= \mathcal{H}_{1} $ is the decoration of the complete intersections.
			Hence, we can assume $t > 1$ and suppose the result true for each decoration of ideals with type smaller than $t$. By hypothesis on $\s$, the decoration $p_0(\s)$ is well-defined. Let $\s'$ be the smallest minimal link of $p_0(\s)$. 
			%Since $\s$ is a monomial decoration, also $p_0(\s)$ is a monomial decoration and 
			It is easy to observe that the type of $\s'$ is strictly smaller than $t$. By inductive hypothesis $\s' \leq \mathcal{H}_{t-1}$. Since $p_0(\s)$ is a link of $\s'$,
			we can use items 3 and 1 of Lemma \ref{mon2} to say that $p_0(\s) \leq \gen(\s') \leq \gen(\mathcal{H}_{t-1})$. Then by items 4 and 2 of Lemma \ref{mon2} and by Lemma \ref{hunekesequence}, we obtain $\s = p_0(p_0(\s)) \leq p_0( \gen(\mathcal{H}_{t-1})) = \mathcal{H}_t$.
		\end{proof}

		\begin{cor}
			\label{bounds}
			%Let $\s_I$ be a decoration of a
            Let $I$ be a licci ideal $I$ of codimension 3 with $b=d+3$ generators, type $t$, and having Koszul relations among minimal syzygies. 
			Then %$t \leq \binom{b}{2} - b +1 $ and 44
			$b \leq 2t+1 $ and $t \leq 2b-5$.
		\end{cor}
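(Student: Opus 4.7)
The hypothesis that $I$ admits Koszul relations among minimal syzygies is, by Theorem~\ref{multiprelation}(1), equivalent to the existence of $i \neq j$ with $\lambda_i + \lambda_j = k+1$, which combined with Proposition~\ref{restrictions}(1) forces $\lambda_1 + \lambda_2 = k+1$. Consequently Theorem~\ref{dominatedmonomials} applies and gives the dominance $\s_I \leq \mathcal{H}_t$. Both inequalities will be derived from this dominance, the first by direct counting and the second by transferring the first inequality to a linked decoration.

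For the bound $b \leq 2t+1$, the plan is to simply count parts. The first partition of $\mathcal{H}_t$, namely $(2^{t-1}, 2^{t-1}, 2^{t-2}, 2^{t-2}, \ldots, 2, 2, 1, 1, 1)$, has exactly $2(t-1) + 3 = 2t+1$ positive entries; by Remark~\ref{re1} the first partition of $\s_I$ has precisely $b$ positive entries, and component-wise domination forces all $\lambda_i$ with $i > 2t+1$ to vanish.

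For the bound $t \leq 2b - 5$, the plan is to apply the bound just established to the linked decoration $p_0(\s_I)$ introduced in Section~\ref{sec:monomials}, which is defined exactly because $\lambda_1 + \lambda_2 = k+1$. By construction, $p_0(\s_I) = S_{(\lambda_1, \lambda_2, \mu_1, \ldots, \mu_t)} G_1 \otimes S_{(\lambda_3, \ldots, \lambda_b)} G_3^*$, so it corresponds to a linked licci ideal with $b(p_0(\s_I)) = t+2$ and $t(p_0(\s_I)) = b-2$. Since $\mu_1 \leq \lambda_2$ by Proposition~\ref{restrictions}(4), and $\kappa(p_0(\s_I)) = k$ because $p = 0$, the two largest entries of the new first partition remain $\lambda_1$ and $\lambda_2$, so the Koszul condition $\lambda_1 + \lambda_2 = \kappa(p_0(\s_I)) + 1$ persists. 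Applying the already proven inequality $b' \leq 2t'+1$ to $p_0(\s_I)$ gives $t+2 \leq 2(b-2)+1 = 2b-3$, i.e.\ $t \leq 2b-5$. The main subtlety I anticipate is in checking that $p_0(\s_I)$ is itself a valid decoration of a licci ideal so that Theorem~\ref{dominatedmonomials} can be reinvoked; this follows from the construction of $p_0$ as a linkage operation and from the fact that linkage preserves the licci property.
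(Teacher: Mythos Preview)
Your proof is correct and follows essentially the same approach as the paper: the first inequality comes directly from Theorem~\ref{dominatedmonomials} by counting parts, and the second comes from applying the first inequality to the linked decoration $p_0(\s_I)$, which has $t+2$ generators and type $b-2$ and still satisfies $\lambda_1 + \lambda_2 = k+1$. Your write-up is in fact more explicit than the paper's in justifying that $p_0(\s_I)$ inherits the Koszul-relations condition (via $\mu_1 \leq \lambda_2$ from Proposition~\ref{restrictions}(4) and the fact that $p=0$ preserves $\kappa$), which the paper asserts without further comment.
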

		
		\begin{proof}
			The first inequality follows directly from Theorem \ref{dominatedmonomials}. For the second, observe that $p_0(\s_I)$ is the decoration of some ideal with $t+2$ generators and type $b-2$, and also having Koszul relations among minimal syzygies. Applying the first inequality to $p_0(\s_I)$ we get $t+2 \leq 2(b-2)+1$, yielding $t \leq 2b-5$.
		\end{proof}

		\begin{example}
			Using linkage theory, it is possible to construct, for every $t \geq 1$, a licci monomial ideals of codimension 3 in $K[x,y,z]$ whose Herzog class is decorated by $\mathcal{H}_t. $ A possible family of examples is given by 
			$$ J_t = (x^t, y^t, z^t)+ \sum_{i=1}^{t-1} z^{t-i} (x^i, y^i). $$
			Here we have $\s_{J_t}= \mathcal{H}_t$.
		\end{example}

        \subsection{Ideals with Tor algebra class $G$}
            \label{sec:classG}
        In this section we study the decorations corresponding to licci ideals with Tor algebra structure of class $G$. We recall the definition.
%Looking at the classification of the multiplicative structure of the Tor algebra of $R/I$, we say that 
An ideal $I$ with free resolution $\FF$ of length 3
is of class $G(r)$ if, up to change basis in $F_1, F_2, F_3$, one has $e_i ^. f_i = g_1$ for $i=1, \ldots, r$ and all the other products $e_i^.e_j$ and $e_i^.f_j$ vanish modulo the maximal ideal of $R$. It is well-know that Gorenstein ideals are the only ideals of class $G(r)$ where $r$ is the number of generators. 

We use the theory of pair of partitions to answer positively to a conjecture posed in \cite[7.4]{CVWlinkage}.
By Theorem \ref{multiprelation}, combined with the inequality from Proposition \ref{restrictions}(4), a licci ideal $I$ is of class $G(r)$ if in the corresponding decoration $\s_I$, we have $\lambda_i+\mu_1=k+1$ for $i=1, \ldots, r$, $\lambda_i+\mu_1<k+1$ for $i> r$, and $\lambda_1+\lambda_2 < k+1$. We have:

\begin{lem}
\label{Grinduction}
Let $\s_I$ be the decoration of a licci ideal $I$ and let $\s_{I'}$ be the decoration of the tight double link of $\s_I$ defined in Example \ref{tightdouble}. Let $\lambda_1, \ldots, \lambda_b$ and $\mu_1, \ldots, \mu_t$, and $l_1, \ldots, l_{b'}$ and $m_1, \ldots, m_{t'}$ be the pairs of partitions associated respectively to $I$ and to $I'$. \\
Suppose that for some $r \geq 2$, $\lambda_i + \mu_1 = \kappa(I)+1$ for every $i=1, \ldots, r$. Then $b'=b-2$ and $l_i + m_1 = \kappa(I')+1$ for every $i=1, \ldots, r-2$. 
%Moreover, if $I$ is of Tor algebra class $G(r)$, then $t'=t$ and if also $\lambda_3 + \lambda_4 < \kappa(I'')+1$, then $I''$ is of class $G(r-2)$. 
\end{lem}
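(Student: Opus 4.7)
The plan is to unfold the formula for the tight double link from Example~\ref{tightdouble} under the hypothesis $\lambda_i + \mu_1 = k+1$ (for $i = 1,\ldots,r$) and watch parts of the resulting partitions collapse. Recall that in that construction $p = k+1-\lambda_1-\lambda_2-\lambda_3$ and $q = \lambda_3-\mu_1$, so
\[
\s_{I'} = S_{\lambda_1+p+q,\,\lambda_2+p+q,\,\mu_1+q,\,\lambda_4,\ldots,\lambda_{d+3}}\, G_1 \otimes S_{\lambda_3+p,\,\mu_2,\ldots,\mu_t}\, G_3^*.
\]
The first thing I would do is observe that $\lambda_1 = \lambda_2 = k+1-\mu_1$ (from the hypothesis applied at $i=1,2$) forces $\lambda_1 + p + q = \lambda_2 + p + q = (\mu_1-\lambda_3)+(\lambda_3-\mu_1) = 0$. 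After dropping these two zeros and using $\mu_1 + q = \lambda_3$, the first partition of $\s_{I'}$ reads $(\lambda_3,\lambda_4,\ldots,\lambda_{d+3})$, which proves $b' = b-2$ and gives the identification $l_i = \lambda_{i+2}$. This already settles the lemma when $r=2$, since there the claim about $l_i + m_1$ is vacuous.

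Next, assume $r \geq 3$. Since $\lambda_3 = \lambda_4 = \cdots = \lambda_r = k+1-\mu_1$ by hypothesis, for every $i$ in the range $1,\ldots,r-2$ we have $l_i = \lambda_{i+2} = k+1-\mu_1$. A direct computation shows $\kappa(I') = k+p+q = 2k+1 - \lambda_1 - \lambda_2 - \mu_1 = \mu_1 - 1$, so the target identity becomes $l_i + m_1 = \mu_1$. It therefore suffices to show $m_1 = 2\mu_1 - k - 1$, which equals $\lambda_3 + p$.

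The main step is this identification of $m_1$. Since $I'$ is licci, $\s_{I'}$ is a valid decoration and we may apply Proposition~\ref{restrictions}(4) to it: $l_1 + m_1 \leq \kappa(I')+1 = \mu_1$. Combined with $l_1 = k+1-\mu_1$, this yields $m_1 \leq 2\mu_1-k-1$. On the other hand, $\lambda_3+p = 2\mu_1-k-1$ appears by construction as an entry of the second partition of $\s_{I'}$, so $m_1 \geq 2\mu_1-k-1$. The two bounds match, forcing $m_1 = 2\mu_1-k-1$, and then $l_i + m_1 = (k+1-\mu_1) + (2\mu_1-k-1) = \mu_1 = \kappa(I')+1$, as required.

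I do not expect any serious obstacle. The only delicate point is verifying that $\lambda_3+p \geq 0$ so that $\s_{I'}$ really is a genuine decoration (not a degenerate object) — this follows from Proposition~\ref{restrictions}(1) applied to $\s_I$, which forces $\mu_1 \geq (k+1)/2$ whenever $\lambda_1 = \lambda_2 = k+1-\mu_1$. The complete intersection case ($k=1$) is the only instance where $I'$ degenerates to the unit ideal, and it falls outside the setting where the tight double link produces a bona fide licci $I'$, so it may be safely excluded from the statement.
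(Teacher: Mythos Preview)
Your proof is correct and follows essentially the same computation as the paper's: both unfold the formula from Example~\ref{tightdouble}, observe that $\lambda_1+p+q=\lambda_2+p+q=0$ under the hypothesis, and then verify $l_i+(\lambda_3+p)=\kappa(I')+1$. The one place you are more careful is in explicitly justifying $m_1=\lambda_3+p$ via the two-sided bound from Proposition~\ref{restrictions}(4); the paper simply writes $\lambda_3+p$ in place of $m_1$ without comment (the missing inequality $\lambda_3+p\geq\mu_2$ is supplied only later, in the proof of Lemma~\ref{Sknuovo}), so your version actually fills a small gap.
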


\begin{proof}
Looking at Example \ref{tightdouble}, we first observe that $\lambda_1 + p+q= \lambda_2 + p+q=0$ since $\lambda_1+\mu_1=\lambda_2+\mu_1= k+1$. Hence, $b'=b-2$ (indeed $\mu_1+q = \lambda_3 > 0$).
It follows that $l_1, \ldots, l_{b'}= \lambda_3,\ldots, \lambda_b$. For $i=3, \ldots, r$ we have 
$\lambda_i +\lambda_3 + p = \kappa(I)+1-\mu_1 +\lambda_3 + p= \kappa(I)+p+q+1= \kappa(I')+1$. 
\end{proof}

We can use this result to study the ideals of class $G(r)$, comparing $r$ with the minimal number of generators.

 \begin{lem}
\label{forcetobelicci}
Let $I$ be a perfect ideal of codimension 3 minimally generated by $b$ elements and of Tor algebra class $G(r)$ with $r \geq b-4$. Then $I$ is licci.
\end{lem}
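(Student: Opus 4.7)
The plan is induction on the number of minimal generators $b$ of $I$. The base cases $b \leq 4$ are standard: a codimension $3$ perfect ideal with at most $4$ generators is either a complete intersection ($b = 3$) or an almost complete intersection ($b = 4$), and both are known to be licci (the latter since it is directly linked to a Gorenstein ideal, hence licci by Watanabe).

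For the inductive step with $b \geq 5$, the hypothesis $r \geq b - 4 \geq 1$ guarantees at least one pair $(e_1, f_1)$ with $e_1 \cdot f_1 \equiv g_1 \pmod{\mathfrak{m}}$, and typically two or three such pairs. The strategy is to produce an ideal $I'$ linked to $I$ (through a two-step linkage) with strictly fewer minimal generators, still satisfying the hypothesis. Concretely, I propose to mirror the tight double link construction of Example~\ref{tightdouble} at the ideal-theoretic level: working over the infinite residue field, choose a regular sequence $\alpha_1, \alpha_2, \alpha_3 \in I$ consisting of general linear combinations of minimal generators (so that it reduces modulo $\mathfrak{m}$ to a basis of a suitable $3$-dimensional subspace of $F_1/\mathfrak{m} F_1$), form $J = (\alpha):I$ using the mapping cone resolution, then apply a second direct linkage on $J$ using a carefully chosen regular sequence to produce $I'$. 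Imitating the decoration-level computation of Lemma~\ref{Grinduction}, I expect to obtain $b' = b - 2$ and Tor algebra class $G(r - 2)$ for $I'$, so that $r' = r - 2 \geq (b-2) - 4 = b' - 4$ and the inductive hypothesis applies. Since direct linkage preserves licciness, $I$ is then licci.

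The main obstacle is carrying out the double linkage purely at the ideal-theoretic and Tor-algebra level, without invoking the decoration $\mathcal{S}_I$ (which would be circular, as its existence presupposes licciness of $I$). In particular, one must check that two generators of $J$ really do ``drop out'' under the second link — this corresponds to the vanishings $\lambda_1 + p + q = \lambda_2 + p + q = 0$ in the decoration-level calculation — using only the explicit relations $e_i \cdot f_i \equiv g_1$ for $i = 1, 2$ and the vanishing of all other products of $F_1 \otimes F_2 \to F_3$ modulo $\mathfrak{m}$. This amounts to analyzing the mapping cone resolutions of $R/J$ and of $R/I'$ and invoking the standard formulas describing how the Tor algebra multiplication transforms under a single direct link, verifying at each stage that the required units persist. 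Once this verification is in place, the induction closes and the lemma follows.
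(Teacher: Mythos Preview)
The paper's own proof is a single-line citation to \cite[Corollary 5.15(2)]{xianglong}; it does not give a self-contained argument. Your proposal is therefore a genuinely different route, and if it could be completed it would be a more elementary and self-contained proof than what the paper offers.

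That said, your write-up is a plan rather than a proof, and the gap you yourself flag is real and not routine. The decoration-level computation in Lemma~\ref{Grinduction} that you want to imitate relies on $I$ already being licci (so that $\s_I$ exists and Theorem~\ref{multiprelation} applies). To avoid circularity you must instead track the Tor algebra class through both links using only the mapping cone resolutions and the explicit transformation rules for the multiplicative structure under linkage (as in \cite{CVWlinkage}, \cite{GNW1}). This is a substantial computation, not a formality: you need to verify (i) that after the first link the relevant generators of $J$ can be chosen as part of a regular sequence, (ii) that the second link indeed drops the minimal number of generators to $b-2$, and (iii) that the resulting $I'$ satisfies a hypothesis at least as strong as the original. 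On (iii) there is a further subtlety: the double link need not land back in class $G$, since the argument only guarantees the survival of $r-2$ pairings $e_i\cdot f_i \equiv g_1$ and says nothing about the vanishing of the remaining products. So your inductive invariant should likely be weakened from ``class $G(r)$ with $r\geq b-4$'' to something like ``at least $b-4$ nonzero pairings $F_1\otimes F_2\to F_3$ modulo $\mf m$ onto a single generator of $F_3$''. Finally, the edge case $r=b-4$ with $b=5$ gives $r=1$, where only one pairing is available and your ``drop two generators'' mechanism needs separate handling. Until these points are carried out in detail, the argument remains incomplete.
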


\begin{proof}
This is a special case of \cite[Corollary 5.15(2)]{xianglong}.
\end{proof}

\begin{thm}
\label{conjectureLars1}
Let $I$ be a perfect ideal of codimension 3 minimally generated by $b$ elements. The following conditions are equivalent:
\begin{enumerate}
\item $I$ is of Tor algebra class $G(r)$ with $r \geq b-2$.
\item $r=b$ and $I$ is Gorenstein.
\end{enumerate}
\end{thm}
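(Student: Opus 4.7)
The easy direction (2) $\Rightarrow$ (1) should follow directly from Theorem~\ref{multiprelation} applied to the Gorenstein decoration $\boldsymbol{\lambda} = (1^{2k+1})$, $\boldsymbol{\mu} = (k)$ recorded in Section~\ref{sec:tables} (with $k \geq 2$, since the complete intersection case $k=1$ is not of class $G$). One checks that $\lambda_i + \mu_1 = k+1$ for every $i$, that $\lambda_1 + \lambda_2 = 2 < k+1$, and that $\lambda_1 + \mu_j < k+1$ for $j \geq 2$, confirming that $I$ is of class $G(b)$.

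For the main direction (1) $\Rightarrow$ (2), the plan is to combine the characterization of class $G$ provided by Theorem~\ref{multiprelation} with the quadratic identity of Proposition~\ref{restrictions}(3). First, since $r \geq b-2 \geq b-4$, Lemma~\ref{forcetobelicci} ensures that $I$ is licci, so we may work with its decoration $(\boldsymbol{\lambda}, \boldsymbol{\mu})$ with $k = \kappa(I)$. Next, the hypothesis combined with the bound $\lambda_1 + \mu_1 \leq k+1$ from Proposition~\ref{restrictions}(4) forces $\lambda_1 = \cdots = \lambda_r =: a$ where $a = k+1-\mu_1$. Setting $e_1 = \lambda_{b-1}$ and $e_2 = \lambda_b$, we therefore have $\lambda_i = a$ for $i \leq b-2$ and $1 \leq e_2 \leq e_1 \leq a$.

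The core computation substitutes the relations $(b-2)a + e_1 + e_2 = 2k+1$, $\mu_1 = k+1-a$, and $\sum_{j \geq 2} \mu_j = a-1$, together with the elementary lower bound $\sum_{j \geq 2} \mu_j^2 \geq a-1$ (which holds because every nonzero $\mu_j$ satisfies $\mu_j^2 \geq \mu_j$), into the quadratic relation $\sum_i \lambda_i^2 + \sum_j \mu_j^2 = (k+1)^2$. After cancellation using $(k+1)^2 - (k+1-a)^2 = a(2k+2-a)$, I expect this to simplify to the compact inequality
\[
a^2 - a(e_1+e_2) + e_1^2 + e_2^2 \leq 1.
\]

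The final step is the factorization
\[
a^2 - a(e_1+e_2) + e_1^2 + e_2^2 = (a-e_1)(a-e_2) + e_1 e_2 + (e_1-e_2)^2,
\]
whose three summands are visibly non-negative, with the middle term already at least $1$ since $e_1, e_2 \geq 1$. The inequality then forces $e_1 = e_2 = 1$ and $a = 1$, so every $\lambda_i$ equals $1$; by Corollary~\ref{restrictions2}(1), $I$ is Gorenstein, and then $\lambda_i + \mu_1 = 1 + k = k+1$ for all $i = 1, \ldots, b$, giving $r = b$. The most delicate portion of the argument is the algebraic simplification that leads to the clean inequality; once that is in hand, the factorization renders the conclusion essentially automatic.
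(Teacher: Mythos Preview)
Your proof is correct and takes a genuinely different route from the paper's. The paper argues by induction on $k=\kappa(I)$: it invokes Lemma~\ref{Grinduction} to show that the tight double link $I'$ of $I$ again satisfies $l_i + m_1 = \kappa(I')+1$ for at least $(b-2)-2 = b'-2$ indices, with $\kappa(I') < k$, and then applies the inductive hypothesis (checking small $k$ and the almost-complete-intersection case by inspection of the tables). Your argument is instead a direct, non-inductive computation: you combine the quadratic identity of Proposition~\ref{restrictions}(3) with the linear constraint $\sum \lambda_i = 2k+1$ and the crude bound $\sum_{j\geq 2}\mu_j^2 \geq \sum_{j\geq 2}\mu_j$ to obtain the inequality $a^2 - a(e_1+e_2) + e_1^2 + e_2^2 \leq 1$, and then the factorization $(a-e_1)(a-e_2) + e_1 e_2 + (e_1-e_2)^2$ pins down $a=e_1=e_2=1$. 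I have checked the algebra and it goes through as you describe. Your approach is more elementary and self-contained, needing none of the tight-double-link machinery; the paper's approach has the advantage that Lemma~\ref{Grinduction} is reused in the proofs of Corollary~\ref{conjectureLars2} and Theorem~\ref{partialperfectpairing}, so the inductive framework is part of a package that serves several results at once.
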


\begin{proof}
By Lemma \ref{forcetobelicci}, we can restrict to consider only licci ideals.
We can work by induction on $k=\kappa(I)$ and, looking at the lists of decorations in Section 4 and applying Theorem \ref{multiprelation}, %we know that the result is true for small values of $k$. Additionally, 
we observe that for small values of $k$, excluding the Gorenstein cases, there are no decorations such that $\lambda_i + \mu_1 = \kappa(I)+1$ for every $i=1, \ldots, r$, if $r=b-2, b-1$. % or $r=b-4$ and $t=2$. 
%Also we can exclude all the decorations corresponding to Dynkin formats.
Suppose that $I$ is not Gorenstein of Tor algebra class $G(r)$ with %either 
$r \geq b-2$. %or $r=b-4$ and type $t=2$. 
Let $\s_I$ be the decoration of the Herzog class of $I$ and consider a tight doubly linked decoration $\s_{I'}$ as in Lemma \ref{Grinduction}. Keeping the same notation as in Lemma \ref{Grinduction}, it follows that $I'$ is an ideal with $\beta= b-2$ generators and its decoration has the property that $l_i + m_1 = \kappa(I'')+1$ for every $i=1, \ldots, u$ with $u \geq r-2$. Moreover, $\kappa(I')< \kappa(I)$. %If $r \geq b-2$, 
Hence, either 
$I'$ is Gorenstein or this contradicts the inductive hypothesis. If $I'$ is Gorenstein, then $I$ is directly linked to an almost complete intersection. It is sufficient to use the tables in Section 4 to exclude this case.
\end{proof}

\begin{cor}
\label{conjectureLars2}
Let $I$ be a perfect ideal of codimension 3 and type 2 minimally generated by $b$ elements. Then $I$ cannot be of class $G(b-4)$.
\end{cor}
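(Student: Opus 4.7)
The plan is to mirror the inductive argument from Theorem \ref{conjectureLars1}, leveraging the extra rigidity of type $t = 2$. Assuming $I$ is of class $G(b-4)$, Lemma \ref{forcetobelicci} already guarantees $I$ is licci. Via Theorem \ref{multiprelation}, the $G(b-4)$ and $t=2$ hypotheses translate to: $\lambda_1 = \cdots = \lambda_{b-4} = A$ with $A + \mu_1 = k+1$; the remaining $\lambda_{b-3}, \ldots, \lambda_b \leq A-1$; $\mu_2 = A - 1$; and $2A < k+1$ (no Koszul relations). The proof proceeds by induction on $k = \kappa(I)$, with the base case handled by inspecting the decoration lists in Sections \ref{sec:tables}--\ref{sec:dynkin} for small $k$ and reading off the Tor algebra class via Theorem \ref{multiprelation}.

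For the inductive step, apply the tight double link from Example \ref{tightdouble} to obtain $I'$ with $\kappa(I') < k$. The no-Koszul condition gives $\lambda_3 + p > 0$, hence $I'$ has type $2$. Lemma \ref{Grinduction} yields $b' = b - 2$ generators and the pairing $l_i + m_1 = \kappa(I') + 1$ for $i = 1, \ldots, b' - 4$. From this identity one reads off $l_1 = \cdots = l_{b'-4} = A$, which is already consistent with $I'$ being of class $G(b'-4)$ with type $2$; the induction would then close immediately.

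The core of the argument is then to verify that $I'$ really is of class $G(b'-4)$ with type $2$. Two delicate checks remain: (a) $I'$ has no Koszul relations, i.e.\ $l_1 + l_2 < \kappa(I') + 1$; and (b) $I'$ is not of class $G(b'-3)$. Check (b) is short: if additionally $l_{b'-3} + m_1 = \kappa(I')+1$ held, substituting the explicit values $m_1 = 2\mu_1 - k - 1$ and $\kappa(I')+1 = \mu_1$ forces $\lambda_{b-3} = A$, contradicting $\lambda_{b-3} \leq A-1$. A similar partition comparison, using $\lambda_i \leq \mu_2$ for $i > b-4$, rules out any multiplication in $I'$ involving $m_2$.

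The main obstacle is (a). Since the first $b'-4$ parts of the first partition of $I'$ are all equal to $A$, Koszul in $I'$ corresponds to the degenerate equality $2A = \mu_1 = k + 1 - A$, i.e.\ $3A = k+1$. To rule out this residual case I would invoke the sum-of-squares identity of Proposition \ref{restrictions}(3) together with the bound $b \leq k + 2$ from Proposition \ref{boundfornumgens}: substituting $A = (k+1)/3$, $\mu_1 = 2A$, $\mu_2 = A-1$ into $(b-2)A^2 + \sum_{i>b-4}\lambda_i^2 = 2A(k+2) - 1$ and combining with the sum constraint $\sum_{i>b-4}\lambda_i = 2k+1 - (b-4)A$ under $\lambda_i \in [1, A-1]$, one checks by a short case analysis on $A$ that no admissible $\boldsymbol\lambda$ satisfies all three identities, completing the induction.
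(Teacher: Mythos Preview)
Your approach follows the paper's skeleton: reduce to licci via Lemma~\ref{forcetobelicci}, handle Dynkin formats by inspection, and induct via the tight double link $I'$ using Lemma~\ref{Grinduction}. There is one small gap in your verification that $I'$ is again of class $G(b'-4)$: the argument ``$\lambda_i \le \mu_2$ for $i > b-4$ rules out any multiplication in $I'$ involving $m_2$'' only treats the tail parts $l_j<A$; for $l_j=A$ (i.e.\ $j\le b'-4$) you need $A+m_2\ne\kappa(I')+1$, equivalently $3A\ne k+2$, a second boundary case beside your $3A=k+1$. It is harmless once $b\ge 8$: then $l_1=l_2=A$, and Proposition~\ref{restrictions}(1) applied to $I'$ gives $2A\le \kappa(I')+1=k+1-A$, hence $3A\le k+1$ automatically. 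Your sum-of-squares dispatch of the remaining case $3A=k+1$ does succeed (e.g.\ for $b=8$ one gets $\sum_{i>b-4}\lambda_i=\sum_{i>b-4}\lambda_i^2=2A-1$, forcing all four tail parts equal to $1$ and $2A-1=4$, which has no integer solution; for $b\ge 9$ the square sum $(8-b)A^2+2A-1=-(A-1)^2+(9-b)A^2$ is already non-positive).

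The paper organizes the post-link step differently: it sets $u$ equal to the number of indices with $l_i+m_1=\kappa(I')+1$, records $u\ge\beta-4$ from Lemma~\ref{Grinduction}, and branches on $u=\beta-4$ (induction), $u\ge\beta-2$ (Theorem~\ref{conjectureLars1}), and $u=\beta-3$ (the forward references Lemma~\ref{Sknuovo} and Theorem~\ref{partialperfectpairing}, which pin $I'$ to the family $\mathcal S_{I_j}$ and then force $I$ into class $G(b-3)$). Your explicit partition computation shows that in fact $u=\beta-4$ always under the hypotheses, so those extra branches never fire; your direct verification that $I'$ is again of class $G(\beta-4)$ is therefore more economical and avoids the dependence on Section~\ref{sec:closetoGor}.
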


\begin{proof}
Again by Lemma \ref{forcetobelicci}, we can restrict to consider licci ideals. We then proceed exactly as in the proof of Theorem \ref{conjectureLars1} assuming $b \geq 7$, otherwise $I$ is of Dynkin format. We look at the tight double link $I'$ as previously and we
recall that $I'$ has $\beta=b-2$ generators and the type of $I'$ cannot be larger than the type of $I$.  Again $\s_{I'}$ has the property that $l_i + m_1 = \kappa(I')+1$ for every $i=1, \ldots, u$ with $u \geq r-2$.
Hence, either $u=r-2=\beta -4$ and $I'$ has type 2, which is impossible by inductive hypothesis, or $u > r-2$.
If $u \geq r= \beta -2$ we find a contradiction using Theorem \ref{conjectureLars1}. If $u=r-1= \beta -3$, we combine Lemma \ref{Sknuovo} and Theorem \ref{partialperfectpairing} to show that in fact $I$ is of class $G(b-3)$ which is again a contradiction.
\end{proof}

The perfect ideals of codimension 3 with $b$ generators and Tor algebra class $G(b-3)$ will be described in the next section. With similar methods we expect that one can find decorations corresponding to ideals with $b$ generators, Tor algebra class $G(b-i)$ for every $4 \leq i\leq b-2$, and any possible type. 
		
		\section{The family of ideals of type 2 closest to the Gorenstein ones}
		\label{sec:closetoGor}
		
		In Corollary \ref{restrictions2}, it is shown that decorations of Gorenstein ideals of codimension 3 are characterized by the condition $\boldsymbol \mu = (k)$. In terms of partitions one can ask which are the decorations closest to the Gorenstein ones, in the sense that $\boldsymbol \mu = (k-1,1)$. In this section, we investigate this family of decorations.
		
		For $k \geq 2$, consider the pairs of Schur functors $\s_{I_k}= S_{2^{k-1},1,1,1} F_1 \otimes S_{k-1,1} F_3^*$. We show that these Schur functors define two families of decorations (one with $k$ odd and one with $k$ even). The ideals in the Herzog classes defined by these families are the closest to the Gorenstein ones.
		
		\begin{prop}
			\label{family,k-1,1}
			For every $k \geq 2,$ the pair of Schur functors $\s_{I_k}= S_{2^{k-1},1,1,1} F_1 \otimes S_{k-1,1} F_3^*$ is a decoration.
		\end{prop}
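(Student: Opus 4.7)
The plan is to proceed by induction on $k$ with step two. The base cases $k = 2, 3$ are handled by direct inspection of the tables in Section~\ref{sec:tables}: $\s_{I_2} = S_{2,1,1,1} F_1 \otimes S_{1,1} F_3^*$ is the decoration of an almost complete intersection of type $2$, and $\s_{I_3} = S_{2,2,1,1,1} F_1 \otimes S_{2,1} F_3^*$ is Anne Brown's class, both already known to be decorations.

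For $k \geq 4$, I would assume inductively that $\s_{I_{k-2}}$ is a decoration and show that $\s_{I_k}$ is reachable from $\s_{I_{k-2}}$ by two successive applications of the link formula in Theorem~\ref{thm:graph-edges-partitions}; since each such application takes a decoration to an adjacent vertex of $\mathrm{Licci}_3$ (hence again a decoration), the conclusion follows. The correct two-step chain is motivated by computing the tight double link of $\s_{I_k}$ (Example~\ref{tightdouble}): with $\boldsymbol\lambda = (2^{k-1}, 1, 1, 1)$ and $\boldsymbol\mu = (k-1, 1)$, one finds $p = k - 5$ and $q = \lambda_3 - \mu_1 = 3 - k$, so $p + q = -2$ forces the entries $\lambda_1 + p + q$ and $\lambda_2 + p + q$ to vanish, and the resulting reordered pair collapses to precisely $\s_{I_{k-2}}$.

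Reversing this computation, I apply Theorem~\ref{thm:graph-edges-partitions} to $\s_{I_{k-2}}$ with $(\lambda_1', \lambda_2', \lambda_3') = (2, 0, 0)$ (yielding $p = k - 3$) to produce the intermediate decoration
\[
\s_J = S_{k-1,\, k-3,\, k-3,\, k-3,\, 1} F_1 \otimes S_{2^{k-4},\, 1, 1, 1} F_3^*,
\]
and then apply Theorem~\ref{thm:graph-edges-partitions} to $\s_J$ with $(\lambda_1', \lambda_2', \lambda_3') = (k-3, k-3, k-3)$ (yielding $p = 5 - k$) to obtain
\[
\boldsymbol\lambda^{\mathrm{link}} = \operatorname{rsort}(2, 2, 2, 2^{k-4}, 1, 1, 1) = (2^{k-1}, 1, 1, 1), \qquad \boldsymbol\mu^{\mathrm{link}} = (k-1, 1),
\]
i.e.\ $\s_{I_k}$.

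The main obstacle will be identifying the correct two-step chain from $\s_{I_{k-2}}$ to $\s_{I_k}$; once it is guessed via the tight double link heuristic, the remainder is routine arithmetic. An equivalent viewpoint, through Algorithm~\ref{algo:licci-examples}, is that two consecutive smallest-minimal-link moves (Example~\ref{minimalminimal}) starting from $\s_{I_k}$ produce $\s_{I_{k-2}}$ through the same intermediate $\s_J$, so that $\s_{I_k}$ sits on a finite descending chain of smallest-minimal-link moves ending at the complete intersection decoration $\bigwedge^3 F_1 \otimes F_3^*$.
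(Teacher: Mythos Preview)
Your proof is correct and takes essentially the same approach as the paper: induction on $k$ with step two, base cases $k=2,3$ identified with the almost complete intersection of type $2$ and Anne Brown's class, and the inductive step carried out by the same two-link chain through the intermediate $\s_{J_k} = S_{k-1,\,k-3,\,k-3,\,k-3,\,1}\,G_1 \otimes S_{2^{k-4},\,1,1,1}\,G_3^*$. The only cosmetic difference is that the paper computes the chain \emph{downwards} (two smallest minimal links from $\s_{I_k}$ to $\s_{I_{k-2}}$) while you present the upward direction explicitly; since adjacency in $\mathrm{Licci}_3$ is symmetric, the two are equivalent.
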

		
		\begin{proof}
			If $k=2,3$ we obtain the almost complete intersection of type 2 and the Anne Brown's class of format $(1,5,6,2)$ (see the tables in Section \ref{sec:tables}). Thus we can assume $k \geq 4$.
			We start from $\s_{I_k}$ and apply two consecutive smallest minimal links (in this case this is equivalent to apply the minimal tight double link defined in Example \ref{tightdouble}) .
            %we apply two consecutive smallest minimal links. 
            First we have $p= k+1-6 = k-5$ and the first smallest minimal link is 
			$$ \s_{J_k} = S_{k-1,k-3,k-3,k-3,1} G_1 \otimes S_{2^{k-4},1,1,1} G_3^*. $$
			Then we have $p'= k-3+1 - (2(k-4)+3) = -k+3$. Thus the second smallest minimal link of $\s_{I_k}$ is 
			$$  S_{2, 2^{k-4},1,1,1} F_1' \otimes S_{k-3,1} F_3'^* = \s_{I_{k-2}}. $$ By induction, this shows that $\s_{I_k}$ is a decoration for every $k \geq 1$.
		\end{proof}
		
		%Notice that the two families of ideals $J_k$ (for $k$ even or odd) are linked to the families of $\s_{I_k}$.
		
		We show now that if a decoration has $\boldsymbol \mu = (k-1,1)$, then it must be one of the decorations $\s_{I_k}$.
		
		\begin{prop}
			\label{functor,k-1,1}
			Let $\s= \s_I$ be a decoration, let $k= \kappa(\s)$ and suppose $\mu_1= k-1$. Then $\s = \s_{I_k}$.
		\end{prop}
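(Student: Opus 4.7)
The plan is to reduce the problem to an elementary counting argument, using the constraints from Proposition~\ref{restrictions} to sharply restrict the possible values of the $\lambda_i$, and then pin down the precise multiplicities via the quadratic relation in Proposition~\ref{restrictions}(3).

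First I would read off $\boldsymbol\mu$ immediately: since $\mu_1 = k-1$ and $\sum_j \mu_j = k$ with $\mu_1 \geq \mu_2 \geq \cdots > 0$, the only possibility is $\boldsymbol\mu = (k-1, 1)$. Next, Proposition~\ref{restrictions}(4) gives $\lambda_1 \leq k+1-\mu_1 = 2$, so every part of $\boldsymbol\lambda$ lies in $\{1,2\}$. Proposition~\ref{restrictions}(2) then gives $\lambda_1 + \lambda_2 > k+1 - \mu_1 = 2$, which forces $\lambda_1 = 2$ (since $\lambda_1 \geq \lambda_2$).

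Let $r$ denote the number of parts of $\boldsymbol\lambda$ equal to $2$ and let $b$ be the total number of parts. The condition $\sum_i \lambda_i = 2k+1$ becomes $2r + (b-r) = r + b = 2k+1$. The quadratic relation from Proposition~\ref{restrictions}(3) reads
\[
4r + (b-r) + (k-1)^2 + 1 = (k+1)^2,
\]
i.e.\ $3r + b = 4k - 1$. Subtracting the linear relation gives $2r = 2k-2$, so $r = k-1$ and $b = k+2$. Hence $\boldsymbol\lambda = (2^{k-1}, 1, 1, 1)$, which matches $\mathcal{S}_{I_k}$ exactly.

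There is no real obstacle here: once Proposition~\ref{restrictions} is on hand, the argument is essentially a two-line application of the inequality constraints followed by a $2\times 2$ linear solve. One should however verify that the resulting pair is an actual decoration — but this is exactly the content of Proposition~\ref{family,k-1,1}, so nothing more needs to be checked at this stage.
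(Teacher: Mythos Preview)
Your proof is correct and follows essentially the same approach as the paper: bound $\lambda_1 \leq 2$ via Proposition~\ref{restrictions}(4), force $\lambda_1 = 2$, then solve the linear system coming from $\sum \lambda_i = 2k+1$ and the quadratic relation of Proposition~\ref{restrictions}(3). The only cosmetic difference is that the paper obtains $\lambda_1 \geq 2$ by invoking Corollary~\ref{restrictions2} (non-Gorenstein iff $\lambda_1 \neq 1$), whereas you use Proposition~\ref{restrictions}(2) directly; both routes are equivalent here. Your closing remark about verifying that the pair is a decoration is unnecessary, since $\s$ is assumed to be one from the outset.
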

		
		\begin{proof}
			The assumption $\mu_1=k-1$ implies $\mu_2=1$ and $\mu_3=0$. %By inspection (see Section \ref{sec:dynkin}) we know that the result is true for $k \leq 4$, thus we can assume $k \geq 5$. %and work by induction on $k$. 
            By Corollary \ref{restrictions2}, $I$ is not Gorenstein and $\lambda_1 \geq 2$. But, by Proposition \ref{restrictions}(4), we must have $\lambda_1+\mu_1 \leq k+1$ and therefore $\lambda_1 = 2$.
             %  Applying a first smallest minimal link to $\s$ (with $p= \sum_{i \geq 4} \lambda_i - k$), we get $$\s' = S_{\lambda_1+p,\lambda_2+p, \lambda_3+p, k-1,1} G_1 \otimes S_{\lambda_4, \ldots, \lambda_{d+3}} G_3^*.$$ To reorder in non-increasing order the indices of the partition in $G_1$, we observe that $k-1 > \lambda_3+p$ by condition (2) of Proposition \ref{restrictions} applied to $\s$. Moreover, we can assume $\lambda_3 + p >0,$ otherwise $\s'$ would correspond to an almost complete intersection and, in this case, looking at the tables in Section \ref{sec:tables} describing the links of the almost complete intersections, we get $k=3$ and $\s = \s_{I_3}$. Hence, we have $p(\s')= \lambda_3 +p+1- \sum_{i \geq 4} \lambda_i = \lambda_3 + 1 -k$ and the second smallest minimal link of $\s$ is
	%		$$ \s'' = S_{\lambda_1+p+\lambda_3 + 1 -k,\lambda_2+p+\lambda_3 + 1 -k, \lambda_3, \lambda_4, \ldots, \lambda_{d+3}} F_1' \otimes S_{\lambda_3+p,1} F_3'^*. $$ Since $\s''$ is a decoration, we must have $\lambda_2+p+\lambda_3 + 1 -k \geq 0$. %$\lambda_3 + p < k-1$, we can apply the inductive hypothesis on $\s''$ to get that $\lambda_2+p+\lambda_3 + 1 -k $
	%		Using the definition of $p$ and the fact that $\sum_{i \geq 1} \lambda_i = 2k+1$ we obtain $- \lambda_1 + 2 \geq 0$. By condition (1) of Corollary \ref{restrictions2} we must have $\lambda_1 > 1$, hence $\lambda_1= 2$. Now we know that 
            In particular, all $\lambda_i $ are equal to $2$ or $1$. Let  $a$ and $b$ be the number of $\lambda_i$'s equal respectively to $2$ or to $1$.
			We know that $2a+b= 2k+1$, and by Proposition \ref{restrictions}(3) we get also the relation $4a+b= (k+1)^2 - (k-1)^2-1 = 4k-1$. An easy calculation yields $a = k-1$, $b=3$.
		\end{proof}
		
		\begin{remark}
			\label{linkingup,k-1,1}
			%For the purpose of constructing ideals associated to the functors $\s_{I_k}$, 
			We provide also the formulas describing the links from $\s_{I_k}$ to $\s_{I_{k+2}}$. %The representation of $\sigma \in S_{2^{k-1},1,1,1} F_1$ giving the minimal minimal link to $\s_{J_{k}}$ is $$ s_1^2s_2^2s_3^2\gamma_1^2 \cdots \gamma^2_{d} \gamma_{d+1} \gamma_{d+2} \gamma_{d+3} $$ (where $d= k-1$). To link up to the functor $\s_{J_{k+2}}$ we need to use the element 
			%$$ s_1^2\gamma^2\gamma^2\gamma_1^2 \cdots \gamma^2_{d} \gamma_{d+1} \gamma_{d+2} \gamma_{d+3}.  $$ This link is described by the choices $\lambda_1' = \lambda_1$, $\lambda_2', \lambda_3' = 0$, $\lambda_i'' = \lambda_{i+1}$ for $i = 1,\ldots, d+2$, and $p = \sum_{i \geq 2} \lambda_i - k= (2k+1-2)-k= k-1$. The formula of Theorem \ref{linkageformula} gives now 
			We first link choosing $\lambda_1' = 2$, $\lambda_2'= \lambda_3' = 0$. The link with respect to this choice is
			$$  S_{k+1, k-1, k-1, k-1,1} G_1 \otimes S_{2^{k-2}, 1,1,1} G_3^* = \s_{J_{k+2}}. $$
			To successive link from $\s_{J_{k+2}}$ to $\s_{I_{k+2}}$ is now a minimal link obtained from the choice $\lambda_1', \lambda_2', \lambda_3' = (k-1, k-1, k-1) $.
			%$$ \s'= S_{\lambda_1+p, p, p, k-1,1} G_1 \otimes S_{\lambda_2, \ldots, \lambda_{d+3}} G_3^* = S_{k+1, k-1, k-1, k-1,1} G_1 \otimes S_{2^{k-2}, 1,1,1} G_3^* = \s_{J_{k+2}}. $$
			%Working now with $\s'$, the representation of $\sigma \in S_{k+1, k-1, k-1, k-1,1} G_1$ giving back the minimal minimal link to $\s_{I_k}$ is
			%$$ s_1^{k+1}s_2^{k-1}s_3^{k-1} \gamma_1^{k-1} \gamma_2. $$ To link up to the functor $\s_{I_{k+2}}$ we need to use the element 
			%$$ \gamma_1^{k+1}s_2^{k-1}s_3^{k-1} s_1^{k-1} \gamma_2.  $$ This link is described by the choices $\lambda_1', \lambda_2', \lambda_3' = \lambda_4, \lambda_2, \lambda_3 = k-1 $, $\lambda_1'' = \lambda_{1} = k+1$, $\lambda_2''= \lambda_5=1$, and $p = k+1+1-(2(k-2)+3)= -k+3$.
			%In terms of regular sequences if $I_k = (x_1, \ldots, x_{k+2})$ and $x_1, x_2, x_3$ is the regular sequence defining the link $I_k \sim J_k$, then the link $I_k \sim J_{k+2}$ is defined by a regular sequence of the form $(x_1, y_1, y_2)$ with $y_1, y_2 \in \m I_k$. The ideal $J_{k+2}$ has now the form $(x_1, y_1, y_2, z_1, z_2)$ with $ \deg z_1 < \deg z_2$. The next link $J_{k+2} \sim I_{k+2}$ is defined by a regular sequence of the form $(y_1+a_1, y_2+a_2, z_1+a_3)$ with $a_1, a_2, a_3 \in \m J_{k+2}.$
		\end{remark}

        Our next result shows that the family of licci ideals with decorations $\s_{I_k}$ corresponds exactly to the family of perfect ideals of codimension 3 having the largest perfect pairing on the free resolution after the Gorenstein ideals. To do this we use the results obtained in Section \ref{sec:classG} describing the decorations of ideals of Tor algebra class $G$.

\begin{lem}
\label{Sknuovo}
Let $\s_I$ be the decoration of a licci ideal and let $\s_{I'}$ be the decoration of its tight double link defined in Example \ref{tightdouble}. Suppose that $\lambda_i + \mu_i = \kappa(I)+1$ for $i\leq 3$ and that $I'= I_j$ where $j=\kappa(I')$. Then $I=I_k$ where $k=\kappa(I)$.
\end{lem}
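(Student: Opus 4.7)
\emph{Proof plan.} The plan is to substitute the hypothesis directly into the explicit tight double link formula of Example~\ref{tightdouble}, match the resulting decoration of $I'$ against the very rigid shape of $\s_{I_j}$ supplied by Proposition~\ref{functor,k-1,1}, and read off the two partitions of $I$.

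First, I would interpret the hypothesis in its class $G(3)$-compatible form $\lambda_i + \mu_1 = k+1$ for $i \leq 3$, since the literal $\mu_i$ reading forces $\lambda_1 = \lambda_2 = \lambda_3$ and $\mu_1 = \mu_2 = \mu_3$, after which Proposition~\ref{restrictions}(3) leaves no admissible decoration. Monotonicity of $\boldsymbol\lambda$ then immediately gives $\lambda_1 = \lambda_2 = \lambda_3 =: a$ with $a = k + 1 - \mu_1$.

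Next, I would plug these values into Example~\ref{tightdouble}. One obtains $p = k+1-3a$, $q = a - \mu_1 = 2a - k - 1$, hence $p + q = -a$, and
\[
    \lambda_1 + p + q \;=\; \lambda_2 + p + q \;=\; 0, \qquad \mu_1 + q \;=\; a, \qquad \lambda_3 + p \;=\; k + 1 - 2a,
\]
with $\kappa(I') = k - a =: j$. After discarding the two zero entries, this yields
\[
    \s_{I'} \;=\; S_{a,\lambda_4,\ldots,\lambda_{d+3}}\, G_1 \,\otimes\, S_{k+1-2a,\mu_2,\ldots,\mu_t}\, G_3^*.
\]

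Then I would match this against $\s_{I_j} = S_{2^{j-1},1,1,1}\, G_1 \otimes S_{j-1,1}\, G_3^*$. The $G_3^*$ side has only two nonzero parts, forcing $\mu_r = 0$ for $r \geq 3$ and the multiset identity $\{k+1-2a,\mu_2\} = \{k-a-1,1\}$, which leaves two options $a = 2$ or $a = k/2$. The $G_1$ target has maximum entry $2$, and since $\lambda_i \leq \lambda_3 = a$ for $i \geq 4$ the reordered partition $(a,\lambda_4,\ldots,\lambda_{d+3})$ has maximum $a$, so $a \leq 2$. The degenerate value $a = 1$ is ruled out because it would force $I$ Gorenstein and $I'$ a complete intersection, rather than any $I_j$ from Proposition~\ref{family,k-1,1}; and the alternative $a = k/2$ collapses to $a = 2$ when $k = 4$ and is impossible otherwise. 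Hence $a = 2$ in every case.

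Finally, with $a = 2$ the $G_3^*$ comparison gives $\mu_1 = k - 1$ and $\boldsymbol\mu = (k-1,1)$, while the $G_1$ comparison forces $\lambda_4,\ldots,\lambda_{d+3}$ to consist of $k - 4$ copies of $2$ followed by three $1$'s; combined with $\lambda_1 = \lambda_2 = \lambda_3 = 2$ this recovers $\boldsymbol\lambda = (2^{k-1},1,1,1)$ and thus $\s_I = \s_{I_k}$. The main technical obstacle I expect is the reordering step in Example~\ref{tightdouble}, which mixes entries from $\boldsymbol\lambda$ and $\boldsymbol\mu$; it is controlled by the uniform bound $\lambda_i \leq 2$ for $i \geq 4$, which makes the matching with the rigid pattern of $\s_{I_j}$ unambiguous.
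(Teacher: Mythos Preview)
Your proposal is correct, and your reading of the hypothesis as $\lambda_i + \mu_1 = k+1$ (rather than the literal $\mu_i$) is the intended one. Your route differs from the paper's, though. The paper never bounds $a$ via the $G_1$ side. Instead, after obtaining $\s_{I'} = S_{\lambda_3,\ldots,\lambda_b} \otimes S_{\lambda_3+p,\mu_2,\ldots,\mu_t}$ (using only $i=1,2$), it observes that $\lambda_3 + (\lambda_3 + p) = \kappa(I')+1$, a direct consequence of the identity $\mu_1+q=\lambda_3$ from Example~\ref{tightdouble}, and then applies Proposition~\ref{restrictions}(4) to $\s_{I'}$ to force $\lambda_3+p \geq \mu_2$. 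Thus $\lambda_3+p$ is the maximal $\mu$-entry of $I'$; matching against $\s_{I_j}$ gives $\lambda_3+p = j-1 = k+p+q-1$, whence $\mu_1 = \lambda_3 - q = k-1$, and Proposition~\ref{functor,k-1,1} finishes. Your argument instead reads $a \leq 2$ off the $G_1$ side and then eliminates $a=1$; this is more elementary (it avoids Proposition~\ref{restrictions}(4)) but uses the full hypothesis $i \leq 3$ and a small case split.

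Two minor remarks. First, when $a=1$ the tight double link $I'$ is Gorenstein of level $k-1$, not a complete intersection; the relevant point is simply that it has type $1$, hence cannot equal any $I_j$. Second, your multiset analysis of the $G_3^*$ side is redundant: once $a\leq 2$ and $a\neq 1$, you already have $a=2$ and $\mu_1=k-1$, so Proposition~\ref{functor,k-1,1} applies immediately without matching the remaining entries.
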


\begin{proof}
Arguing as in the proof of Lemma \ref{Grinduction}, we use that $\lambda_i + \mu_i = k+1$ for $i=1,2$ to get
$$  \s_{I'}= S_{\lambda_3, \ldots, \lambda_b}F_1 \otimes S_{\lambda_3+p, \mu_2, \ldots, \mu_t}F_3^* = S_{I_j}= S_{2^{j-1},1,1,1}F_1 \otimes S_{j-1,1}F_3^*. $$  
From Example \ref{tightdouble}, we know that $ \lambda_3 = \mu_1+q $. Hence, $\lambda_3 + \lambda_3+p= \lambda_3 + \mu_1+q+p =k+1+q+p=\kappa(I')+1.  $ By Proposition \ref{restrictions}(4), $\lambda_3+p \geq \mu_2$ and therefore $\lambda_3+p = \kappa(I')-1 = k+p+q-1$. Combining these facts we get $ \lambda_3 = \mu_1+q = k+q-1 $. Hence $\mu_1 = k-1$ and the desired result follows by Proposition \ref{functor,k-1,1}. 
\end{proof}

\begin{thm}
\label{partialperfectpairing}
Le $I$ be a perfect ideal of codimension 3 minimally generated by $b$ elements. 
%and let $ \s_I$ be the corresponding decoration. 
Suppose 
%$\kappa(I)= k \geq 4$ and 
that $I$ is not Gorenstein. 
The following conditions are equivalent:
\begin{enumerate}
\item[(1)] $I$ is of class $G(r)$ with $r =b-3 $.
\item[(2)] $I$ is licci of class $G(r)$ with $r =k-1 $.
\item[(3)] $I$ is licci of class $G(r)$ with $r \geq k-1$. %with $k=d+1$.
\item[(4)] $I$ is licci and $\s_I = \s_{I_k}$.
\end{enumerate}
\end{thm}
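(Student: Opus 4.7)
The plan is to prove the cycle of implications by routing through (4). The key ingredients are Theorem \ref{multiprelation} (relating the Tor algebra multiplication to the partitions), Proposition \ref{functor,k-1,1} (characterizing $\s_{I_k}$ as the unique decoration with $\mu_1 = k-1$), and the tight double link of Example \ref{tightdouble} together with Lemma \ref{Grinduction} and Lemma \ref{Sknuovo} for inductive arguments.

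The easy implications (4) $\Rightarrow$ (1) and (4) $\Rightarrow$ (2) come from a direct computation. With $\s_I = \s_{I_k} = S_{2^{k-1},1,1,1} F_1 \otimes S_{k-1,1} F_3^*$, we have $b = k+2$, $\mu_1 = k-1$, $\mu_2 = 1$, so Theorem \ref{multiprelation} gives $e_i \cdot f = g_1$ mod $\m$ precisely for $i \in \{1,\dots,k-1\}$, while the inequality $\lambda_1 + \lambda_2 = 4 < k+1$ for $k \geq 4$ kills all Koszul products (the case $k = 2$ is the almost complete intersection of type $2$, handled separately). Hence $I$ is of class $G(k-1) = G(b-3)$, yielding both (1) and (2).

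The implication (2) $\Rightarrow$ (3) is trivial. For (3) $\Rightarrow$ (4), assume $I$ is licci, non-Gorenstein, and of class $G(r)$ with $r \geq k-1 \geq 2$. Then Theorem \ref{multiprelation} forces $\lambda_1 = \dots = \lambda_r = \lambda := k+1-\mu_1$, and Corollary \ref{restrictions2}(1) gives $\lambda \geq 2$, while the Koszul vanishing demands $2\lambda < k+1$. Combined, these force $\lambda = 2$ for $k \geq 4$, i.e., $\mu_1 = k-1$, so Proposition \ref{functor,k-1,1} yields $\s_I = \s_{I_k}$. The cases $k \in \{2,3\}$ I would verify by direct inspection of Section \ref{sec:tables} and Section \ref{sec:dynkin}.

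For the remaining implication (1) $\Rightarrow$ (4), I would first apply Lemma \ref{forcetobelicci} to ensure that $I$ is licci, and then proceed by induction on $k$, with small $k$ handled by inspection. For the inductive step, take the tight double link $I'$ of Example \ref{tightdouble}: Proposition \ref{restrictions}(2) gives $\kappa(I') < k$, and Lemma \ref{Grinduction} shows that $I'$ remains of class $G(r')$ with $r' \geq r - 2 = (b-2)-3 = b'-3$. If $I'$ is non-Gorenstein, the inductive hypothesis identifies $\s_{I'} = \s_{I_{\kappa(I')}}$, and Lemma \ref{Sknuovo} lifts this to $\s_I = \s_{I_k}$. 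The main obstacle here is controlling two degeneracies of the tight double link: first, the case in which $I'$ becomes Gorenstein, which I expect to rule out using Corollary \ref{restrictions2}(1) applied to the explicit partitions appearing in Example \ref{tightdouble}; second, a possible drop of the type from $t$ to $t-1$, which requires a separate matching of partitions compatible with the inductive hypothesis. As a secondary sanity check, Proposition \ref{restrictions}(3) together with the linear sum $\sum\lambda_i = 2k+1$ and the bounds $\lambda_{b-2},\lambda_{b-1},\lambda_b \leq \lambda-1$ forbid $\lambda \geq 3$ by a direct numerical contradiction, giving an alternative (non-inductive) route to $\s_I = \s_{I_k}$.
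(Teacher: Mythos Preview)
Your overall strategy---routing through (4) via Theorem~\ref{multiprelation}, Proposition~\ref{functor,k-1,1}, the tight double link, and Lemmas~\ref{Grinduction} and~\ref{Sknuovo}---matches the paper's proof exactly. Two points need attention.

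In (3) $\Rightarrow$ (4), the sentence ``Combined, these force $\lambda = 2$ for $k \geq 4$'' is not justified by the two constraints you explicitly list: $\lambda \geq 2$ and $2\lambda < k+1$ alone allow $\lambda$ as large as $\lfloor k/2\rfloor$. The missing ingredient is the pigeonhole bound $(k-1)\lambda \leq r\lambda \leq \sum_i \lambda_i = 2k+1$, coming from $r \geq k-1$; this gives $\lambda \leq 2 + 3/(k-1)$, hence $\lambda = 2$ for $k \geq 5$, and the Koszul constraint closes the case $k=4$. The paper runs essentially this computation but phrased as a bound on $\mu_1$ rather than on $\lambda$: assuming $\mu_1 \leq k-2$ gives $2k+1 \geq (k-1)(k+1-\mu_1) \geq 3(k-1)$, forcing $k \leq 4$, then inspection. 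Either way, you must state the sum constraint explicitly.

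In (1) $\Rightarrow$ (4), your plan to exclude the case $I'$ Gorenstein via Corollary~\ref{restrictions2}(1) is vague, and the partition analysis you would need is not short. The paper instead observes that if the tight double link $I'$ is Gorenstein, then the intermediate ideal $J$ (the smallest minimal link of $I$) is an almost complete intersection, so $I$ is directly linked to an almost complete intersection; one then checks from the tables in Section~\ref{sec:tables} that no such $I$ is of class $G(b-3)$. This is cleaner than tracking partition entries through the double link. Your concern about a type drop is not an obstruction: Lemma~\ref{Sknuovo} only needs the hypothesis $\lambda_i + \mu_1 = k+1$ for $i \leq 3$ and $\s_{I'} = \s_{I_j}$, both of which are insensitive to whether the type of $I'$ is $t$ or $t-1$.
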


\begin{proof}
The implication (2) $\Rightarrow$ (3) is obvious, while
the implications (4) $\Rightarrow$ (1) and (4) $\Rightarrow$ (2) follow immediately by Theorem \ref{multiprelation}.

We show the implication (3) $\Rightarrow$ (4).
%If $I$ is Gorenstein we know that $I$ is of class $G(r)$ with $r= 2k+1$. 
Suppose $I$ to be not Gorenstein and of class $G(r)$ with $r \geq k-1$. %$r \geq k-1$. 
It is sufficient to show $\mu_1=k-1$ and apply Proposition \ref{functor,k-1,1}.
By way of contradiction say that $\mu_1 \leq k-2$. 
The assumption of $G(r)$ implies, by Theorem \ref{multiprelation}, that $\lambda_i + \mu_1 = k+1$ for every $i=1, \ldots, r$. Hence, 
$$ 2k+1 \geq \lambda_{1} + \ldots + \lambda_{r} = r(k+1 -\mu_{1}) \geq (k-1)(k+1 -(k-2))= 3k-3. $$ It follows that $k \leq 4$ and %the result for these small values of $k$ 
it can be checked by inspection that, if $k\leq4$, the only Herzog class of Tor algebra class $G(r)$ (with $r \geq 3$) are the Gorenstein class and the one of $\s_{I_4}$. 

Finally, we prove (1) $\Rightarrow$ (4).
By Lemma \ref{forcetobelicci}, if $I$ satisfies condition (1), then it is licci. Looking at the tables in Section 4, we observe that the result is true both for small values of $k$ and if $I$ is directly linked to an almost complete intersection. Also we can check that $I$ is not an almost complete intersection.
Thus, we use a method similar to the one used in the proof of Theorem \ref{conjectureLars1}. 
By the assumption of class $G(b-3)$, %and $k\geq 4$, 
$\s_I$ satisfies the assumption of Lemma \ref{Grinduction}.
We work by induction on $k$. Using Lemma \ref{Grinduction}, %Theorem \ref{conjectureLars1} 
and the fact that (2),(3),(4) are equivalent, we can assume that the tight double link $\s_{I'}$ of $\s_I$ is either Gorenstein or of the form $\s_{I_j}$ with $j=\kappa(I')=k+p+q$ (see Example \ref{tightdouble} and observe that $\kappa(I')< k$). But if $I'$ is Gorenstein, then $I$ is directly linked to an almost complete intersection and the thesis follows. 

Otherwise, since $I'$ has $b-2$ generators, we can assume $b \geq 6$, $\lambda_3 + \mu_1 =k+1$ and
 $\s_{I'}= \s_{I_j}$. The thesis follows from Lemma \ref{Sknuovo}. 
\end{proof}

		In the next subsections we describe explicit free resolutions of ideals in each Herzog class $\s_{I_k}$. In an upcoming paper we will show that the ideals that we construct here define rigid algebras. % add reference for the upcoming paper on rigidity?. \ec
		
		\subsection{Construction of the model $I_k$ for $k$ even}
		
		Let $k \geq 2 $ be an even number. Let $X$ be a $3 \times k$ matrix with generic entries $x_{ij}$. Denote by $M_{ij}^{\hat{l}}$ the $2 \times 2$ minor of $X$ obtained by removing the $l$-th row and taking the $i$-th and $j$-th columns. Similarly denote by $M_{ijl}$ the $3 \times 3$ minor of $X$ on the columns $i,j,l$.
		
		Consider another set of variables $z_{ij}$ for $2 \leq i < j \leq k$ and let $Z= \lbrace Z_{ij} \rbrace$ be the skew-symmetric matrix such that $Z_{ij} = (-1)^{i+j} z_{i+1, j+1}$ if $j \neq i$ and $Z_{ii}=0$. We adopt the notation $P_{\hat{h_1}, \ldots, \hat{h_t}}$ to denote the pfaffian obtained from $Z$ removing the rows and columns $h_1 -1, \ldots, h_t-1$. By convention the pfaffian of an empty matrix is equal to $1$. %and the Pfaffians of a matrix of size $< 1$ are zero.
		
		Define a polynomial ring $R$ over a field $K$ with all the variables $x_{ij}, z_{ij}$ and a variable $y$. We fix the grading on $R$ setting $\deg(x_{ij})= \deg(y)= 1$ and $\deg(z_{ij})=2$. \\
		
		Define the ideal $I_k$ generated by the elements $ \alpha_2, \ldots, \alpha_{k}, \beta_1, \beta_2, \beta_3$, where
		$$ \alpha_r = yP_{\hat{r}} + \sum_{i,j \neq 1,r}^k M_{1ij} P_{\hat{r}, \hat{i}, \hat{j}} (-1)^{\sigma
			(i,j,r)},  $$
		$$ \beta_h=  \sum_{i=2}^k M_{1i}^{\hat{h}} P_{\hat{i}} (-1)^{i}. $$
		%Denote by $X'$ the matrix obtained from $X$ by removing the first column and changing the sign on all the entries of the second row.
		The second differential of the free resolution of $I_k$ is
		$$ d_2= \bmatrix 
		-M_{1,2}^{\hat{1}} & -M_{1,2}^{\hat{2}} & -M_{1,2}^{\hat{3}} &  0 &  0 & -z_{23} & \ldots & z_{2k} \\  
		-M_{1,3}^{\hat{1}} & -M_{1,3}^{\hat{2}} & -M_{1,3}^{\hat{3}} &  0 & z_{23} & 0 & \ldots & -z_{3k} \\
		\vdots & \vdots & \vdots & \vdots & \vdots & \vdots & \ddots & \vdots \\
		-M_{1,k}^{\hat{1}} & -M_{1,k}^{\hat{2}} & -M_{1,k}^{\hat{3}} & 0 & -z_{2k} & z_{3k} & \ldots & 0 \\
		y & 0 & 0 & x_{11} & -x_{12} & x_{13} & \ldots & -x_{1k} \\ 
		0 & y & 0 & -x_{21} & x_{22} & -x_{23} & \ldots & x_{12k} \\
		0 & 0 & y & x_{31} & -x_{32} & x_{33} & \ldots & -x_{3k} \\
		\endbmatrix. $$ %where $A$ is the matrix with entries
		%$$ A_{ij}= (-1)^{i+j} M_{1,i+1}^{\hat{j}}. $$ 
		The third differential of the free resolution of $I_k$ is
		$$ d_3= \bmatrix 
		x_{11} &  \sum_{h=2}^k x_{1h} P_{\hat{h}} (-1)^{h+1} \\  
		-x_{21} &  \sum_{h=2}^k x_{2h} P_{\hat{h}} (-1)^{h} \\ 
		x_{31} &  \sum_{h=2}^k x_{3h} P_{\hat{h}} (-1)^{h+1} \\ 
		-y &  \sum_{i,j,r \geq 2}^k M_{i,j,r} P_{\hat{i}, \hat{j}, \hat{r}} (-1)^{\sigma(i,j,r)} \\ 
		0 & \alpha_2 \\ 
		\vdots & \vdots \\ 
		0 & \alpha_k \\ 
		\endbmatrix. $$
		
		\begin{example}
			In the case $k=2$, we recover the resolution of the ideal generated by $\alpha_2 = y$ and by the three maximal minors of a $3 \times 2$ matrix in the variables $x_{ij}$. Therefore $I_2$ is a generic almost complete intersection of type 2.
		\end{example}
		
		\begin{example}
			For $k=4$, we obtain that $I_4$ is generated by $$yz_{34} + M_{134}, \quad yz_{24} - M_{124}, \quad yz_{23} + M_{123}, $$  $$M_{12}^{\hat{1}} z_{34}- M_{13}^{\hat{1}} z_{24} + M_{14}^{\hat{1}} z_{23}, \quad M_{12}^{\hat{2}} z_{34}- M_{13}^{\hat{2}} z_{24} + M_{14}^{\hat{2}} z_{23}, \quad M_{12}^{\hat{3}} z_{34}- M_{13}^{\hat{3}} z_{24} + M_{14}^{\hat{3}} z_{23}. $$
			The other differentials of the free resolution of $I_4$ are given by:
			$$ d_2= \bmatrix 
			-M_{1,2}^{\hat{1}} & M_{1,2}^{\hat{2}} & -M_{1,2}^{\hat{3}} & 0 & 0 & -z_{23} & z_{24}\\ 
			M_{1,3}^{\hat{1}} & -M_{1,3}^{\hat{2}} & M_{1,3}^{\hat{3}} & 0 & z_{23} & 0 & -z_{34}\\ 
			-M_{1,4}^{\hat{1}} & M_{1,4}^{\hat{2}} & -M_{1,4}^{\hat{3}} & 0 & -z_{24} & z_{34} & 0\\ 
			y & 0 & 0 & x_{11} & x_{12} & x_{13} & x_{14} \\ 
			0 & y & 0 & -x_{21} & -x_{22} & -x_{23} & -x_{24} \\
			0 & 0 & y & x_{31} & x_{32} & x_{33} & x_{34} \\
			\endbmatrix, $$
			$$ d_3= \bmatrix 
			x_{11} &   - x_{12}z_{34} + x_{13}z_{24} - x_{14}z_{23}  \\  
			-x_{21} &  - x_{22}z_{34} + x_{23}z_{24} - x_{24}z_{23} \\ 
			x_{31} &  - x_{32}z_{34} + x_{33}z_{24} - x_{34}z_{23} \\ 
			-y &  M_{234}  \\ 
			0 & yz_{34} + M_{134} \\ 
			0 & yz_{24} - M_{124} \\ 
			0 & yz_{23} + M_{123} \\ 
			\endbmatrix. $$
		\end{example}
		%\medskip 

		% prove that these links preserve rigidity, done by Xianglong! \ec
		
		%\medskip
		
		% Using induction on $k$, we need to do: \ec \\
		% Option 1: First prove that the above differentials define an exact complex $\FF$. Compute structure maps $w^{(3)}_1$ and $w^{(1)}_1$ on $\FF$. Use them to compute generators and $w^{(1)}_1$ of $J_{k+2}$ and then only the generators of $I_{k+2}$.  \\
		% Option 2: If we cannot prove exactness, we can compute twice the whole mapping cone and find the resolutions of $J_{k+2}$ and $I_{k+2}$. \\
		% also to do: each ideal is reached by the larger ones by non-homogeneous specialization of the new variables \ec
		
		\begin{remark}
			\label{exactF_k,even}
			Setting $d_1$ to be the row matrix with entries $(\alpha_2, \ldots, \alpha_{k}, \beta_1, \beta_2, \beta_3)$ and 
			using standard formulas involving relations of pfaffians and minors one can see that $d_1, d_2, d_3$ form the differentials of a complex $\FF_k$. To show that this complex is acyclic, one can use the Acyclicity Lemma, localize to any prime ideal of codimension $2$ and without loss of generality assume that the variable $z_{k-1,k}$ is a unit. Reducing then the complex to a minimal complex and specializing mapping to zero all the variables of the form $x_{i,k-1}, x_{ik}, z_{i,k-1}, z_{ik}$ with $(i,k) \neq (k-1, i)$, one obtains the complex $\FF_{k-2}$. By induction this shows that $\FF_k$ is acyclic.
		\end{remark}

		\begin{remark}
			\label{linkF_k,even}
			The complex $\FF_k$ from can be constructed inductively from $\FF_{k-2}$ using a double mapping cone construction. 
			One can define the double link of ideals in the following way:
			the link from $I_k $ to $ J_{k+2}$ is defined, introducing the opportune new variables, by the regular sequence $\alpha_1, \theta_1, \theta_2 $ where
			$$ \theta_i = \sum_{j=2}^{k-1} z_{j,k+i} \alpha_j + \sum_{h=1}^{3} x_{h,k+i} \beta_h. $$
			The ideal $J_{k+2}$ is then generated by $\alpha_1, \theta_1, \theta_2, \theta_3, \eta$. The regular sequence defining the link from $J_{k+2}$ to $ I_{k+2}$ is given by  
			$$ \theta_1 + \alpha_1 z_{k, k+1}, \quad \theta_2 + \alpha_1 z_{k, k+2}, \quad \theta_3 + \alpha_1 z_{k+1, k+2}. $$
            Using inductively Lemma \ref{lem:special-graded-free-res-link} as described in Algorithm \ref{algo:licci-examples}, we obtain that the ideal $ I_{k+2}$ is in the Herzog class corresponding to $\s_{I_k}$.
		\end{remark}

		\subsection{Construction of the model $I_k$ for $k$ odd}

		Let $k \geq 3 $ be an odd number. Let $X$ be a $3 \times k-1$ matrix with generic entries $x_{ij}$. Denote by $M_{ij}^{\hat{l}}$ the $2 \times 2$ minor of $X$ obtained by removing the $l$-th row and taking the $i$-th and $j$-th columns. Similarly denote by $M_{ijl}$ the $3 \times 3$ minor of $X$ on the columns $i,j,l$.
		
		Consider another set of variables $w_{ij}$ for $1 \leq i < j \leq k-1$ and let $W= \lbrace W_{ij} \rbrace$ be the skew-symmetric matrix such that $W_{ij} = (-1)^{i+j} w_{i+1, j+1}$ if $j \neq i$ and $W_{ii}=0$. We adopt the notation $P_{\hat{h_1}, \ldots, \hat{h_t}}$ to denote the pfaffian obtained from $W$ removing the rows and columns $h_1 -1, \ldots, h_t-1$. The pfaffian of $W$ is denoted by $P$. Again, by convention the pfaffian of an empty matrix is equal to $1$. %and the Pfaffians of a matrix of size $< 1$ are zero.
		
		Define a polynomial ring $R$ over a field $K$ with all the variables $x_{ij}, z_{ij}$ and two variables $y$ and $z$. We fix the grading on $R$ setting $\deg(x_{ij})= \deg(y)= \deg(z)= 1$ and $\deg(w_{ij})=2$. \\
		
		Define the ideal $I_k$ generated by $\alpha_1, \ldots, \alpha_{k-1}, \beta_1, \beta_2, \beta_3$, where
		$$  \alpha_r = z \sum_{i,j,l  \neq r} M_{ijl} P_{\hat{r}, \hat{i}, \hat{j}, \hat{l}} (-1)^{\sigma
			(i,j,l,r)} + 
		\sum_{i \neq r} P_{ \hat{i}, \hat{r}} (y_1 x_{1i} + y_2 x_{2i} + y_3 x_{3i}) (-1)^{\sigma
			(i,r)},  $$
		$$ \beta_h=  Py_h + z \sum_{i,j} M_{ij}^{\hat{h}} P_{\hat{i}, \hat{j}} (-1)^{i+j}. $$

		%Denote by $X'$ the matrix obtained from $X$ by changing the sign on all the entries of  the odd columns \ec and by $X''$ the transpose of $X$ after changing the sign to all the entries in odd columns \ec.
		The second differential of the free resolution of $I_k$ is
		$$  d_2= \bmatrix 
		-y_1 x_{11} - y_2 x_{21} - y_3 x_{31} &  0  & -w_{12} & \ldots & -w_{1k-1} & zx_{11} & zx_{21} & zx_{31} \\  
		y_1 x_{12} +y_2 x_{22} + y_3 x_{32} &  w_{12}  & 0 & \ldots & w_{2k-1} & -zx_{12} & -zx_{22} & -zx_{32} \\
		\vdots & \vdots & \vdots &  \ddots & \vdots & \vdots & \vdots & \vdots \\
		y_1 x_{1k-1} +y_2 x_{2k-1} + y_3 x_{3k-1}  &   w_{1k-1}  & -w_{2k-1} & \ldots & 0 & -zx_{1k-1} & -zx_{2k-1} & -zx_{3k-1} \\
		0 &  -x_{11} & x_{12}  & \ldots &  x_{1k-1} & 0 & y_3  & -y_2 \\ 
		0 &   -x_{21} & x_{22}& \ldots &  x_{2k-1}  & -y_3 & 0 & y_1\\
		0 &  -x_{31} & x_{32} & \ldots &  x_{3k-1}  &  y_2 & -y_1 & 0\\
		\endbmatrix. $$ %where $Y$ is the skew-symmetric matrix with entries $Y_{ij}= (-1)^{i+j} y_h$ with $h \in \lbrace 1,2,3 \rbrace \setminus \lbrace i,j \rbrace$ and $Y_{ii}=0$. 

		The third differential of the free resolution of $I_k$ is
		$$ d_3= \bmatrix 
		z &  P \\  
		0 &  \alpha_1 \\ 
		\vdots &  \vdots \\ 
		0 &  \alpha_{k-1} \\ 
		y_1 & \sum_{i,j} M_{ij}^{\hat{1}} P_{\hat{i}, \hat{j}} (-1)^{i+j} \\ 
		y_2 & \sum_{i,j} M_{ij}^{\hat{2}} P_{\hat{i}, \hat{j}} (-1)^{i+j} \\ 
		y_3 & \sum_{i,j} M_{ij}^{\hat{3}} P_{\hat{i}, \hat{j}} (-1)^{i+j} \\ 
		\endbmatrix. $$
		
		\begin{example}
			In the case $k=3$ we recover Anne Brown's model of format $(1,5,6,2)$ generated by $  y_1 x_{12} + y_2 x_{22} + y_3 x_{32}, \, y_1 x_{11} + y_2 x_{21} + y_3 x_{31}, \, w_{12} y_1 + z M_{12}^{\hat{1}}, \, w_{12} y_2 - z M_{12}^{\hat{2}}, \, w_{12} y_3 + z M_{12}^{\hat{3}}.  $
			The other differentials of the resolutions are given by: 
			$$ d_2= \bmatrix 
			y_1 x_{11} + y_2 x_{21} + y_3 x_{31} & 0 & -w_{12} &  -zx_{11} & -zx_{21} & -zx_{31} \\  
			-y_1 x_{12} - y_2 x_{22} - y_3 x_{32} & w_{12} & 0 &  zx_{12} & zx_{22} & zx_{32} \\ 
			0 & -x_{11} & x_{12} & 0  & -y_3 & y_2 \\ 
			0 & -x_{21} & x_{22} &  y_3 & 0 & -y_1 \\ 
			0 & -x_{31} & x_{32} &  -y_2 & y_1 & 0 \\ 
			\endbmatrix, $$
			$$ d_3= \bmatrix 
			z &  w_{12} \\  
			0 &   y_1 x_{12} + y_2 x_{22} + y_3 x_{32} \\ 
			0 &  y_1 x_{11} + y_2 x_{21} + y_3 x_{31} \\ 
			y_1 & M_{12}^{\hat{1}} \\ 
			y_2 & -M_{12}^{\hat{2}} \\ 
			y_3 & M_{12}^{\hat{3}} \\ 
			\endbmatrix. $$
		\end{example}
		
		\begin{example}
			For $k=5$,  setting $\delta_i= y_1 x_{1i} + y_2 x_{2i} + y_3 x_{3i}$, we have 
			%$$\alpha_1 = z M_{234} +  z_{34}\delta_2 - z_{24}\delta_3 + z_{23}\delta_4,$$ $$\beta_1 = Py_1 + z ( M_{12}^{\hat{1}} z_{34} - M_{13}^{\hat{1}} z_{24} + M_{14}^{\hat{1}} z_{23} + M_{23}^{\hat{1}} z_{14} - M_{24}^{\hat{1}} z_{13} + M_{34}^{\hat{1}} z_{12}),$$ where $P= z_{12} z_{34} - z_{13} z_{24}+ z_{14} z_{23}.$
			% $I_5$ is generated by $ z M_{234} +  z_{34}\delta_2 - z_{24}\delta_3 + z_{23}\delta_4,  \, z M_{134} +  z_{34}\delta_1 + z_{14}\delta_3 - z_{13}\delta_4, \,  z M_{124} + z_{24}\delta_1 - z_{14}\delta_2 - z_{12}\delta_4, \, z M_{123} + z_{23}\delta_1 - z_{13}\delta_2 + z_{12}\delta_1, \,  y_1 x_{11} + y_2 x_{21} + y_3 x_{31}, \, w_{12} y_1 + z M_{12}^{\hat{1}}, \, w_{12} y_2 - z M_{12}^{\hat{2}}, \, w_{12} y_3 + z M_{12}^{\hat{3}}. $
			%The differentials of the resolutions are given by:
			%The 
			that the second differential of the resolution is given by 
			$$ d_2= \bmatrix 
			\delta_1 & 0 & -w_{12} & w_{13} & -w_{14} &  -zx_{11} & -zx_{21} & -zx_{31} \\  
			-\delta_2 & w_{12} & 0 & -w_{23} & w_{24} &  zx_{12} & zx_{22} & zx_{32} \\ 
			\delta_3 & -w_{13} & w_{23} & 0 & -w_{34} & -zx_{13} & -zx_{23} & -zx_{34} \\ 
			-\delta_2 & w_{14} & -w_{24} & w_{34} & 0 & zx_{14} & zx_{24} & zx_{34} \\ 
			0 & -x_{11} & x_{12} & -x_{13} & x_{14} & 0  & -y_3 & y_2 \\ 
			0 & -x_{21} & x_{22} & -x_{23} & x_{24} &  y_3 & 0 & -y_1 \\ 
			0 & -x_{31} & x_{32} & -x_{33} & x_{34} &  -y_2 & y_1 & 0 \\ 
			\endbmatrix. $$
			The other two differentials can be computed according to the general formulas.
		\end{example}
		% $$ d_3= \bmatrix 
		% z &  P \\  
		% 0 &  \alpha_1 \\ 
		% \vdots &  \vdots \\ 
		% 0 &  \alpha_{k-1} \\ 
		% y_1 & \sum_{i,j} M_{ij}^{\hat{1}} P_{\hat{i}, \hat{j}} (-1)^{i+j} \\ 
		% y_2 & \sum_{i,j} M_{ij}^{\hat{2}} P_{\hat{i}, \hat{j}} (-1)^{i+j} \\ 
		% y_3 & \sum_{i,j} M_{ij}^{\hat{3}} P_{\hat{i}, \hat{j}} (-1)^{i+j} \\ 
		% \endbmatrix. $$

		% prove the same things as in the even case \ec

		\begin{remark}
			\label{exactF_k,odd}
			Set $d_1$ to be the row matrix with entries $(\alpha_1, \ldots, \alpha_{k-1}, \beta_1, \beta_2, \beta_3)$
			Similarly to the case where $k$ is even, one can show that the maps $d_1, d_2, d_3$ form the differentials of an exact complex $\FF_k$. In this case one localize to invert the variable $w_{k-1, k-2}$ and specialize to zero all the variables $x_{i,k-2}, x_{i,k-1}, w_{i,k-2}, w_{i,k-1}$ with $(i,k-1) \neq (k-2, k-1)$.
			%Setting $d_1$ to be the row matrix with entries $(\alpha_2, \ldots, \alpha_{k}, \beta_1, \beta_2, \beta_3)$ and using standard formulas involving relations of pfaffians and minors one can see that $d_1, d_2, d_3$ form the differentials of a complex $\FF_k$. To show that this complex is acyclic, one can use the Acyclicity Lemma, localize to any prime ideal of codimension $2$ and without loss of generality assume that the variable $z_{k-1,k}$ is a unit. Reducing then the complex to a minimal complex and specializing mapping to zero all the variables of the form $x_{i,k-1}, x_{ik}, z_{i,k-1}, z_{i,k}$ with $(i,k) \neq (k-1, i)$, one obtains the complex $\FF_{k-2}$. By induction this shows that $\FF_k$ is acyclic.
		\end{remark}
		
		\begin{remark}
			\label{linkF_k,odd}
			In this case the link from $I_k $ to $ J_{k+2}$ is defined, introducing the opportune new variables, by the regular sequence $\alpha_1, \theta_1, \theta_2 $ where
			$$ \theta_i = \sum_{j=2}^{k-1} w_{j,k-1+i} \alpha_j + \sum_{h=1}^{3} x_{h,k-1+i} \beta_h. $$
			The ideal $J_{k+2}$ is then generated by $\alpha_1, \theta_1, \theta_2, \theta_3, \eta$. The regular sequence defining the link from $J_{k+2} $ to $ I_{k+2}$ is given by  
			$$ \theta_1 + \alpha_1 w_{1, k}, \theta_2 + \alpha_1 w_{1, k+1}, \theta_3 + \alpha_1 w_{k, k+1}. $$
			Also in this case, using inductively Lemma \ref{lem:special-graded-free-res-link} as described in Algorithm \ref{algo:licci-examples}, we obtain that the ideal $ I_{k+2}$ is in the Herzog class corresponding to $\s_{I_k}$.
		\end{remark}
		
		% Should we say more about this? The details would require many pages of computations \ec

		%\section{Decorations of zero dimensional monomial ideals}

\section{The case of arbitrary codimension}
\label{sec:arbitrary}
			%\begingroup \color{magenta}
			There is evidence to suggest that the framework of Section \ref{sec:preliminaries} generalizes to arbitrary codimension $c \geq 2$; see for instance \cite[\S2.3]{xianglong}. Specifically, we change the left arm of the graph $T$ to have length $c-2$, and we consider the set $\leftindex^{z_1}{W(d,t)}^{x_{c-2}}$ of minimal length representatives for their respective double cosets. We analogously define the limiting graph $\leftindex^{z_1}{\overline{W}}^{x_{c-2}}$ and the ``combinatorial licci graphs'' $\mathrm{Licci}^{\mathrm{bi}}_c$ and $\mathrm{Licci}_c$.
			
			The vertices of $\mathrm{Licci}_c$ are given by $\leftindex^{z_1}{\overline{W}}^{x_{c-2}}$ by definition, and they may be identified with certain pairs of partitions $(\boldsymbol \lambda,\boldsymbol \mu)$ using the same formula \eqref{eq:weights-to-partitions}. One now has that
			\[
				\sum \lambda_i = 1+(c-1)\sum \mu_i.
			\]
			As before, we will typically use $k$ to denote $\sum \mu_i$. We will often write these pairs $(\boldsymbol \lambda,\boldsymbol \mu)$ in the notation $S_{\boldsymbol \lambda} F_1 \otimes S_{\boldsymbol \mu} F_c^*$ in view of the representation theory perspective given in Section \ref{sec:background-reps}.
			
			One has the following analogue of Theorem~\ref{thm:graph-edges-partitions}. The proof is almost verbatim the same, so we omit it.
			\begin{thm}\label{thm:graph-edges-partitions-codim-c}
				Let $\sigma \in \leftindex^{z_1}{\overline{W}}^{x_{c-2}}$ correspond to the pair of partitions $(\boldsymbol\lambda,\boldsymbol\mu)$, where $\boldsymbol\lambda = (\lambda_1,\lambda_2,\ldots)$ and $\boldsymbol\mu = (\mu_1,\mu_2,\ldots)$ are viewed as infinite lists with trailing zeros. Let $\lambda'_1 \geq \ldots \geq \lambda'_c$ be parts of $\boldsymbol\lambda$ (possibly equal to zero) and let $\boldsymbol\mu^\mathrm{link}$ be the partition obtained from $\boldsymbol\lambda$ after removing these parts. Define
				\[
				\boldsymbol\lambda^\mathrm{link} = \operatorname{rsort}(\lambda'_1 + p, \ldots, \lambda'_c + p,\mu_1,\mu_2,\mu_3,\ldots).
				\]
				where $\operatorname{rsort}$ means to sort in non-increasing order and
				\[
				p = \sum_{j=1}^\infty \mu^\mathrm{link}_j - \sum_{j=1}^\infty \mu_j.
				\]
				Then the pair $(\boldsymbol\lambda^\mathrm{link},\boldsymbol\mu^\mathrm{link})$ corresponds to some $\sigma^\mathrm{link} \in \leftindex^{z_1}{\overline{W}}^{x_{c-2}}$ adjacent to $\sigma$ on $\mathrm{Licci}_c$, and all neighbors of $\sigma$ are obtained in this fashion for some choice of parts $\lambda'_1 \geq \ldots \geq \lambda'_c$.
			\end{thm}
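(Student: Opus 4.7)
The plan is to adapt the proof of Theorem~\ref{thm:graph-edges-partitions} essentially verbatim, modifying only what the longer left arm $(x_1,\ldots,x_{c-2})$ of $\overline{T}$ forces. Given $\sigma \in \leftindex^{z_1}{\overline{W}}^{x_{c-2}}$ and a putative neighbor $\sigma^\mathrm{link}$, I first produce a bridging element $\tilde\sigma \in \leftindex^{y_1,z_1}{\overline{W}}^{x_{c-2}}$ representing both $[\sigma] \in \overline{W}_{z_1}\backslash\overline{W}/\overline{W}_{x_{c-2}}$ and $[\chi(\sigma^\mathrm{link})] \in \overline{W}_{y_1}\backslash\overline{W}/\overline{W}_{x_{c-2}}$. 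Writing $\tilde\sigma\omega_{x_{c-2}}=\sum\tilde a_i\omega_i$, $\sigma\omega_{x_{c-2}}=\sum a_i\omega_i$, and $\chi(\sigma^\mathrm{link})\omega_{x_{c-2}}=\sum a'_i\omega_i$, the aim is to relate these three coefficient systems and then translate the relations back to partitions through the formula \eqref{eq:weights-to-partitions}.

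The relevant vertex orderings become $l_{\boldsymbol\lambda}=(x_{c-2},x_{c-3},\ldots,x_1,u,y_1,y_2,\ldots)$ and its variant $l'_{\boldsymbol\lambda}=(x_{c-2},\ldots,x_1,u,z_1,z_2,\ldots)$, which agree in their first $c-1$ positions. From $\tilde\sigma=\tau\sigma$ with $\tau$ built out of $s_i$ for $i\in\{x_1,\ldots,x_{c-2},u,y_1,y_2,\ldots\}$, and $\tilde\sigma=\tau'\chi(\sigma^\mathrm{link})$ with $\tau'$ supported on $\{x_1,\ldots,x_{c-2},u,z_1,z_2,\ldots\}$, one deduces $\tilde a_{z_j}=a_{z_j}$ and $\tilde a_{y_j}=a'_{y_j}$ for all $j\geq 2$. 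Consequently the sequence $\tilde{\boldsymbol\lambda}$ obtained from $\tilde\sigma$ via $l_{\boldsymbol\lambda}$ is simply a reordering of $\boldsymbol\lambda$ whose first $c$ entries may be interchanged with any choice of $c$ parts of $\boldsymbol\lambda$, and this is exactly how the parameters $\lambda'_1\geq\cdots\geq\lambda'_c$ enter the statement. Similarly, the tail of $\tilde{\boldsymbol\lambda}^\mathrm{link}$ from index $c+1$ onward reproduces $\boldsymbol\mu$, while $\boldsymbol\mu^\mathrm{link}$ appears as the second partition in the pair associated to $\sigma^\mathrm{link}$.

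For the first $c$ entries of $\tilde{\boldsymbol\lambda}^\mathrm{link}$ the telescoping argument of the codimension $3$ proof still applies. The terms of $\tilde\lambda^\mathrm{link}_i - \tilde\lambda_i$ indexed by $j<c$ cancel because $l_{\boldsymbol\lambda}$ and $l'_{\boldsymbol\lambda}$ coincide there, leaving
\[
\tilde\lambda^\mathrm{link}_i - \tilde\lambda_i \;=\; \sum_{j=1}^\infty \tilde a_{z_j} - \sum_{j=1}^\infty \tilde a_{y_j}
\]
independently of $i\in\{1,\ldots,c\}$. Combining this with the elementary identities $\sum_{j\geq 2}(j-1)\tilde a_{y_j}=\sum_j \mu^\mathrm{link}_j$, $\sum_{j\geq 2}(j-1)\tilde a_{z_j}=\sum_j \mu_j$, and the symmetry $\sum_j j c_{y_j}=\sum_j j c_{z_j}$ valid on the entire $\overline{W}$-orbit of $\omega_{x_{c-2}}$, the common value of $\tilde\lambda^\mathrm{link}_i-\tilde\lambda_i$ equals $p$ as defined in the theorem. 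Sorting the resulting $\tilde{\boldsymbol\lambda}^\mathrm{link}$ into non-increasing order yields $\boldsymbol\lambda^\mathrm{link}$.

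The one step needing genuine verification is that the symmetry identity $\sum_j j c_{y_j}=\sum_j j c_{z_j}$ continues to hold in codimension $c$. It is trivially true at the base weight $\omega_{x_{c-2}}$ (both sides vanish), so it suffices to check preservation under each simple reflection. Reflections on the $y$- or $z$-arms behave exactly as in the codim $3$ argument; the new reflections $s_{x_j}$ for $j\in\{1,\ldots,c-2\}$ alter only the $c_{x_\bullet}$ coefficients (and in the case of $s_{x_1}$ also $c_u$), none of which appear in the identity; and $s_u$ increments $c_{y_1}$ and $c_{z_1}$ by the same amount $c_u$, so both sides shift identically. With this in hand, every remaining bookkeeping step is formally identical to the codim $3$ argument, confirming that the proof is ``almost verbatim'' as the authors assert.
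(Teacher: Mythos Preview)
Your proposal is correct and follows exactly the approach the paper indicates: the authors state that the proof is ``almost verbatim the same'' as that of Theorem~\ref{thm:graph-edges-partitions} and omit it, and you have carried out precisely that adaptation. Your explicit verification of the invariant $\sum_j j c_{y_j}=\sum_j j c_{z_j}$ under the new reflections $s_{x_j}$ and under $s_u$ is the one point that genuinely needed checking beyond the $c=3$ case, and your argument there is sound.
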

			
			Our hope is that the theory generalizes in the following way. We include in parentheses the results that each of the following points is intended to generalize.
			\begin{conjecture}\label{conj:general-c}
				Let $(R,\mf{m},\Bbbk)$ be a local Noetherian ring containing $\mb{Q}$.
				\begin{enumerate}
					\item There exists a notion of ``higher structure maps'' for codimension $c$ perfect ideals $I \subset R$. In particular, there are maps of the form $S_{\boldsymbol \lambda} F_1 \otimes S_{\boldsymbol \mu} F_c^* \to R$, where the $F_i$ denote the free modules in a free resolution of $R/I$. These maps are the components of a single map $w^{(1)} \colon V \otimes R \to R$, where $V = L(\omega_{x_{c-2}})^\vee$ is a representation of $\mf{g}(T)$. The bottom component of $w^{(1)}$ is the differential $d_1 \colon F_1 \to R$. If the ideal $I$ is homogeneous, then these maps may be chosen to be homogeneous as well. (\cite[\S5]{GNW3})
					
					\item The perfect ideal $I$ is licci if and only if $w^{(1)}$ is nonzero mod $\mf{m}$. The lowest representation $\s_I = S_{\boldsymbol \lambda} F_1 \otimes S_{\boldsymbol \mu} F_c^*$ on which it is nonzero mod $\mf{m}$ is necessarily extremal, and intrinsic to the ideal $I$. (\cite[Theorem 6.4]{GNW3}, paraphrased here as Theorem~\ref{licci1})
					
					\item The assignment $I \mapsto \s_I$ induces a graph isomorphism
					\[
						\left\{\begin{matrix}
							\text{Herzog classes of codimension $c$}\\
							\text{licci ideals and the unit ideal}
						\end{matrix}\right\} \xrightarrow{\sim} \mathrm{Licci}_c
					\]
					where edges on the left are given by direct linkage. (\cite[Theorem 7.3]{GNW3} and Theorem~\ref{thm:licci-graph})
					\item Using $w^{(1)}$, one can define special generating systems (SGS) in the same way as Definition~\ref{def:SGS}, and their application to linkage should be analogous to Remark~\ref{re2}.
				\end{enumerate}
			\end{conjecture}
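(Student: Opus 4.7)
The plan is to mimic the codimension 3 proof structure as closely as possible; the fundamental obstacle is the absence of a generic ring theory for length $c$ resolutions, so the first step would be to construct a generic ring $\widehat{R}_\mathrm{gen}$ for the format $(1,r_1,\ldots,r_c)$, analogous to Weyman's construction in \cite{W18}. This generic ring should carry an action of the Kac-Moody Lie algebra $\mf{g}$ associated to the graph $T(c-1,d,t)$, and in particular it should house the representation $L(\omega_{x_{c-2}})^\vee$ in such a way that its bottom graded component is $F_1$ and the higher components are attached to the first differential as in the length 3 case. Given such a construction, part (1) of the conjecture would follow by specialization: for any perfect ideal $I$ with resolution $\mb{F}$, any lift $w\colon \widehat{R}_\mathrm{gen} \to R$ specializing the universal resolution to $\mb{F}$ supplies the higher structure maps $w^{(1)}$, homogeneous whenever $\mb{F}$ is, as desired.

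Once (1) is in place, parts (2), (3), and (4) should follow by translating the codimension 3 arguments essentially verbatim, using the combinatorial ingredient Theorem~\ref{thm:graph-edges-partitions-codim-c} in place of its codimension 3 version. For (2), the ``if'' direction reduces via iterated smallest minimal links (Example~\ref{minimalminimal}, now applied with $c$ removed parts) to the decoration $\bigwedge^c F_1 \otimes F_c^*$ of a complete intersection, where $w^{(1)} \otimes \Bbbk$ is manifestly nonzero; the ``only if'' direction uses that units deform to units, so a non-licci ideal cannot specialize from a licci rigid model. For (3), given a direct link $I \sim I'$ by a regular sequence $\alpha_1,\ldots,\alpha_c \in I$, one precomposes $w^{(1)}$ with an element of $\mathrm{GL}(F_1)$ that moves this sequence into the bottom $(y_1,z_1)$-bigraded component of $L(\omega_{x_{c-2}})^\vee \otimes R$; the Herzog class of $I'$ is then read off as the minimal length representative of $[\tilde{\sigma}] \in \overline{W}_{z_1} \backslash \overline{W} / \overline{W}_{x_{c-2}}$, exactly as in the proof of Theorem~\ref{thm:licci-graph}. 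Part (4) is then a formal definitional consequence once $w^{(1)}$ is available.

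The hard part, by a wide margin, is constructing the generic ring. Weyman's length 3 construction is geometric, realizing $\widehat{R}_\mathrm{gen}$ in terms of a coordinate ring attached to a Kac-Moody group orbit in a specific homogeneous space, and its finiteness and representation-theoretic structure are very sensitive to the length 3 nature of the resolution. Extending this to arbitrary length requires either a new geometric model (perhaps via a suitable generalization of the Schubert cell picture used in the proof of Theorem~\ref{genericcomplexthm}, now attached to $T(c-1,d,t)$) or a fundamentally different construction, neither of which is currently available. As a fallback strategy yielding strong evidence rather than a full proof, one can attempt to verify the conjecture on explicit families: Gorenstein ideals in codimension 4 (where some higher structure is understood via \cite{CGNW,weyman-gorenstein}), hyperplane sections of codimension $c-1$ licci ideals, and doublings, constructing ad hoc higher structure maps for each family and checking compatibility with the combinatorial edge formula of Theorem~\ref{thm:graph-edges-partitions-codim-c}.
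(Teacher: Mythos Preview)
The statement you are addressing is a \emph{conjecture}, not a theorem: the paper does not prove it and explicitly presents it as an extrapolation of the codimension 3 theory (``Our hope is that the theory generalizes in the following way''). There is therefore no ``paper's own proof'' to compare against. Your write-up is not a proof but a proof \emph{strategy}, and you acknowledge this yourself when you say the construction of the generic ring ``is not currently available.''

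That said, your analysis of the obstruction is accurate and aligns with the paper's perspective. The paper states in the introduction to Section~\ref{sec:arbitrary} that ``the absence of a well-defined theory of generic rings and higher structure maps in arbitrary codimension does not allow to obtain a formal correspondence between pairs of partitions and Herzog classes of licci ideals.'' You correctly identify that (1) is the load-bearing part, that (2)--(4) would follow from (1) by the same arguments as in codimension 3, and that the missing ingredient is an analogue of Weyman's generic ring construction for resolutions of length $c$. Your fallback strategy of checking explicit families (Gorenstein codimension 4, hyperplane sections, doublings) is exactly what the paper does in Sections~\ref{sec:gor4}--\ref{sec:dev2} as supporting evidence. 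So your assessment is sound, but it should be framed as an explanation of why the conjecture is open and what evidence exists, not as a proof proposal.
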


			\subsection{The linkage formula in codimension $c$}
			
			Let $I$ be a licci ideal of codimension $c$, deviation $d$, and type $t$, or the unit ideal. Then, assuming Conjecture~\ref{conj:general-c}, there is an associated $\s_I = S_{\boldsymbol \lambda}F_1 \otimes S_{\boldsymbol \mu}F_c^*$ where $\boldsymbol \mu = (\mu_1, \ldots, \mu_{t})$ is a partition of $k$ and
			$\boldsymbol \lambda = (\lambda_1, \ldots, \lambda_{d+c})$ is a partition of $(c-1)k+1$. We define $\kappa(I) = k$. Again, these pairs of partitions/Schur functors associated to licci ideals will be called \it decorations. \rm Our work in this section is primarily focused on the combinatorial analysis of these decorations, which remains valid independently of Conjecture~\ref{conj:general-c}. However, we present all our results assuming the validity of the conjecture, since point (3) is the main reason for our interest in these decorations anyway.
			
			Assuming Conjecture~\ref{conj:general-c} (4), we have the following translation of Theorem~\ref{thm:graph-edges-partitions-codim-c}.   
			\begin{definition} \rm (Linkage formula for decorations) 
				\label{linkageformulahighcodim}
				Let $I$ be a codimension $c$ licci ideal (or the unit ideal) with $\s_I = S_{\boldsymbol \lambda}F_1 \otimes S_{\boldsymbol \mu}F_c^*$. Let $\boldsymbol \lambda' = (\lambda'_1,\ldots,\lambda'_c)$ be a sublist of $\boldsymbol \lambda$, where we view $\boldsymbol \lambda$ as having infinitely many trailing zeros, and let $\boldsymbol \lambda'' = \boldsymbol \lambda \backslash \boldsymbol \lambda'$. Denote by $G_1, \ldots, G_c$ the free modules in the minimal free resolution of $R/J$ where $J$ is linked to $I$ in the same manner as Remark~\ref{re2} for the chosen $\boldsymbol \lambda'$ (generalized in the evident way to arbitrary $c$).
				
				Then:
				$$ \s_J =  S_{\lambda'_1+p, \ldots, \lambda'_c+p, \mu_1, \ldots, \mu_t} G_1 \otimes S_{\boldsymbol \lambda''} G_3^*, $$
				after reordering the first of the above partitions in non-increasing order, where
				\begin{equation}
					\label{phighcodim}
					p = \kappa(J) - \kappa(I) = (c-2)k+1-\lambda_1' - \cdots - \lambda_c'.
				\end{equation}
			\end{definition}
			If $I$ is the unit ideal, then $\s_I = F_1$ by Conjecture~\ref{conj:general-c} (1), since the differential $d_1 \colon F_1 \to R$ would be nonzero mod $\mf{m}$. It follows that the decorations $\s$ are exactly the pairs of Schur functors/partitions that can be obtained via finitely many applications of this formula, starting from the decoration $F_1$. For example, applying this formula to the unit ideal with $\boldsymbol \lambda' = (0,\ldots,0)$ results in a complete intersection $J$ with $\s_J = \bigwedge^c G_1 \otimes G_c^*$. Conjecture~\ref{conj:general-c} (1) then suggests that complete intersections can be characterized by the nonvanishing of some map $\bigwedge^c G_1 \to G_c$ modulo $\mf{m}$. This makes sense, since, indeed, the comparison map from the Koszul complex on the generators of $J$ to the resolution $\mb{G}$ of $R/J$ yields such a map.
			
			\begin{remark}\label{rem:recursive-s_I-codim-c}
				Assuming Conjecture~\ref{conj:general-c} (2), specifically that the assignment $I \mapsto \s_I$ is well-defined for licci ideals, this gives a way of recursively computing $\s_I$ for a licci ideal $I$. However, it cannot be used as a standalone definition, because \emph{a priori} it depends on the choice of links from $I$ to a complete intersection.
			\end{remark}
			
			%\endgroup
			
			Combinatorially, the smallest and largest minimal links, the generic link, and the minimal tight double link can be defined analogously (in terms of $c$) as for the codimension 3 case; see Examples~\ref{minimalminimal}-\ref{tightdouble}.
			
			The next lemma provides a first restrictions on the elements of the partitions of a decoration.	
		
		\begin{lem}
			\label{mu1lambdac}
			Let $\s= S_{\boldsymbol \lambda}F_1 \otimes S_{\boldsymbol \mu}F_c^*$ be a decoration of a licci ideal of codimension $c$. Let $p = (c-2)k+1-\lambda_1-\ldots-\lambda_c$ be the $p$ corresponding to the smallest minimal link. Then $0 \leq \lambda_c + p < \mu_1$.
		\end{lem}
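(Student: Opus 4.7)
The plan is to establish the two inequalities separately, with the first being an almost immediate consequence of Theorem~\ref{thm:graph-edges-partitions-codim-c} and the second requiring a contradiction via a short two-step cycle under smallest minimal linkage.

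First I would handle $\lambda_c + p \geq 0$. Applying Theorem~\ref{thm:graph-edges-partitions-codim-c} with $\lambda'_i = \lambda_i$ for $i = 1, \ldots, c$ produces the smallest minimal link $\s^{\mathrm{link}}$, which is again a valid decoration. Its first partition is the non-increasing reordering of $(\lambda_1+p, \ldots, \lambda_c+p, \mu_1, \ldots, \mu_t)$ and therefore consists of non-negative integers. Since $\lambda_c = \min\{\lambda_1,\ldots,\lambda_c\}$ and $\mu_j \geq 0$, the bound $\lambda_c + p \geq 0$ follows immediately.

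Next I would tackle $\lambda_c + p < \mu_1$ by contradiction. Suppose instead that $\lambda_c + p \geq \mu_1$. Then no reordering is required and $\s^{\mathrm{link}}$ has first partition $(\lambda_1+p, \ldots, \lambda_c+p, \mu_1, \ldots, \mu_t)$ and second partition $(\lambda_{c+1}, \ldots, \lambda_{d+c})$. A direct computation shows $\kappa(\s^{\mathrm{link}}) = k+p$ and that the corresponding $p$-value is
\[
p(\s^{\mathrm{link}}) = (c-2)(k+p) + 1 - \sum_{i=1}^c (\lambda_i + p) = p - 2p = -p.
\]
Applying the smallest minimal link a second time---selecting the first $c$ parts $\lambda_1+p, \ldots, \lambda_c+p$---the shift $-p$ undoes the shift $+p$, the remaining parts $\lambda_{c+1},\ldots,\lambda_{d+c}$ are restored from the old second partition, and the residual $\mu_1,\ldots,\mu_t$ reappear as the new second partition. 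Thus two iterations of the smallest minimal link return exactly to $\s$. This two-cycle contradicts the fact that every decoration must reduce via iterated smallest minimal linkage to the decoration of the unit ideal (equivalently, to the complete intersection $\bigwedge^c F_1 \otimes F_c^*$) in finitely many steps.

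The hard part will be justifying this termination property in arbitrary codimension. In codimension 3 it is built into the theory (cf.\ the discussion preceding Proposition~\ref{restrictions} and Algorithm~\ref{algo:licci-examples}); for $c > 3$ it must be invoked as a consequence of Conjecture~\ref{conj:general-c} together with the straightforward generalization of Algorithm~\ref{algo:licci-examples}. Once termination is granted, the cycle yields the desired contradiction, and the whole argument mirrors the codimension-3 proofs of Proposition~\ref{restrictions}(1)--(2).
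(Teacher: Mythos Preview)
Your proof is correct and follows essentially the same approach as the paper: apply the smallest minimal link once to get $\lambda_c+p\geq 0$, then assume $\lambda_c+p\geq\mu_1$, compute that the next smallest minimal link has $p'=-p$ and returns to $\s$, contradicting termination at a complete intersection. Your caution about the termination property in codimension $c>3$ is well-placed; the paper simply asserts it (``a decoration can always be linked to a complete intersection by smallest minimal links''), relying on the standing assumption of Conjecture~\ref{conj:general-c} in that section.
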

		
		\begin{proof}
			Apply the smallest minimal link to $ \s $ to get $$ \s'= S_{\lambda_1+p, \ldots, \lambda_c+p, \mu_1, \ldots, \mu_t} F_1' \otimes S_{\lambda_{c+1}, \ldots, \lambda_{b}} (F_c')^{\ast}. $$ Since $\s'$ is also a decoration of some licci ideal, we must have $ \lambda_c + p \geq 0. $ Suppose $ \lambda_c + p \geq \mu_1. $ Then the smallest minimal link of $s'$ is
			$$ \s''= S_{\lambda_1+p+p', \ldots, \lambda_c+p+p',\lambda_{c+1}, \ldots, \lambda_{b}} F_1'' \otimes S_{\mu_1, \ldots, \mu_t} (F_c'')^{\ast}. $$ But $p' = (c-2)(k+p)+1-\lambda_1-\ldots-\lambda_c - cp = -p.$ Thus $\s'' = \s$ and this is a contradiction since a decoration can always be linked to a complete intersection by smallest minimal links. Hence, $ \lambda_c + p < \mu_1. $
		\end{proof}

        \begin{cor}
			\label{lambda<k}
			Let $\s = S_{\boldsymbol \lambda}F_1 \otimes S_{\boldsymbol \mu}F_c^*$ be a decoration and let $k= \sum_{i=1}^t \mu_i$. Then 
			$\lambda_i \leq k$ for every $i$.
		\end{cor}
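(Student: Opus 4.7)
The plan is to proceed by induction on $k$. For the base case $k = 1$, Lemma~\ref{mu1lambdac} forces $\lambda_c + p = 0$ (since it is a non-negative integer strictly less than $\mu_1 = 1$); combined with $\sum \lambda_i = c$, this pins down $\boldsymbol{\lambda} = (1^c)$, so $\mathcal{S}$ is the complete intersection decoration and the bound $\lambda_i = 1 = k$ holds trivially.

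For the inductive step, the natural tool is the smallest minimal link $\mathcal{S}'$, with parameter $p = (c-2)k + 1 - \sum_{i=1}^c \lambda_i$ and level $\kappa(\mathcal{S}') = k + p$. The key point is that $\lambda_1 + p$ always appears among the parts of the first partition of $\mathcal{S}'$. When $p < 0$ we have $\kappa(\mathcal{S}') < k$, so the induction hypothesis applied to $\mathcal{S}'$ bounds every part of its first partition by $k+p$, yielding in particular $\lambda_1 + p \leq k + p$ and thus $\lambda_1 \leq k$; the monotonicity of $\boldsymbol{\lambda}$ then gives $\lambda_i \leq k$ for all $i$.

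The main obstacle is the case $p \geq 0$, in which the level does not decrease under the smallest minimal link and induction on $\mathcal{S}'$ does not apply directly. Here the plan is to compose the smallest minimal link with a secondary link from $\mathcal{S}'$ modeled on the minimal tight double link of Example~\ref{tightdouble}, using the auxiliary partition $(\lambda_1 + p, \ldots, \lambda_{c-1} + p, \mu_1)$. A direct computation produces $p' = \lambda_c - \mu_1$, and the strict inequality $\lambda_c + p < \mu_1$ from Lemma~\ref{mu1lambdac} forces $p + p' < 0$; hence the resulting decoration $\mathcal{S}''$ satisfies $\kappa(\mathcal{S}'') < k$, so the induction hypothesis applied to $\mathcal{S}''$ bounds the shifted part $\lambda_1 + p + p'$ (which still appears as a part of the first partition of $\mathcal{S}''$) and yields $\lambda_1 \leq k$ as before.

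The most delicate step is verifying that the secondary link is valid, i.e., that the shifted entries $\lambda_{c-1} + p + p' = \lambda_{c-1} + \lambda_c + p - \mu_1$ remain non-negative. I expect this to follow from iterating Lemma~\ref{mu1lambdac} on successive smallest minimal links in the spirit of how Proposition~\ref{restrictions}(4) was deduced from Proposition~\ref{restrictions}(1) in the codimension 3 setting; alternatively, when this inequality fails one has $\lambda_{c-1} < \mu_1$ and hence $\lambda_1$ would already be bounded by $\mu_1 \leq k$ in degenerate sub-cases, which would close the argument.
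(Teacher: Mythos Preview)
Your approach is essentially the paper's: induction on $k$ via the minimal tight double link. The paper's proof is simply more direct. It does not split into cases on the sign of $p$; it immediately takes the minimal tight double link $\s''$ of $\s$, notes that Lemma~\ref{mu1lambdac} gives $\kappa(\s'') = k + p + q < k$, and then applies the induction hypothesis to the part $\lambda_1 + p + q$ of the first partition of $\s''$ to conclude $\lambda_1 + p + q \leq k + p + q$, i.e.\ $\lambda_1 \leq k$. Your $p < 0$ shortcut is harmless but unnecessary.

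The step you flag as ``most delicate'' --- the non-negativity of $\lambda_{c-1} + p + p'$ --- is in fact a non-issue, and this is where your write-up leaves an artificial gap. Theorem~\ref{thm:graph-edges-partitions-codim-c} asserts that for \emph{any} choice of $c$ parts $\lambda'_1,\ldots,\lambda'_c$ from the first partition of a decoration, the linked pair $(\boldsymbol\lambda^{\mathrm{link}},\boldsymbol\mu^{\mathrm{link}})$ corresponds to a vertex of $\mathrm{Licci}_c$, i.e.\ is again a decoration. Since $\lambda_1+p,\ldots,\lambda_{c-1}+p,\mu_1$ are all genuine parts of the first partition of the smallest minimal link $\s'$ (they are non-negative by Lemma~\ref{mu1lambdac} and $\mu_1 > 0$), the secondary link is automatically valid, and the shifted entries $\lambda_i + p + q$ are automatically non-negative because the output is a partition. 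There is no need to iterate Lemma~\ref{mu1lambdac} or to argue a degenerate sub-case; you can simply invoke Theorem~\ref{thm:graph-edges-partitions-codim-c} and proceed exactly as the paper does.
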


        \begin{proof}
        If $k=1$, the equality $\sum \lambda_i = (c-1)k+1=c$ forces $\s=\bigwedge^c F_1 \otimes F_c^*$. The result is true in this case. We can then work by induction on $k$.
            Let $k>1$ and let $\s'$ be the minimal tight double link of $\s$. By Lemma \ref{mu1lambdac}, $\kappa(\s')=k+p+q<k$. By inductive hypothesis we get $\lambda_1+p+q \leq \kappa(\s')=k+p+q$. Hence, $\lambda_1 \leq k$ and the result follows.
        \end{proof}

		We prove a fundamental relation involving the sums of squares of the partition elements of a decoration. In the case of $c=3$, this can be used to prove the relation stated in item (3) of Proposition \ref{restrictions}.
		
		\begin{thm} \rm (Squares formula) \it
			\label{squareformulas}
			Let $\s = S_{\boldsymbol \lambda}F_1 \otimes S_{\boldsymbol \mu}F_c^*$ be a decoration. Then
			$$ \sum_{i=1}^{d+c} \lambda_i^2 + \sum_{i=1}^{t} \mu_i^2 = (c-2)k^2 + 2k+1. $$
		\end{thm}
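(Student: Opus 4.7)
The plan is to prove the identity by induction, using the fact that every decoration $\s$ arises from the decoration $\s = F_1$ of the unit ideal (i.e.\ $\boldsymbol \lambda = (1)$, $\boldsymbol \mu = \emptyset$, $k=0$) via finitely many applications of the linkage formula in Definition~\ref{linkageformulahighcodim}. The base case is immediate: $\sum \lambda_i^2 + \sum \mu_j^2 = 1 = (c-2)\cdot 0^2 + 2\cdot 0 + 1$. It therefore suffices to show that the identity is preserved under a single link, regardless of the choice of sublist $\boldsymbol \lambda' = (\lambda'_1,\ldots,\lambda'_c) \subseteq \boldsymbol \lambda$.

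For the inductive step, assume the identity holds for $\s$ and consider $\s^{\mathrm{link}}$. By the linkage formula, $\boldsymbol \lambda^{\mathrm{link}}$ is a reordering of $(\lambda'_1 + p, \ldots, \lambda'_c + p, \mu_1, \mu_2, \ldots)$ and $\boldsymbol \mu^{\mathrm{link}} = \boldsymbol \lambda \setminus \boldsymbol \lambda'$, so the sum of squares expands as
\[
\sum_i (\lambda^{\mathrm{link}}_i)^2 + \sum_j (\mu^{\mathrm{link}}_j)^2 = \sum_i \lambda_i^2 + \sum_j \mu_j^2 + 2p\sum_{i=1}^c \lambda'_i + cp^2,
\]
since the $(\lambda'_i)^2$ terms in $\boldsymbol \lambda^{\mathrm{link}}$ cancel against those reappearing in $\boldsymbol \mu^{\mathrm{link}}$.

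Since $k^{\mathrm{link}} = k + p$, what must be verified is that the incremental change $(c-2)(k+p)^2 + 2(k+p) - (c-2)k^2 - 2k = 2(c-2)kp + (c-2)p^2 + 2p$ agrees with $2p\sum \lambda'_i + cp^2$. A short rearrangement reduces this to the identity
\[
p\bigl[(c-2)k + 1 - \textstyle\sum \lambda'_i\bigr] = p^2,
\]
which is exactly \eqref{phighcodim}. This completes the induction.

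There is no serious obstacle; the argument is essentially a bookkeeping check once the right inductive setup is chosen. The only subtle point is recognizing that the correct parameter to induct on is not $k$ (which may increase or decrease under a link) but rather the distance to the unit ideal on $\mathrm{Licci}_c$, so that the base case is trivial and the single-link calculation directly encodes \eqref{phighcodim}.
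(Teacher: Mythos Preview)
Your proof is correct and follows essentially the same approach as the paper: both verify the base case directly and then show the identity is preserved under a single application of the linkage formula, reducing to the defining relation \eqref{phighcodim}. The only cosmetic difference is that the paper takes the complete intersection ($k=1$) as its base case while you use the unit ideal ($k=0$); either works, and your remark about inducting on distance in $\mathrm{Licci}_c$ rather than on $k$ is a nice clarification of why the argument goes through.
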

		
		\begin{proof}
			% the proof depends only on the combinatoric of linkage and not on free resolutions! \ec \\
			If $\s$ is the decoration of a complete intersection, then $\s = \bigwedge^c F_1 \otimes F_c^*$, $k=1$ and the result follows.
			Then we prove that if $\s $ and $ \s'$ are linked and the above formula holds for $\s$ then it holds also for $\s'$. We use Definition \ref{linkageformulahighcodim} to define the link from $\s$ to $\s'$. We have to prove that 
			\begin{equation}
				\label{eqsquare}
				\sum_{i=1}^{c} (\lambda_i'+p)^2 + \sum_{i=1}^{t} \mu_i^2 + \sum_{i=1}^{d+c} (\lambda_i'')^2 = (c-2)(k+p)^2 + 2(k+p)+1.
			\end{equation}  
			We expand the left-hand side of (\ref{eqsquare}) as
			$$  \sum_{i=1}^{c} (\lambda_i')^2 + 2p \sum_{i=1}^{c} \lambda_i' + cp^2 + \sum_{i=1}^{t} \mu_i^2 + \sum_{i=1}^{d+c} (\lambda_i'')^2.  $$
			By definition of the partitions $\lambda'$ and $\lambda''$ and by assumption on $\s$, we obtain 
			$$ \sum_{i=1}^{c} (\lambda_i')^2 +  \sum_{i=1}^{d+c} (\lambda_i'')^2 =  \sum_{i=1}^{d+c} \lambda_i^2 = (c-2)k^2 + 2k+1 - \sum_{i=1}^{t} \mu_i^2. $$
			Combining the above formulas, we obtain that the left-hand side of (\ref{eqsquare}) is equal to
			$$  2p \sum_{i=1}^{c} \lambda_i' + cp^2 + (c-2)k^2 + 2k+1. $$ We use now the definition of $p$ (eq. \ref{phighcodim}) to write 
			$ \sum_{i=1}^{c} \lambda_i' = (c-2)k+1-p. $ An easy computation gives the desired result.
		\end{proof}

		\begin{remark}
			Theorem \ref{squareformulas}, in the case $c=2$, shows that the only decorations are of the form $\bigwedge^{k+1} F_1 \otimes \bigwedge^k F_2^*$ for any $k \geq 1$.
			Our linkage formula for the decorations recovers the well-known linkage theory of perfect ideals of codimension 2.
		\end{remark}
		
%\begingroup \color{magenta}
			
			In Section \ref{sec:freeres}, it was demonstrated how homogeneous  licci ideals of codimension $c=3$ with the modules $F_1$ and $F_c$ generated in particular degrees enjoy nice properties. The machinery used to establish these properties is unfortunately absent for $c\geq 4$, but assuming Conjecture~\ref{conj:general-c}, we can lay out the expectations.
			
			First, we observe a lemma that is true independently of Conjecture~\ref{conj:general-c}.
			
			\begin{lem}\label{lem:mapping-cone-decorations}
				Let $\s = S_{\boldsymbol \lambda}F_1 \otimes S_{\boldsymbol \mu}F_c^*$ be a decoration, and let $d,t,k$ be as usual. Suppose $I \subset R$ is a homogeneous  perfect ideal of codimension $c$ and
\begin{equation}\label{eq:special-graded-free-res-codim-c}
				\mb{F} \colon 0 \to \bigoplus_{i=1}^t R(-(k+c-2+\mu_i)) \to \cdots \to \bigoplus_{i=1}^{c+d} R(-(k+1-\lambda_i)) \to R
				\end{equation}
				is a minimal length graded free resolution of $R/I$, with first differential $\begin{bmatrix}
					x_1 & \cdots & x_{c+d}
				\end{bmatrix}$ where $\deg(x_i) = k+1-\lambda_i$. Let $\lambda'_1 \geq \ldots \geq \lambda'_c$ be a sublist of $(\lambda_1,\ldots,\lambda_{c+d},0,0,0,\ldots)$, and for $i = 1,\ldots,c$ take $\alpha_i$ as follows:
				\begin{itemize}
					\item If $\lambda_i' = \lambda_j > 0$, let $\alpha_i = x_j$.
					\item If $\lambda_i' = 0$, let $\alpha_i \in \mf{m}I$ have degree $k+1$.
				\end{itemize}
				Suppose that $\alpha_1,\ldots,\alpha_c$ is a regular sequence. Then $R/((\alpha_1,\ldots,\alpha_c):I)$ has a minimal length graded free resolution of the form
				\[
				0 \to \bigoplus_{i=1}^{t'} R(-((k+p)+c-2+\mu^\mathrm{link}_i)) \to \cdots \to \bigoplus_{i=1}^{c+t} R(-((k+p)+1-\lambda^\mathrm{link}_i)) \to R.
				\]
				where $t'$ is obtained as $d$ plus the number of $\lambda'_i$ equal to zero, $p = (c-2)k +1 - \sum_j \lambda'_j$, and $\boldsymbol \lambda^\mathrm{link}, \boldsymbol \mu^\mathrm{link}$ are as in Theorem~\ref{thm:graph-edges-partitions-codim-c}.
			\end{lem}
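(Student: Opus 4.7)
The plan is to run the classical mapping cone construction for linkage. Setting $A = \bigoplus_{i=1}^c R(-(k+1-\lambda'_i))$, the Koszul complex $\mathbb{K}(\alpha)$ is a minimal graded free resolution of $R/(\alpha)$. Since $(\alpha) \subset I$, there is a comparison map $\phi\colon \mathbb{K}(\alpha)\to\mathbb{F}$ lifting the surjection $R/(\alpha)\twoheadrightarrow R/I$, which I would normalize so that $\phi_0 = \mathrm{id}_R$ and $\phi_1$ sends the basis vector $e_i$ of $A$ to a lift of $\alpha_i$ in $F_1$. Dualizing to $\phi^\ast\colon\mathbb{F}^\ast\to\mathbb{K}^\ast$, taking the mapping cone, reversing to a chain complex, and twisting by $\det A = \bigwedge^c A = R(-(2k+c-1+p))$ yields a graded free resolution $\mathbb{G}$ of $R/J$ of length $c+1$ with
\[
G_0 = R,\quad G_i = F_{c-i+1}^\ast\otimes\det A\,\oplus\,\bigwedge^i A\quad(1\le i\le c),\quad G_{c+1} = \det A,
\]
using the pairing isomorphism $\bigwedge^n A^\ast\otimes\det A \cong \bigwedge^{c-n}A$ throughout.

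I would then perform two rounds of split cancellation forced by the unit entries in $\phi_0^\ast$ and $\phi_1^\ast$ to reduce $\mathbb{G}$ to length $c$. The identity entry of $\phi_0^\ast$ gives a unit coefficient in the differential $G_{c+1}\to G_c$ coupling $\det A$ with the $\det A$-summand of $G_c$; cancelling them removes $G_{c+1}$ and leaves $G_c = F_1^\ast\otimes\det A$. Next, for each $i$ with $\lambda'_i = \lambda_{j_i}>0$ (i.e.\ $\alpha_i = x_{j_i}$ in the lemma's hypothesis), the entry of $\phi_1^\ast$ at position $(j_i,i)$ is a unit; after applying the pairing isomorphism, it pairs the $j_i$-th summand of $F_1^\ast\otimes\det A\subset G_c$ with the summand of $\bigwedge^{c-1}A\subset G_{c-1}$ indexed by $\{1,\dots,c\}\setminus\{i\}$. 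A direct computation using $\sum_j\lambda'_j = (c-2)k+1-p$ shows both summands share the shift $-(k+p+c-2+\lambda_{j_i})$, so the cancellation is legitimate and leaves
\[
G_c = \bigoplus_{\lambda_j\in\boldsymbol\lambda''} R(-(k+p+c-2+\lambda_j)),
\]
which matches the desired form under $\boldsymbol\mu^\mathrm{link} = \boldsymbol\lambda''$ and $t' = |\boldsymbol\lambda''| = d + \#\{i:\lambda'_i = 0\}$.

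For the first module $G_1 = F_c^\ast\otimes\det A\,\oplus\,A$, the $t$ summands from $F_c^\ast\otimes\det A$ carry shifts rewritten as $(k+p)+1-\mu_i$ and the $c$ summands from $A$ rewrite as $(k+p)+1-(\lambda'_i+p)$. Their union is exactly the multiset underlying the sorted partition $\boldsymbol\lambda^\mathrm{link}$ from Theorem~\ref{thm:graph-edges-partitions-codim-c}, so $G_1 = \bigoplus_{i=1}^{c+t}R(-((k+p)+1-\lambda^\mathrm{link}_i))$. Since the differential $G_1\to G_0 = R$ lists the generators of $J$, all lying in $\mathfrak{m}$, no further cancellations are forced at the bottom. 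The principal obstacle is the bookkeeping of degree shifts under the pairing isomorphism and the summand-by-summand matching; I make no attempt to minimize $\mathbb{G}$ further, since higher comparison maps $\phi_i$ for $i\ge 2$ may introduce additional non-minimality in the middle modules $G_2,\dots,G_{c-1}$ but cannot affect $G_c$ or $G_1$.
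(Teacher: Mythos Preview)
Your proposal is correct and takes essentially the same approach as the paper: both use the classical mapping cone (Ferrand) construction for the linked ideal, dualize, twist by $\det A$, and read off the shifts at the two ends. You are simply more explicit about the two rounds of split cancellation at the top (the unit from $\phi_0^\ast$ and the units from $\phi_1^\ast$ coming from $\alpha_i = x_{j_i}$), which the paper folds silently into its displayed resolution; the degree bookkeeping you carry out matches the three identities the paper records at the end of its proof.
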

			\begin{remark}\label{rem:mapping-cone-decorations}
				As $k+p = \sum \mu^\mathrm{link}_i$, the resulting resolution looks very similar to the original \eqref{eq:special-graded-free-res-codim-c}, but with $\s^\mathrm{link}$ in place of $\s$. However, the new $\boldsymbol\lambda^\mathrm{link}$ may have fewer than $c+t$ nonzero parts.
				
				That being said, if the link going backwards from $\s^\mathrm{link}$ to $\s$ is minimal, then $\boldsymbol \lambda^\mathrm{link}$ will necessarily have exactly $c+t$ nonzero parts, so the output is exactly \eqref{eq:special-graded-free-res-codim-c} applied to $\s^\mathrm{link}$ in this case.
			\end{remark}
			\begin{proof}
				$R/((\alpha_1,\ldots,\alpha_c):I)$ can be resolved using the dual of the mapping cone of a comparison map from the Koszul complex on $\alpha_1,\ldots,\alpha_c$ to $\mb{F}$. Such a comparison map has the form
				\[
				\begin{tikzcd}
					0 \ar[r]& \bigoplus_{i=1}^t R(-(k+c-2+\mu_i)) \ar[r]& \cdots\ar[r] & \bigoplus_{i=1}^{c+d} R(-(k+1-\lambda_i)) \ar[r]& R\\
					0 \ar[r]& R(-(c(k+1) -\sum_i \lambda'_i)) \ar[r]\ar[u]& \cdots\ar[r]& \bigoplus_{i=1}^c R(-(k+1-\lambda'_i)) \ar[r]\ar[u]& R\ar[u,equals]
				\end{tikzcd}
				\]
				hence a resolution of the linked ideal is given by
				\begin{align*}
					0 \to& \begin{matrix}
						\bigoplus_{\lambda_i \in \lambda\backslash \lambda'} R(-(c(k+1)-\sum_j \lambda'_j - (k+1-\lambda_i))
					\end{matrix}
					\to\cdots\\[1em]
					&\cdots\to
					\begin{matrix}
						\bigoplus_{i=1}^c R(-(k+1-\lambda'_i))\\
						\oplus\\
						\bigoplus_{i=1}^t R(-(c(k+1)-\sum_j \lambda'_j - (k+c-2+\mu_i))
					\end{matrix} \to R.
				\end{align*}
				This simplifies to the desired form by noting that
				\begin{align*}
					c(k+1) - \sum_j \lambda'_j - (k+1-\lambda_i) &= k + p + c - 2 + \lambda_i\\
					k+1 - \lambda'_i &= k+p + 1 - (\lambda'_i + p)\\
					c(k+1)-\sum_j \lambda'_j - (k+c-2+\mu_i) &= k+p+1-\mu_i
				\end{align*}
				together with the definitions of $\boldsymbol \lambda^\mathrm{link}, \boldsymbol \mu^\mathrm{link}$.
			\end{proof}
			
			\begin{thm}
				\label{degreesgeneric}
				Let $\s = S_{\boldsymbol \lambda}F_1 \otimes S_{\boldsymbol \mu}F_c^*$ be a decoration other than $F_1$. There exists a codimension $c$ homogeneous ideal $I$ in the standard graded polynomial ring $R = K[x_1,\ldots,x_c]$ such that
				\begin{itemize}
					\item $I$ is homogeneously licci.
					\item $R/I$ has a graded free resolution of the form \eqref{eq:special-graded-free-res-codim-c}.
				\end{itemize}
			\end{thm}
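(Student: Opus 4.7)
The plan is to emulate Algorithm~\ref{algo:licci-examples} from the codimension 3 case, iterating Lemma~\ref{lem:mapping-cone-decorations} along a finite path in $\mathrm{Licci}_c$ connecting $\s$ to the complete intersection decoration $\s_0 = \bigwedge^c F_1 \otimes F_c^*$. Specifically, I would induct on the length $N$ of such a path $\s = \s_N \sim \cdots \sim \s_0$ obtained by repeatedly applying the smallest minimal link (Example~\ref{minimalminimal} generalized to codimension $c$); finiteness of $N$ follows from the fact that the decorations are, by definition, those pairs of partitions obtainable from $F_1$ by finitely many applications of Definition~\ref{linkageformulahighcodim}.

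For the base case, take $I_0 = (x_1, \ldots, x_c) \subset R = K[x_1,\ldots,x_c]$, whose Koszul complex is precisely a minimal graded free resolution of the form \eqref{eq:special-graded-free-res-codim-c} with $\boldsymbol\lambda = (1^c)$, $\boldsymbol\mu = (1)$, and $k=1$. For the inductive step, assume $I_{j-1} \subset R$ has been constructed realizing $\s_{j-1}$ with graded free resolution of the prescribed form. Since $\s_{j-1}$ is the smallest minimal link of $\s_j$, Theorem~\ref{thm:graph-edges-partitions-codim-c} implies that the reverse link from $\s_{j-1}$ to $\s_j$ is realized by the sublist $\boldsymbol\lambda' \subset \boldsymbol\lambda^{(j-1)}$ consisting (up to reordering) of the entries $\lambda^{(j)}_i + p_j$ for $i = 1,\ldots,c$, where $p_j = \kappa(\s_{j-1}) - \kappa(\s_j)$. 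Applying Lemma~\ref{lem:mapping-cone-decorations} to $I_{j-1}$ with this $\boldsymbol\lambda'$ would then produce $I_j$ as a direct link of $I_{j-1}$.

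The main technical hurdles are two. First, I need this reverse link to be minimal in the sense of Lemma~\ref{lem:mapping-cone-decorations}, that is, all $\lambda'_i > 0$, so that by Remark~\ref{rem:mapping-cone-decorations} the mapping-cone resolution of $R/I_j$ is already minimal and of the form \eqref{eq:special-graded-free-res-codim-c}. Lemma~\ref{mu1lambdac} gives $\lambda^{(j)}_c + p_j \geq 0$, with equality leading to a non-minimal link; in the boundary case the plan is to refine the descending path, replacing the edge $\s_{j-1} \sim \s_j$ by an alternative adjacency in $\mathrm{Licci}_c$ (for instance via a largest minimal link or the tight double link of Example~\ref{tightdouble}) chosen so as to avoid the degeneracy. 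Second, I need the $\alpha_i$ of Lemma~\ref{lem:mapping-cone-decorations} to form a regular sequence in $R$. Since $R$ has Krull dimension $c$, the ideal $I_{j-1}$ is $\mf{m}$-primary (a property preserved throughout the induction under linkage of height-$c$ ideals in the polynomial ring $R$ of dimension $c$), and any sufficiently general choice of $c$ homogeneous elements of $I_{j-1}$ of prescribed positive degrees forms a regular sequence. Following Algorithm~\ref{algo:licci-examples}, each $\alpha_i$ is first taken to be the specific minimal generator corresponding to $\lambda'_i$, then modified by a generic $K$-linear homogeneous combination of minimal generators of $I_{j-1}$ of strictly smaller degree; such lower-degree generators exist because, in the smallest minimal link $\s_j \sim \s_{j-1}$, the entries of $\boldsymbol\mu^{(j)}$ migrate into $\boldsymbol\lambda^{(j-1)}$ as parts strictly exceeding each $\lambda'_i$. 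Over the infinite field $K$, prime avoidance then makes a generic such adjustment regular, completing the induction.
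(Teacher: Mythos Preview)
Your approach is essentially the same as the paper's: induct along a path of smallest minimal links from $\s_0 = \bigwedge^c F_1 \otimes F_c^*$ to $\s$, using Lemma~\ref{lem:mapping-cone-decorations} at each step. The base case and the regular-sequence argument (hurdle 2) are handled correctly; the $\mf{m}$-primary observation is the right one, and modifying each $\alpha_i$ by general combinations of lower-degree generators is exactly what the paper does. (Your claim that \emph{all} parts $\mu^{(j)}_l$ exceed \emph{every} $\lambda'_i$ is inaccurate---Lemma~\ref{mu1lambdac} only gives $\mu^{(j)}_1 > \lambda'_c$---but this does not break the argument once one also maintains, as the paper does, the invariant that $I_{j-1}$ contains a regular sequence of \emph{minimal} degree; this invariant is preserved precisely because the $\alpha_i$ reappear as minimal-degree generators of $I_j$.)

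Your hurdle 1, however, is a phantom. You worry that the forward link $\s_{j-1} \to \s_j$ might have some $\lambda'_i = 0$, and propose refining the path to avoid this. But Remark~\ref{rem:mapping-cone-decorations} does not require the \emph{forward} link to be minimal: it requires the \emph{backward} link from $\s^{\mathrm{link}} = \s_j$ to $\s = \s_{j-1}$ to be minimal. By construction of the path, this backward link is the smallest minimal link of $\s_j$, which uses the $c$ largest parts $\lambda^{(j)}_1,\ldots,\lambda^{(j)}_c$ of $\boldsymbol\lambda^{(j)}$, all strictly positive. Equivalently, the entries of $\boldsymbol\lambda^{\mathrm{link}}$ are $\lambda'_i + p' = \lambda^{(j)}_i > 0$ together with the $\mu^{(j-1)}_l$, so the output resolution already has first module of the correct rank $c + d_j$. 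No path refinement is needed, and your proposed workaround (which has its own termination and correctness issues) can simply be deleted.
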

			\begin{proof}
				We follow Algorithm~\ref{algo:licci-examples}. First, we repeatedly take smallest minimal links to combinatorially link to the decoration of a complete intersection:
				\[
				\s = \s_N \sim \cdots \sim \s_1 \sim \s_0 = \bigwedge^c F_1 \otimes F_c^*.
				\]
				We then inductively construct ideals $I_j$ which satisfy the conclusions of the theorem for $\s_j$, and which each contain a regular sequence of minimal degree.
				
				We start by taking $I_0 = (x_1,\ldots,x_c)$. We construct $I_{j+1}$ from $I_j$ by linking using a homogeneous regular sequence as in Lemma~\ref{lem:mapping-cone-decorations}, where $\lambda'$ corresponds to the link from $\s_j$ to $\s_{j+1}$. Since $I_j$ has a regular sequence of minimal degree, such a homogeneous regular sequence always exists. In view of Remark~\ref{rem:mapping-cone-decorations}, the resulting $I_{j+1}$ satisfies the desired conclusion, and furthermore $\alpha_1,\ldots,\alpha_c$ appear as minimal degree generators of $I_{j+1}$ because $\s_{j+1} \sim \s_{j}$ was a smallest minimal link. This completes the induction.
			\end{proof}
					
			\begin{lem} 
				Assume Conjecture~\ref{conj:general-c}. Then the analogue of Lemma~\ref{lem:special-graded-free-res} holds: let $\s$ be a decoration, $R$ be graded, and $I \subset R$ be a homogeneous codimension $c$ perfect ideal such that $R/I$ has a graded free resolution of the form \eqref{eq:special-graded-free-res-codim-c}. Let $\mf{p} \subset R$ be a prime ideal such that $R_i \subset \mf{p}$ for all $i \neq 0$. If $I_\mf{p}$ is a licci ideal in $R_\mf{p}$, then:
				\begin{enumerate}
					\item $\s_I = \s$. In particular, the ends of the resolution \eqref{eq:special-graded-free-res-codim-c} are minimal.
					\item Any minimal generating set for $I$, ordered so that the degrees are non-decreasing, forms an SGS for $I_\mf{p}$ after localization.
					\item All of the graded Betti numbers of $R/I$ are completely determined by the two ends in \eqref{eq:special-graded-free-res-codim-c}.
				\end{enumerate}
			\end{lem}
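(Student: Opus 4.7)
The plan is to mirror the proof of Lemma~\ref{lem:special-graded-free-res}, substituting the ingredients supplied by Conjecture~\ref{conj:general-c} for the codimension 3 specific results used there. First, I would take the generic homogeneous licci ideal in the Herzog class $\s$ produced by Theorem~\ref{degreesgeneric}, whose free resolution already has the form \eqref{eq:special-graded-free-res-codim-c}. By Conjecture~\ref{conj:general-c}(1), I would then fix a homomorphism $w \colon \widehat{R}_\mathrm{gen} \to R$ specializing the generic free resolution of the appropriate format to the given $\mb{F}$, chosen to be homogeneous of degree zero with respect to the grading on $\widehat{R}_\mathrm{gen}$ induced by the grading on $\mb{F}$.

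The crucial point is a degree computation. Since $F_1$ and $F_c$ in $\mb{F}$ have the same graded structure as in the generic example of Theorem~\ref{degreesgeneric}, the grading on $V \otimes R$ (with $V = L(\omega_{x_{c-2}})^\vee$) is determined. Each extremal $\mf{gl}(F_1) \times \mf{gl}(F_c)$-representation $S_{\boldsymbol{\tilde\lambda}} F_1 \otimes S_{\boldsymbol{\tilde\mu}} F_c^* \subset V$ then acquires a natural degree computed from the shifts $(k+1-\lambda_i)$ on $F_1$ and $(k+c-2+\mu_i)$ on $F_c$. In analogy with the codimension 3 situation (worked out in \cite{GNW3} via the Schubert cell coordinate ring perspective), I expect that the unique extremal representation whose lowest weight space lies in degree zero is $S_{\boldsymbol{\lambda}} F_1 \otimes S_{\boldsymbol{\mu}} F_c^*$, and that this representation contributes exactly one generator of $V \otimes R$ in degree zero.

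With this in place, since $I_\mf{p}$ is licci, Conjecture~\ref{conj:general-c}(2) forces $w^{(1)} \otimes R_\mf{p}$ to be nonzero modulo $\mf{p}R_\mf{p}$. Because $w^{(1)}$ is homogeneous of degree zero and the only degree-zero contribution to $V \otimes R$ comes from the lowest weight space of $S_{\boldsymbol{\lambda}} F_1 \otimes S_{\boldsymbol{\mu}} F_c^*$, the nonvanishing must occur precisely there, yielding (1). Conclusion (2) then follows from the analogue of Definition~\ref{def:SGS} promised by Conjecture~\ref{conj:general-c}(4), since the bottom component of $w^{(1)}$ is $d_1$ and ordering the generators by non-decreasing degree on $F_1$ coincides with the increasing weight order. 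For conclusion (3), the location of the unit in $w^{(1)}$ implies that $I$ is a specialization of the generic example prior to localization; since this specialization is degree-preserving on $w^{(1)}$, the entire graded free resolution of the generic example descends to one for $R/I$ with identical graded shifts in every homological position, determining all intermediate Betti numbers.

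The main obstacle is clearly the unproven status of Conjecture~\ref{conj:general-c}; once it is granted, the proof is almost verbatim the codimension 3 argument. The one technical point that still warrants explicit verification is the degree-zero uniqueness statement in $V \otimes R$, which should follow from the general representation-theoretic structure of $L(\omega_{x_{c-2}})^\vee$ together with the explicit graded realization used in Theorem~\ref{degreesgeneric}, but would need to be spelled out carefully in the absence of the Schubert cell framework available for $c=3$.
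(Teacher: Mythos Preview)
Your proposal is correct and follows essentially the same approach as the paper's proof, which simply states that ``assuming the conjecture, the proof of this statement is the same as Lemma~\ref{lem:special-graded-free-res}.'' Your expansion accurately reconstructs the mechanism: the grading forces the unique degree-zero generator of $V\otimes R$ to be the lowest weight space of $S_{\boldsymbol\lambda}F_1\otimes S_{\boldsymbol\mu}F_c^*$, and the licci hypothesis via Conjecture~\ref{conj:general-c}(2) pins the unit there, giving (1) and (2); then (3) follows by specializing the generic example. The paper adds one remark you omit: for $c\geq 4$ point (3) is ``not as explicit'' because the graded minimal free resolution of the generic example is not known in closed form, so the intermediate Betti numbers are determined in principle but not by an explicit formula. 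One minor notational point: Conjecture~\ref{conj:general-c}(1) only postulates the map $w^{(1)}\colon V\otimes R\to R$, not a generic ring $\widehat{R}_{\mathrm{gen}}$, so your invocation of the latter is a slight overreach in notation, though harmless for the argument.
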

			
			\begin{proof}
				Assuming the conjecture, the proof of this statement is the same as Lemma~\ref{lem:special-graded-free-res}. The comment in (1) follows because $\s_I = \s$ implies that the ideal has deviation exactly $d$ and type exactly $t$. Point (3) is not as explicit here as in the $c=3$ case, because we do not know the graded minimal free resolution of the generic examples for arbitrary $c$.
			\end{proof}
			%\endgroup

		The next corollary generalizes item (3) of Corollary \ref{restrictions2} providing a conjectural description of the decorations of hyperplane sections in arbitrary codimension.

		\begin{cor}
			\label{hyperplanesection}
            Assume Conjecture~\ref{conj:general-c}.
            Let $\s_I= S_{\boldsymbol \lambda}F_1 \otimes S_{\boldsymbol \mu}F_c^*$ be the decoration of a licci ideal of codimension $c$. Let $k= \sum_{i=1}^t \mu_i$.
			The following conditions are equivalent: 
			\begin{enumerate}
				\item[(1)] $\lambda_1 = k$.
				\item[(2)] $I$ is an hyperplane section of a licci ideal of codimension $c-1$. 
			\end{enumerate}
            Furthermore, if $I'$ is an hyperplane section of $I$, then 
			$ \s_{I'}= S_{k,\boldsymbol \lambda}F_1 \otimes S_{\boldsymbol \mu}F_{c+1}^*.  $
		\end{cor}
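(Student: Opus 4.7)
The plan is to establish the ``furthermore'' statement first, from which (2) $\Rightarrow$ (1) follows immediately, and then handle (1) $\Rightarrow$ (2) by producing a codimension $c-1$ licci ideal whose hyperplane section realizes $I$. Both halves rest on Theorem~\ref{degreesgeneric}, which provides a homogeneous representative with a prescribed graded free resolution, together with the codimension $c$ analogue of Lemma~\ref{lem:special-graded-free-res} whose validity is asserted under Conjecture~\ref{conj:general-c}.

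To prove the furthermore statement, I would start with a codimension $c-1$ licci ideal $J \subset S$ with $\s_J = S_{\boldsymbol \lambda}G_1 \otimes S_{\boldsymbol \mu}G_{c-1}^*$. By Theorem~\ref{degreesgeneric} applied in codimension $c-1$, I may take $J$ homogeneous with minimal free resolution whose first and last free modules are $\bigoplus_i S(-(k+1-\lambda_i))$ and $\bigoplus_i S(-(k+c-3+\mu_i))$ respectively. Choosing a generic linear form $f$ and setting $I = J + (f)$, the resolution of $S/I$ is the tensor product of the resolution of $S/J$ with the Koszul complex on $f$, and its first and last free modules become
\[ F_1 = F^J_1 \oplus S(-1), \qquad F_{c} = F^J_{c-1}(-1). \]
A quick check of the shifts shows they match the form \eqref{eq:special-graded-free-res-codim-c} for the pair $(k, \boldsymbol \lambda;\boldsymbol \mu)$. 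Since $J$ is licci, so is $I$, because any regular sequence linking $J$ to another ideal extends by $f$ to a regular sequence linking $I$ to the corresponding hyperplane section. Applying the codim $c$ analogue of Lemma~\ref{lem:special-graded-free-res} then gives $\s_I = S_{(k,\boldsymbol\lambda)}F_1 \otimes S_{\boldsymbol\mu}F_c^*$, establishing the formula.

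For (1) $\Rightarrow$ (2), starting from $\s$ with $\lambda_1 = k$, Theorem~\ref{degreesgeneric} produces a homogeneous licci $I \subset R = K[x_1,\ldots,x_n]$ whose free resolution has the form \eqref{eq:special-graded-free-res-codim-c}. The summand $R(-1)$ in $F_1$ corresponding to $\lambda_1 = k$ forces a minimal generator of $I$ of degree $1$; after a linear change of variables this generator is $x_1$, and after subtracting multiples of $x_1$ from the remaining generators I obtain $I = (x_1) + J$ with $J$ generated by elements of $K[x_2,\ldots,x_n]$. Since $x_1$ is a non-zero-divisor on $R$, the reduction $I/(x_1) \subset R/(x_1)$ has codimension $c-1$, so $J$ itself has codimension $c-1$ in $R$, and $I$ is a hyperplane section of $J$ by construction. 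The remaining point is that $J$ is licci, which I would obtain by descending a chain of links exhibiting $I$ as licci to a chain for $J$ modulo $x_1$, using that $x_1$ may always be included in a regular sequence contained in $I$.

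The main obstacle is the appeal to the codim $c$ analogue of Lemma~\ref{lem:special-graded-free-res}: the ability to recover a decoration from the graded Betti numbers of a suitably shifted resolution is precisely the content of Conjecture~\ref{conj:general-c} on which this corollary is predicated. A secondary, routine but genuinely non-trivial point is the descent of licci-ness through the hyperplane section construction in both directions, which requires lifting the standard codimension three manipulations into the conjectural higher-codimension framework developed in this section.
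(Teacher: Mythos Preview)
The paper does not include an explicit proof of this corollary; it is stated immediately after Theorem~\ref{degreesgeneric} and the codimension~$c$ analogue of Lemma~\ref{lem:special-graded-free-res}, with the remark that it ``generalizes item (3) of Corollary~\ref{restrictions2}.'' Your proposal is a correct and reasonable fleshing out of the intended argument, following exactly the pattern of the codimension~3 proof: use Theorem~\ref{degreesgeneric} to produce a degree~1 generator when $\lambda_1 = k$, change coordinates to exhibit a hyperplane section, and use the graded-free-resolution criterion (the analogue of Lemma~\ref{lem:special-graded-free-res}, valid under Conjecture~\ref{conj:general-c}) to read off the decoration of a hyperplane section.

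One small point: in your ``furthermore'' paragraph you prove the formula for a \emph{particular} representative $J$ (the one furnished by Theorem~\ref{degreesgeneric}) and its hyperplane section. To conclude for an arbitrary $I$ and an arbitrary hyperplane section $I'$, you implicitly use that the Herzog class of $I'$ depends only on that of $I$; this is the sort of deformation-theoretic fact the paper treats as routine throughout (a deformation of $I$ yields a deformation of $I' = I + (f)$), and is in the same spirit as the ascent/descent of licci-ness you already flag. None of this is a gap so much as an acknowledgment that, as you note, the corollary genuinely rests on the conjectural framework of this section.
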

		
		Adopting the notation of the previous corollary, we say that the decoration $\s_{I'}$ is an \it hyperplane section \rm of $\s_I$.
		We analyze now the decorations for small values of $k$.
		
		\begin{thm} 
			\label{smallk}
			Let $\s = \s_I = S_{\boldsymbol \lambda}F_1 \otimes S_{\boldsymbol \mu}F_c^*$ be a decoration and let $k = \sum_{i=1}^t \mu_i$. Then:
			\begin{enumerate}
				\item[(1)]  $k=1$ if and only if $I$ is a complete intersection. 
				\item[(2)]  $k=2$ if and only if $\s$ is obtained as iterated hyperplane section of one of the following decorations:
				\begin{itemize}
					\item $\bigwedge^3 F_1 \otimes \bigwedge^2 F_2^* $ \, (almost complete intersection of codimension 2).
					\item $\bigwedge^5 F_1 \otimes S_2 F_3^* $ \, (Gorenstein ideal of codimension 3 and deviation 2).
				\end{itemize}
				%either $\bigwedge^3 F_1 \otimes \bigwedge^2 F_2^* $ (almost complete intersection of codimension 2) or of $\bigwedge^5 F_1 \otimes S_2 F_3^* $ (Gorenstein ideal of codimension 3 and deviation 2).
				\item[(3)] $k=3$ if and only if $\s$ is obtained as iterated hyperplane section of one of the following decorations:
				\begin{itemize}
					\item $\bigwedge^4 F_1 \otimes \bigwedge^3 F_2^* $ \, (perfect ideal of codimension 2 and deviation 2).
					\item $S_{2,2,2,1} F_1 \otimes \bigwedge^3 F_3^*$ \, (almost complete intersection of codimension 3 and type 3).
					\item $S_{2,2,1,1,1} F_1 \otimes S_{2,1} F_3^*$ \, (Anne Brown's model of codimension 3, deviation 2, type 2).
					\item $\bigwedge^7 F_1 \otimes S_3 F_3^* $ \, (Gorenstein ideal of codimension 3 and deviation 4).
					\item $S_{2,2,2,2,2} F_1 \otimes S_{2,1} F_4^*$ \, (almost complete intersection of codimension 4 and type 2 that is not hyperplane section - see Section \ref{sec:gor4} for more details).
					\item $S_{2,2,2,1,1,1,1} F_1 \otimes S_{3} F_4^*$ \, (Kustin-Miller's model of Gorenstein ideals of codimension 4 and deviation 3 \cite{KM1},\cite{KM2}).
					\item $S_{2,2,2,2,2,2,1} F_1 \otimes S_{3} F_5^*$ \, (Huneke-Ulrich's model of Gorenstein ideals of codimension 5 and deviation 2 \cite{hu-ul5}).
				\end{itemize}
			\end{enumerate}
		\end{thm}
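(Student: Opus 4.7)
The plan is to reduce the classification to a finite combinatorial enumeration using three tools already established: the bound $\lambda_i \leq k$ (Corollary~\ref{lambda<k}), the squares formula $\sum \lambda_i^2 + \sum \mu_i^2 = (c-2)k^2 + 2k + 1$ (Theorem~\ref{squareformulas}), and the characterization of hyperplane sections as decorations with $\lambda_1 = k$ (Corollary~\ref{hyperplanesection}). Each direction of the theorem is then addressed: the base decorations listed are known to correspond to concrete licci ideals, and the enumeration will show these are the only possibilities.

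First I would dispose of $k=1$: the constraint $\sum \mu_i = 1$ forces $\boldsymbol\mu = (1)$, and Corollary~\ref{lambda<k} gives $\lambda_i \in \{0,1\}$, so with $\sum \lambda_i = c$ we obtain $\boldsymbol\lambda = (1^c)$, yielding the complete intersection decoration. Next, by Corollary~\ref{hyperplanesection}, any decoration with $\lambda_1 = k$ is a hyperplane section and is thereby built from a codimension $c-1$ decoration with the same $k$. So I may reduce to $\lambda_1 < k$ and ask which ``base'' decorations arise.

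For $k=2$ with $\lambda_1 < 2$, all parts equal $1$, so $\boldsymbol\lambda = (1^{2c-1})$. The squares formula reduces to $(2c-1) + \sum \mu_i^2 = 4c-3$. Taking $\boldsymbol\mu = (2)$ forces $c = 3$ (producing $\bigwedge^5 F_1 \otimes S_2 F_3^*$), and $\boldsymbol\mu = (1,1)$ forces $c = 2$ (producing $\bigwedge^3 F_1 \otimes \bigwedge^2 F_2^*$). For $k=3$ with $\lambda_1 < 3$, each $\lambda_i \in \{1,2\}$; letting $b$ and $c_1$ count the twos and ones, $2b + c_1 = 3c-2$ and the squares formula yields $2b = 6c - 9 - \sum \mu_i^2$. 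Enumerating over $\boldsymbol\mu \in \{(3),(2,1),(1,1,1)\}$, solving for nonnegative $b, c_1$, and discarding cases with $c_1 < 0$ or $b < 0$ produces exactly the seven base partitions: $(1^7);(3)$, $(2^3,1^4);(3)$, $(2^6,1);(3)$ from $\boldsymbol\mu = (3)$; $(2^2,1^3);(2,1)$ and $(2^5);(2,1)$ from $\boldsymbol\mu = (2,1)$; and $(1^4);(1,1,1)$ and $(2^3,1);(1,1,1)$ from $\boldsymbol\mu = (1,1,1)$. Each of these matches exactly one of the seven items claimed.

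Finally I would verify existence, identifying each base decoration with a classical family: codimension $2$ perfect ideals (Hilbert-Burch), codimension $3$ Gorenstein (Watanabe) and their linked almost complete intersections, Anne Brown's model discussed in Section~\ref{sec:tables}, and in higher codimension the Kustin--Miller \cite{KM1,KM2} and Huneke--Ulrich \cite{hu-ul5} families of Gorenstein ideals, plus the codimension $4$ almost complete intersection of type $2$ constructed in Section~\ref{sec:gor4}. The bookkeeping in Step~4 is the bulkiest part of the argument but entirely mechanical; the only genuinely non-combinatorial input is the identification of the base decorations with these classical models, which for the codimension $\geq 4$ entries relies on the known existence of the Kustin--Miller and Huneke--Ulrich ideals rather than on Conjecture~\ref{conj:general-c}.
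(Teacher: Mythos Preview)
Your approach is essentially the same as the paper's: reduce to the case $\lambda_1 < k$ via the hyperplane-section characterization, use Corollary~\ref{lambda<k} to bound the parts, and then combine the linear constraint $\sum\lambda_i = (c-1)k+1$ with the squares formula to solve for the multiplicities of the parts. Your enumeration for $k=3$ is in fact more explicit than the paper's, which simply says ``straightforward computations yield the desired results'' after deriving $2a_2 = 6c-9-\sum\mu_i^2$ and $a_1\in\{-3c+20,-3c+16,-3c+14\}$; you have filled in those computations correctly.
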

		
		\begin{proof}
			The case $k=1$ is straightforward (see the proof of Corollary \ref{lambda<k}). For $k=2$, if we assume $\s$ to be not an hyperplane section, we must have $\lambda_i=1$ for every $i$. We can use now the formula $2c-1= (c-1)k+1 =  \sum_{i=1}^{d+c} \lambda_i = d+c$ to get $d= c-1$. From the squares formula in Theorem \ref{squareformulas} we obtain $ 2c-1 +\sum_{i=1}^t \mu_i^2 =  4c-3. $ 
			Since $k=2$, we must have either $\boldsymbol \mu = (1,1)$ or $\boldsymbol \mu = (2)$. These two possibilities yield the desired classification.
			
			To study the case $k=3$ we observe that we have 3 cases: $\boldsymbol \mu = (1,1,1)$, $\boldsymbol \mu = (2,1)$, $\boldsymbol \mu = (3)$. Again assuming that $\s$ is not an hyperplane section, we must have $\lambda_i \leq 2$ for every $i$. For $j=1,2$, let $a_j$ be the number of $\lambda_i$'s equal to $j$. Hence we have 
			$$ 3c-2 = (c-1)k+1 =  \sum_{i=1}^{d+c} \lambda_i = 2a_2 + a_1, $$ and, by Theorem, \ref{squareformulas},
			$$   4a_2+a_1 +\sum_{i=1}^t \mu_i^2 =  9c-11.    $$ Subtracting the previous equations, we get $2a_2 = 6c -9 - \sum_{i=1}^t \mu_i^2$. The three possibilities for $ \boldsymbol \mu $ give %$a_2 \in \lbrace 3c - 9, 3c -7, 3c-6 \rbrace$. 
			$2a_2 \in \lbrace 6c - 18, 6c -14, 6c-12 \rbrace$ and therefore $a_1 \in \lbrace -3c +20, -3c +16, -3c+14 \rbrace$.
			% This allows to compute $a_1$ in terms of $c$. 
			The fact the $a_1 \geq 0$ implies that the previous equations can be satisfied only for finitely many values of $c$. Straightforward computations yield now the desired results.
		\end{proof}
		
		% if $k =4$, all but finitely many decorations are hyperplane sections, we can find the other ones by computer
		
		%if $k \geq 5$, I do not know whether this is still true, the main different with the previous cases is that $ \binom{i}{2} > i$ if $i \geq 4$. \ec \\
		
		In general for every fixed value of $k$, we expect that there exist only finitely many decorations at level $k$ that are not hyperplane sections.

		\subsection{The doubling of a decoration}
		
		In codimension 3 all Gorestein ideals are licci, but this is no more true in higher codimension. The ideal of $2 \times 2$ minors of a generic $3 \times 3$ matrix is a classical example of a non-licci Gorenstein ideal of codimension 4. An interesting problem is the classification of Herzog classes of Gorenstein licci ideals of arbitrary codimension.
		
		The goal of this section is to find a way to construct decorations of (licci) Gorenstein ideals of codimension $c+1$ from decorations of licci ideals of codimension $c$.
		
		The \it doubling \rm of an ideal is a construction which obtains a Gorenstein ideal of codimension $c+1$ from a perfect ideal of codimension $c$, under some extra assumptions \cite{MMVW}, \cite{DGA-algebra}. %Particular examples of doublings are hyperplane sections of Gorenstein ideals and Gorenstein ideals obtained as sums of geometrically linked ideals.
		
		Here we give a purely combinatoric definition of the doubling of a decoration. Conjecturally this corresponds to the decoration of the doubling in the ideal-theoretic sense, for the ideals for which the notion of doubling is well-defined. The main application of this construction will be the explicit description of an infinite family of Herzog classes of Gorenstein ideals of codimension 4 with 9 generators.
		
		\begin{definition}
			\label{defdoubling}
			Given a decoration $\s = S_{\boldsymbol \lambda}F_1 \otimes S_{\boldsymbol \mu}F_c^*$, define the \it doubling \rm of $\s$ as
			$$\mathcal{D}(\s) = S_{\boldsymbol \lambda, \boldsymbol \mu}G_1 \otimes S_{k}G_{c+1}^*, $$ after reordering the first partition in non-increasing order.
		\end{definition}
		
		\begin{thm} 
		\label{doubling}
			Let $\s= S_{\boldsymbol \lambda}F_1 \otimes S_{\boldsymbol \mu}F_c^*$ be a decoration of a licci ideal of codimension $c$. Denote by $\s'$ the minimal tight double link of $\s$. %and by $\mathcal{D}(\s)'$ the minimal tight double link of $\mathcal{D}(\s)$.
            Then, if $k=\kappa(\s)>1$, there exists a double link $\mathcal{L}'$ of $\mathcal{D}(\s)$ such that
            $ \mathcal{L}' =  \mathcal{D}(\s')$. %and $\kappa(\mathcal{L}')=\kappa(\s')< k.$
            %if $k > 1$, there exists a double link $\s''$ of $\s$ and a double link $\mathcal{L}''$ of $\mathcal{D}(\s)$ such that:
			%\begin{itemize}
				%\item[(1)] $\mathcal{L}''=\mathcal{D}(\s'')$.
				%\item[(2)] $\kappa(\s'')< \kappa(\s)=k.$
			%\end{itemize}
			In particular, $\mathcal{D}(\s)$ is a decoration of a Gorenstein licci ideal of codimension $c+1$.
		\end{thm}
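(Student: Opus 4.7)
My plan is to proceed by induction on $k = \kappa(\s)$. The base case $k=1$ is immediate: $\s = \bigwedge^c F_1 \otimes F_c^*$ is the complete intersection decoration in codimension $c$, and then $\mathcal{D}(\s) = \bigwedge^{c+1} G_1 \otimes G_{c+1}^*$ is the complete intersection decoration in codimension $c+1$, which is Gorenstein and licci. (Although the linking statement requires $k>1$, the $k=1$ case is still needed for the final ``in particular'' assertion.) For the inductive step I would exhibit an explicit double link of $\mathcal{D}(\s)$ and check combinatorially that its output equals $\mathcal{D}(\s')$, where $\s'$ is the minimal tight double link of $\s$.

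The natural choice is to take, as the first link of $\mathcal{D}(\s)$ in codimension $c+1$, the sublist $\boldsymbol\lambda'_1 = (\lambda_1, \ldots, \lambda_c, \mu_1)$: the $c$ entries used in the smallest minimal link from $\s$ to $\s_J$, together with $\mu_1$. The linkage formula of Definition \ref{linkageformulahighcodim} produces the shift
\[
p_1 = (c-1)k + 1 - (\lambda_1 + \cdots + \lambda_c + \mu_1) = p + k - \mu_1,
\]
where $p = (c-2)k + 1 - \lambda_1 - \cdots - \lambda_c$ is the first shift in the minimal tight double link of $\s$. This yields an intermediate decoration with left partition $(\lambda_1+p_1, \ldots, \lambda_c+p_1, \mu_1+p_1, k)$ and right partition $(\lambda_{c+1}, \ldots, \lambda_{d+c}, \mu_2, \ldots, \mu_t)$. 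For the second link I would then omit only the entry $\lambda_c + p_1$, selecting $\boldsymbol\lambda'_2 = (\lambda_1+p_1, \ldots, \lambda_{c-1}+p_1, \mu_1+p_1, k)$. Using $\sum_i \lambda_i = (c-1)k+1$, a short computation gives $p_2 = \lambda_c - k$, hence $p_1 + p_2 = p + q$ where $q = \lambda_c - \mu_1$ is precisely the second shift in the minimal tight double link of $\s$.

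The remaining step is to compare the result $\mathcal{L}'$ with $\mathcal{D}(\s')$. Using the identities $\mu_1 + q = \lambda_c$, $\mu_1 + p_1 + p_2 = \lambda_c + p$, and $k + p_2 = \lambda_c$, the left partition of $\mathcal{L}'$ reorders to $(\lambda_1+p+q, \ldots, \lambda_{c-1}+p+q, \lambda_c+p, \lambda_c, \lambda_{c+1}, \ldots, \lambda_{d+c}, \mu_2, \ldots, \mu_t)$, while its right partition is the singleton $(\lambda_c + p_1) = (k+p+q)$; both match Definition \ref{defdoubling} applied to the description of $\s'$ given in Example \ref{tightdouble} extended to arbitrary codimension. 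Induction then applies because $\kappa(\s') = k+p+q < k$ (which follows from Lemma \ref{mu1lambdac}), yielding that $\mathcal{D}(\s)$ is a decoration; it is Gorenstein because its right partition $(k)$ has only one part, so its type is $1$. The main obstacle is precisely the bookkeeping in the second paragraph: arranging the partitions under the two linkage operations, recognizing the three identifications above, and checking that the selected sublists $\boldsymbol\lambda'_1, \boldsymbol\lambda'_2$ are valid (which amounts only to a reordering). Once these are in place, matching $\mathcal{L}'$ with $\mathcal{D}(\s')$ is direct.
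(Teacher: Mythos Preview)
Your approach is essentially the same as the paper's: same two-step link of $\mathcal{D}(\s)$ via the sublists $(\lambda_1,\ldots,\lambda_c,\mu_1)$ and then $(\lambda_1+p_1,\ldots,\lambda_{c-1}+p_1,\mu_1+p_1,k)$, same shift identities $p_1=p+k-\mu_1$ and $p_1+p_2=p+q$, and the same inductive conclusion. The only point the paper treats more carefully is the legitimacy of the intermediate step: it verifies explicitly (using Lemma~\ref{mu1lambdac} for $\lambda_c+p_1\ge 0$ and the fact $b>c$ for $\mu_1+p_1>0$) that the entries after the first link are nonnegative, whereas your remark that validity ``amounts only to a reordering'' understates this check.
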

		
		\begin{proof}
			First observe that if $\s$ is the decoration of a complete intersection, then $\mathcal{D}(\s)$ is the decoration of a complete intersection of codimension $c+1$. Thus we assume $k > 1$.

            Extending the formula from Example \ref{tightdouble} to codimension $c$, we obtain            		%	Let $\s'$ be the smallest minimal link of $\s$. Hence, we have $p = (c-2)k+1-\lambda_1-\ldots-\lambda_c,$ and
		%	$$ \s'= S_{\lambda_1+p, \ldots, \lambda_c+p, \mu_1, \ldots, \mu_t} F_1' \otimes S_{\lambda_{c+1}, \ldots, \lambda_{b}} (F_c')^{\ast}. $$ 
	%		We define now a minimal link $\s''$ of $\s'$ by choosing the subpartition $(\lambda_1+p, \ldots, \lambda_{c-1}+p, \mu_1 )$ (this link is well-defined but it may not be the smallest minimal).
	%		We obtain 
			$$ \s'= S_{\lambda_1+p+q, \ldots, \lambda_{c-1}+p+q, \mu_1+q, \lambda_{c+1}, \ldots, \lambda_b} F_1' \otimes S_{\lambda_{c}+p, \mu_2 \ldots, \mu_{t}} (F_c')^{\ast}, $$ where 
            $p = (c-2)k+1-\lambda_1-\ldots-\lambda_c,$ and
            $$q= (c-2)(k+p)+1-(c-1)p-\lambda_1-\ldots-\lambda_{c-1}-\mu_1.$$ By Lemma \ref{mu1lambdac}, $\lambda_{c}+p < \mu_1 $. This implies $\kappa(\s')< \kappa(\s)=k.$
			
	%We compute then the minimal tight double link to $\mathcal{D}(\s)$.
                Let us define now a first minimal link $\mathcal{L}$ of $\mathcal{D}(\s)$ with respect to the subpartition $(\lambda_1, \ldots, \lambda_{c}, \mu_1 )$. This gives
			$$ \mathcal{L}= S_{\lambda_1+p', \ldots, \lambda_c+p', \mu_1+p', k} G_1' \otimes S_{\lambda_{c+1}, \ldots, \lambda_{b}, \mu_2, \ldots, \mu_t} (G_{c+1}')^{\ast}, $$ where $p'= (c-1)k+1-\lambda_1-\ldots-\lambda_c-\mu_1 = \lambda_{c+1}+\ldots +\lambda_b - \mu_1.$ Since $\s$ does not correspond to a complete intersection, we have $b > c$ and $\mu_1+p' > 0.$ Moreover, $\lambda_c+p' = \lambda_c + p + k-\mu_1 \geq 0$, since $\lambda_c+p \geq 0$ by Lemma \ref{mu1lambdac}, and $k - \mu_1 \geq 0$ by definition of $k$. It follows that all the elements $\lambda_1+p', \ldots, \lambda_c+p', \mu_1+p'$ are non-negative and we can link again from $ \mathcal{L} $ choosing the subpartition $(\lambda_1+p', \ldots, \lambda_{c-1}+p', \mu_1+p', k)$. Therefore, we get
			$$ \mathcal{L}'= S_{\lambda_1+p'+q', \ldots, \lambda_{c-1}+p'+q', \mu_1+p'+q', k+q',\lambda_{c+1}, \ldots, \lambda_{b}, \mu_2, \ldots, \mu_t} G_1'' \otimes S_{\lambda_{c}+p'} (G_{c+1}'')^{\ast}, $$ where $q'= (c-1)(k+p')+1-cp' - \lambda_1-\ldots-\lambda_{c-1}-\mu_1 -k.$
			Observing that $p'=p+k-\mu_1$, we get 
			$$ \kappa(\mathcal{L}')= \lambda_{c}+p' = \lambda_{c}+p+k-\mu_1 = \lambda_{c}+p+\mu_2+ \ldots+ \mu_{t} = \kappa(\s'). $$
			Hence $k+p+q= \kappa(\s') = \kappa(\mathcal{L}') = k+p'+q' $, implying $p+q=p'+q'$. This obviously implies that $ \lambda_i  +p +q= \lambda_i +p'+q'$ for every $i=1, \ldots, c-1$.
			
			To prove $\mathcal{L}'=\mathcal{D}(\s')$ we still need to show that $\mu_1+ q = k+q'$ and $\lambda_c + p = \mu_1 +p'+ q'. $
			But, from the equality $q - q' = p-p' =  k-\mu_1$, we obtain $\mu_1+ q = k+q'$, while from the definitions of $q$ and $p$, we get $$ q + \mu_1 = (c-2)k+1-p -\lambda_1-\ldots-\lambda_{c-1} = \lambda_c. $$ From this we obtain
			$\mu_1 +p'+ q' = \mu_1 +p+ q = \lambda_c + p. $
			By definition of doubling, this shows that $\mathcal{L}'=\mathcal{D}(\s')$.
			
			Finally, by induction on $k$, this also proves that $\mathcal{D}(\s)$ is a decoration. Indeed, $\mathcal{D}(\s)$ is a double link of $\mathcal{D}(\s')$, which is a decoration by inductive hypothesis thanks to the fact that $\kappa(\mathcal{D}(\s'))= \kappa(\mathcal{L}')= \kappa(\s')< k.$
		\end{proof}
		
		\begin{example}
			\label{exdoubling}
			We list the following remarkable examples of doublings of decorations:
			\begin{itemize}
				\item Gorenstein ideals:  
				$ \mathcal{D}(S_{\boldsymbol \lambda} F_1 \otimes S_{k} F_c^*) = S_{k,\boldsymbol \lambda} G_1 \otimes S_{k} F_{c+1}^*. $ The doubling is the hyperplane section.
				\item Hyperplane sections:
				$ \mathcal{D}(S_{k, \boldsymbol \lambda} F_1 \otimes S_{\boldsymbol \mu} F_c^*) = S_{k,\boldsymbol \lambda, \boldsymbol \mu} G_1 \otimes S_{k} F_{c+1}^*. $ The doubling of the hyperplane section of $\s$ is the hyperplane section of the doubling of $\s$.
				\item $\mathcal{D}(S_{2,2,1,1,1}F_1 \otimes S_{2,1}F_3^*) = \mathcal{D}(S_{2,2,2,1}F_1 \otimes S_{1^3}F_3^*) =S_{2^3, 1^4} G_1 \otimes S_{3}G_4^*.$
				In codimension 3, the Anne Brown's model and the almost complete intersection of type 3 have the same doubling, that is the Kustin-Miller's model with 7 generators.
				\item $\mathcal{D}(S_{2,2,2,2,1}F_1 \otimes S_{2,2}F_3^*) = S_{2^6, 1} G_1 \otimes S_{4}G_4^*.$
				In codimension 3, the doubling of the Celikbas-Kra\'skiewicz-Laxmi-Weyman's model is the generic Gorenstein ideal of codimension 4 of format $E_7$ (see \cite{examples}, \cite[Section 5]{CGNW}).
				%\item In codimension 3, the doubling of the $G(b-3)$ family is
				%\item $\mathcal{D}(S_{2^{k-1},1^3}F_1 \otimes S_{k-1,1}F_3^*) = S_{k-1,2^{k-1},1^4} G_1 \otimes S_{k}G_4^*.$
				\item In Section \ref{sec:1683}, we observed that in codimension 3, we have infinite families of decorations of ideals with resolution of format $(1,6,8,3)$. For instance
				$$ \s_{I_n}= S_{a+2,a,a,a-2n+1, a-2n+1, a-2n+1} F_1 \otimes S_{a-n+1,a-n+1,a-n} F_3^*, $$ with $a= \frac{n(3n+1)}{2}$. The family
				$$ \mathcal{D}(\s_{I_n}) = S_{a+2,a,a, a-n+1,a-n+1,a-n, a-2n+1, a-2n+1, a-2n+1} G_1 \otimes S_{3a-3n+2} G_4^* $$ provides infinitely many decorations of licci Gorenstein ideals of codimension 4 with 9 generators. To link from $I_n$ to $I_{n-1}$ we need three smallest minimal links. Hence, to link from $ \mathcal{D}(\s_{I_n}) $ to $ \mathcal{D}(\s_{I_{n-1}}) $ we need three double links.
			\end{itemize}
		\end{example}
		
		In \cite{Ul2}, it is proved that the sum of two geometrically linked licci ideals of codimension $c$ is a Gorenstein ideal of codimension $c+1$. This construction is a particular example of a doubling. We conjecture the following result about the decorations of sums of linked ideals.
		
		\begin{conjecture}
			\label{conjectureonsums}
			Let $I$ be a licci ideal of codimension $c$. Let $\s_I = S_{\boldsymbol \lambda}F_1 \otimes S_{\boldsymbol \mu}F_c^*$.
			Suppose that $J$ is an ideal geometrically linked to $I$ such that $\kappa(I) \leq \kappa(J)$. Then, $\s_{I+J} = \mathcal{D}(\s_I). $
			%$$ \s_{I+J} = S_{\boldsymbol \lambda, \boldsymbol \mu}G_1 \otimes S_{k}G_{c+1}^*  $$ after reordering the partitions in non-increasing order. 
		\end{conjecture}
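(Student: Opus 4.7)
The plan is to combine a reduction to a convenient homogeneous model (as provided by Theorem~\ref{degreesgeneric}) with an inductive argument on $k = \kappa(I)$ based on Theorem~\ref{doubling}.

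\emph{Reduction and base case.} Since the (conjectural) decoration $\s_{I+J}$ is an invariant of the Herzog class of $I+J$, we may replace $I$ by a homogeneous model whose graded minimal free resolution has the form \eqref{eq:special-graded-free-res-codim-c}. Using Lemma~\ref{lem:mapping-cone-decorations}, choose a homogeneous regular sequence $\boldsymbol\alpha \subset I$ so that $J = (\boldsymbol\alpha):I$ is also in the corresponding graded form; the hypothesis $\kappa(I) \leq \kappa(J)$ forces the shift $p \geq 0$ in Definition~\ref{linkageformulahighcodim}, and a generic choice of $\boldsymbol\alpha$ within its degree constraints makes the link geometric. In the base case $k = 1$, $I$ is a complete intersection, $\mathcal{D}(\s_I) = \bigwedge^{c+1} G_1 \otimes G_{c+1}^*$, and a direct computation (generalizing the complete intersection entry of the tables in Section~\ref{sec:tables}) shows that $I + J$ is a complete intersection of codimension $c+1$, hence has decoration $\mathcal{D}(\s_I)$.

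\emph{Inductive step.} Assume the conjecture holds for all licci ideals $I^\ast$ with $\kappa(I^\ast) < k$. Applying the minimal tight double link to $\s_I$ (Example~\ref{tightdouble}), we obtain $\s'$ with $\kappa(\s') < k$, realized by a pair of codimension-$c$ links $I \sim I_1 \sim I'$. By Theorem~\ref{doubling}, the decorations $\mathcal{D}(\s_I)$ and $\mathcal{D}(\s')$ are connected by a double link in codimension $c+1$. Under Conjecture~\ref{conj:general-c}(3) applied in codimension $c+1$, this combinatorial double link is realized by a pair of direct links of Gorenstein ideals. By induction, $\mathcal{D}(\s')$ is the Herzog class of $I' + J'$ for any geometric link $J'$ of $I'$ with $\kappa(J') \geq \kappa(I')$. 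One then needs to construct a pair of codimension-$(c+1)$ links starting from $I' + J'$ whose combined effect matches the combinatorial double link, and to identify the terminal ideal with $I + J$ up to Herzog class. The natural way to produce these codimension-$(c+1)$ links is via the identification $\omega_{R/I} \cong J/(\boldsymbol\alpha)$ available for geometrically linked $I, J$, and the presentation of $R/(I+J)$ as the canonical Gorenstein doubling of $R/I$ by its canonical module, in the sense of Ulrich \cite{Ul2}.

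\emph{Main obstacle.} The delicate step is the compatibility claim in the inductive argument: one must produce an actual pair of codimension-$(c+1)$ links at the level of ideals (and not merely at the level of Herzog classes) which realizes the combinatorial double link of Theorem~\ref{doubling} and terminates precisely at $I + J$. This requires tracking how the regular sequences of the codimension-$c$ tight double link $I \sim I_1 \sim I'$ induce regular sequences for the codimension-$(c+1)$ links on the Gorenstein doublings, and how the canonical module $\omega_{R/I}$ transforms under this process; the Huneke-Ulrich description of geometrically linked perfect ideals is the natural tool here, but verifying that geometric (rather than merely algebraic) linkage is preserved at each stage is subtle. A tempting alternative that tries to read $\s_{I+J}$ directly off the graded minimal free resolution of $I+J$ from the reduction step is obstructed by the fact that this resolution does not in general take the nice form of Lemma~\ref{lem:special-graded-free-res} corresponding to $\mathcal{D}(\s_I)$, so the decoration cannot be extracted from graded Betti numbers alone without an additional deformation argument.
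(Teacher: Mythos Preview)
The statement you are attempting to prove is labeled \emph{Conjecture} in the paper; the authors do not provide a proof, nor do they claim one. There is therefore no ``paper's proof'' to compare your proposal against.

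Your proposal is not a proof either, and you say so yourself: the paragraph headed \emph{Main obstacle} identifies precisely the step that is missing, namely the construction of actual codimension-$(c+1)$ links between $I'+J'$ and $I+J$ realizing the combinatorial double link of Theorem~\ref{doubling}. Without this, the induction does not close. Two further issues compound this. First, the entire argument is conditional on Conjecture~\ref{conj:general-c}: for $c>3$ the decoration $\s_I$ is not known to be well-defined, so even the statement $\s_{I+J}=\mathcal{D}(\s_I)$ only makes sense conjecturally. Second, your base case is not justified. If $I$ is a complete intersection and $J$ is \emph{geometrically} linked to $I$ with $\kappa(J)\geq 1$, it does not follow by ``direct computation'' that $I+J$ is a complete intersection of codimension $c+1$; you would need to argue this, and the tables in Section~\ref{sec:tables} concern $c=3$ only and do not address the structure of $I+J$.

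In short: this is an open conjecture in the paper, and your outline correctly locates the difficulty but does not overcome it.
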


		\subsection{Gorenstein ideals of codimension 4}
		\label{sec:gor4}
		Structure theorems for Gorenstein ideals of codimension 4 have been studied in many papers, including \cite{DGA-algebra}, \cite{KM1}, \cite{KM2},\cite{vv}, \cite{reid}.
		
		Recently, a theory relating free resolutions of Gorenstein ideals of codimension 4, generic rings, and Kac-Moody Lie algebras, analogous to the one established in codimension 3, has been addressed in \cite{weyman-gorenstein}, \cite{CGNW}. The Kac-Moody Lie algebra associated to the format of resolution of a Gorenstein ideal of codimension 4 is of type $E_n$ where $n$ is the number of minimal generators of the ideal. It follows that the corresponding diagram is Dynkin only for $n \leq 8$ and therefore there are only finitely many Herzog classes of Gorenstein ideals of codimension 4 with $n\leq 8$ generators.
		Moreover, it is expected that all such Gorenstein ideals are licci. 
		
		We can explicitly list the pair of partitions associated to their Herzog classes. We can start from the case $n=6$, since, for $n=4$ the ideals are complete intersections, and there are no Gorenstein ideals of codimension 4 with 5 generators because of a theorem of Kunz \cite{kunz} stating that almost complete intersections are never Gorenstein.
		
		For $n=6$, it is proved in \cite{CGNW}, that there is only one Herzog class, that correspond to the hyperplane section of a Gorenstein ideal of codimension 3 with 5 generators and the associated partition is 
		$S_{2,1^5}F_1 \otimes S_{2}F_4^*$.
		
		For $n=7$, there are two Herzog classes: $S_{2^3, 1^4} F_1 \otimes S_{3}F_4^*$
		and $S_{2^6, 1} F_1 \otimes S_{4}F_4^*.$ As already pointed out in Example \ref{exdoubling}, they correspond to the Kustin-Miller's model from \cite{KM1},\cite{KM2} and to the generic model computed in \cite{examples}, and they both appear as doublings (in the sense of Definition \ref{defdoubling}).
		
		In the case $n=8$, there are 10 Herzog classes. The associated pairs of partitions are 
		$S_{3, 1^7} F_1 \otimes S_{3}F_4^*$, 
		$S_{3, 2^3, 1^4} F_1 \otimes S_{4}F_4^*$, 
		$S_{3^2, 2^4, 1^2} F_1 \otimes S_{5}F_4^*$, 
		$S_{4,3, 2^6} F_1 \otimes S_{6}F_4^*$, 
		$S_{3^4, 2^3, 1} F_1 \otimes S_{6}F_4^*$, 
		$S_{4, 3^4, 2^3} F_1 \otimes S_{7}F_4^*$, 
		$S_{3^7, 1} F_1 \otimes S_{7}F_4^*$, 
		$S_{4^2, 3^5, 2} F_1 \otimes S_{8}F_4^*$,
		$S_{4^4,3^4} F_1 \otimes S_{9}F_4^*$, 
		$S_{4^7,3} F_1 \otimes S_{10}F_4^*$.
		They all appear as doubling (in the sense of Definition \ref{defdoubling}) of perfect ideals of codimension 3 and format $E_7$, except the last one.
		
		In Example \ref{exdoubling}, it is described an infinite family of Herzog classes of licci Gorenstein ideals of codimension 4 with 9 generators.
		
		From linkage theory we obtain that there exist only finitely many Herzog classes of almost complete intersections of codimension 4 and type $\leq 4$. Those of type $2$ have associated pairs of partitions $S_{2,2,1,1,1} F_1 \otimes S_{1,1}F_4^*$
		and $S_{2^5} F_1 \otimes S_{2,1}F_4^*.$ Those of type $3$ and $4$ are many more, but can still be calculated algorithmically.

		\subsection{Gorenstein ideals of deviation 2}
        \label{sec:dev2}
		
		Gorenstein ideals of deviation 2 have been object of the investigation of \cite{herzog-miller}, \cite{hu-ul5}.
		
		We expect that the theory of Kac-Moody Lie algebras associated to formats of free resolutions can be described also in this case. %of Gorenstein ideals of deviation 2. In this case,
        There are finitely many Herzog classes of Gorenstein ideals of deviation 2 if the codimension $c$ of the ideals is $\leq 6$. We list the associated pairs partitions for $c=5,6$ (the case $c=3,4$ have been already covered previously). We have:
		$S_{2,2,1^5} F_1 \otimes S_{2}F_5^*, $
		$S_{2^6,1} F_1 \otimes S_{3}F_5^*$ ($c=5$, format $E_7$), and 
		$S_{2^3,1^5} F_1 \otimes S_{2}F_6^*,$
		$S_{3,2^6,1} F_1 \otimes S_{3}F_6^*,$
		$S_{3^5,2^3} F_1 \otimes S_{4}F_6^*,$
		$S_{4^2,3^6} F_1 \otimes S_{5}F_6^*,$
		$S_{4^7,3} F_1 \otimes S_{6}F_6^*$ ($c=6$, format $E_8$).
		
		We have been able to produce an infinite family of Herzog classes of Gorenstein ideals of deviation 2 and codimension 7. Let $a$ be a nonnegative integer. Define $\s_{a}$ to be 
        $$  S_{ \frac{(3a^2 + 7a + 4)}{2}, 
				\frac{(3a^2 + 7a + 4)}{2}, 
				\frac{(3a^2 + 7a + 4)}{2}, 
				\frac{(3a^2 + 6a + 4)}{2}, 
				\frac{(3a^2 + 6a + 2)}{2}, 
				\frac{(3a^2 + 6a + 2)}{2}, 
				\frac{(3a^2 + 5a + 2)}{2},
				\frac{(3a^2 + 5a + 2)}{2},
				\frac{(3a^2 + 5a + 2)}{2}  }
			F_1
			\otimes S_{ \frac{ (9a^2 + 18a + 8)}{4} }F_7^* $$ if $a$ is even, and 
            $$ S_{ \frac{(3a^2 + 7a + 4)}{2}, 
				\frac{(3a^2 + 7a + 4)}{2}, 
				\frac{(3a^2 + 7a + 4)}{2}, 
				\frac{(3a^2 + 6a + 5)}{2}, 
				\frac{(3a^2 + 6a + 3)}{2}, 
				\frac{(3a^2 + 6a + 3)}{2}, 
				\frac{(3a^2 + 5a + 2)}{2},
				\frac{(3a^2 + 5a + 2)}{2},
				\frac{(3a^2 + 5a + 2)}{2} }
			F_1
			\otimes S_{ \frac{(9a^2 + 18a + 9)}{4}  }F_7^*   $$ if $a$ is odd.

		For $a = 0$, this yields $S_{2^4,1^5}F_1 \otimes S_2F_7^*$, which corresponds to a quadruple hyperplane section of a Gorenstein ideal of codimension 3 and deviation 2. For every $a \geq 1$, $\s_{a}$ can be obtained from $\s_{a-1}$ by applying $6$ consecutive largest minimal links.

		%lambda is the partition with
		%3 parts given by (3a^2 + 7a + 4)/2
		%1 part given by (3a^2 + 6a + 4)/2 if a is even and (3a^2 + 6a + 5)/2 if a is odd
		%2 parts given by (3a^2 + 6a + 2)/2 if a is even and (3a^2 + 6a + 3)/2 if a is odd
		%3 parts given by (3a^2 + 5a + 2)/2
		%mu is the partition with a single part given by (9a^2 + 18a + 8)/4 if a is even and (9a^2 + 18a + 9)/4 if a is odd.
		
		%For a = 0, this yields (2,2,2,2,1,1,1,1,1), (2), which (if the theory works as we expect) should correspond to a quadruple hypersurface section of (1,5,5,1).
		
		%To go from S_a to S_(a+1), the sequence of links is
		%S_a ~ a.c.i. ~ gor ~ a.c.i. ~ gor ~ a.c.i. ~ S_(a+1)
		%where every link is largest minimal

		Conjecturally, all Gorenstein ideals of deviation 2 and codimension $c \leq 6$ are licci. We do not know whether there exist Gorenstein ideals of deviation 2 and codimension $c \geq 7$ that are not licci. We leave this as an open question.

\subsection{Licci conjecture for $(c,d,t)$ triplets}
        To conclude this paper, we summarize the expected results and the state of the art regarding the question of which perfect ideals of codimension $c$, deviation $d$, and type $t$ are licci.

        \begin{conjecture}
            \label{genlicciconj}
            Fix $c\geq 3$, $d\geq 1$, $t \geq 1$. Then:
            \begin{itemize}
                \item[(a)] If the diagram $T_{c-1,d+1,t+1}$ is Dynkin (of type ADE), then every perfect ideal of codimension $c$, deviation $d$, and type $t$ is licci.
                \item[(b)]  If the diagram $T_{c-1,d+1,t+1}$ is not Dynkin, then there exist perfect ideals of codimension $c$, deviation $d$, and type $t$ that are not licci.
            \end{itemize}
        \end{conjecture}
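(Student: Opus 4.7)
\textbf{Proof proposal for Conjecture \ref{genlicciconj}.}

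The plan is to treat the two parts separately, building on Conjecture~\ref{conj:general-c} that formalizes the theory of higher structure maps in codimension $c\geq 4$.

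For part (a), let $I\subset R$ be a perfect ideal of codimension $c$, deviation $d$, and type $t$ with $T_{c-1,d+1,t+1}$ Dynkin. By Conjecture~\ref{conj:general-c}~(1), $I$ admits a higher structure map $w^{(1)}$, and by part (2) of the conjecture, $I$ is licci if and only if $w^{(1)}\otimes\Bbbk \neq 0$. The goal is therefore to show that non-vanishing is automatic in the Dynkin case. I would try the following route: in the Dynkin case the Kac-Moody Lie algebra is finite-dimensional, so $L(\omega_{x_{c-2}})^\vee$ is finite-dimensional and $\mathrm{Licci}_c(d,t)$ has finitely many vertices. Using the explicit description of each Herzog class's rigid model in terms of a Schubert cell in a Kac-Moody flag variety (as in \cite[\S3]{GNW3} for $c=3$, and its conjectural extension to $c\geq 4$), one should construct a ``universal'' family over a smooth affine base on which $w^{(1)}$ is nonzero mod the maximal ideal by inspection of degrees (as in Theorem~\ref{genericcomplexthm} and the discussion preceding Lemma~\ref{lem:special-graded-free-res}). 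Any perfect ideal $I$ of the given format would then be realized as a specialization of this universal family along a local homomorphism preserving the non-vanishing of $w^{(1)}\otimes\Bbbk$, which by Conjecture~\ref{conj:general-c}~(2) yields licci-ness. For $c=3$ this is already Theorem~\ref{licci1}; for $c=4$ and Dynkin types $E_n$ with $n\leq 8$ the parallel machinery is being developed in \cite{CGNW,weyman-gorenstein}.

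For part (b), the strategy is to produce explicit families of non-licci perfect ideals in each non-Dynkin format. For $c=3$ this is done in \cite{CVWdynkin}, where non-licci perfect ideals are exhibited in every non-Dynkin length-3 format. For general $c$, I would combine these $c=3$ counter-examples with structural operations that preserve non-licci-ness. Specifically, if $I\subset R$ is perfect of codimension $c$ and not licci, and $x$ is a regular element on $R/I$, then $I+(x)$ is perfect of codimension $c+1$ with the same deviation and type, and is still non-licci because a direct link of $I+(x)$ would pull back to a direct link of $I$. Iterating this hyperplane-section construction takes a non-licci $(c_0,d,t)$-triplet to non-licci $(c_0+n,d,t+n)$-triplets; on the combinatorial side this extends the left arm of $T_{c-1,d+1,t+1}$, preserving the non-Dynkin property. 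The remaining non-Dynkin triplets, such as those obtained by increasing only the type or only the deviation, could be reached by combining hyperplane sections with linkages along regular sequences and with the operation of taking $R/I$ to its Cohen--Macaulay dual, each of which can be tracked on the graph $T$. A complementary source of examples in codimension $\geq 4$ is the classical non-licci Gorenstein ideal of $2\times 2$ minors of a generic $3\times 3$ matrix (which sits in a non-Dynkin format), together with its hyperplane sections and direct links.

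The main obstacle is part (a) for $c\geq 4$ outside the Gorenstein case: it rests entirely on Conjecture~\ref{conj:general-c}, whose present status is genuinely conjectural since no full theory of the generic ring and of higher structure maps is yet available in this generality. Without it, even the intrinsic assignment $I\mapsto\s_I$ is not known to be well-defined, so the direction ``$w^{(1)}\otimes\Bbbk\neq 0 \Rightarrow I$ licci''---the decisive input for (a)---cannot be invoked. A rigorous proof would therefore first require constructing a generic ring of Kac-Moody flavour in arbitrary codimension, and verifying that its orbit stratification matches the combinatorial graph $\mathrm{Licci}_c$ defined here.
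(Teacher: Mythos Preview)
The statement is an open conjecture; the paper does not prove it but rather summarizes the state of the art immediately afterward. So there is no paper proof to compare against, and your proposal must be read as a strategy outline, not a proof.

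That said, your outline contains concrete errors. In part (b) you claim that iterating hyperplane sections sends $(c_0,d,t)$ to $(c_0+n,d,t+n)$. This is wrong: a hyperplane section preserves both deviation and type, sending $(c,d,t)$ to $(c+1,d,t)$, since the resolution of $R/(I+(x))$ is the tensor product of the resolution of $R/I$ with the Koszul complex on $x$, leaving the rank of the last free module unchanged. This is exactly the observation the paper records after the conjecture. Your claim that hyperplane section ``extends the left arm'' of $T$ is correct, but your bookkeeping of which triplets are reached is not. More seriously, your assertion that the ``remaining non-Dynkin triplets\ldots\ could be reached'' via hyperplane sections, linkage, and duality is far too optimistic: the paper explicitly lists $(4,2,2)$, $(4,3,2)$, $(4,2,3)$, and $(c,2,1)$ for $c\geq 7$ as genuinely open, and none of the operations you name is known to produce non-licci examples in those formats.

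Your argument for part (a), even granting Conjecture~\ref{conj:general-c}, is circular. You propose to build a universal family on which $w^{(1)}\otimes\Bbbk\neq 0$ holds, and then realize an arbitrary perfect ideal $I$ as a specialization of it. But the universal family (the union of the Schubert-cell models) parametrizes exactly the ideals with $w^{(1)}\otimes\Bbbk\neq 0$; knowing that $I$ specializes from it is equivalent to already knowing $w^{(1)}\otimes\Bbbk\neq 0$ for $I$, which is what you are trying to prove. The actual argument for $c=3$ in \cite{GNW2,GNW3} does not proceed this way: it uses that in the Dynkin case the generic ring $\widehat{R}_{\mathrm{gen}}$ is Noetherian, which forces the map from $\widehat{R}_{\mathrm{gen}}$ to factor through one of finitely many Schubert models, and this is what guarantees the unit in $w^{(1)}$. ``Inspection of degrees'' plays no role in that step.
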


        In the case $c=3$, both parts of the conjecture have positive answer \cite{CVWdynkin}, \cite{GNW2} \cite{GNW3}. In an upcoming paper, we plan to present a positive answer also in the case $c=4$, $t=1$ (Gorenstein ideals of codimension 4). By taking hyperplane sections, it is easy to observe that if the triplet $(c,d,t)$ admits perfect non-licci ideals, so does the triplet $(c+1,d,t)$. Moreover, it is not hard to exhibit non-licci perfect ideals corresponding to triplets $(4,4,2),(4,2,4)$.
        Hence, to conclude this study in codimension 4, it will be required to analyze the triplets $(4,2,2), (4,3,2), (4,2,3)$, and discover if they admit any non-licci perfect ideal. As pointed out at the end of Section \ref{sec:dev2}, in higher codimension, also the existence of non-licci perfect ideals for the triplets $(c,2,1)$ is not known. 
		
		\section*{Acknowledgements}
		
		This material is partially based upon work supported by the National Science Foundation under Grant No. DMS-1928930 and by the Alfred P. Sloan Foundation under grant G-2021-16778, while the authors were in residence at the Simons Laufer Mathematical Sciences Institute (formerly MSRI) in Berkeley, California, during the Spring 2024 semester. 
		
		%T. Chmiel, 
		L. Guerrieri and J. Weyman are supported by the grants: \\ MAESTRO NCN-UMO-2019/34/A/ST1/00263 - Research in Commutative Algebra and Representation Theory, \\
		NAWA POWROTY- PPN/PPO/2018/1/00013/U/00001 - Applications of Lie algebras to Commutative Algebra,  \\
		OPUS grant National Science Centre, Poland grant UMO-2018/29/BST1/01290. 
		
		L. Guerrieri is also supported by the Miniatura grant 2023/07/X/ST1/01329 from NCN (Narodowe Centrum Nauki), which funded his visit to SLMath in April 2024.
		
		The authors would like to thank Ela Celikbas, Tymoteusz Chmiel, Lars Christensen, David Eisenbud, Sara Angela Filippini, Craig Huneke, Witold Kra\'skiewicz, Andrew Kustin, Jai Laxmi, Claudia Polini, Steven Sam, Jacinta Torres, Bernd Ulrich, and Oana Veliche for interesting discussions
		pertaining to this paper and related topics.

		%\end{thebibliography}

		%\bibitem{Br83} 
		%Bruns, W.
		%Divisors on varieties of complexes,
		%Math. Ann.
		% 264 (1983) 53-71

		%\bibitem{BE73} 
		%Buchsbaum, D., Eisenbud, D.
		%\it  What makes a complex exact, \rm
		%J. of Algebra.
		%25
		%(1973)
		%259-268

		%\bibitem{Gaeta}
		%Gaeta, F.
		%\it Quelques progr\`es r\'ecents dans la classification des vari\'et\'es alg\'ebriques d'un espace projectif, \rm
		%Deuxi\`eme Colloque de G\'eometrie Alg\'ebrique Li\`ege (1952), C.B.R.M.

		%\bibitem{homconj} 
		% Hochster, M.
		%\it Homological conjectures, old and new, \rm
		%Illinois J. Math. 51(1): 151-169 (Spring 2007).

		%\bibitem{JPW81}  
		% J\'ozefiak, T.,  Pragacz, P., Weyman, J.
		%Resolutions of determinantal varieties and tensor complexes associated with symmetric
		%and antisymmetric matrices,
		%Asterisque
		%87-88
		%(1981)
		%109-189

		%\bibitem{Macaulay}
		%Macaulay, F.S.
		%\it On the Resolution of a given Modular System into Primary Systems including some Properties of Hilbert Numbers, \rm
		%Mathematische Annalen (1913), Vol. 74, pp. 66-121.

		% \bibitem{Peskine-Szpiro-4pages}
		% Peskine, C., Szpiro, L.
		%\it Syzygies et multiplicit\'es, \rm
		%C. R. Acad. Sci. Paris S\'er. A Math. 278
		%(1974), 1421–1424.

		%\bibitem{PW90}  
		% Pragacz, P., Weyman, J.
		%\it  On the generic free resolutions, \rm
		%J. of Algebra 128, no.1 (1990) 1-44.

		%\bibitem{W03}   Weyman, J. 
		%\it Cohomology of vector bundles and syzygies, \rm
		% Cambridge University Press, Cambridge, UK 
		%(2003) 
		%Cambridge Tracts in Mathematics, vol. 149

		%\end{thebibliography}

	\end{document}